\newtheorem{thm}{Theorem}[subsection]
\newtheorem{prop}[thm]{Proposition}
\newtheorem{cor}[thm]{Corollary}
\newtheorem{lem}[thm]{Lemma}
\newtheorem{lemma}[thm]{Lemma}
\newtheorem{theointro}{Theorem}
\theoremstyle{definition}
\newtheorem{definition}[thm]{Definition}
\newtheorem{ex}[thm]{Example}
\newtheorem{rem}[thm]{Remark}
\newtheorem{art}[thm]{}
\numberwithin{paragraph}{section}
\numberwithin{equation}{section}
\setlist[enumerate]{label=\it{(\roman*)},
ref=\it{(\roman*)}}
\def\P{{\mathbb P}}
\def\N{{\mathbb N}}
\def\Z{{\mathbb Z}}
\def\Q{{\mathbb Q}}
\def\R{{\mathbb R}}
\def\C{{\mathbb C}}
\def\A{{\mathbb A}}
\def\H{{\mathbb H}}
\def\G{{\mathbb G}}
\def\N{{\mathbb N}}
\def\T{{\mathbb T}}
\def\SS{{\mathbb S}}
\newcommand{\metr}{{\|\hspace{1ex}\|}}
\newcommand{\Hom}{{\rm Hom}}
\def\an{{\rm an}}
\def\div{{\rm div}}
\DeclareMathOperator{\trop}{trop}
\DeclareMathOperator{\tropbar}{\overline{trop}}
\DeclareMathOperator{\MA}{MA}
\newcommand{\Xan}{{X^{\rm an}}}
\newcommand{\Yan}{{Y^{\rm an}}}
\newcommand{\Acal}{{\mathscr A}}
\newcommand{\Bcal}{{\mathscr B}}
\newcommand{\Ecal}{{\mathscr E}}
\newcommand{\Fcal}{{\mathscr F}}
\newcommand{\Hcal}{{\mathscr H}}
\newcommand{\Lcal}{{\mathscr L}}
\newcommand{\Mcal}{{\mathscr M}}
\newcommand{\Ocal}{{\mathscr O}}
\newcommand{\Ucal}{{\mathscr U}}
\newcommand{\Wcal}{{\mathscr W}}
\newcommand{\Xcal}{{\mathscr X}}
\newcommand{\Ycal}{{\mathscr Y}}
\newcommand{\codim}{{\rm codim}}
\newcommand{\coker}{{\rm coker}}
\newcommand{\Pic}{{\rm Pic}}
\newcommand{\Spec}{{\rm Spec}}
\newcommand{\id}{{\rm id}}
\newcommand{\cl}{{\rm cl}}
\DeclareMathOperator{\supp}{supp}
\newcommand{\relint}{{\rm relint}}
\newcommand{\kcirc}{{ K^\circ}}
\newcommand{\ktilde}{{ \tilde{K}}}
\newcommand{\Lan}{{L^{\rm an}}}
\newcommand{\AS}{{A}}
\newcommand{\Rinf}{\R_{-\infty}}
\newcommand{\Rsup}{\R_\infty}
\newcommand{\torus}{{\T}}
\newcommand{\Xsigmaan}{{X_\Sigma^{\rm an}}}
\newcommand{\torusan}{{\torus^{\rm an}}}
\newcommand{\dell}{\partial}
\newcommand{\dellbar}{\overline{\partial}}
\newcommand{\vol}{{\rm  vol}}
\DeclareMathOperator{\im}{Im}
\DeclareMathOperator{\PSH}{PSH}
\title[Tropical toric pluripotential theory]{Pluripotential theory for tropical toric varieties and  non-archimedean Monge--Amp\`ere equations}
\author[J.I.Burgos Gil]{Jos\'e Ignacio Burgos Gil}
\address{
J. I. Burgos Gil,
Instituto de Ciencias Matem\'aticas (CSIC-UAM-UCM-UCM3), 
Calle Ni\-co\-l\'as Cabrera 15, Campus de la Universidad 
Aut\'onoma de Madrid, Cantoblanco, 28049 Madrid, Spain}
\email{burgos@icmat.es}
\author[W.~Gubler]{Walter Gubler}
\address{W. Gubler, Mathematik, Universit{\"a}t 
Regensburg, 93040 Regensburg, Germany}
\email{walter.gubler@mathematik.uni-regensburg.de}
 \author[P.~Jell]{Philipp Jell}
 \address{P. Jell,  Mathematik, Universit{\"a}t 
Regensburg, 93040 Regensburg, Germany}
\email{philipp.jell@mathematik.uni-regensburg.de}
\author[K.~K\"unnemann]{Klaus K{\"u}nnemann}
\address{K. K{\"u}nnemann, Mathematik, Universit{\"a}t 
Regensburg, 93040 Regensburg, Germany}
\email{klaus.kuennemann@mathematik.uni-regensburg.de}
 \thanks{J.~I.~Burgos was partially supported by MINISTERIO
DE CIENCIA E INNOVACION research projects PID2019-108936GB-C21 and 
ICMAT Severo Ochoa project CEX2019-000904-S.
W.~Gubler, P.~Jell and K.~K{\"u}nnemann 
were supported by the collaborative research 
center SFB 1085 \emph{Higher Invariants - Interactions between Arithmetic Geometry and Global Analysis} funded by the Deutsche Forschungsgemeinschaft.}
\begin{document}

\begin{abstract}
Tropical toric varieties are partial compactifications of finite dimensional real vector spaces associated with rational polyhedral fans. We introduce  plurisubharmonic functions and a Bedford--Taylor product for Lagerberg currents on open subsets of a tropical toric variety. The resulting tropical toric pluripotential theory provides  the link to give a canonical correspondence between complex and non-archimedean pluripotential theories of invariant plurisubharmonic functions on toric varieties. We will apply this correspondence to solve invariant  non-archimedean Monge--Ampère equations on toric and abelian varieties over arbitrary non-archimedean fields.  
\end{abstract}

\keywords{{Pluripotential theory, tropical toric varieties, tropicalization of abelian varieties, non-archimedean Monge--Ampère equations}} 
\subjclass{{Primary 32P05; Secondary 32U05, 14T90, 32W20}}

\maketitle

\setcounter{tocdepth}{1}

\tableofcontents

\section{Introduction}  \label{section: introduction}

Pluripotential theory  is a non-linear complex counterpart of classical potential theory. It is the study of plurisubharmonic (short:~psh) functions and the complex Monge--Amp\`ere equation. 
There are rich applications to multidimensional complex analysis, K\"ahler geometry, algebraic geometry and to dynamics. 
A celebrated result was Yau's solution \cite{yau78} of the  complex Monge--Amp\`ere equation for smooth forms on a K\"ahler manifold more than twenty years after Calabi proved uniqueness. 
Bedford and Taylor \cite{bedford-taylor1982} showed that one can go beyond the smooth case as there is a well-defined product for the positive $(1,1)$-currents associated to locally bounded psh functions and hence the  Monge--Amp\`ere measure makes sense for such functions. 
Later, Demailly \cite{demailly_1993} 
defined also a Bedford--Taylor product for singular psh functions. 
Here, singular means that the functions $\varphi$ are allowed to take the value $-\infty$ and the product is only well-defined under certain restrictions on the unbounded loci $\varphi^{-1}(\{-\infty\})$. 

In non-archimedean geometry, continuous semipositive metrics were introduced by Zhang \cite{zhang-95}.  Chambert-Loir \cite{chambert-loir-2006} defined corresponding Monge--Amp\`ere measures. 
Kontsevich and Soibelman \cite{kontsevich-soibelman2006} emphasized the role of non-archimedean geometry for mirror symmetry, while Kontsevich and Tschinkel \cite{kontsevitch-tschinkel} proposed a strategy to solve the non-archimedean Monge--Amp\`ere equation. 
Yuan and Zhang \cite{yuan-zhang} showed uniqueness {of the solution up to scaling}. 
Liu \cite{liu2011} solved the non-archimedean Monge--Amp\`ere equation for totally degenerate abelian varieties. 
In case of a trivially or discretely valued complete field $K$ of residue characteristic zero, Boucksom, Favre and Jonsson \cite{bfj-solution} gave a global approach to non-archimedean pluripotential theory and presented a variational method which gives the existence of a solution for any smooth projective variety over $K$.  

The goal of this paper is to introduce a pluripotential theory for certain partial compactifications of $\R^n$ called tropical toric varieties. 
Noting that convex functions are the natural real analogue of psh functions, it is clear that the Monge--Amp\`ere measure on such a compactification is closely related to the classical real Monge--Amp\`ere operator studied by Aleksandrov. 
The classical real Monge--Amp\`ere equation has several variants. 
The corresponding Dirichlet problem was solved by Aleksandrov
\cite{aleksandrov_1958} and Bakelman \cite{bakelman57}, the
corresponding second boundary problem (i.e.~with prescribed gradient
image) was done by Pogorelov \cite{pogorelov1964}.

Toric geometry is very useful for testing conjectures in algebraic and
arithmetic geometry. 
The reason is that invariant objects in toric geometry are completely described by combinatorial means which comes handy for such tests. 
In particular, a toric variety with dense {split} torus $\T$ is completely determined by a fan $\Sigma$ in $N_\R= N \otimes_\Z \R$ where $N$ is the cocharacter lattice of $\T$ and so we denote the associated toric variety by $X_\Sigma$. 
Note that $X_\Sigma$ is {naturally} defined over $\Spec\, \Z$ and by base change, we get a toric variety over any base field. 
In the introduction, we consider the associated complex toric variety
$X_{\Sigma,\infty}$ and the associated toric variety $X_{\Sigma,v}$
for a non-archimedean field $K$ with valuation $v$ of rank one. 

There is a tropical toric variety $N_\Sigma$ associated to the fan $\Sigma$ which is a partial compactification of $N_\R$. 
For $w \in \{\infty,v\}$, it comes with a canonical tropicalization map
\[
\trop_w\colon X_{\Sigma,w}^{\rm an} \longrightarrow N_\Sigma
\] 
which is a continuous proper surjective map. 
We exploit $\trop_\infty$ and $\trop_v$ to relate the complex space $X_{\Sigma,\infty}^\an$ and  the non-archimedean Berkovich space $X_{\Sigma,v}^\an$.

Lagerberg \cite{lagerberg-2012} has introduced real valued $(p,q)$-forms on $N_\R$ which form a bigraded sheaf of algebras $A_{N_\Sigma}^{\cdot,\cdot}$ with differential operators $d'$ and $d''$. 
{Lagerbergs construction} was generalized by Jell, Shaw and Smacka \cite{jell-shaw-smacka2015} to polyhedral spaces as $N_\Sigma$. 
For an open subset $U$ of $N_\Sigma$, the space $D^{p,q}(U)$ of Lagerberg currents  of bidegree $(p,q)$ is the topological dual of $A_c^{p,q}(U)$. 
In \cite{burgos-gubler-jell-kuennemann1},
  we have worked out a suitable positivity notion for Lagerberg currents  of bidegree $(p,p)$. 
For a summary and a comparison to the corresponding notions in complex and non-archimedean geometry, we refer to Section \ref{sec:posit-real-compl}.

\subsection{Tropical psh functions} 
In Section \ref{sec: Plurisubharmonic functions on partial compactifications}, we introduce psh functions on an open subset $U$ of $N_\Sigma$. 
These are strongly upper-semicontinuous functions $\varphi\colon U  \to \R \cup \{-\infty\}$ which restrict to convex functions on $U\cap N_\R$. 
We will show in Theorem \ref{thm:1} that every psh function on $U$ is continuous. 
This is not true for classical psh functions {on complex manifolds}. 
We show that psh functions on $U$ share many properties with complex psh functions. 
In Theorem \ref{psh and positivity of Lagerberg current}, we prove that a function $\varphi$ on $U$ is psh if and only if $\varphi$ is a locally integrable strongly upper-semicontinuous function such that the associated Lagerberg current is positive. 
In Theorem \ref{thm:3}, we will see that any psh function on $U$ is locally a decreasing limit of smooth psh functions. 
In Section \ref{sec:bedf-tayl-calc}, we introduce a Bedford--Taylor product for psh functions on the tropical toric variety $N_\Sigma$ associated to the fan $\Sigma$ in $N_\R \simeq \R^n$.

\begin{theointro} \label{tropical Bedford-Taylor in intro}
For an open subset $U$ of $N_\Sigma$ and  locally bounded psh functions $u_0,\ldots,u_q$ on $U$, there is a unique Lagerberg current 
\[
u_0 d'd''u_1 \wedge \ldots \wedge d'd''u_q \in D^{q,q}(U)
\]
such that this product is {determined} locally in $U$, agrees with the product of Lagerberg forms in the smooth case, and is continuous with respect to uniform convergence of psh functions.
\end{theointro}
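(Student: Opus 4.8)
The plan is to follow the classical Bedford--Taylor strategy, adapted to the tropical toric setting, exploiting the fact (Theorem~\ref{thm:3}) that locally every psh function on $U$ is a decreasing limit of smooth psh functions. First I would establish the key local regularization: given locally bounded psh functions $u_0,\dots,u_q$ on $U$ and a point $x\in U$, choose a neighborhood $V$ on which each $u_i$ is bounded and is the pointwise decreasing limit of smooth psh functions $u_i^{(k)}$. For smooth psh functions the product $u_0^{(k)}\, d'd''u_1^{(k)}\wedge\cdots\wedge d'd''u_q^{(k)}$ is just the usual wedge product of Lagerberg forms, which is a well-defined positive Lagerberg current of bidegree $(q,q)$. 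I would then define the desired current inductively on $q$ by the formula
\[
u_0\, d'd''u_1\wedge\cdots\wedge d'd''u_q \;:=\; d'd''\bigl(u_1\,(d'd''u_2\wedge\cdots\wedge d'd''u_q)\bigr)\,u_0,
\]
reading the inner bracket as the already-constructed current for $q-1$ functions, and making sense of the outer multiplication by the bounded psh function $u_0$ against a positive current. The positivity from \cite{burgos-gubler-jell-kuennemann1} is what makes this multiplication legitimate: a positive $(q{-}1,q{-}1)$-current has measure coefficients, and a bounded Borel function can be integrated against such measures.

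The main work is to prove the continuity/convergence statement, i.e.\ that if $u_i^{(k)}\to u_i$ uniformly (or, in the decreasing-limit case, monotonically) with all functions uniformly bounded, then the associated currents converge weakly. Here I would argue by induction on $q$. The inductive hypothesis gives weak convergence of the $(q{-}1,q{-}1)$-currents $T^{(k)}:=u_1^{(k)}\,d'd''u_2^{(k)}\wedge\cdots\wedge d'd''u_q^{(k)}$ to $T$; applying $d'd''$ is continuous for the weak topology, so $d'd''T^{(k)}\to d'd''T$. The delicate point is passing to the limit in the product $u_0^{(k)}\cdot d'd''T^{(k)}$: one needs that a uniformly bounded sequence of psh functions converging uniformly, paired against a weakly convergent sequence of positive currents, converges to the expected product. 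This is exactly the tropical analogue of the Bedford--Taylor convergence theorem, and I expect it to be \textbf{the main obstacle}. The standard complex proof splits $u_0^{(k)}\,d'd''T^{(k)} - u_0\,d'd''T$ into $(u_0^{(k)}-u_0)\,d'd''T^{(k)}$ and $u_0\,(d'd''T^{(k)}-d'd''T)$; the first term is controlled by uniform convergence of $u_0^{(k)}$ together with a uniform mass bound on the positive currents $d'd''T^{(k)}$ on compact subsets (a tropical Chern--Levine--Nirenberg type estimate, which itself is proved by induction using integration by parts and the boundedness of the potentials), while the second term needs that $u_0$ can be tested against the weakly convergent positive currents $d'd''T^{(k)}$ — for this I would approximate $u_0$ from above by smooth psh functions and use monotone convergence against the measure coefficients, exploiting again the continuity of psh functions on $N_\Sigma$ (Theorem~\ref{thm:1}), which is a genuine simplification over the complex case.

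Finally I would address the remaining assertions. Locality is immediate from the inductive construction, since each step only uses the restrictions of the $u_i$ to the relevant open set and the product of Lagerberg forms is local. Agreement with the smooth case follows by induction: when all $u_i$ are smooth, the inductively defined current equals the classical wedge product because $d'd''(u_1\omega)=d'd''u_1\wedge\omega$ for a smooth closed form $\omega=d'd''u_2\wedge\cdots\wedge d'd''u_q$, and then multiplication by the smooth function $u_0$ is the usual one. For uniqueness, suppose two assignments satisfy the three properties; by locality it suffices to compare them on a neighborhood where each $u_i$ is a decreasing limit of smooth psh functions, and since such decreasing limits converge uniformly on compact subsets once one truncates appropriately — or, more robustly, one invokes the continuity axiom directly for the uniformly convergent regularizing sequences — the two currents agree with the common smooth-case value in the limit, hence coincide. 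One subtlety worth flagging: to invoke the continuity axiom for \emph{decreasing} limits one should first record that on a tropical toric variety a decreasing sequence of smooth psh functions converging pointwise to a (continuous, by Theorem~\ref{thm:1}) psh function in fact converges locally uniformly, by Dini's theorem; this is where the automatic continuity of tropical psh functions pays off and removes the need for the more elaborate decreasing-sequence arguments of Bedford--Taylor and Demailly.
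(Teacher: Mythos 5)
Your proposal takes a genuinely different route from the paper. The paper does \emph{not} run the Bedford--Taylor induction on $N_\Sigma$ itself: for a smooth fan it pulls the $u_j$ back to psh functions $v_j=u_j\circ\trop_\infty$ on the complex toric manifold $V=\trop_\infty^{-1}(U)$ (using Proposition \ref{thm:2}), forms the \emph{complex} Bedford--Taylor product of Demailly on $V$, and defines the tropical product as $\trop_{\infty,*}$ of the result; locality, agreement with the smooth case and continuity are then inherited from the complex theory via the projection formula, and uniqueness follows from local regularization (Theorem \ref{thm:3}). For a non-smooth fan the paper reduces to the smooth case by toric resolution and the projection formula (Remark \ref{Bedford-Taylor for non-smooth}). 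Your intrinsic real-analytic construction is explicitly acknowledged by the authors as possible, and it has the mild advantage of treating arbitrary fans uniformly without resolution; the price is that you must actually prove the tropical analogues of the two pillars of Bedford--Taylor theory, which the paper imports wholesale from \cite{demailly_agbook_2012}.

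That price is where your write-up falls short of a proof. You correctly identify the Chern--Levine--Nirenberg mass estimate and the convergence theorem for $u_0^{(k)}\,d'd''T^{(k)}$ as ``the main obstacle'', but you only describe the shape of the argument. Two points in particular need real work: (a) that a positive Lagerberg $(p,p)$-current on an open subset of $N_\Sigma$ has (locally finite) measure coefficients, so that multiplication by a bounded Borel function makes sense --- this is the tropical analogue of the order-zero property of positive currents and has to be extracted from the positivity formalism of \cite{burgos-gubler-jell-kuennemann1}; and (b) the CLN estimate itself near boundary strata of $N_\Sigma$, where test forms of bidegree $(n-q,n-q)$ with $q\ge 1$ do see the boundary, so the integration-by-parts argument must be checked for forms that are merely ``constant towards the boundary''. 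Your observations that tropical psh functions are automatically continuous (Theorem \ref{thm:1}) and that decreasing limits are therefore locally uniform by Dini are correct and genuinely simplify the convergence step relative to the complex case; the locality, smooth-case and uniqueness arguments are fine as stated. So: a valid alternative strategy, but the core convergence lemma is asserted rather than proved, whereas the paper obtains it for free from the complex Bedford--Taylor theorem via $\trop_\infty$.
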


For $u_0 \equiv 1$, we show that the Bedford--Taylor product $d'd''u_1 \wedge \ldots \wedge d'd''u_q$ is a closed positive Lagerberg current on $U$. 
For $q=n$, we get a positive Radon measure on $U$ which we call the associated \emph{Monge--Amp\`ere measure}. 
If $\Sigma$ is a smooth fan, then we allow in the Bedford--Taylor product in Theorem \ref{tropical Bedford-Taylor in intro} additionally a factor given by a closed positive $(p,p)$-current $T$ (see Theorem \ref{thm:7}). 
Moreover, we give in Theorem \ref{thm:8} a Bedford--Taylor product for unbounded psh functions if the unbounded loci intersect transversally. 

The proofs of the above results on the tropical Bedford--Taylor product use that the functions $v_j \coloneqq u_j \circ \trop_\infty$ are psh functions on the open subset $V \coloneqq \trop_\infty^{-1}(U)$ of the complex toric manifold $X_{\Sigma,v}^{\rm an}$  and that $T=\trop_{\infty,*}(S)$ for a canonical positive $(p,p)$-current $S$ on $V$ by the main theorem in \cite{burgos-gubler-jell-kuennemann1}. 
Then we define 
\[
u_0 d'd''u_1 \wedge \ldots \wedge d'd''u_q \wedge T \coloneqq \trop_{\infty,*}(v_0 d'd''v_1 \wedge \ldots \wedge d'd''v_q \wedge S)
\]
by using the corresponding complex Bedford--Taylor product on $V$ and the claim follows from the complex case and local regularization. For a non-smooth fan, we reduce Theorem \ref{tropical Bedford-Taylor in intro} to the smooth case by toric resolution of singularities and use a projection formula (see Remark \ref{projection formula for unbounded psh}). In principle, it is possible to prove Theorem \ref{tropical Bedford-Taylor in intro} purely by means of real analysis, but a reduction to the complex case by using tropicalization is very handy and gives the results much quicker. Such comparison principles in the toric case are a permanent guideline in the whole paper.

\subsection{Tropical $\theta$-psh functions} 
For a complete fan $\Sigma$, we show in Proposition \ref{maximum principle} that psh functions on $N_\Sigma$ are constant. 
To get also an interesting global theory, we proceed similarly as in the complex case and introduce $\theta$-psh functions.
Here, we fix a closed $(1,1)$-current $\theta$ given by a similar construction as the first Chern current {associated with} a continuous metric of a line bundle in the complex or non-archimedean setting.
In Section \ref{section: semipositive metrics and theta-psh functions}, we show that {our construction makes sense also} for non-complete fans and we introduce $\theta$-psh functions on any open subset $U$ of $N_\Sigma$. 
Since $\theta$-psh functions can be locally identified with psh functions, most of the properties of psh functions generalize to $\theta$-psh functions. 
In particular, we have a Bedford--Taylor product for $\theta$-psh functions as in Theorem \ref{tropical Bedford-Taylor in intro}.

In Theorems \ref{main comparison thm of psh} and
\ref{main correspondence theorem for semipositiv in non-arch}, we show the following \emph{correspondence theorem}. We consider any non-archimedean field $K$ with valuation $v$. For $w \in \{\infty, v\}$, let $\SS_w$ be the maximal compact torus in the dense torus orbit of $X_{\Sigma,w}^{\an}$. We pick $\theta$ as above and set $\theta_w \coloneqq \trop_w^*(\theta)$.

\begin{theointro} \label{main correspondence theorem for theta-psh functions in intro}
Let $\Sigma$ be a smooth fan and let $U$ be an open subset of $N_\Sigma$.
Then there are canonical isomorphisms between the following cones:
\begin{enumerate}
	\item\label{thm-b-i} 
	the cone of continuous $\SS_\infty$-invariant $\theta_\infty$-psh functions $\varphi_\infty$ on $\trop_\infty^{-1}(U)$;
	\item\label{thm-b-ii}	
	the cone of continuous $\theta$-psh functions on $U$;
	\item\label{thm-b-iii}
	the cone of continuous $\SS_v$-invariant $\theta_v$-psh functions $\varphi_\infty$ on $\trop_v^{-1}(U)$.
\end{enumerate}
For $w 	\in \{\infty, v\}$, the correspondence is given by $\varphi \mapsto \varphi_w \coloneqq \varphi \circ \trop_w$. 
\end{theointro}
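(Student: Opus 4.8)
The plan is to treat $w=\infty$ and $w=v$ separately and to show, in each case, that $\varphi\mapsto\varphi_w=\varphi\circ\trop_w$ is a bijection from the cone \ref{thm-b-ii} of continuous $\theta$-psh functions on $U$ onto the cone of continuous $\SS_w$-invariant $\theta_w$-psh functions on $\trop_w^{-1}(U)$; composing the bijection for $w=\infty$ with the inverse of the one for $w=v$ then identifies \ref{thm-b-i} with \ref{thm-b-iii} via $\varphi_\infty\mapsto\varphi_v$, as claimed. So fix $w$. Since $\trop_w$ is continuous, proper, surjective and $\SS_w$-invariant, the assignment $\varphi\mapsto\varphi_w$ maps continuous functions on $U$ to continuous $\SS_w$-invariant functions on $\trop_w^{-1}(U)$, and it is injective because $\trop_w$ is surjective. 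It is also onto all continuous $\SS_w$-invariant functions, because $\trop_w$ exhibits $U$ as the topological quotient of $\trop_w^{-1}(U)$ by $\SS_w$: on the dense torus this is the quotient homomorphism by the maximal compact torus (the real torus $(S^1)^n$ in the complex case), the action of $\SS_w$ is transitive on the fibers of $\trop_w$, and $\trop_w$ is a closed map since it is continuous, proper and surjective onto the locally compact Hausdorff space $U$. Therefore the theorem reduces to the equivalence: $\varphi$ is $\theta$-psh on $U$ if and only if $\varphi_w$ is $\theta_w$-psh on $\trop_w^{-1}(U)$.

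This equivalence is local on $U$, and since $\theta_w=\trop_w^*\theta$, a continuous local potential $g$ of $\theta$ pulls back to a continuous local potential $g\circ\trop_w$ of $\theta_w$. As $\varphi$ is $\theta$-psh iff $\varphi+g$ is psh, and $\varphi_w$ is $\theta_w$-psh iff $(\varphi+g)\circ\trop_w$ is psh, we may assume $\theta=0$ and must only show that pullback along $\trop_w$ is a bijection from continuous psh functions on $U$ onto continuous $\SS_w$-invariant psh functions on $\trop_w^{-1}(U)$.

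For the implication from $\varphi$ psh to $\varphi_w$ psh I would use Theorem~\ref{thm:3}: locally on $U$ the function $\varphi$ is a decreasing limit of smooth psh functions $\varphi_k$, hence $\varphi_w$ is locally the decreasing limit of the $\varphi_k\circ\trop_w$. The pullback of a smooth psh function under the toric morphism $\trop_w$ is again smooth and psh: on a torus chart $\trop_w$ is, under the identification $N_\R\simeq\R^n$, the map $z\mapsto-\log|z|$ (equal to $v(z)$ in the non-archimedean case), and a direct computation gives $\trop_w^*\bigl(d'd''\psi\bigr)\ge0$ for smooth convex $\psi$; the general case follows because $\trop_w$ is compatible with the toric boundary. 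Decreasing limits of psh functions with continuous limit are psh in both the complex and the non-archimedean setting, so $\varphi_w$ is psh, and it is continuous as a composite of continuous maps. (Alternatively, this direction follows from Theorem~\ref{psh and positivity of Lagerberg current} combined with the compatibility of associated Lagerberg currents under $\trop_w$ recorded in Section~\ref{sec:posit-real-compl}.)

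The converse, from $\varphi_w$ psh to $\varphi$ psh, is the heart of the argument and I expect it to be the main obstacle. Let $\psi$ be a continuous $\SS_w$-invariant psh function on $\trop_w^{-1}(U)$ and let $\varphi$ be the unique continuous function on $U$ with $\psi=\varphi\circ\trop_w$ produced by the quotient description above. Being continuous, $\varphi$ is strongly upper-semicontinuous and locally integrable, so by Theorem~\ref{psh and positivity of Lagerberg current} it suffices to show that the Lagerberg current $d'd''\varphi$ on $U$ is positive. Now $\psi$ being psh makes the current $d'd''\psi$ on $\trop_w^{-1}(U)$ positive and closed, it is $\SS_w$-invariant since $\psi$ is, and functoriality of $d'd''$ under $\psi=\varphi\circ\trop_w$ (checked by pairing with smooth forms) yields $\trop_{w,*}(d'd''\psi)=d'd''\varphi$. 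The positivity dictionary between $\SS_w$-invariant currents on $\trop_w^{-1}(U)$ and Lagerberg currents on $U$ from \cite{burgos-gubler-jell-kuennemann1}, recalled in Section~\ref{sec:posit-real-compl}, then forces $d'd''\varphi\ge0$, so $\varphi$ is psh. (In the complex case one may instead prove convexity of $\varphi|_{U\cap N_\R}$ directly: restricting $\psi$ to a monomial curve over an affine segment in $U\cap N_\R$ gives an $\SS_\infty$-invariant subharmonic function of one complex variable, which depends only on the real tropical coordinate and is therefore convex.) The difficulty is concentrated in needing the two-sided positivity correspondence of \cite{burgos-gubler-jell-kuennemann1} and the push-forward identity $d'd''\varphi=\trop_{w,*}(d'd''(\varphi\circ\trop_w))$ in a form valid on the open set $\trop_w^{-1}(U)$, and, on the non-archimedean side, in verifying that the notion of psh function entering the definition of $\theta_v$-psh functions matches positivity of the associated Lagerberg current.
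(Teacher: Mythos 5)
Your proposal is correct and its overall architecture (reduce to $\theta=0$ via continuous local potentials as in Proposition~\ref{lemm:4}, identify $U$ with the topological quotient $\trop_w^{-1}(U)/\SS_w$ to handle surjectivity, then prove the psh equivalence in both directions) is the one the paper follows in Theorems~\ref{main comparison thm of psh} and \ref{main correspondence theorem for semipositiv in non-arch}. The forward direction via Theorem~\ref{thm:3} (local decreasing smooth regularization, pullback of positive Lagerberg forms, stability of psh under decreasing limits) is exactly the paper's non-archimedean argument and a legitimate variant of its complex one. Where you genuinely diverge is the converse in the complex case: the paper's Proposition~\ref{thm:2} proves that $\varphi_\infty$ psh implies $\varphi$ psh by classical pluripotential theory --- Demailly's characterization of invariant psh functions on Reinhardt domains for convexity on $U\cap N_\R$, properness of $\trop_\infty$ for upper semicontinuity, and an explicit holomorphic disc $\widetilde h\colon\mathbb E\to X^{\an}_{\Sigma,\infty}$ together with the sub-mean-value inequality to verify the boundary monotonicity condition \ref{item:3} of Definition~\ref{def:1} --- whereas you run the same current-theoretic argument as in the non-archimedean case: $\trop_{w,*}[\psi]=[\varphi]$ (Proposition~\ref{projection formula trop current cont function}, resp.\ Proposition~\ref{push-forward of psh in non-arch}, the latter resting on the support statement of Proposition~\ref{lem: support on skeleton}), commutation of $\trop_{w,*}$ with $d'd''$, positivity of push-forwards, and then Theorem~\ref{psh and positivity of Lagerberg current} combined with Theorem~\ref{thm:1}\,\ref{continuity condition} to absorb the boundary condition into continuity plus convexity. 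This buys a more uniform treatment of the two places $w\in\{\infty,v\}$ at the cost of invoking the tropical characterization theorems earlier; note also that you only need the easy direction of the positivity dictionary (push-forward of positive currents is positive), not the full two-sided correspondence of \cite[Theorem 7.1.5]{burgos-gubler-jell-kuennemann1}, and that in the non-archimedean case the ``transitivity of $\SS_v$ on fibers'' should be read as the quotient statement $X_{\Sigma,v}^{\an}/\SS_v=N_\Sigma$ of \cite[Proposition 4.2.15]{bps-asterisque}, since $\SS_v$ is not a topological group acting on the underlying space.
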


The above bijective correspondence is true for any fan $\Sigma$. For simplicity, we consider in this paper only psh functions on complex manifolds. There is also a  theory of psh functions on  singular complex analytic spaces as described in \cite{demailly_1985}. If $\pi:X_{\Sigma'}\to X_\Sigma$ is a toric desingularization, then pull-back gives an isomorphism from the space of psh-functions on $U$ to the space of psh-functions on $\pi^{-1}(U)$ \cite[Theorem 1.7]{demailly_1985} which can be used to prove Theorem \ref{main correspondence theorem for theta-psh functions in intro} and many other statements in this paper also in the singular case.

We generalize the correspondences in Theorem \ref{main correspondence theorem for theta-psh functions in intro} also to singular $\theta$-psh functions. 
In \S \ref{global-regularization}, we prove for a complete fan
$\Sigma$ a global regularization result for $\theta$-psh functions. 

\subsection{Toric Monge--Amp\`ere equations} 
In the non-archimedean case, we show that the correspondence of continuous psh-functions {in Theorem  \ref{main correspondence theorem for theta-psh functions in intro}} is compatible with the tropical and non-archimedean Bedford--Taylor products (see Theorem \ref{comparision to non-archimedean Bedford-Taylor}). 
Let $\mu$ be a positive Radon measure on an open subset $U$ of the $n$-dimensional tropical toric variety $N_\Sigma$. 
In the complex case, we assume again that $\Sigma$ is a smooth fan, then there is a unique $\SS_\infty$-invariant Radon measure on $\mu_\infty$ on the toric manifold $X_{\Sigma,\infty}^\an$ with ${\trop_\infty}(\mu_\infty)= \mu$.  
In the non-archimedean case, there is a canonical section $\iota_v\colon N_\Sigma \to X_{\Sigma,v}^\an$ of $\trop_v$ which allows to identify $N_\Sigma$ with the canonical skeleton of $X_{\Sigma,v}^\an$.  
There is a unique Radon measure $\mu_v$ on $X_{\Sigma,v}^\an$ supported on the canonical skeleton $N_\Sigma$ and with  ${\trop_v}(\mu_v)= \mu$. 
In Proposition \ref{local MA equation}, we show for $w \in
\{v,\infty\}$  that  
\begin{equation} \label{correspondence of local MA equations}
(d'd''\varphi + \theta)^{\wedge n}=\mu \quad \Leftrightarrow \quad (d'd''\varphi_w + \theta_w)^{\wedge n}=\mu_w
\end{equation}
using  $\varphi_w \coloneqq \varphi \circ \trop_w$ for continuous $\theta$-psh functions $\varphi$ on $U$ as in Theorem \ref{main correspondence theorem for theta-psh functions in intro}.

We assume that $\Sigma$ is a complete fan and that $L$ is an ample line bundle on the toric variety $X_\Sigma$. For any non-archimedean field $K$ with valuation $v$, Boucksom and Jonsson \cite[Theorem 6.9]{bj:singular_metrics} introduce a Monge--Amp\`ere measure $c_1(L_v^\an,\metr)^{\wedge n}$ for   singular semipositive metrics $\metr$  on $L_v^\an$ which are (locally) bounded or more generally of finite energy. The construction is based on the continuous case using continuity along decreasing nets.  Similarly as in complex geometry, we can define the \emph{non-pluripolar Monge--Amp\`ere operator} 
\begin{equation*} 
\mu_{\metr} \coloneqq \lim_{k \to \infty} 1_{\{\metr < e^k\metr_0 \}} c_1(L_v^\an,\min(\metr_v,e^k\metr_0))^{\wedge n}
\end{equation*}
for any singular semipositive metric $\metr_v$ on $L_v^\an$. The construction does not depend on the choice of a fixed model metric $\metr_0$ of $L$. In general, we have $\mu(X_{\Sigma,v}^\an) \leq \deg_L(X)$. We define $\Ecal(L_v^\an)$ to be the space of singular semipositive metrics on $L_v^\an$ where equality is achieved. 

\begin{theointro} \label{solution of non-arch global toric MAeq in intro}
	Under the above assumptions, let $\mu_v$ be a positive Radon measure on $X_{\Sigma,v}^{\rm an}$ which is supported on the canonical skeleton $N_\Sigma$ satisfying $\mu_v(N_\Sigma \setminus N_\R)=0$ and with $\mu_v(X_{\Sigma,v}^{\rm an})= \deg_{L}(X_\Sigma)$. 
	Then there is a  metric $\metr_v$ in $\Ecal(L_v^\an)$  solving the non-archimedean  Monge--Amp\`ere equation 
	$c_{1}(L^{\an}_{v},\metr_{v})^{\wedge n}=\mu_v$.
\end{theointro}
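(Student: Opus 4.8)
The plan is to transfer the problem to the tropical toric variety $N_\Sigma$ via the tropicalization map $\trop_v$ and the canonical section $\iota_v$, solve the resulting real Monge--Ampère problem there, and then lift the solution back. First I would choose a toric model metric $\metr_0$ on $L_v^\an$; by the theory recalled in the introduction its restriction to the canonical skeleton corresponds to a piecewise-linear concave function on $N_\R$, and since $L$ is ample the associated current $\theta_v = \trop_v^*(\theta)$ is given by $\theta = d'd''\psi_0$ for a strictly convex piecewise-linear function $\psi_0$ on $N_\R$ whose recession data matches the moment polytope $P$ of $(X_\Sigma, L)$. Pushing the measure $\mu_v$ forward under $\trop_v$ gives a positive Radon measure $\mu = \trop_{v,*}(\mu_v)$ on $N_\Sigma$ with $\mu(N_\Sigma \setminus N_\R)=0$ by hypothesis, and with total mass $\mu(N_\Sigma) = \mu_v(X_{\Sigma,v}^\an) = \deg_L(X_\Sigma) = n!\,\vol(P)$.

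The heart of the argument is to produce a bounded (continuous, or of finite energy) $\theta$-psh function $\varphi$ on $N_\Sigma$ solving $(d'd''\varphi + \theta)^{\wedge n} = \mu$ on $N_\Sigma$. Writing $\phi = \varphi|_{N_\R} + \psi_0$, this is exactly the real Monge--Ampère equation $\MA(\phi) = \mu$ on $N_\R$ for a convex function $\phi$ with prescribed gradient image $\partial\phi(N_\R) \subseteq P$ — the \emph{second boundary value problem} solved classically by Pogorelov (already cited in the introduction), with the mass balance $\mu(N_\R) = n!\,\vol(P)$ being precisely the necessary and sufficient compatibility condition. From the existence of $\phi$ one recovers $\varphi$ on $N_\R$; that $\varphi$ extends to a continuous (or finite-energy) $\theta$-psh function on the partial compactification $N_\Sigma$ follows from controlling the recession behaviour of $\phi$ against $P$, using the bounded-extension criteria for $\theta$-psh functions on $N_\Sigma$ established earlier in the paper, and in the merely finite-energy case from the description of $\Ecal$-metrics via the non-pluripolar mass condition $\mu(N_\Sigma) = \deg_L(X_\Sigma)$.

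Once $\varphi$ is in hand, I lift it by setting $\metr_v \coloneqq \metr_0 e^{-\varphi_v}$ with $\varphi_v \coloneqq \varphi \circ \trop_v$. By the correspondence theorem (Theorem B and its singular generalization), $\varphi_v$ is a continuous — or finite-energy — $\theta_v$-psh function, so $\metr_v$ is a singular semipositive metric on $L_v^\an$; and by the local Monge--Ampère equivalence \eqref{correspondence of local MA equations} of Proposition \ref{local MA equation}, applied here globally with $w=v$, the equation $(d'd''\varphi + \theta)^{\wedge n} = \mu$ on $N_\Sigma$ implies $c_1(L_v^\an, \metr_v)^{\wedge n} = \mu_v$, the unique Radon measure on $X_{\Sigma,v}^\an$ supported on the skeleton with $\trop_{v,*}(\mu_v) = \mu$. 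The mass identity $c_1(L_v^\an,\metr_v)^{\wedge n}(X_{\Sigma,v}^\an) = \mu(N_\Sigma) = \deg_L(X_\Sigma)$ then shows $\metr_v \in \Ecal(L_v^\an)$, which finishes the proof.

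The main obstacle I anticipate is not the classical real Monge--Ampère solvability — Pogorelov's theorem handles that — but rather the \emph{boundary regularity} needed to guarantee that the solution $\varphi$ actually extends to a \emph{continuous} (rather than merely upper-semicontinuous or finite-energy) $\theta$-psh function on all of $N_\Sigma$, i.e.\ that it lies in the class where Proposition \ref{local MA equation} and the continuous correspondence of Theorem B apply verbatim. This requires matching the asymptotics of $\partial\phi$ to the facets of $P$ precisely enough, and in the degenerate situations one must fall back on the finite-energy formalism of Boucksom--Jonsson recalled above, checking that the non-pluripolar mass is saturated; handling this case uniformly with arbitrary (possibly non-smooth) complete fan $\Sigma$ — via toric resolution of singularities and the projection formula, as in Remark \ref{projection formula for unbounded psh} — is the step demanding the most care.
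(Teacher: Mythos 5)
Your proposal follows essentially the same route as the paper: push $\mu_v$ down to $N_\Sigma$, solve the second boundary problem of Pogorelov (Theorem \ref{thm:6}) to obtain a convex $f$ on $N_\R$ with $\Delta^\circ\subset\Delta(f)\subset\Delta$, extend $\varphi=f+\Psi$ to $N_\Sigma$, and transfer back via the correspondence theorems and Proposition \ref{local MA equation} applied over the dense torus. The boundary-regularity obstacle you flag dissolves exactly along the lines of your own fallback: the paper does not make $\varphi$ finite at the boundary, but only checks via Lemma \ref{lemm:8} and Proposition \ref{prop:4'} that $f+\Psi$ is bounded above so that the $\limsup$-extension is a (possibly $-\infty$-valued) $\theta$-psh function, and then verifies $\varphi\in\Ecal(N_\Sigma,\theta)$ from the non-pluripolar mass condition, using that $\mu$ does not charge $N_\Sigma\setminus N_\R$.
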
 

If $K$ is a discretely or trivially valued field of residue characteristic $0$ and $\mu_v$ is a measure of finite energy, then the above theorem follows from the solution of the non-archimedean Monge--Amp\`ere equation of Boucksom and Jonsson \cite[Theorem B]{bj:singular_metrics}. 
However, there are Radon measure $\mu_v$ as in Theorem \ref{solution of non-arch global toric MAeq in intro} which are not of finite energy and the result holds for any non-archimedean field $K$. 
In Remark \ref{generalization of main global thm}, we give also a natural generalization of Theorem \ref{solution of non-arch global toric MAeq in intro} for $L$ semiample and big.

In the proof of Theorem \ref{solution of non-arch global toric MAeq in intro}, we show first the corresponding statement for $\theta$-psh functions on $N_\Sigma$ which can be done either by referring to the second boundary problem for the real Monge--Amp\`ere operator (see Theorem \ref{thm:6}) or by referring to the solution of the corresponding complex Monge--Amp\`ere equation due to Boucksom, Eyssidieux, Guedi and Zeriahi \cite{boucksometal10:_monge_amper_big}. 
Then the correspondence results for $\theta$-psh functions yield the claim. 
For details, we refer to the proof of Theorem \ref{solution of non-arch global toric MAeq}.

\subsection{Abelian varieties} In Section \ref{section: totally degenerate abelian varieties}, we show that a similar correspondence as in Theorem \ref{main correspondence theorem for theta-psh functions in intro} is possible between $\theta$-psh functions on polarized complex and non-archimedean totally degenerate abelian varieties over any non-archimedean field $K$ with non-trivial valuation $v$. This is a bit surprising as abelian varieties are not defined over $\Spec(\Z)$ in contrast to toric varieties. For simplicity, we assume here in the introduction that $K$ is algebraically closed. 

An  abelian variety $A_v$ over $K$ is called \emph{totally degenerate}
if there is a \emph{non-archimedean Tate uniformization} $A_v^\an
\simeq T_v^\an/\Lambda$ for a multiplicative torus $T$  and a discrete
subgroup $\Lambda$ of $A(K)$. Let $N$ be the cocharacter lattice of
$T$, then it is additionally required that $\trop_v\colon T_v^\an \to N_\R$ maps $\Lambda$  isomorphically onto a complete lattice in $N_\R$ and we will identify $\Lambda$ with this lattice. 
Note that $\Lambda$ can be intrinsically attached to $A_v^\an$ and hence we get a canonical tropicalization map 
\[
\tropbar_v\colon A_v^\an \longrightarrow N_\R/\Lambda
\]
onto the tropical abelian variety $N_\R/\Lambda$.

We consider now a polarization $\lambda$ on the totally degenerate
abelian variety $A_v$ over $K$. This is an isogeny $\phi\colon A_v \to A_v^\vee$ to the dual abelian variety induced by an ample line bundle $L_v$. Let $\T_v^\vee$ be the torus with character lattice $\Lambda$, then we have the Tate uniformization $(A_v^\vee)^\an \simeq 
(\T_v^\vee)^\an/M$ for $M \coloneqq \Hom(N,\Z)$ and so the
lift of $\varphi$ to the Tate uniformizations gives a homomorphism
$\lambda\colon \Lambda \to M$. Using that $M$ is the character lattice of $\T$, set $[\gamma,m] \coloneqq m(\gamma)$ for $\gamma \in \Lambda$ and $m \in M$. We define $\trop_v(A_v,\phi)\coloneqq (\Lambda,M,[\cdot,\cdot],\lambda)$ and 
we will see in \S \ref{subsection: tropicalization of polarized totally degenerate abelian varieties} that the following properties hold:
\begin{enumerate}
	\item \label{lattice axiom}
	$M$ and $\Lambda$ are finitely generated free abelian groups of rank $n$;
	\item \label{pairing axiom}
	the paring $[\cdot,\cdot]$ extends to a non-degenerate real bilinear form $\Lambda_\R \times M_\R \to \R$;
	\item \label{polarization axiom}
	the homomorphism $\lambda\colon  \Lambda \to M$ has finite cokernel of cardinality $d$;
	\item \label{scalar product axiom}
	the bilinear form $[\cdot,\lambda(\cdot)]$ on $\Lambda$ is symmetric and positive definite. 
\end{enumerate}
A tuple $(\Lambda,M,[\cdot,\cdot],\lambda)$ with these four properties is called a \emph{polarized tropical abelian variety of degree $d^2$} as studied in \cite{foster-rabinoff-shokrieh-soto}. By \ref{pairing axiom}, we identify $\Lambda_\R = N_\R$  and the bilinear form in \ref{scalar product axiom} extends to {an euclidean inner product} $b$ on $N_\R$. Similarly as in Riemannian geometry, $b$ induces a smooth positive $(1,1)$-form $\omega$ on  the underlying tropical abelian variety  $N_\R/\Lambda$. For details, we refer to \S \ref{subsection:Polarized tropical abelian varieties} where we study polarized tropical abelian varieties.

Back to the case of a totally degenerate abelian variety $(A_v,\phi)$, we get a canonical smooth $(1,1)$-form $\omega_v \coloneqq \tropbar_v^*(\omega)$ on $A_v^\an$ which agrees with the first Chern form of a canonical metric of the ample line bundle $L_v$.

It is well-known that a complex abelian variety $A_\infty$ has a \emph{complex Tate uniformization} $A_\infty^\an \cong \T_\infty^\an/\Lambda$ for a multiplicative torus $\T_\infty$ over $\C$. Note that $\T_\infty^\an$ is not simply connected.  
 The torus $\T$ and the lattice $\Lambda$ are not intrinsically associated to $A_\infty$. This is in contrast to the classical uniformization $A_\infty^\an \simeq  V/U$ where $V$ is the tangent space of $A_\infty^\an$ at $0$ and $U$ is the complete lattice $H_1(A_\infty^\an,\Z)$ in $V$. 
 
We note that a polarization on a complex abelian variety $A_\infty$ is given by a Riemann form $H$, i.e.~a positive definite sesquilinear form $H$ on  $V$ such that the alternate form  $E = \im(H)$ takes values in $\Z$ on the lattice $U$. A symplectic basis for $E$ in $U$ generates isotropic sublattices $U_1$ and $U_2$ of rank $n$ with $U = U_1 \oplus U_2$. We will see in \S \ref{subsection: tropicalization of polarized complex abelian varieties} that 
we have a complex Tate uniformization with cocharacter lattice $N \simeq U_1$ of the torus $\T_\infty$ and with $\Lambda \simeq U_2$. Similarly as in the non-archimedean case, we can associate to $(A_\infty,H,U_1,U_2)$ a canonical polarized tropical abelian variety $\trop_\infty(A_\infty,H,U_1,U_2) = (\Lambda,M,[\cdot,\cdot],\lambda)$
and a canonical tropicalization map $\tropbar_\infty\colon A_\infty^\an \longrightarrow N_\R/\Lambda$.
For the canonical $(1,1)$-form $\omega$ of $\trop_\infty(A_\infty,H,U_1,U_2)$, we note that $\omega_\infty \coloneqq \tropbar_\infty^*(\omega)$ is the canonical K\"ahler form of the polarization $H$ of $A_\infty$.

For any polarized totally degenerate abelian variety $(A_v,\phi)$ over $K$, there is a corresponding decomposed complex polarized abelian variety $(A_\infty,H,U_1,U_2)$ with 
$$\trop_\infty(A_\infty,H,U_1,U_2) = (\Lambda,M,[\cdot,\cdot],\lambda)=\trop_v(A_v,\phi).$$
We will see this in  \S \ref{subsection: totally degenerate and complex abelian varieties}. For $w \in \{\infty,v\}$, the maximal compact torus $\SS_w$ in $\T_w^\an$ acts on $A_w^\an$ through the Tate uniformization. 
We will prove the following analogue of Theorem \ref{main correspondence theorem for theta-psh functions in intro} for corresponding polarized complex, tropical and totally degenerate abelian varieties:

\begin{theointro} \label{main correspondence theorem for theta-psh functions on av in intro}
	For any open subset $W$ of the tropical abelian variety $N_\R/\Lambda$,  there are canonical isomorphisms between the following cones:
	\begin{enumerate}
		\item the cone of continuous $\SS_\infty$-invariant $\omega_\infty$-psh functions $\varphi_\infty$ on ${\tropbar}_\infty^{-1}(W)$;
		\item the cone of  $\omega$-psh functions on $W$;
		\item the cone of continuous $\SS_v$-invariant $\theta_v$-psh functions $\varphi_\infty$ on ${\tropbar}_v^{-1}(W)$.
	\end{enumerate}
	For $w 	\in \{\infty, v\}$, the correspondence is given by $\varphi \mapsto \varphi_w \coloneqq \varphi \circ \tropbar_w$. 
\end{theointro}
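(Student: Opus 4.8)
The plan is to reduce Theorem~\ref{main correspondence theorem for theta-psh functions on av in intro} to the local correspondence of Theorem~\ref{main correspondence theorem for theta-psh functions in intro}, exploiting that all three spaces involved are uniformized by toric objects. Recall that $(A_v,\phi)$ has a Tate uniformization $A_v^\an \simeq \T_v^\an/\Lambda$, that $(A_\infty,H,U_1,U_2)$ has a complex Tate uniformization $A_\infty^\an \simeq \T_\infty^\an/\Lambda$ with cocharacter lattice $N \simeq U_1$, and that on the tropical side $N_\R/\Lambda$ is the quotient of $N_\R$ by the translation action of $\Lambda$. For $w \in \{\infty,v\}$ there is a commutative diagram relating $\trop_w\colon \T_w^\an \to N_\R$ with $\tropbar_w\colon A_w^\an \to N_\R/\Lambda$, so that $\tropbar_w$ is, locally over any sufficiently small open $W \subseteq N_\R/\Lambda$, identified with $\trop_w$ restricted to a preimage. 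Since $\omega = \tropbar_w^*(\text{nothing})$ — rather, $\omega_w = \tropbar_w^*(\omega)$ arises exactly from the smooth positive $(1,1)$-form $\omega$ on $N_\R/\Lambda$ induced by the euclidean inner product $b$ — the form $\omega$ is locally $d'd''$ of a smooth convex function $g$ on $N_\R$ (a quadratic form for $b$), so locally the $\omega$-psh condition on $W$ is literally the psh condition for $\varphi + g$ on the open subset of $N_\R$, and likewise $\omega_w$-psh functions pull back to $\theta_w$-type psh data on the toric uniformizing space with $\theta_w$ replaced by $d'd''(g \circ \trop_w)$, which is the first Chern form of the canonical metric on $L_w$.

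First I would make precise the local structure: cover $N_\R/\Lambda$ by opens $W_i$ small enough that the quotient map $N_\R \to N_\R/\Lambda$ splits over $W_i$ and that $\omega|_{W_i} = d'd''g_i$ for a smooth $g_i$. Then on $\tropbar_w^{-1}(W_i)$ the $\SS_w$-invariant $\omega_w$-psh functions correspond, via the toric uniformization and the already-established Theorem~\ref{main correspondence theorem for theta-psh functions in intro} applied with the local potential $g_i$ (equivalently with the trivial fan $\Sigma$ consisting of $N_\R$ itself, a smooth fan), bijectively to $\omega$-psh functions on $W_i$, and this correspondence is $\varphi \mapsto \varphi \circ \tropbar_w$. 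The second step is to glue: the maps $\varphi \mapsto \varphi \circ \tropbar_w$ are manifestly compatible with restriction to smaller opens, and the correspondence is local in the target, so the local bijections on the $W_i$ patch together to a bijection over all of $W$; one checks the cocycle-type compatibility on overlaps $W_i \cap W_j$, which is automatic because on an overlap $g_i - g_j$ is a smooth function with $d'd''(g_i-g_j)=0$, i.e.\ affine on each face, so it does not affect the psh condition and the two local identifications agree. The cone structure (closure under sum, positive scalars, and the fact that the bijections are $\R$-linear up to the additive $g_i$) is preserved because the toric correspondence in Theorem~\ref{main correspondence theorem for theta-psh functions in intro} is.

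The one genuinely new input, and the step I expect to be the main obstacle, is verifying that $\omega$ is, locally on $N_\R/\Lambda$, of the form $d'd''g$ for a smooth (strongly) convex $g$, and that $\tropbar_w^*\omega$ really is the first Chern form of the canonical metric on $L_w$ — i.e.\ matching up the Riemann-form data $(H,U_1,U_2)$ in the complex case and the Tate polarization data $(\Lambda,M,[\cdot,\cdot],\lambda)$ in the non-archimedean case with the euclidean form $b$ on $N_\R$, so that all three objects carry literally the same local convex potential. In the complex case this is the standard computation expressing the canonical K\"ahler form of $H$ in the Tate coordinates coming from the symplectic splitting $U = U_1 \oplus U_2$; in the non-archimedean case it is the corresponding computation with the canonical metric on $L_v$, as in \cite{bfj-solution} or Liu's work \cite{liu2011}. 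Once the identification of potentials is in place, continuity of the corresponding functions — which is automatic on the tropical side by Theorem~\ref{thm:1}, and which transfers through $\tropbar_w$ because $\tropbar_w$ is continuous proper surjective — together with the gluing, completes the argument. I would finally remark that, exactly as stated for Theorem~\ref{main correspondence theorem for theta-psh functions in intro}, the use of complex manifolds (rather than singular analytic spaces) is harmless here since totally degenerate abelian varieties and their complex counterparts are smooth.
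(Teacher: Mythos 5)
Your proposal is correct and follows essentially the same route as the paper: the paper proves this (as Proposition \ref{lambda-invariance-cor}, via \ref{finiteness-remark} and \ref{theta-psh functions on abelian varieties}) by applying the toric correspondence of Theorems \ref{main comparison thm of psh} and \ref{main correspondence theorem for semipositiv in non-arch} with the trivial fan, $\Psi=0$ and the global quadratic Green function $g_0(u)=\tfrac12 b(u,u)$ on the $\Lambda$-invariant lift $U=\pi^{-1}(W)$, and then descends by $\Lambda$-equivariance — which is the global form of your local-lift-and-glue argument, and exactly what the introduction sketches. The ingredient you flag as the main obstacle (that $\omega$ has the smooth convex local potential coming from $b$, and that $\tropbar_w^*\omega$ matches the canonical metric data) is already supplied by \ref{canonical form for tropical polarized abelian variety}, Proposition \ref{example-canonical-archimedean-metric} and Definition \ref{definition canonical form on totally degenerate abelian variety}, so nothing further is needed.
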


This follows from Theorem \ref{main correspondence theorem for theta-psh functions in intro} using that $W$ can be locally lifted to an isomorphic open subset of $N_\R$.  
Let $\mu$ be any positive Radon measure on the tropical abelian variety $N/\Lambda$ with polarization $(\Lambda,M,[\cdot,\cdot],\lambda)$ of degree $d^2$. Assuming the necessary condition $\mu(N_\R/\Lambda)= n! \cdot \sqrt{d}$, we will see in Corollary \ref{non-arch-ma-tot-deg-av'} that the tropical Monge--Amp\`ere equation
\begin{equation} \label{tropical MA-equation on abelian variety in intro}
(\omega + d'd''\varphi)^{\wedge n}=\mu
\end{equation} 
has an $\omega$-psh function $\varphi\colon N_\R \to \R$ as a solution, unique up to adding constants. Using Theorem \ref{main correspondence theorem for theta-psh functions in intro}, this follows from the corresponding case in complex K\"ahler geometry. 

Section \ref{section: short variant} is dedicated to solve the invariant Monge--Amp\`ere equation for an arbitrary abelian variety $A$ over any non-trivially valued non-archimedean field $K$ with non-trivial valuation. 
Using Raynaud extensions, Berkovich \cite[\S 6.5]{berkovich-book} has introduced a \emph{canonical skeleton} as a compact subset of $A^\an$ which is a strong deformation retraction.

\begin{theointro} \label{solution of non-archimedean MAeqn for abelian varieties in intro}
Let $L$ be an ample line bundle on any abelian variety $A$ over the non-trivially valued non-archimedean field $K$. Let $\mu$ be a positive Radon measure on $A^\an$ supported in the canonical skeleton of $A$ and with $\mu(A^\an)	= \deg_L(A)$. Then there is a continuous semipositive metric $\metr$ on $L^\an$,  unique up to scaling, with
\[
c_1(L,\metr)^{\wedge \dim(A)}= \mu.
\]
\end{theointro}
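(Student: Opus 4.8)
The plan is to reduce the general abelian variety case to the totally degenerate one by decomposing $A$ via its Raynaud extension, and then to combine the totally degenerate result (which follows from Theorem E and Corollary~\ref{non-arch-ma-tot-deg-av'}) with the structure of the canonical skeleton. First I would recall Berkovich's description: associated with $A$ there is a short exact sequence $0 \to T \to E \to B \to 0$ of analytic groups, where $T$ is an analytic torus of some dimension $t$, $B^\an$ is the analytification of an abelian variety $B$ with good reduction, and $A^\an \simeq E^\an/\Lambda'$ for a discrete lattice $\Lambda'$ of rank $t$. The canonical skeleton $S(A)$ is the preimage under $A^\an \to B^\an$ of the Shilov point (the reduction point) of $B^\an$, and it is canonically isomorphic to a real torus $N_\R/\Lambda$ of dimension $t$; the retraction $A^\an \to S(A)$ is, after this identification, the tropicalization map $\tropbar_v$ on the torus part. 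Crucially, the measure $\mu$ lives on $S(A) = N_\R/\Lambda$.

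Next I would analyze how the ample line bundle $L$ interacts with this decomposition. Up to translation and replacing $L$ by a symmetric power, one reduces to the case where $L$ is symmetric; then the polarization $\phi_L\colon A \to A^\vee$ and the corresponding pairing data can be read off on the skeleton. The key point is that the restriction of the curvature current to the skeleton is governed by a polarized tropical abelian variety structure $(\Lambda, M, [\cdot,\cdot], \lambda)$ on $N_\R/\Lambda$ as in the totally degenerate case — but now of dimension $t \leq n = \dim A$, with the remaining $n-t$ dimensions coming from the good-reduction part $B$. On the skeleton, $c_1(L,\metr_0)^{\wedge n}$ for the canonical model metric $\metr_0$ restricts, up to a combinatorial constant involving $\deg_{L|_B}(B)$ and the index of $\lambda$, to (a multiple of) the Haar measure; more generally, for a semipositive metric $\metr = e^{-\varphi}\metr_0$ with $\varphi = \varphi_v$ the pullback of a continuous function on $N_\R/\Lambda$, the top self-intersection is supported on $S(A)$ and equals the tropical Monge–Ampère measure $(\omega + d'd''\varphi)^{\wedge t}$ wedged with the fixed smooth form from $B$, which after integrating out the $B$-directions is just a constant times $(\omega + d'd''\varphi)^{\wedge t}$ on $N_\R/\Lambda$. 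This uses that $\mu$ is supported on $S(A)$: we only need to solve the equation "in the skeleton directions".

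I would then apply the tropical existence result: since $\mu$ is a positive Radon measure on $N_\R/\Lambda$ with the correct total mass $\mu(A^\an) = \deg_L(A)$, which translates into the mass condition $\mu(N_\R/\Lambda) = t!\,\sqrt{d} \cdot (\text{const from } B)$ needed in \eqref{tropical MA-equation on abelian variety in intro}, Corollary~\ref{non-arch-ma-tot-deg-av'} (via Theorem~\ref{main correspondence theorem for theta-psh functions on av in intro}) furnishes an $\omega$-psh function $\varphi$ on $N_\R/\Lambda$ solving $(\omega + d'd''\varphi)^{\wedge t} = (\text{const})^{-1}\mu$. Pulling $\varphi$ back by $\tropbar_v$ (composed with the retraction $A^\an \to S(A)$) and using the correspondence that $\tropbar_v^*\varphi$ is a continuous $\omega_v$-psh function, we obtain a continuous semipositive metric $\metr = e^{-\tropbar_v^*\varphi}\metr_0$ on $L^\an$. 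The local Monge–Ampère comparison — that $c_1(L^\an,\metr)^{\wedge n}$ equals the pushforward of the tropical Monge–Ampère measure — then gives $c_1(L^\an,\metr)^{\wedge n} = \mu$. Uniqueness up to scaling is exactly the Yuan–Zhang theorem \cite{yuan-zhang} quoted in the introduction.

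The main obstacle will be the second step: making precise the claim that on the mixed Raynaud extension the top self-intersection of a semipositive metric pulled back from the skeleton decomposes as a product of the good-reduction contribution from $B$ and the tropical Monge–Ampère measure in the torus directions. This requires (i) a careful choice of the canonical model metric $\metr_0$ adapted to the Raynaud extension, (ii) verifying that semipositive metrics on $L^\an$ whose associated function factors through the retraction correspond, on the Tate uniformization $E^\an$, to $\SS_v$-invariant psh functions in a way compatible with the torus fibration $E^\an \to B^\an$, and (iii) a Fubini-type argument for non-archimedean Monge–Ampère measures on the fibration — most cleanly handled by passing to a finite base change so that $A$ acquires semistable (or even totally degenerate up to the $B$-part) reduction, applying the already-established invariance and base-change properties of Chambert-Loir measures, and then descending. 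I expect that the factorization of the Monge–Ampère measure along $E^\an \to B^\an$, rather than the tropical solvability, is where the real work lies.
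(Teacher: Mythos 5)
Your proposal follows essentially the same route as the paper: Raynaud extension, identification of the canonical skeleton with a tropical abelian variety $N_\R/\Lambda$ whose dimension is the torus rank $t$, solution of the tropical Monge--Amp\`ere equation there after rescaling the mass by the constant $\binom{\dim A}{\dim A-t}\deg_H(B)$, pullback of the potential along $\tropbar_v$, Yuan--Zhang for uniqueness, and descent along a finite Galois splitting extension via the norm of metrics. You also correctly locate the factorization of the Monge--Amp\`ere measure along $E^\an\to B^\an$ as the crux; the paper establishes it as a degree formula for the irreducible components $Y_\omega$ of a Mumford model, $\deg_{q^*\Hcal(f)}(Y_\omega)=\tfrac{(\dim A)!}{(\dim A-t)!}\,\deg_H(B)\,\vol(\{\omega\}^f)$, proved by a projection formula along the toric fibration $Y_\omega\to\Bcal_s$. (A side remark: the skeleton is not the full preimage of the Shilov point of $B^\an$ under $q$ --- it is only a section-image sitting inside that fiber, and there is no induced map $A^\an\to B^\an$ in general --- but nothing in your argument rests on this.)

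The one step you pass over too quickly is semipositivity. Knowing that $\tropbar_v^*\varphi$ is a continuous $\omega_v$-psh function does not by itself give that $e^{-\tropbar_v^*\varphi}\metr_0$ is semipositive in Zhang's sense, i.e.\ a uniform limit of nef model metrics; yet that is what the theorem asserts and what is needed to invoke the Yuan--Zhang uniqueness theorem. The correspondence theorems of the paper deliver this equivalence only for complete toric varieties, not for an abelian variety with a nontrivial good-reduction part. The paper closes the gap by writing the convex potential as $f=\tilde\varphi+Q$ for the canonical cocycle quadratic function $Q$, approximating $f$ uniformly by $\Lambda$-periodic, piecewise $\Gamma$-affine convex functions with rational slopes satisfying the same automorphy condition (a nontrivial periodic regularization, since the naive local approximants must be glued into a locally finite $\Lambda$-equivariant supremum), and checking nefness of the induced line bundles on the components of the resulting Mumford models. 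The same approximation then computes the Monge--Amp\`ere measure of the approximants as atomic measures at the vertices with the weights above, and yields your ``Fubini'' factorization in the limit --- so the two difficulties you treat separately (semipositivity and the factorization) are in fact resolved by one and the same construction, which is where the real work of the proof lies.
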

Uniqueness follows from a much more general result of Yuan and Zhang. Existence was proven for totally degenerate abelian varieties by Liu \cite{liu2011} in case of such a measure $\mu$ with smooth density proving that the solution is a smooth metric. 
If $A$ has good reduction, then the canonical skeleton is a single point and the solution is the canonical metric on $L^\an$.

In \S \ref{subsection: descent}, we give a descent argument which reduces Theorem \ref{solution of non-archimedean MAeqn for abelian varieties in intro} to algebraically closed fields $K$ and for such fields we give the argument in \S \ref{short subsection: Raynaud uniformization and canonical tropicalization}--\S \ref{short subsection: MA equations on abelian varieties}. For existence, we use that a corresponding tropical Monge--Amp\`ere equation as in \eqref{tropical MA-equation on abelian variety in intro} has a solution. Then we lift the solution to $A^\an$ along the canonical tropicalization map and 
  a similar approximation argument as in \cite{gubler-compositio} shows semipositivity.

\subsection{Notation and conventions} \label{section: notation} 
\addtocontents{toc}{\protect\setcounter{tocdepth}{1}}

The set of natural numbers $\N$ includes $0$. For any $t \in \R$, let $\R_{\geq t} \coloneqq \{r \in \R \mid r \geq t\}$. 
We write $\Rsup =\R \cup \{ \infty \}$ and $\Rinf  =\R \cup \{ -\infty \}$. 
 A \emph{lattice} is a free $\Z$-module of finite rank. A \emph{lattice} in a finite dimensional real vector space is a discrete subgroup. 
 For a ring $A$, the group of invertible elements is denoted by $A^\times$.
 A \emph{variety} over a field $F$ is an integral scheme which is of finite type and separated over $\Spec\, F$.

In this paper, we denote by $K$ a \emph{non-archimedean field} which is
 a field $K$  complete with respect to
a given ultrametric absolute value $|\phantom{a}|\colon K\to \R_{\geq 0}$. The  \emph{valuation} is $v \coloneqq -\log|\phantom{a}|$ and $\Gamma \coloneqq v(K^\times)$ is the \emph{value group}. We have the \emph{valuation ring} $K^\circ \coloneqq \{\alpha \in K \mid v(\alpha)\geq 0\}$ with maximal ideal $K^{\circ\circ}\coloneqq \{\alpha \in K \mid v(\alpha)> 0\}$ and \emph{residue field} $\ktilde \coloneqq \kcirc/K^{\circ \circ}$. 
If $Y$ is a variety over $K$, then $\Yan$ denotes the analytification of $Y$ as a Berkovich analytic space.

Let $N$ be a free abelian group of rank $n$, $M=\Hom_\Z(N,\Z)$ its dual and denote by $N_\R$ resp.~$M_\R$ the respective scalar extensions to $\R$. 
We also fix a subgroup $\Gamma$ of $\R$, usually the value group of a non-archimedean field. 
A function $f\colon N_\R \to \R$ is called \emph{affine} if $f=u+c$ for some $u \in M_\R$ and $c \in \R$. 
We call $u$ the \emph{slope} of $f$  and \emph{integral slope} means $u \in M$.  
We say that $f$ is \emph{integral $\Gamma$-affine} if $u \in M$ and $c \in \Gamma$. 

A \emph{polyhedron in $N_\R$} is  a finite intersection of half-planes $\{f \leq 0\}$ for affine functions $f$ on $N_\R$. A \emph{polytope} is a bounded polyhedron. A polyhedron is \emph{integral $\Gamma$-affine} if the  affine functions $f$ can be chosen integral $\Gamma$-affine. The \emph{relative interior} of a polyhedron  $\sigma$ is denoted by $\relint(\sigma)$. We use $\Delta^\circ$ for the \emph{interior of $\Delta$ in $N_\R$}.
A \emph{face} of a polyhedron $\sigma$ is  the intersection of $\sigma$ with the boundary of a half-space containing $\sigma$. 
We also allow $\sigma$ and $\emptyset$ as faces of $\sigma$. 
{We write $\tau\prec\sigma$ if $\tau$ is a face of a polyhedron $\sigma$.}
The \emph{open faces} of $\sigma$ are the relative interiors of the faces of $\sigma$. 
A \emph{polyhedral complex} in $N_\R$ is a finite set $\Pi$ of polyhedra in $N_\R$ such that for $\sigma\in \Pi$ all faces of $\sigma$ are in $\Pi$ and for another $\rho \in \Pi$ we have that $\sigma \cap \rho$ is a common face of $\sigma$ and $\rho$. The \emph{support} of $\Pi$ is defined by  $|\Pi| \coloneqq \bigcup_{\sigma \in \Pi} \sigma$.

We mean by a \emph{fan} a finite polyhedral complex $\Sigma$ in $N_\R$ consisting of strictly convex polyhedral rational cones. We denote by $X_\Sigma$ the corresponding toric variety with dense torus $\torus$ and by $N_\Sigma$ the corresponding partial compactification of $N_\R$. We refer to \cite{burgos-gubler-jell-kuennemann1} for details about $N_\Sigma$ which we often call the \emph{tropical toric variety associated to $\Sigma$}.

A \emph{piecewise affine function $f$} is usually defined on a finite union $P$ of polyhedra $\sigma$ such that $f|_\sigma$ is affine. There is always a polyhedral complex $\Pi$ with $P=|\Pi|$ such that $f|_\sigma$ is affine for every $\sigma \in \Pi$. Then we say that $f$ is  \emph{piecewise affine with respect to $\Pi$}. If we may choose $\Pi$ as a fan $\Sigma$  and if $f|_\sigma \in M_\R$ for all $\sigma \in \Sigma$, then $f$ is called \emph{piecewise linear}. 

We refer to \cite[Appendix A]{burgos-gubler-jell-kuennemann1}
for our conventions about Radon measures. We recall the following useful fact for a locally compact Hausdorff space $Y$ with a continuous  action by a compact group $G$. 
Let $\pi\colon Y \to X \coloneqq Y/G$ be the quotient which is also a locally compact Hausdorff space. 
Then for any positive Radon measure $\mu_X$ on $X$, there is a unique $G$-invariant positive Radon measure $\mu_Y$ on $Y$ with image measure  $\pi(\mu_Y)= \mu_X$. 
This follows  by averaging compactly supported continuous functions with respect to the probability Haar measure on $G$ and then using the Riesz representation theorem, see \cite[chap.~VII, \S 2]{bourbaki-integration-7-8}.

\subsection*{Acknowledgements}
We are grateful to Ana Botero, Yanbo Fang, Roberto Gualdi, Mattias Jonsson, Yifeng Liu, and César Martínez for their comments on a previous version.

\section{Forms and currents}
\label{sec:posit-real-compl}
In this section we fix notation for forms and currents in the complex,
tropical and non-archimedean setting.
We also introduce and discuss positivity of forms and currents. 
In the toric setting, we relate complex (resp.~non-archimedean) forms
and currents to Lagerberg forms and currents on the associated
tropical variety.

\subsection{The complex situation} \label{section: complex situation}

Let $X$ be a connected complex manifold of dimension $n$. 
We denote by $A=A_X$ the sheaf of complex smooth differential forms on
$X$, which is a bigraded differential sheaf of complex algebras with
respect to the differential operators $\dell$ and $\dellbar$. 
We will use also the differential operators $d=\dell + \dellbar$ and 
$d^c= \frac{1}{2 \pi i}(\dell - \dellbar)$ on $A$. 
Note that $dd^c = \frac{i}{\pi}\dell \dellbar$ and our definition of $d^c$ follows the convention from pluripotential theory as in \cite{demailly_agbook_2012}.
Observe that our $d^c$ is two times the $d^c$-operator used in 
Arakelov theory \cite{gillet-soule1990, soule-etal1992}.

Dually, we have the  sheaf of complex currents on $X$ which we denote
by $D$. The bigrading of $D^{p,q}$ is made in such a way that  
a locally integrable form  $\omega$ of bidegree $(p,q)$ on an open
subset $V$ of $X$ gives rise to an associated current $[\omega] \in
D^{p,q}(V)$ by
\[
[\omega]\colon A_{c}^{n-p,n-q}(V) \longrightarrow \R , \quad
\alpha \mapsto [\omega](\alpha) \coloneqq \int_V \omega \wedge \alpha
\]
where $A_c(V)$ denotes the {space}  of compactly
supported smooth differential forms as usual.

A differential form  $\omega$ on $V$ of bidegree $(p,p)$ is called
\emph{positive}, if for some $m \in \N$, there exist non-negative functions $f_j\colon V \to \R$ and $(p,0)$-forms $\alpha_j$ for $j=1, \ldots, m$, such that
\[ 
\omega = \sum_{j=1}^m i^{p^2} f_j \alpha_j \wedge \overline \alpha_j.
\]
Here, we differ from the terminology used in
\cite[III.1.A]{demailly_agbook_2012},
where the weakly positive forms from 
\cite{burgos-gubler-jell-kuennemann1} are called positive.
We observe that weakly positive forms are always positive
if $p\in\{0,1,n-1,n\}$ and
refer to \cite[Proposition 2.3.5]{burgos-gubler-jell-kuennemann1} for
an explanation, why our positivity notation is more suitable for
comparing with Lagerberg forms.
A current $T \in D^{p,p}(V)$ is called \emph{positive}, if $T(\omega)
\geq  0$ for all positive forms $\omega \in A_c^{n-p,n-p}(V)$.

A function $\varphi\colon V\to \Rinf$  is called \emph{strongly upper semicontinuous} if  for every {null set} $E\subset V$  and every  $a\in V$, we have
\begin{equation} \label{def:2'}
\varphi(a) =  \limsup_{\substack{x \to a, \,{x \notin E}}} \varphi(x).
\end{equation}

We refer to \cite[I.5]{demailly_agbook_2012} for the definition and the study of \emph{plurisubharmonic functions} (psh functions for short). 
For $V$ connected, a function $\varphi\colon V \to \Rinf$, not identically $-\infty$, is psh if and only if $\varphi$ is strongly usc, locally integrable and ${dd^{c}} [\varphi]$ is a positive current \cite[Th\'eor\`eme II.3]{lelong68:psh}.
Bedford--Taylor theory shows that for locally bounded psh functions $v_1, \ldots, v_q$ on $V$ and a closed positive current $T \in D^{p,p}(V)$ on $V$, there is a unique way to define a positive current
\begin{equation} \label{first Bedford-Taylor product}
v_1 dd^c v_{2} \wedge \ldots \wedge 
dd^c v_{q}\wedge T \in D^{p+q-1,p+q-1}(V)
\end{equation}
and\ a closed positive current 
\begin{equation} \label{second Bedford-Taylor product}
dd^c v_{1} \wedge \ldots \wedge 
dd^c v_{q}\wedge T   \in D^{p+q,p+q}(V)  
\end{equation}
such that the products \eqref{first Bedford-Taylor product} and \eqref{second Bedford-Taylor product} agree with the usual products in the case of smooth psh functions $v_1, \ldots, v_q$ and such that these products are continuous along monotone decreasing sequences with respect to weak convergence of currents. For details and a generalization to the unbounded case, we refer to \cite[Sections III.3-4]{demailly_agbook_2012}.

\subsection{The tropical situation} \label{section: tropical situation}
We recall some facts from \cite{burgos-gubler-jell-kuennemann1}. 
Let $N$ be a a free abelian group of rank $n$ with dual $M=\Hom_\Z(N,\Z)$ and let $N_\R \coloneqq N \otimes_\Z\R$. 
We consider a fan $\Sigma$ in $N_\R$ and the \emph{associated partial compactification $N_\Sigma$} of $N_\R$.
The partial compactification $N_\Sigma$ is also called \emph{tropical toric variety} and has a natural stratification
\[
N_\Sigma = \coprod_{\sigma \in \Sigma} N(\sigma)
\] 
with $N(\sigma) \coloneqq N_\R/\langle \sigma \rangle_\R$ \cite[Section 3.1]{burgos-gubler-jell-kuennemann1}.
The toric variety $X_\Sigma$  comes with a continuous tropicalization map
\[
\trop_\infty\colon X_{\Sigma,\infty}^{\rm an} \longrightarrow N_\Sigma
\]
where $X_{\Sigma,\infty}^{\rm an}$ is the complex analytification of $X_\Sigma$. 
We denote by $A=A_{N_\Sigma}$ the sheaf of smooth Lagerberg forms on $N_\Sigma$. 
It is a  bigraded sheaf of real algebras with  differentials  $d',d''$. 
We have the Lagerberg involution $J$ which sends $A^{p,q}$ to $A^{q,p}$. 
Similarly as in the complex case, we have a bigraded sheaf $D$ of Lagerberg currents on $N_\Sigma$ with differentials $d',d''$ 
\cite[Section 3.2]{burgos-gubler-jell-kuennemann1}. 
By integration again, any locally integrable Lagerberg form $\omega$ has an associated Lagerberg current $[\omega]$. 
Note that integration on $N_\R$ depends on the underlying lattice $N$ in $N_\R$. 
Sections of $A^{0,0}$ are called \emph{smooth functions}.

A Lagerberg form $\omega$ of bidegree $(p,p)$ on the open subset $U$
of $N_\Sigma$ is called \emph{positive}, if for some $m \in \N$ there
exist functions $f_j\colon U \to \R_{\geq 0}$ and $(p,0)$-forms
$\alpha_j$ on $U$ for $j=1, \ldots, m$ such that
\[
\omega = \sum_{j=1}^m (-1)^{\frac{p(p-1)}{2}}f_j\alpha_j \land J(
\alpha_j ).
\]
We call $T \in D^{p,p}(U)$ \emph{symmetric} if $T(J\omega)=(-1)^qT(\omega)$ for all $\omega \in A_c^{q,q}(U)$ where $q \coloneqq n-p$. A \emph{positive Lagerberg current}  is a symmetric $T \in D^{p,p}(U)$ with $T(\omega) \geq 0$ for all positive $\omega \in A_c^{q,q}(U)$.

\begin{rem} \label{same top dim forms}\label{locally integrable gives current}
Let $(p,q)$ be with $\max(p,q) = n$ and let $U \subset N_\Sigma$ be open. 
By definition of the forms on $U$, they are constant towards the boundary and hence any $\omega \in A^{p,q}(U)$ has support in $U \cap N_\R$. 
In particular, the inclusion $\AS^{p,q}_c(U \cap N_\R) \to \AS^{p,q}_c(U)$ is an isomorphism. 
We conclude that the dual restriction map $D^{n-p,n-q}(U) \to
D^{n-p,n-q}(U \cap N_\R)$ is  an isomorphism.  
For a locally integrable form $\eta \in A^{n-p,n-q}(U \cap N_\R)$, we get  
$[\eta] \in D^{n-p,n-q}(U \cap N_\R)=D^{n-p,n-q}(U)$. 
In particular, a locally integrable function $f$ on $U \cap N_\R$
  defines a current $[f]\in D^{0,0}(U)$ on the whole $U$.
\end{rem}

\begin{rem} \label{tropical spaces}
The partial compactification $N_\Sigma$ is a special case of a \emph{polyhedral space}. 
We refer to \cite[\S 2]{jell-shaw-smacka2015} for
definitions and for an introduction of the sheaf $A$ of smooth
Lagerberg forms on polyhedral spaces.
Integration of compactly supported top dimensional forms is well-defined on rational weighted polyhedral spaces.  
A special case of a rational weighted polyhedral space is a \emph{tropical space} where  the weights satisfy a balancing condition. 
\end{rem}

\begin{rem}\label{toric morphism motivation}
  Let $N'$ be a free abelian group of finite rank and let $\Sigma'$ be
  a fan in $N_\R'$.  Recall that a morphism
  $\psi \colon X_{\Sigma'} \to X_{\Sigma}$ is called \emph{toric} if
  $\psi$ restricts to a homomorphism on the dense tori and if $\psi$
  is equivariant with respect to the torus actions.  Toric morphisms
  $ X_{\Sigma'} \to X_{\Sigma}$ are in bijective correspondence to
  group homomorphisms $N' \to N$, which map every cone of $\Sigma'$
  into a cone of $\Sigma$ \cite[Theorem 3.3.4]{cox-little-schenck}.

  More generally, let $L\colon N' \to N$ be a homomorphism of
  cocharacter lattices inducing the homomorphism
  $\rho\colon \torus' \to \torus$ of associated split complex tori.
  We recall from \cite[\S 3.2]{bps-asterisque} that a
  \emph{$\rho$-equivariant morphism
    $\psi\colon X_{\Sigma'} \to X_\Sigma$} of
  $\torus'$-(resp.~$\torus$-)toric varieties is given by the
  composition of a translation by an element in $\torus'$ with a toric
  morphism to a stratum of $X_\Sigma$.  Clearly, $\rho$-equivariant
  toric morphisms $\psi\colon X_{\Sigma'} \to X_\Sigma$ are in
  bijective correspondence to $L$-equivariant morphisms of tropical
  toric varieties $E\colon {N'_{\Sigma'}}\to N_\Sigma$ as defined below.
\end{rem}

\begin{definition} \label{new morphism of toric tropical varieties}
  Let $N'$ be a free abelian group of finite rank and let $\Sigma'$ be
  a fan in $N_\R'$.  Let $L\colon N' \to N$ be a homomorphism of
  abelian groups.  A map $E\colon {N'_{\Sigma'}}\to N_\Sigma$ is
  called \emph{$L$-equivariant morphism of tropical toric varieties
    (or of partial compactifications)}, if the map $E$ is continuous
  and $L_\R$-equivariant.
\end{definition}

\begin{rem}\label{remark-toric-morphism}
Let $E\colon {N'_{\Sigma'}}\to N_\Sigma$ be an $L$-equivariant
morphism of tropical toric varieties.
\begin{enumerate}
\item\label{remark-toric-morphism:1}
The map $E$ is a morphism of polyhedral spaces by \cite[\S 2.1.4]{jell-thesis}. 
Hence we have a pull back $E^* \colon A^{p,q}(U') \to A^{p,q}(U)$ for every open subset $U$ of $N_{\Sigma}$ and $U' \coloneqq {E}^{-1}(U)$. 
\item\label{remark-toric-morphism:2}
The map $E$ is proper (in the sense of topological spaces), if and only if
the preimage of $|\Sigma|$ with respect to $L_\R\colon N_{\R}'\to N_{\R}$ 
equals $|\Sigma'|$.
Observe that this is also equivalent to properness of 
the corresponding $\rho$-equivariant morphism $\psi \colon X_{\Sigma'} \to X_{\Sigma}$ from Remark \ref{toric morphism motivation}
\cite[Theorem 3.4.11]{cox-little-schenck}.
\item\label{remark-toric-morphism:3} 
Assume that $E$ is proper. 
Then we get a linear map $E^* \colon A^{p,q}_c(U) \to A^{p,q}_c(U')$ which is continuous with respect to the locally convex topologies defined in \cite[3.2.4]{burgos-gubler-jell-kuennemann1}. 
By duality, we get a linear map $E_* \colon D^{p, q}(U') \to D^{p+m-n, q+m-n}(U)$ which is continuous with respect to the weak topology of currents.  
\item\label{remark-toric-morphism:4}
Pull back of forms and push forward of currents along $E$ respect positivity.
\end{enumerate}
\end{rem}

\begin{art} \label{comparison with complex forms} We assume now that
  $\Sigma$ is a smooth fan.  Let $U$ be an open subset of $N_\Sigma$
  and let $V \coloneqq \trop_\infty^{-1}(U)$ be the corresponding
  $\SS$-invariant open subset of the complex toric manifold
  $X_{\Sigma,\infty}^{\rm an}$ where $\SS=\SS_\infty$ denotes the
  maximal compact torus in the dense orbit $\T_\infty^\an$ of
  $X_{\Sigma,\infty}^{\rm an}$.  There is a unique homomorphism
  \cite[Proposition 4.1.7]{burgos-gubler-jell-kuennemann1}
\[
\trop_\infty^*\colon A(U) \longrightarrow A(V)
\]
with 
\begin{math}
\trop_\infty^{\ast}\varphi = \varphi\circ \trop_\infty
\end{math} for all $\varphi \in A^{0,0}(U)$ and which
satisfies
\begin{equation}
\label{correspondences of differential operators}
\trop_\infty^{\ast} \circ \,d' = \pi ^{-1/2}\partial\circ \trop_\infty^{\ast},\quad
\trop_\infty^{\ast} \circ \,d'' = \pi ^{-1/2}i\bar\partial\circ \trop_\infty^{\ast}.
\end{equation}
This homomorphism 
 respects the bigrading, positivity of forms and integration of top
 dimensional forms. On the  $\SS$-invariant forms in $A(V)$, we have
 defined an antilinear involution $F$. Let $A(V)^{\SS,F}$ be the real
 subalgebra of $\SS$- and $F$-invariant forms. Then we have
 $\trop_\infty^*A(U) \subset A(V)^{\SS,F}$ with equality if $U \subset N_\R$.
 By equation \eqref{correspondences of differential operators}, we obtain
   \begin{equation}\label{eq:3}
     \trop_\infty^{\ast} \circ \,d'd'' = dd^{c}\circ \trop_\infty^{\ast}.
   \end{equation}

Let $D(V)^{\SS,F}$ be the space of $\SS$- and $F$-invariant currents on $V$. By duality, we get a linear map
\[
\trop_{\infty,*}\colon  D^{p,p}(V)^{\SS,F} \longrightarrow D^{p,p}(U).
\]
By \cite[Theorem 7.1.5]{burgos-gubler-jell-kuennemann1}, $\trop_{\infty,*}$ maps the cone of closed positive currents inside of
$D^{p,p}(V)^{\SS,F}$ isomorphically onto the cone of closed positive
Lagerberg currents in $D^{p,p}(U)$.
\end{art}

\begin{prop} \label{projection formula trop current cont function}
Let $f$ be a function on an open subset $U$ of $N_\Sigma$ with values in $\R \cup \{\pm \infty\}$. 
Assume that the function $f \circ \trop_\infty$ on the complex manifold $V \coloneqq \trop_\infty^{-1}(U)$ is locally integrable. 
Then we have 
\begin{align*}
\trop_{\infty,*} [f \circ \trop_\infty] = [f].
\end{align*}
\end{prop}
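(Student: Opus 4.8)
The plan is to verify the identity by testing both currents against an arbitrary compactly supported Lagerberg form $\alpha \in A_c^{n,n}(U)$, reducing to the smooth case by approximation and then using the change-of-variables formula built into $\trop_\infty^*$. First I would unwind the definitions: by Remark \ref{same top dim forms}, a top-degree form $\alpha$ on $U$ has support in $U \cap N_\R$, and the current $[f]$ lives on $D^{0,0}(U) = D^{0,0}(U \cap N_\R)$, so it suffices to compute $[f](\alpha) = \int_{U \cap N_\R} f \cdot \alpha$ and compare it to $(\trop_{\infty,*}[f\circ\trop_\infty])(\alpha) = [f\circ\trop_\infty](\trop_\infty^*\alpha) = \int_V (f\circ\trop_\infty) \cdot \trop_\infty^*(\alpha)$, where $\trop_\infty^*$ is the pullback of \ref{comparison with complex forms}. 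The key computational input is that $\trop_\infty^*$ respects integration of top-dimensional forms, i.e.\ $\int_V \trop_\infty^*(\beta) = \int_{N_\R} \beta$ for $\beta \in A_c^{n,n}(U)$; this is the statement recalled in \ref{comparison with complex forms} that the pullback ``respects \ldots integration of top dimensional forms.''

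Next I would handle the case where $f$ itself is smooth (or at least continuous) on $U$: then $f \cdot \alpha = $ some smooth top form $\beta$ on $U$, and $(f \circ \trop_\infty)\cdot \trop_\infty^*(\alpha) = \trop_\infty^*(f\cdot\alpha) = \trop_\infty^*(\beta)$ because $\trop_\infty^*$ is a ring homomorphism with $\trop_\infty^*(f) = f \circ \trop_\infty$ on functions. Then $\int_V \trop_\infty^*(\beta) = \int_{N_\R}\beta = \int_{N_\R} f\alpha$, which is exactly $[f](\alpha)$. This settles the smooth case; note that here $f \circ \trop_\infty$ is automatically locally integrable, so the hypothesis is harmless.

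For the general locally integrable $f$, I would approximate: fix $\alpha$ and a relatively compact open $U_0$ containing $\supp(\alpha) \cap N_\R$; on $U_0 \cap N_\R$ choose smooth functions $f_k \to f$ in $L^1_{\mathrm{loc}}$. Since $\trop_\infty$ is a proper map with a ``Fubini-type'' fiber integration (the fibers over $N_\R$ are compact tori and the pushforward of the fiber measure is, up to the lattice normalization, what makes $\int_V \trop_\infty^* = \int_{N_\R}$ hold), one gets $f_k \circ \trop_\infty \to f \circ \trop_\infty$ in $L^1_{\mathrm{loc}}(V)$; concretely, $\int_{\trop_\infty^{-1}(U_0)} |f_k \circ \trop_\infty - f\circ\trop_\infty| \cdot |\trop_\infty^*\alpha'| = \int_{U_0 \cap N_\R}|f_k - f|\cdot|\alpha'|$ for a suitable volume form $\alpha'$. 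Passing to the limit in $[f_k](\alpha) = \trop_{\infty,*}[f_k\circ\trop_\infty](\alpha)$, using the smooth case for each $k$ and dominated convergence on both sides, yields $[f](\alpha) = \trop_{\infty,*}[f\circ\trop_\infty](\alpha)$. Since $\alpha$ was arbitrary, the two currents agree.

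The main obstacle is making the $L^1_{\mathrm{loc}}$-approximation argument along $\trop_\infty$ precise — specifically, controlling $\|f_k\circ\trop_\infty - f\circ\trop_\infty\|_{L^1}$ in terms of $\|f_k - f\|_{L^1}$ on the base. This rests on the fact that the pushforward along $\trop_\infty$ of Lebesgue measure on $V$ (suitably weighted) is Lebesgue measure on $N_\R$ times the normalized Haar measure on the compact fiber torus, i.e.\ on the compatibility of integration through \cite[Proposition 4.1.7]{burgos-gubler-jell-kuennemann1} with the measure-theoretic pushforward. Everything else is a routine unwinding of the duality defining $\trop_{\infty,*}$ together with Remark \ref{same top dim forms}.
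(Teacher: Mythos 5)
Your overall strategy matches the paper's: test both currents against $\alpha\in A_c^{n,n}(U)$, settle the smooth case via the compatibility of $\trop_\infty^*$ with products and integration (this is \cite[Corollary 5.1.8]{burgos-gubler-jell-kuennemann1}), and then extend by approximation. The one place where your write-up is not yet a proof is the displayed equality
$\int_{\trop_\infty^{-1}(U_0)} |f_k \circ \trop_\infty - f\circ\trop_\infty| \cdot |\trop_\infty^*\alpha'| = \int_{U_0 \cap N_\R}|f_k - f|\cdot|\alpha'|$:
this is exactly the identity you are trying to prove, applied to the non-smooth function $|f_k-f|$, so as stated the $L^1$ estimate is circular. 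You correctly name the missing ingredient — that the image measure under $\trop_\infty$ of the Radon measure ``integration against $\trop_\infty^*\alpha'$'' is the Radon measure ``integration against $\alpha'$'' — but you should actually establish it rather than defer it. The paper does this by inserting an intermediate step: from the smooth case one passes to \emph{continuous} $f$ by uniform approximation near $\supp(\omega)$ (no measure theory needed there, just uniform convergence against two fixed compactly supported Radon measures), and the continuous case is precisely the statement that the two Radon measures have the same action on $C_c$, hence coincide as Borel measures; the general locally integrable case is then the standard change-of-variables formula for image measures. If you reorganize your argument in that order (smooth $\Rightarrow$ continuous $\Rightarrow$ image-measure identification $\Rightarrow$ locally integrable), the circularity disappears and your proof becomes essentially the paper's.
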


\begin{proof}
Note first that local integrability of $f \circ
\trop_\infty$ implies local integrability of
$f|_{U \cap N_\R}$. 
Hence the Lagerberg current $[f]$ is well defined 
by Remark \ref{locally integrable gives current}. 
It is enough to show that 
\begin{equation} \label{projection formula with function and form}
\int_V (f \circ \trop_\infty) \cdot \trop_\infty^*(\omega)= \int_U f \cdot \omega
\end{equation}
for all $\omega \in A_c^{n,n}(U \cap N_\R)$. 
By  \cite[Corollary 5.1.8]{burgos-gubler-jell-kuennemann1}, the identity \eqref{projection formula with function and form} holds for smooth functions $f$. Note that integration against the forms $\omega$ and $\trop_\infty^*(\omega)$ give  Radon measures on $U$ and $V$, respectively, with compact supports. Since we can approximate a continuous function  uniformly by smooth functions in a neighbourhood of $\supp(\omega)$, we conclude that \eqref{projection formula with function and form} holds for all continuous functions $f$ on $U$. Since a Radon measure with compact support in $U \cap N_\R$ is induced by a unique signed  Borel measure, it follows that  \eqref{projection formula with function and form} is true for all locally integrable functions $f$ on $U \cap N_\R$. 
\end{proof}

\subsection{The non-archimedean situation} \label{section: non-archimedean situation}

Let  $K$ be a non-archimedean complete field with valuation $v = -\log
| \phantom{a}|$.
First, we recall the construction of forms and currents of
Chambert-Loir and Ducros on analytic spaces over $K$.  
For more algebraic approaches, we refer to \cite{gubler-forms},
\cite{gubler-kuenne2018} and \cite{jell-2019}.
Then we will study forms and currents in the toric setting.

\begin{art} \label{Chambert-Loir-Ducros forms}
Let  $X$ be an $n$-dimensional connected good $K$-analytic space whose topology is Hausdorff (see \cite{berkovich-book} and \cite[\S 1]{berkovich-ihes}). Chambert-Loir and Ducros have introduced forms and currents on $X$ and we refer to  \cite{chambert-loir-ducros} for  details about the following facts.

The sheaf $A = A_X$ of smooth differential forms on $X$ has similar properties as in the complex case. 
It is a bigraded differential sheaf of real algebras with respect to natural differential operators $d'$ and $d''$. 
Its construction is based on tropical charts and Lagerberg forms as follows: 
A \emph{moment map} on an open subset $W$ of $X$ is a morphism
$\varphi\colon W \to \mathbb (\G_{\rm m}^r)^\an$ leading to the
tropicalization map $\trop_\varphi \coloneqq \trop_v \circ
\varphi\colon W \to \R^r$. 
There is a bigraded differential homomorphism $\trop_\varphi^*$ from
the sheaf of smooth Lagerberg forms on $\R^r$ to the sheaf of
differential forms on $W$.  
If $\trop_\varphi(W)$ is contained in the compact support $|\mathscr C|$
of a polyhedral complex $\mathscr C$ in $\R^{r}$ and 
$\alpha,\beta\in A^{p,q}(N_\R)$ have the same restriction to 
polyhedra in $\mathscr C$,
then $\trop_\varphi^*(\alpha)=\trop_\varphi^*(\beta)$ in $A^{p,q}(W)$.
For more details about the construction of $A$ we refer to \cite[\S
3.1]{chambert-loir-ducros}.

There is a unique involution $J$ on $A$ which sends $A^{p,q}$ to $A^{q,p}$ and which is compatible with the Lagerberg involution on tropical charts. 
Smooth differential forms of bidegree $(0,0)$ are functions which we call \emph{smooth functions}. 

A differential form $\omega \in A^{p,p}(V)$ on an open subset $V$ of $X$ is called \emph{positive} if there exist $m \in \N$, smooth functions $f_j\colon V \to \R_{\geq 0}$ and forms $\alpha_j \in A^{p,0}(V)$ for $j=1, \ldots, m$ such that 
\[
\omega = \sum_{j=1}^m (-1)^{\frac{p(p-1)}{2}}f_j\alpha_j \land J(
\alpha_j ).
\]

We assume now that $X$ has no boundary in the sense of \cite[\S 2.5, \S 3.1]{berkovich-book}. 
This is the case for example for an open subset in the Berkovich analytification of an algebraic variety over $K$. 
By duality there is also a bigraded sheaf $D$ of currents on $X$ with differentials $d',d''$. 
Moreover, one can integrate compactly supported forms on $X$ of bidegree $(n,n)$. 
For every open subset $V$ of $X$, we get that any $\omega \in A^{p,q}(V)$ has an associated current $[\omega] \in D^{p,q}(V)$. 
Symmetric and positive currents on $V$ are defined similarly as the corresponding notions in the tropical case. 
\end{art}

\begin{art} \label{non-archimedean tropicalization with boundary}
Let $N$ be a free abelian group of rank $n$ with dual $M$ and let $\torus$ be the multiplicative torus with character lattice $M$. 	
From now on, we consider the $\torus$-toric variety $X_\Sigma$ associated to a fan $\Sigma$ in $N_\R$. Its Berkovich analytification over $K$ is denoted by $X_{\Sigma,v}^{\rm an}$. The \emph{Kajiwara--Payne tropicalization map} $\trop_v \colon X_{\Sigma,v}^{\rm an} \to N_\Sigma$  is the unique continuous map given by the usual tropicalization map on  $\torus_v^{\rm an}$. The latter is characterized by 
$$\langle \trop_v(x), u \rangle = v \circ \chi^u(x)$$
for $x \in \torus_v^{\rm an}$ and any $u \in M$ with associated character $\chi^u$. Note that $\trop_v$ is a proper map of topological spaces. For details, see \cite{kajiwara2008} and  \cite{payne-2009a}.

We have a canonical section $\iota\colon N_\Sigma \to X_{\Sigma,v}^{\rm an}$ of $\trop_v$. 
Since $\iota$ is a homeomorphism onto a closed subset of $\Xsigmaan$, 
we may view $N_\Sigma$ as a closed subset of $X_{\Sigma,v}^{\rm an}$ which we call the \emph{canonical skeleton}. For details, we refer to \cite[\S 4.1]{bps-asterisque}.

\end{art}

\begin{art} \label{SS-invariant forms, sets and functions}
Let $\SS=\SS_v$ denote the maximal affinoid torus in $\torus_v^{\rm
  an}$ acting  on $X_{\Sigma,v}^{\rm an}$  by the  morphism
\begin{align*}
m \colon  \SS \times X_{\Sigma,v}^{\rm an} \longrightarrow X_{\Sigma,v}^{\rm an}. 
\end{align*}
Let $p_2$ the second projection of $ \SS \times X_{\Sigma,v}^{\rm
  an}$, where we take products in the category of $K$-analytic spaces.
We consider an \emph{$\SS$-invariant open subset} $V$ of
$X_{\Sigma,v}^{\rm an}$, i.e.~we require that $m^{-1}(V) =
p_2^{-1}(V)$.
A form $\alpha \in \AS^{p,q}(V)$ 
with $m^*\alpha = p_2^*\alpha$ is called \emph{$\SS$-invariant}. 
Similarly, a function $f$ on $V$ is said to be \emph{$\SS$-invariant},
if $f \circ m= f \circ p_2$.

Note that $\SS$ is a $K$-analytic group. However the underlying
topological space of $\SS$ does not inherit a group structure.
Nevertheless there are well-defined orbits \cite[Chapter 5]{berkovich-book}.
The $\SS$-orbits of $X_{\Sigma,v}^{\rm an}$ are precisely the fibers of 
the map $\trop_v$, such that 
$\trop_v \colon X_{\Sigma,v}^{\rm an} \to N_\Sigma$ is the quotient by
$\SS$ in the sense of topological spaces \cite[Proposition
4.2.15]{bps-asterisque}.
\end{art}

Using the identity on $\torus_v^{\rm an}$ as a moment map, we get from \ref{Chambert-Loir-Ducros forms} a canonical homomorphism 
\begin{equation} \label{non-archimedean pull-back on torus}
\trop_v^*\colon A_{N_\R} \longrightarrow   (\trop_v)_*(A_{\torus_v^{\rm an}})
\end{equation}
of bigraded differential sheaves of  algebras with respect to the differentials $d',d''$.

\begin{prop} \label{prop: non-archimedean pull-back on toric variety}
	The homomorphism in \eqref{non-archimedean pull-back on torus} extends uniquely to a homomorphism 
\begin{equation} \label{non-archimedean pull-back on toric variety}
\trop_v^*\colon A_{N_\Sigma} \longrightarrow   (\trop_v)_*(A_{X_{\Sigma,v}^{\rm an}})
\end{equation}	
of bigraded  sheaves of algebras {compatible}
with the differentials $d',d''$. For any open $U$
in $N_\Sigma$ and $\alpha \in A^{p,q}(U)$, we have that
$\trop_v^*(\alpha)$ is an  $\SS$-invariant form on $\trop_v^{-1}(U)$.
\end{prop}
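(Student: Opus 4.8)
The plan is to build the extension stratum by stratum using the toric structure of $X_{\Sigma,v}^{\rm an}$, and then glue. First I would reduce to the affine case: since the sheaves in question are sheaves on $N_\Sigma$ and $X_{\Sigma,v}^{\rm an}$ respectively, and since the affine toric opens $N_\sigma = N_{\{\tau : \tau \prec \sigma\}}$ for $\sigma \in \Sigma$ form an open cover of $N_\Sigma$ (with $\trop_v^{-1}(N_\sigma) = X_{\sigma,v}^{\rm an}$ the corresponding affine toric open of $X_{\Sigma,v}^{\rm an}$), it suffices to construct compatible homomorphisms $\trop_v^*\colon A_{N_\sigma} \to (\trop_v)_*(A_{X_{\sigma,v}^{\rm an}})$ for each cone $\sigma$, agreeing on overlaps; uniqueness on overlaps is automatic from uniqueness on the open dense torus, so the glued map is well-defined. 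The overlap of $N_\sigma$ and $N_{\sigma'}$ is $N_{\sigma \cap \sigma'}$, so it is enough to treat a single affine chart and check functoriality for face inclusions $\tau \prec \sigma$.

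For a fixed cone $\sigma$ spanning a subspace $\langle\sigma\rangle_\R$, the affine toric variety $X_\sigma$ has a torus factorization: writing $N = N'' \oplus N'$ where $N''$ is a complement of the saturation $N \cap \langle\sigma\rangle_\R$ is not quite canonical, but in any case $X_\sigma \cong X_{\sigma,N\cap\langle\sigma\rangle_\R} \times \torus''$ where the first factor carries the essential information; correspondingly $N_\sigma = N_{\sigma}' \times N''_\R$ and there is a natural retraction, on the level of $N_\sigma$, onto the "deepest" stratum together with the product decomposition of forms. The key input is the final clause of \ref{Chambert-Loir-Ducros forms}: if $\alpha, \beta \in A^{p,q}(N_\R)$ have the same restriction to the polyhedra of a polyhedral complex $\mathscr C$ whose support contains $\trop_\varphi(W)$, then $\trop_\varphi^*(\alpha) = \trop_\varphi^*(\beta)$. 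Combined with the description of forms on $N_\Sigma$ from \cite[\S 4.1]{burgos-gubler-jell-kuennemann1} — namely that a form on $N_\sigma$ is, near the boundary, pulled back from a form "constant in the $\sigma$-directions" — this shows that the moment map given by the coordinate characters on $X_{\sigma,v}^{\rm an}$ factors the relevant tropicalization through a polyhedral complex on which the boundary behaviour of forms on $N_\sigma$ is detected, so $\trop_v^*\alpha$ makes sense as a form on all of $X_{\sigma,v}^{\rm an}$, not just on the torus. This is the step I expect to be the main obstacle: one must check that the definition of forms on $N_\Sigma$ near the boundary strata (forms becoming "locally constant towards the boundary" in the precise sense of the reference) matches exactly the condition under which the Chambert-Loir--Ducros pullback $\trop_\varphi^*$ is insensitive to the choice of representative, so that the torus-level map \eqref{non-archimedean pull-back on torus} genuinely extends across the boundary.

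Once the extension is constructed on each affine chart, the remaining assertions are routine. Compatibility with the bigrading, the algebra structure, and the differentials $d',d''$ holds because $\trop_v^*$ on the torus already has these properties (being built from $\trop_\varphi^*$ applied to Lagerberg forms, which is a morphism of bigraded differential algebras) and because these are all local conditions that can be checked on the dense torus by continuity/density, or directly from the construction of $A_{X_{\sigma,v}^{\rm an}}$ via tropical charts. Finally, $\SS$-invariance of $\trop_v^*(\alpha)$ on $\trop_v^{-1}(U)$ follows from \ref{SS-invariant forms, sets and functions}: the $\SS$-orbits are exactly the fibres of $\trop_v$, so any form of the shape $\trop_v^*(\alpha)$ is pulled back along the quotient map $\trop_v$ and is therefore invariant; concretely, $m^*\trop_v^*(\alpha) = (\trop_v \circ m)^*(\alpha) = (\trop_v \circ p_2)^*(\alpha) = p_2^*\trop_v^*(\alpha)$ since $\trop_v \circ m = \trop_v \circ p_2$ by $\SS$-invariance of the tropicalization map. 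Uniqueness of the extension is clear since any two extensions agree on the dense torus $\torus_v^{\rm an}$, which is dense in $X_{\Sigma,v}^{\rm an}$, and forms are determined by their restriction to a dense open by the sheaf property together with the fact that each affine chart $X_{\sigma,v}^{\rm an}$ has $\torus_v^{\rm an}$ dense in it.
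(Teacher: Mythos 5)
Your proposal follows essentially the same route as the paper: near a boundary point of the stratum $N(\sigma)$ one writes a Lagerberg form as $\pi_\sigma^*\alpha_u$ for the quotient map $\pi_\sigma\colon N_\R\to N(\sigma)$, observes that $\pi_\sigma\circ\trop_v$ is the tropicalization of a moment map (the characters in $\sigma^\perp\cap M$) so the Chambert-Loir--Ducros pullback is defined across the boundary, and deduces $\SS$-invariance from $\trop_v\circ m=\trop_v\circ p_2$ and uniqueness from the fact that forms are determined by their restriction to the dense torus orbit. The only quibble is that this last fact is a specific property of the Chambert-Loir--Ducros sheaf of forms, not a consequence of the sheaf axioms plus density as you suggest.
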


\begin{proof}
By \cite{jell-2019}, one can construct the
sheaf of smooth forms on $X_{\Sigma,v}^\an$ using toric charts with boundaries. 
The existence of the morphism \eqref{non-archimedean pull-back on
  toric variety} is a natural consequence of this construction.
For the convenience of the reader, we include a
direct proof.

Complex forms and Lagerberg forms are determined by their restriction
to the dense stratum. 
This proves uniqueness. 
It remains to define $\trop_v^*(\alpha)\in
  A^{p,q}(\trop_v^{-1}(U))^{\SS}$ for any open subset $U$ of
$N_\Sigma$ and any $\alpha \in A^{p,q}(U)$.  
It is enough to give the definition locally
around a given point $u \in U$. There is a unique $\sigma \in \Sigma$
with $u \in N(\sigma)$.  
Let $\pi_\sigma\colon N_\R \to N(\sigma)$ be the quotient map. 
By  definition of  Lagerberg forms on $U$, there is an open
neighbourhood $U_u$ of $u$ in $U$ such that $\alpha =
\pi_\sigma^*\alpha_u$ on $U_u$ for some $\alpha_u \in A^{p,q}(U_u \cap
N(\sigma))$.
We note that $ \pi_\sigma \circ \trop_v$ is the tropicalization of a
moment map and hence we can use \ref{Chambert-Loir-Ducros forms} to
define $\omega \coloneqq \trop_v^*(\alpha)$ on $V_u \coloneqq
\trop_v^{-1}(U_u)$.
Since $\omega |_{V_u \cap \torus_v^{\rm an}}$ agrees with the
definition in \eqref{non-archimedean pull-back on torus} proving existence. 
	
	Note that $V_u$ is an $\SS$-invariant open subset of $V \coloneqq \trop_v^{-1}(U)$. On $\SS \times V_u$, we have 
	\begin{equation*} \label{identity of tropicalizations of moment maps}
	 \pi_\sigma  \circ \trop_v \circ p_2 = \pi_\sigma  \circ \trop_v \circ m.
	\end{equation*}
    These are  tropicalizations of  moment maps with  $p_2^*\omega= (\pi_\sigma  \circ \trop_v \circ p_2)^*(\alpha_u)$ and $m^*\omega=(\pi_\sigma  \circ \trop_v \circ m)^*(\alpha_u) $ on $\SS \times V_u$. This proves $\SS$-invariance of $\omega$ on $V_u$ and hence on $V$.	
\end{proof}

In the complex situation (see \ref{comparison with complex forms}),

the analogue of the morphism \eqref{non-archimedean
pull-back on torus} is an
isomorphism onto the subsheaf of $\SS$- and $F$-invariant forms of  
$(\trop_v)_*(A_{\torus_v^{\rm an}})$. Since $F$ is only
related to the  complex structure, one might hope
in the non-archimedean case that \eqref{non-archimedean pull-back on
  torus} gives an isomorphism onto the subsheaf of $\SS$-invariant
forms. The following is a counterexample.

\begin{ex} \label{counterexample for invariant smooth on torus}
Let $N\coloneqq \Z$ and $\Sigma\coloneqq \{(0)\}$
with $X_\Sigma=\torus = \mathbb G_{\rm m}$ and $N_\Sigma=\R$. 
We claim that the function $f \coloneqq \min(0, \trop_v)^2$ is an
$\SS$-invariant smooth function on $\torus_v^{\rm an}$ which is
\emph{not} contained in  $\trop_v^*(\AS^{0,0}(\R))$.

Indeed, the function $\min(0,u)^2$ is not smooth on $\R$ and hence $f
\not\in \trop_v^*(\AS^{0,0}(\R))$. Since $f$ factors through 
$\trop_v$, it is $\SS$-invariant.  To show  $f \in
\AS^{0,0}(\torus_v^{\rm an})$, note first that 
\begin{displaymath}
f(x) = (-\log \vert T(x) \vert)(-\log \vert (T-1)(x) \vert)
\end{displaymath}
for $x \in \torus_v^{\rm an}$ where $T$ is the toric coordinate on $\torus$. 
We consider the closed embedding $\varphi \colon \torus \to \torus^2,
x \mapsto (x, x-1)$. Let $u_1, u_2$ denote the coordinates on
$\trop_v((\torus_v^{\rm an})^2) = \R^2$. Then $u_1 \cdot u_2 \in
\AS^{0,0}(\trop_\varphi(\torus_v^{\rm an}))$. We conclude that
$\trop_\varphi^*(g) = f \in \AS^{0,0}(\torus_v^{\rm an})$ proving the
claim. 
\end{ex}

\begin{prop} \label{lem: support on skeleton}
	Let $X_\Sigma$ be a toric variety of dimension $n$. 
	Let $\omega$ be a $(p,q)$-form on an open subset $U$ of $N_\Sigma$
	with $\max(p,q) = n$. 
	Then $\supp(\trop_v^*(\omega)) = \iota(\supp(\omega))$,
	where $\iota \colon N_\Sigma \to X_{\Sigma,v}^{\an}$ 
	is the embedding of the canonical skeleton as in 
	\ref{non-archimedean tropicalization with boundary}. 
\end{prop}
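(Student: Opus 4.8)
The plan is to reduce the statement to the structure of Lagerberg forms on $N_\Sigma$, which are constant towards the boundary strata, combined with Proposition \ref{prop: non-archimedean pull-back on toric variety}. First I would observe that by Remark \ref{same top dim forms}, since $\max(p,q)=n$, the form $\omega\in A^{p,q}(U)$ has support contained in $U\cap N_\R$; in fact $\supp(\omega)$ is a compact subset of $N_\R$ (being closed in $N_\Sigma$ but avoiding all boundary strata $N(\sigma)$ with $\sigma\neq\{0\}$). On $N_\R=\torus$ the tropicalization map $\trop_v$ restricts to the usual tropicalization $\torus_v^{\rm an}\to N_\R$, and $\iota$ restricts to the usual Gauss-point section. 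So it suffices to prove the identity over $\torus_v^{\rm an}$, i.e.\ that $\supp(\trop_v^*(\omega))=\iota(\supp(\omega))$ for $\omega$ a top-degree form with compact support in $N_\R$.

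For the inclusion $\supp(\trop_v^*(\omega))\subseteq\iota(\supp(\omega))$, I would argue pointwise: if $x\in X_{\Sigma,v}^{\rm an}$ and $\trop_v(x)\notin\supp(\omega)$, then there is an open neighbourhood $W$ of $\trop_v(x)$ in $N_\Sigma$ on which $\omega$ vanishes, hence $\trop_v^*(\omega)$ vanishes on $\trop_v^{-1}(W)$, which is an open neighbourhood of $x$. This shows $\supp(\trop_v^*(\omega))\subseteq\trop_v^{-1}(\supp(\omega))$. To upgrade this to $\iota(\supp(\omega))$, note that $\trop_v^*(\omega)$ is a smooth $(p,q)$-form with $\max(p,q)=n$ on an analytic space of dimension $n$; by the theory of Chambert-Loir--Ducros such a form is supported on the $n$-skeleton, and being $\trop_v$-pulled-back and $\SS$-invariant it is supported on the fiber over its tropical support, which meets the canonical skeleton exactly in $\iota(\supp(\omega))$. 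Here I would invoke that a top-degree smooth form factors through a tropical chart and that integration against it is a measure supported on a skeleton; the key input is that $\trop_v^*(\omega)$ restricted to any fiber $\trop_v^{-1}(u)$ with $u\in\supp(\omega)$ is non-zero precisely along the skeleton point.

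For the reverse inclusion $\iota(\supp(\omega))\subseteq\supp(\trop_v^*(\omega))$, I would take $u\in\supp(\omega)$ and show that $\trop_v^*(\omega)$ does not vanish on any neighbourhood of $\iota(u)$. If it did vanish on $\trop_v^{-1}(W')$ for some open $W'\ni u$ — which we may take $\SS$-invariant and of the form $\trop_v^{-1}$ of an open set, shrinking — then since over $\torus$ the map $\trop_v^*$ is injective on forms (complex and Lagerberg forms being determined by restriction to the dense stratum, as used in the proof of Proposition \ref{prop: non-archimedean pull-back on toric variety}, and the analogous injectivity of $\trop_v^*$ on $A^{p,q}(N_\R)$ for top degree via a faithful tropical chart), we would get $\omega|_{W'}=0$, contradicting $u\in\supp(\omega)$. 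The cleanest way to get this injectivity is to use that for a top-degree form $\eta$ on $N_\R$, $\int_{N_\R}\eta\wedge\alpha = \int_{\torus_v^{\rm an}}\trop_v^*(\eta)\wedge\trop_v^*(\alpha)$ for suitable $\alpha$ (the non-archimedean analogue of \cite[Corollary 5.1.8]{burgos-gubler-jell-kuennemann1} / projection formula along $\trop_v$, which holds because $\trop_v$ restricted to the skeleton is an isomorphism), so $\trop_v^*(\eta)=0$ forces $\eta=0$.

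\textbf{Main obstacle.} The delicate point is the first inclusion: showing that $\supp(\trop_v^*(\omega))$ is contained in the skeleton $\iota(N_\Sigma)$ and not merely in the larger set $\trop_v^{-1}(\supp(\omega))$. This requires knowing that a smooth $(n,n)$-form (or $(p,q)$-form with $\max(p,q)=n$) pulled back via a tropicalization map has support on the corresponding skeleton — intuitively because such a form ``sees'' only the polyhedral directions and is locally constant in the transverse affinoid directions. I expect to handle this by working in a tropical chart where $\trop_v^*(\omega) = \trop_\varphi^*(\text{Lagerberg form})$ and using that the associated current/measure is supported where the tropical chart has full-dimensional image, i.e.\ on the skeleton; alternatively, one can cite the structure of $\SS$-invariant forms and the fact that $\trop_v$ is the quotient by $\SS$ (\ref{SS-invariant forms, sets and functions}) together with the fact that $\iota$ gives a section, so that $\supp$ of an $\SS$-invariant top-degree form meets every $\SS$-orbit in at most its skeleton point.
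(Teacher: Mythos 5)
Your reduction to the dense torus and your treatment of the reverse inclusion are broadly in line with the paper (which simply quotes \cite[Corollaire 3.2.3]{chambert-loir-ducros} for $\trop_v(\supp(\trop_v^*\omega))=\supp(\omega)$), but the forward inclusion — the step you yourself flag as the main obstacle — rests on a premise that is not available and is in fact false. It is not true that a smooth $(p,q)$-form with $\max(p,q)=n$ on $\T_v^{\rm an}$ is automatically supported on the skeleton $\iota(N_\R)$: a form defined through a non-trivial tropical chart, say $\trop_\varphi^*(\alpha)$ for $\varphi=(z,z-1)\colon\G_{\rm m}\to\G_{\rm m}^2$ with $\alpha$ a Lagerberg $(1,1)$-form concentrated on the ray of the tropical line lying over $0\in\R$, has support meeting points of the open unit disc around $1$ that are not on the skeleton of $\G_{\rm m}$. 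So "by the theory of Chambert-Loir--Ducros such a form is supported on the $n$-skeleton" is not a citable fact. The appeal to $\SS$-invariance does not rescue this: the support of an $\SS$-invariant object would, if anything, be saturated under $\SS$-orbits (i.e.\ under the fibres of $\trop_v$), which points toward the full tube $\trop_v^{-1}(\supp(\omega))$ rather than toward the section $\iota(\supp(\omega))$. What is special here is that $\trop_v^*(\omega)$ is pulled back along the \emph{identity} moment map, and extracting the consequence of that requires an argument, not a citation. (Minor additional slips: $\supp(\omega)$ need not be compact, only closed in $U$ and contained in $U\cap N_\R$; and a neighbourhood of the single point $\iota(u)$ need not contain a tube $\trop_v^{-1}(W'')$, so your "shrink to an $\SS$-invariant neighbourhood" step in the reverse inclusion also needs the forward inclusion first.)

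The missing idea is the following separating-function argument. Given $x\in\T_v^{\an}\setminus\iota(N_\R)$ with $y\coloneqq\iota(\trop_v(x))$, choose a regular function $f$ on $\T$ with $|f(x)|\neq|f(y)|$ and refine the tropicalization to $\trop_\varphi$ for $\varphi=(\id,f)\colon\T\to\T\times\A^1$. The set $W\coloneqq\trop_\varphi^{-1}\bigl(\trop_\varphi(\T_v^{\an})\setminus\trop_\varphi(\iota(N_\R))\bigr)$ is an open neighbourhood of $x$, and for every $x'\in W$ the points $x'$ and $\iota(\trop_v(x'))$ are separated by $\trop_\varphi$, which forces $\trop_v(x')$ to lie in the singular locus $V(\trop_v(f))$ of the piecewise affine function $\trop_v(f)$ — an $(n-1)$-dimensional polyhedral set. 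Since $\max(p,q)=n$, the Lagerberg form $\omega$ restricts to zero on any polyhedral complex of dimension $<n$, and by the chart-compatibility built into the definition of $\trop_\varphi^*$ this gives $\trop_v^*(\omega)|_W=0$, hence $x\notin\supp(\trop_v^*(\omega))$. Without some such mechanism tying points off the skeleton to a lower-dimensional tropical locus, the inclusion $\supp(\trop_v^*(\omega))\subseteq\iota(N_\R)$ does not follow.
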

\begin{proof}
	We have seen  $\supp(\omega) \subset N_\R$ in Remark \ref{same top dim forms} and similarly  $\supp(\trop_v^*\omega) \subset  \T_v^{\rm an}$ by \cite[Lemme 3.2.5]{chambert-loir-ducros}, so 
	we may assume $X_\Sigma = \T$. 
	Let $x \in \T_v^{\an} \setminus \iota(N_\R)$
	and let $y = \iota(\trop_v(x))$. 
	Then there exists a regular function $f$ on $\T$ such that
	$\vert f(x) \vert \neq \vert f(y) \vert$. 
	By the fundamental theorem in tropical geometry, the tropicalization $\trop_v(f)$ is a piecewise affine function on $N_\R$ whose singularity locus $V(\trop_v(f))$ is an $(n-1)$-dimensional polyhedral complex in $N_\R$ equal to $\trop_v(V(f)_v^{\rm an})$ for the zero set $V(f)$ of $f$ (see \cite[\S 3.1]{maclagan-sturmfels}).
For
\begin{align*}
\varphi \colon \T \longrightarrow \T \times \A^1, z \longmapsto (z, f(z)),
\end{align*}
we see that $\trop_\varphi(x) \neq \trop_\varphi(y)$ and $\trop_\varphi(x) \notin \trop_\varphi(\iota(N_\R)))$. 
The first projection restricts to a map $\pi \colon \trop_\varphi( \T_v^{\rm an}) \to \trop_v( \T_v^{\rm an})=N_\R$ which is injective away from $V(\trop_v(f))$. 
	Let 
	\begin{align*}
	W \coloneqq \trop_\varphi^{-1}(\trop_\varphi(\T_v^{\rm an}) \setminus \trop_\varphi(\iota(N_\R))).
	\end{align*}
	Since $\trop_v$ is a proper map and since the canonical skeleton $\iota(N_\R)$ is closed in $\T_v^{\rm an}$, the set $W$ is an open neighborhood of $x$ in  $\torus_v^{\rm an}$.
	For any $x' \in W$ and $y' \coloneqq \iota(\trop_v(x'))$, we have   
	$$\pi(\trop_\varphi(x'))=\trop_v(x')=\trop_v(y')=\pi(\trop_\varphi(y')).$$
	Since $x'\in W$, we have $x' \neq y'$ and hence the above shows that $\trop_v(x') \in V(\trop_v(f))$. This yields 
    $\trop_v(W) \subset V(\trop_v(f))$.  
	Since we have $\max(p,q) = n > \dim(V(\trop_v(f)))$, we have $\omega|_{V(\trop_v(f))} = 0$ and  
	hence $\trop_v^*(\omega)|_W = 0$. 
	This shows $x \notin \supp(\trop_v^*(\omega))$ and since $x$ was arbitrary in $\torus_v^{\rm an} \setminus \iota(N_\R)$,
	we have $\supp(\trop_v^*(\omega)) \subset \iota(N_\R)$. 
	By  \cite[Corollaire 3.2.3]{chambert-loir-ducros}, we have 
	$
	\trop_v(\supp(\trop_v^*(\omega)))=\supp(\omega),
	$
	and the claim follows.
\end{proof}

\begin{art} \label{push-forward with respect to trop}
Let $U$ be an open subset of $N_\Sigma$ and let $V \coloneqq \trop_v^{-1}(U)$. 
Then we have a linear map 
\[
\trop_{v,*} \colon D^{p,q}(V) \longrightarrow D^{p,q}(U), \quad T \longmapsto T \circ \trop_v^*.
\]
For $\alpha \in A^{r,s}(U)$ and $T \in D^{p,q}(V)$, the projection formula 
	\begin{equation} \label{projection formula non-arch}
	\trop_{v,*}(\trop_v^*(\alpha) \wedge T)= \alpha \wedge \trop_{v,*}(T).
	\end{equation} 	
is an easy formal consequence of our definitions. 
\end{art}

\section{Plurisubharmonic functions on partial compactifications} \label{sec: Plurisubharmonic functions on partial compactifications}

Let $N$ be a free abelian group of finite rank $n$.
We always consider $\Rinf\coloneqq \R \cup \{-\infty\}$ as an ordered additive 
	monoid equipped with the unique topology which turns the natural 
	bijection $\exp\colon \Rinf\stackrel{\sim}{\to} [0,\infty)\subset\R$
	into a homeomorphism. We fix a fan $\Sigma$ in $N_\R$ with associated partial compactification 
	$N_\Sigma$.
	We denote by $|\Sigma|$ the support of the fan $\Sigma$.

\subsection{Definition and basic properties of plurisubharmonic functions}\label{section-psh-functions}

\begin{definition}\label{definition-psh-open-case}
{ Let $U$ be an open subset of $N_\R$. 
A function $\varphi \colon U \to \Rinf$ is called \emph{convex} if 
\begin{align*}
\varphi\bigl( (1 - t) x + ty\bigr) \leq (1-t) \varphi(x) + t \varphi(y)
\end{align*} 
holds for all $t \in [0,1]$ and all $x,y\in U$ such that
the segment $[x,y]\coloneqq \{(1- t)x + t y \mid t \in [0,1]\}$
is contained in $U$.
A function $\varphi \colon U \to \Rinf$ is 
called \emph{finite} if $\varphi(U)\subset \R$. }
\end{definition}

For $p\in N_\R$ and $v\in |\Sigma|$, {we have shown in \cite[Lemma 3.1.4]{burgos-gubler-jell-kuennemann1} that the limit $p+\infty v\coloneqq \lim_{\mu\to\infty}p+\mu v$ exists in $N_\Sigma$ and $p+\infty v$ lies in the stratum $N(\sigma)$ for the unique cone $\sigma \in \Sigma$ containing $v$ in its relative interior. 
We will also use the \emph{compactified half line}
\[
[p,p+\infty v]\coloneqq \{p+\mu v\,|\,\mu\in [0,\infty]\}.
\]}

\begin{definition} \label{def:1}
	Let $U\subset N_\Sigma$ be an open subset. 
	A function $\varphi \colon U \to \Rinf$ is called  
	\emph{plurisubharmonic} (for short \emph{psh}) if
	\begin{enumerate}
		\item \label{item:1} 
		the function $\varphi$ is upper-semicontinuous (for short \emph{usc}),
		\item \label{item:2}
		the function $\varphi |_{U \cap N_\R}$ is convex (in the sense of
		Definition \ref{definition-psh-open-case}),
		\item \label{item:3}
		for any $p\in N_{\R}$ and  $v\in |\Sigma |$ with $[p,p+\infty v] \subset U$,
		we have
		$\varphi(p+\infty v)\le \varphi(p)$.  
	\end{enumerate}
\end{definition}

\begin{prop}\label{prop:1-3}
Let $U$ be open subset in $N_\Sigma$ and let $(\varphi_{i})_{i\in I}$ be  psh functions on $U$.
	\begin{enumerate}
		\item \label{item:prop1-3:1} 
		Plurisubharmonicity is a local property.
		\item \label{item:prop1-3:2} 
		Let $\varphi(z)=\sup_{i\in I}\varphi_{i}(z)\in [-\infty,\infty]$. 
		If we have $\varphi(z)<\infty$ for all $z\in U$ and if $\varphi\colon U\to \Rinf$ is usc 
		(both conditions hold automatically if $I$ is finite),  
		then $\varphi$ is psh.
		\item \label{item:prop1-3:3}
		If $\varphi_{i+1}(z)\le \varphi_{i}(z)$ for all $i\in \N$ and all $z\in U$, 
		then the limit $\varphi(z)=\lim_{i}\varphi_{i}(z)$ exists pointwise and 
		the function $\varphi\colon U\to \Rinf$ is psh. 
	\end{enumerate}
\end{prop}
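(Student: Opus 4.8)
The plan is to verify each of the three assertions in Proposition~\ref{prop:1-3} directly from Definition~\ref{def:1}, checking the three defining conditions \eqref{item:1}, \eqref{item:2}, \eqref{item:3} in each case. For part~\eqref{item:prop1-3:1}, plurisubharmonicity being local: upper-semicontinuity is a local property of functions on a topological space; convexity of $\varphi|_{U\cap N_\R}$ is local because convexity can be checked on a neighbourhood of each segment $[x,y]$ (a function on an open convex set is convex iff it is locally convex, the standard local-to-global principle for convex functions), and more to the point, the statement of Definition~\ref{def:1}\eqref{item:2} already only constrains segments contained in $U$; finally condition~\eqref{item:3} is manifestly local since it only involves compactified half-lines $[p,p+\infty v]\subset U$. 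So I would just note that if $U=\bigcup_k U_k$ is an open cover and $\varphi|_{U_k}$ is psh for each $k$, then each of \eqref{item:1}--\eqref{item:3} for $\varphi$ on $U$ follows from the corresponding property on the pieces.

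For part~\eqref{item:prop1-3:2}, the supremum: by hypothesis $\varphi$ takes values in $\Rinf$ and is usc, so \eqref{item:1} holds; convexity of $\varphi|_{U\cap N_\R}$ follows from the elementary fact that a pointwise supremum of convex functions (with values in $\Rinf$) is convex, applied to the family $\varphi_i|_{U\cap N_\R}$; and for \eqref{item:3}, given $[p,p+\infty v]\subset U$ we have $\varphi_i(p+\infty v)\le\varphi_i(p)\le\varphi(p)$ for every $i$, so taking the supremum over $i$ gives $\varphi(p+\infty v)\le\varphi(p)$. The parenthetical claim that both extra conditions are automatic for finite $I$ follows because a finite supremum of usc functions is usc, and a finite supremum of $\Rinf$-valued functions is $\Rinf$-valued provided we also know it is not identically (or somewhere) $+\infty$ — but psh functions take values in $\Rinf$ by definition, so the finite supremum does too.

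For part~\eqref{item:prop1-3:3}, the decreasing limit: the pointwise limit $\varphi(z)=\lim_i\varphi_i(z)=\inf_i\varphi_i(z)$ exists in $\Rinf$ because the sequence is decreasing and each $\varphi_i(z)\in\Rinf$ (the infimum of a decreasing sequence in $\Rinf$ again lies in $\Rinf$, with the value $-\infty$ allowed). For \eqref{item:1}, an infimum of a \emph{decreasing} sequence of usc functions is usc — this is the standard fact that $\{\varphi<c\}=\bigcup_i\{\varphi_i<c\}$ is open for each $c\in\R$ (here decreasingness gives the union; for a general infimum usc can fail, so decreasingness is used). For \eqref{item:2}, a decreasing (indeed any) pointwise limit of convex functions is convex: pass to the limit in the inequality $\varphi_i((1-t)x+ty)\le(1-t)\varphi_i(x)+t\varphi_i(y)$, which is legitimate in $\Rinf$. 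For \eqref{item:3}, pass to the limit in $\varphi_i(p+\infty v)\le\varphi_i(p)$ along the half-line contained in $U$.

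None of the three parts presents a genuine obstacle; the only point requiring a little care is the systematic use of the hypothesis that the sequence in \eqref{item:prop1-3:3} is \emph{decreasing} — it is needed to preserve upper-semicontinuity (a supremum, not an infimum, of an arbitrary family of usc functions need not be usc, and dually an arbitrary infimum of usc functions need not be usc), whereas convexity and condition~\eqref{item:3} are preserved under arbitrary pointwise limits. I would also be slightly careful about arithmetic in $\Rinf$ when passing to limits in the convexity inequality and in condition~\eqref{item:3}, but since $\Rinf$ is an ordered topological monoid (as recalled at the start of this section) and all the operations appearing are addition and multiplication by scalars in $[0,1]$, these limit passages are routine.
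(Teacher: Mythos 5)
Your proof is correct in substance and follows exactly the route the paper intends (the paper omits the argument, remarking only that it follows easily from standard properties of convex functions). Two small points deserve attention. First, in part \ref{item:prop1-3:1} the claim that condition \ref{item:3} of Definition \ref{def:1} is ``manifestly local'' is too quick: the compactified half-line $[p,p+\infty v]$ need not be contained in any single member of the open cover. You should argue that some tail $[p+\mu_0 v,\,p+\infty v]$ lies in a chart around $p+\infty v$ (the basic neighbourhoods $U(\sigma,\Omega,p_0)$ used in the proof of Theorem \ref{thm:1} have this property), apply the local condition \ref{item:3} there to get $\varphi(p+\infty v)\le\varphi(p+\mu_0 v)$, and then combine upper semicontinuity at $p+\infty v$ (which bounds $\limsup_{\mu\to\infty}\varphi(p+\mu v)$ above by $\varphi(p+\infty v)<\infty$) with Lemma \ref{lemm:1} to conclude that $\mu\mapsto\varphi(p+\mu v)$ is decreasing, whence $\varphi(p+\mu_0 v)\le\varphi(p)$. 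Second, your side remark in part \ref{item:prop1-3:3} that an arbitrary infimum of usc functions need not be usc is false: $\{\inf_\alpha f_\alpha<c\}=\bigcup_\alpha\{f_\alpha<c\}$ is open for \emph{any} family of usc functions, so the infimum is always usc; the decreasing hypothesis is needed only so that the pointwise limit exists and coincides with that infimum. Neither point affects the correctness of the overall argument.
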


\begin{proof}
	This is shown easily using standard properties of convex functions.
\end{proof}

Let $U\subset N_\Sigma$ be an open subset. 
We call a subset $E \subset U$ a \emph{null set} if $E \cap N_\R$ is a
null set with respect to the Lebesgue measure on $N_\R$. 
A function $\varphi\colon U\to \Rinf$ is called \emph{strongly upper semicontinuous} if  for every {null set} $E\subset U$  and every  $a\in U$, we have 
\begin{equation} \label{def:2}
\varphi(a) =  \limsup_{\substack{x \to a, \,{x \notin E}}} \varphi(x).
\end{equation}

\begin{thm} \label{thm:1}
Let $U\subset N_\Sigma$ be an open subset. {For a function $\varphi\colon U \to \Rinf $, the following conditions are equivalent:
 \begin{enumerate}
 	\item \label{psh condition} The function $\varphi$ is psh.
 	\item \label{continuity condition} The function $\varphi$ is
          continuous and $\varphi|_{U \cap N_\R}$ is convex.
 	\item \label{strongly usc condition} The function $\varphi$ is
          strongly upper semicontinuous and $\varphi|_{U \cap N_\R}$
          is convex.
\item \label{lim sup condition}
The function $\varphi|_{U\cap N_\R}$ is convex and for every point $x\in U\setminus N_{\R}$ we have
\begin{equation}\label{eq:15}
{\varphi}(x)=\limsup_{\substack{y\in U\cap N_{\R}\\y\to x}}
{\varphi}(y).
\end{equation}
 \end{enumerate} 
} 
\end{thm}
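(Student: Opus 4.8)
The plan is to prove the cycle of implications $\ref{psh condition} \Rightarrow \ref{continuity condition} \Rightarrow \ref{strongly usc condition} \Rightarrow \ref{lim sup condition} \Rightarrow \ref{psh condition}$, with the main work concentrated in the first implication. Since plurisubharmonicity is local by Proposition \ref{prop:1-3}\ref{item:prop1-3:1}, and $N_\Sigma$ is locally modeled near a point of the stratum $N(\sigma)$ on the partial compactification $N_{\R}^{k} \times (\R_{-\infty})^{n-k}$ where $\R_{-\infty}$ sits inside $[0,\infty)$ via $\exp$, I would first reduce to a local chart. In such a chart one must show a psh function is continuous. The idea is: on $U \cap N_\R$ the function $\varphi$ is convex and \emph{finite} on the interior, hence automatically continuous there; the only issue is continuity up to and along the boundary strata. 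For a point $x \in N(\sigma) \cap U$, I would use condition \ref{item:3} together with convexity to sandwich the values of $\varphi$ near $x$. Concretely, for a sequence $p_j + \mu_j v \to x$ (with $v \in \relint(\sigma)$), condition \ref{item:3} gives an upper bound $\varphi(p_j + \mu_j v) \le \varphi(p_j)$, and usc at $x$ combined with convexity along the half-line provides the matching lower bound; monotonicity of $\varphi$ along $[p, p+\infty v]$ (which follows from convexity plus \ref{item:3}, since a bounded-above convex function of $\mu$ on $[0,\infty)$ that satisfies $\varphi(p+\infty v) \le \varphi(p)$ is nonincreasing) is the key extra ingredient making the limit exist and equal $\varphi(x)$.

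The implication $\ref{continuity condition} \Rightarrow \ref{strongly usc condition}$ is immediate: a continuous function is strongly usc because the $\limsup$ over $x \to a$ with $x \notin E$ equals $\varphi(a)$ for \emph{any} subset $E$ not containing a neighborhood of $a$, and a null set $E \subset U$ cannot contain a neighborhood of any point since $N_\R$ is dense in $N_\Sigma$ and Lebesgue-positive locally. The implication $\ref{strongly usc condition} \Rightarrow \ref{lim sup condition}$ uses that $U \setminus N_\R$ is contained in the union of the strata $N(\sigma)$ with $\sigma \neq \{0\}$, each of which is a null set in $U$ (it has positive codimension in $N_\Sigma$), so the complement $U \cap N_\R$ is a full-measure set and strong upper-semicontinuity at $x \in U \setminus N_\R$ forces $\varphi(x) = \limsup_{y \to x,\, y \in U \cap N_\R} \varphi(y)$, which is exactly \eqref{eq:15}.

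For $\ref{lim sup condition} \Rightarrow \ref{psh condition}$, conditions \ref{item:1} and \ref{item:2} of Definition \ref{def:1} must be recovered. Condition \ref{item:2} is given. For usc at a boundary point $x$, I would note that \eqref{eq:15} together with convexity of $\varphi|_{U\cap N_\R}$ and the already-established local monotonicity along compactified half-lines lets one control $\limsup_{z \to x} \varphi(z)$ where $z$ ranges over all of $U$ (not just $U \cap N_\R$): given $z$ near $x$ lying in a stratum $N(\tau)$, one perturbs $z$ into $N_\R$ along a direction pointing ``into'' the fan so that $\varphi(z) \le \varphi(\text{nearby interior point})$, reducing to the interior $\limsup$. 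This then gives $\limsup_{z\to x}\varphi(z) \le \limsup_{y \in U\cap N_\R, y\to x}\varphi(y) = \varphi(x)$, i.e.\ usc. Condition \ref{item:3} follows similarly: for $[p, p+\infty v] \subset U$, apply \eqref{eq:15} at the point $x = p + \infty v$ and use that $\varphi(p + \mu v) \le \varphi(p)$ for large finite $\mu$ by convexity in $\mu$ (the convex function $\mu \mapsto \varphi(p+\mu v)$ on $[0,\infty)$ must be nonincreasing once we know $\varphi$ is bounded above near the boundary point, which \eqref{eq:15} supplies).

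The main obstacle will be the implication $\ref{psh condition} \Rightarrow \ref{continuity condition}$, specifically proving continuity \emph{across} the boundary strata and, within that, the sub-point that a psh function is finite (not $-\infty$) along compactified half-lines reaching the boundary — one must rule out $\varphi \equiv -\infty$ phenomena that are allowed in the complex case but excluded here. The resolution is that condition \ref{item:3} bounds boundary values by \emph{finite} interior values (a convex function on an open subset of $N_\R$ not identically $-\infty$ is finite everywhere on the interior), so $\varphi$ takes finite values on each stratum it meets, and then the sandwiching argument via monotonicity along half-lines upgrades finiteness plus usc to genuine continuity. Care is needed to handle points lying on the intersection of several strata and to choose the auxiliary direction $v$ uniformly in a neighborhood, which is where the explicit structure of $N_\Sigma$ from \cite{burgos-gubler-jell-kuennemann1} enters.
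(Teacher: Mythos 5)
Your cycle of implications and the key mechanisms (monotone decrease of a convex function that is bounded near infinity along a ray $[p,p+\infty v]$, the basic neighbourhoods $U(\sigma,\Omega,p_0)$ of $N_\Sigma$, and the reduction of an arbitrary approach to a boundary point to compositions of such rays) are exactly those of the paper, so the overall plan is sound. However, your resolution of what you yourself call the main obstacle rests on a false claim: condition \ref{item:3} of Definition \ref{def:1} gives only the \emph{upper} bound $\varphi(p+\infty v)\le\varphi(p)$, so it does not force finiteness on boundary strata. For example, with $N=\Z$, $\Sigma=\{\{0\},\R_{\ge 0}\}$ and $N_\Sigma=(-\infty,\infty]$, the function $\varphi(u)=-u$, $\varphi(\infty)=-\infty$, is psh and is identically $-\infty$ on the boundary stratum. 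What is actually true (Lemma \ref{lemm:2}) is that on each connected component of each stratum $\varphi$ is either finite convex or identically $-\infty$, and the continuity proof must treat $\varphi(p)=-\infty$ as a genuine separate case: there, continuity at $p$ in the topology of $\Rinf$ means $\varphi(q)\to-\infty$, which one obtains by choosing $M$ with $\varphi(p_0+Mv)<-1/\varepsilon$ and propagating this bound over a whole basic neighbourhood via monotonicity along the rays in directions of $\sigma$. The paper runs exactly this case in parallel with the finite one; as written, your sketch omits it.

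A second point worth pinning down in the finite case: to get the two-sided bound $|\varphi(q)-\varphi(p)|\le\varepsilon$ on an entire basic neighbourhood, rather than along a single ray, the paper uses that the decreasing family of convex functions $\varphi_\mu(w)=\varphi(w+\mu v)$ converges to the finite convex limit $\varphi_\infty$ \emph{uniformly on compact subsets} of $\Omega$ (\cite[Theorem 10.8]{rockafellar1970}). Your sketch acknowledges that the auxiliary direction must be chosen "uniformly in a neighbourhood", but this locally uniform convergence (or, equivalently, the continuity of $\varphi_\infty=\varphi\circ\pi_\sigma$ on the stratum supplied by Lemma \ref{lemm:2}) is the precise ingredient that makes the sandwich close for all points of the neighbourhood simultaneously; note also that upper semicontinuity supplies the upper bound on $\varphi$ near $p$, not the lower one, which comes entirely from the monotonicity along rays together with this uniformity.
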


The proof will be given after a series of lemmata.

\begin{lem}\label{lemm:1} 
Let $\varphi\colon (M,\infty)\to\R$ be a convex function for some $M\in \R$ such that $\lim _{x \to \infty}\varphi(x) <\infty$. 
Then ${\varphi}$ is monotone decreasing. 
\end{lem}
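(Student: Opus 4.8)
The plan is to argue by contradiction, using only the defining convexity inequality together with the elementary fact that the secant slopes of a convex function are monotone. So I would suppose that $\varphi$ is \emph{not} monotone decreasing; then I can pick $x_1 < x_2$ in $(M,\infty)$ with $\varphi(x_1) < \varphi(x_2)$, and set $s \coloneqq \tfrac{\varphi(x_2)-\varphi(x_1)}{x_2-x_1} > 0$, the slope of the secant through the two corresponding points of the graph.

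The next step is to push this positive slope out to infinity. For an arbitrary $x > x_2$, I would write $x_2$ as the convex combination $x_2 = (1-t)x_1 + t x$ with $t \coloneqq \tfrac{x_2-x_1}{x-x_1} \in (0,1)$, so that convexity gives $\varphi(x_2) \le (1-t)\varphi(x_1) + t\,\varphi(x)$. Solving this inequality for $\varphi(x)$ (divide by $t>0$ and use $1/t = (x-x_1)/(x_2-x_1)$) yields
\[
\varphi(x)\ \ge\ \varphi(x_1) + s\,(x-x_1).
\]
Since $s>0$, the right-hand side tends to $+\infty$ as $x\to\infty$, hence $\liminf_{x\to\infty}\varphi(x) = +\infty$. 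This contradicts the hypothesis $\lim_{x\to\infty}\varphi(x) < \infty$, and therefore $\varphi$ must be monotone decreasing.

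I expect no genuine obstacle here: the statement is elementary, and the only points requiring a moment of care are the algebraic rearrangement of the convexity inequality and the observation that the hypothesis already presupposes the existence of $\lim_{x\to\infty}\varphi(x)$ in $[-\infty,\infty)$, so that the derived lower bound $\varphi(x)\to+\infty$ gives a clean contradiction. (One could equivalently phrase the argument via the monotonicity of the right derivative $\varphi'_+$, but the secant-slope version avoids any discussion of differentiability.)
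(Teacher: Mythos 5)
Your proof is correct, and it is the standard argument: the paper itself gives no proof (it states ``This is an easy exercise and will be left to the reader''), so there is nothing to diverge from. The secant-slope computation is carried out correctly, the negation of ``monotone decreasing'' is handled properly, and the contradiction with the hypothesis $\lim_{x\to\infty}\varphi(x)<\infty$ is clean.
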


\begin{proof} 
This is an easy exercise and will be left to the reader.
\end{proof}

\begin{lem}\label{lemm:3}
Let $\varphi\colon U\to {\R_{-\infty}}$ be a psh function on an open subset $U$ of $N_\Sigma$ and let $\sigma \in \Sigma$. 
We consider $p\in N(\sigma )\cap U$, $v\in \relint(\sigma)$ and $q\in U\cap N_{\R}$ such that $p=q+\infty v$ and  $[q,q+\infty v] \subset U$. 
Then the function $\varphi|_{[q,q+\infty v]}$ is continuous. 
\end{lem}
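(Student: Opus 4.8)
The plan is to parametrize the compactified half line $[q,q+\infty v]$ by $[0,\infty]$ via $\mu \mapsto q+\mu v$ (the endpoint $\mu = \infty$ mapping to $p$), and to set $\psi(\mu) \coloneqq \varphi(q+\mu v)$. Continuity of $\psi$ on the open interval $(0,\infty)$ — indeed on $[0,\infty)$ — is immediate, since $\psi$ is the restriction of the convex function $\varphi|_{U\cap N_\R}$ to a line segment, hence a finite convex function of one real variable on an interval (it is finite because psh functions are, by condition \ref{item:3} of Definition \ref{def:1} together with convexity, locally bounded; in any case $\varphi(q)\in\R$ forces $\psi$ to be real-valued near any interior point by convexity), and one-variable convex functions are continuous on the interior of their domain. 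The only point at issue is therefore continuity at $\mu = \infty$, i.e. the claim that $\lim_{\mu\to\infty}\psi(\mu) = \psi(\infty) = \varphi(p)$.

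First I would observe that $\psi|_{(0,\infty)}$ is convex, so by Lemma \ref{lemm:1} (applied with $M=0$, after checking the hypothesis $\lim_{\mu\to\infty}\psi(\mu)<\infty$) it is monotone decreasing for large $\mu$ and the limit $\ell \coloneqq \lim_{\mu\to\infty}\psi(\mu)$ exists in $\Rinf$; the hypothesis $\lim_{\mu\to\infty}\psi(\mu)<\infty$ holds because $\psi$ is decreasing and bounded below near $\infty$ — actually we only need that it is not $+\infty$, which is clear since $\psi$ is finite on $(0,\infty)$ — so Lemma \ref{lemm:1} applies and gives $\ell\le\psi(\mu_0)<\infty$ for any fixed $\mu_0$. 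Next, condition \ref{item:3} of Definition \ref{def:1} gives directly $\psi(\infty) = \varphi(p) = \varphi(q+\infty v) \le \varphi(q) = \psi(0)$, but more usefully, applying condition \ref{item:3} with the base point $q+\mu_0 v$ in place of $q$ (legitimate since $q+\mu_0 v \in U\cap N_\R$ and $[q+\mu_0 v,\, q+\infty v]\subset[q,q+\infty v]\subset U$) yields $\varphi(p) \le \varphi(q+\mu_0 v) = \psi(\mu_0)$ for every $\mu_0 > 0$; letting $\mu_0\to\infty$ gives $\varphi(p)\le\ell$. For the reverse inequality $\ell \le \varphi(p)$, I would use upper-semicontinuity of $\varphi$ (condition \ref{item:1}): since $q+\mu v \to p$ in $N_\Sigma$ as $\mu\to\infty$, usc gives $\limsup_{\mu\to\infty}\psi(\mu) \le \varphi(p)$, i.e. $\ell\le\varphi(p)$. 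Combining, $\ell = \varphi(p) = \psi(\infty)$, which is exactly continuity of $\psi$ at $\infty$.

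I expect the main (minor) obstacle to be bookkeeping around the possibility that $\varphi$ takes the value $-\infty$ and around finiteness: one must be slightly careful that $\psi$ is genuinely real-valued on $(0,\infty)$ so that Lemma \ref{lemm:1} can be invoked — this follows from the local boundedness of psh functions, or directly: if $\psi(\mu_1) = -\infty$ for some $\mu_1 \in (0,\infty)$ then by convexity $\psi \equiv -\infty$ on $[\mu_1,\infty)$, whence $\ell = -\infty$, but then the inequality $\varphi(p)\le\psi(\mu_0)$ combined with $\psi$ decreasing still forces $\varphi(p) = -\infty = \ell$ and continuity at $\infty$ holds trivially; and on $(0,\mu_1]$ convexity with $\psi(0) = \varphi(q)\in\Rinf$ handles the rest. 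Once this dichotomy is dispatched, the argument above closes. Everything else is a routine application of Definition \ref{def:1}, Lemma \ref{lemm:1}, and the elementary fact that finite convex functions on real intervals are continuous.
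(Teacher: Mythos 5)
Your proof is correct and follows essentially the same route as the paper's: continuity on $[q,p)$ comes from convexity of one-variable convex functions, and continuity at $p$ comes from sandwiching $\varphi(p)$ between the upper-semicontinuity bound $\limsup_{\mu\to\infty}\varphi(q+\mu v)\le\varphi(p)$ and the bound $\varphi(q+\mu_0 v)\ge\varphi(p)$ obtained by applying condition \ref{item:3} of Definition \ref{def:1} at the base points $q+\mu_0 v$. The additional care you take with the value $-\infty$ and with the existence of the limit via Lemma \ref{lemm:1} is sound but only makes explicit what the paper's two-line argument handles implicitly.
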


\begin{proof}
We equip $[q,p]$ with the subspace topology from $U$.
By convexity, $\varphi|_{[q,p)}$ is continuous \cite[Section 10]{rockafellar1970}. 
Therefore we only need to show the continuity at the point $p$. 
Combining upper semicontinuity and condition \ref{item:3} in Definition \ref{def:1} we deduce that
\begin{displaymath}
\varphi(p)\ge \lim_{\mu  \to \infty} \varphi(q+\mu  v) \ge \varphi(p)
\end{displaymath}
proving continuity. 
\end{proof}

\begin{lem}\label{lemm:2}
Let $\varphi$ be a psh function on an open subset $U$ of $N_\Sigma$ and let $\sigma \in \Sigma$ such that $U \cap N(\sigma)$ is connected.  
Then $\varphi|_{U\cap N(\sigma )}$ is either identically $-\infty$ or  a finite convex function.
\end{lem}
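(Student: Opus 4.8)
The plan is to split the proof into two stages: first show that the restriction $\varphi|_{U\cap N(\sigma)}$ is convex as a function on the open subset $U\cap N(\sigma)$ of the real vector space $N(\sigma)=N_\R/\langle\sigma\rangle_\R$ (in the sense of Definition \ref{definition-psh-open-case}), and then deduce the stated alternative from the connectedness hypothesis by an elementary convex‑analysis argument. For the convexity stage, the case $\sigma=\{0\}$ is exactly Definition \ref{def:1}\ref{item:2}, so assume $\dim\sigma\ge 1$, fix $v\in\relint(\sigma)$ and a linear section $s$ of the quotient map $\pi_\sigma\colon N_\R\to N(\sigma)$. Given $p_0,p_1\in U\cap N(\sigma)$ with $[p_0,p_1]\subset U\cap N(\sigma)$, set $q_j\coloneqq s(p_j)$ and, for $t\in[0,1]$, $p_t\coloneqq(1-t)p_0+tp_1$ and $q_t\coloneqq s(p_t)=(1-t)q_0+tq_1$. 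Recall from \cite[\S 3.1]{burgos-gubler-jell-kuennemann1} that $q_j+\mu v\to q_j+\infty v=\pi_\sigma(q_j)=p_j$ in $N_\Sigma$ as $\mu\to\infty$; using continuity of the $N_\R$‑action on $N_\Sigma$ and compactness of $[0,1]$, the translated segments $[q_0+\mu v,\,q_1+\mu v]=\{q_t+\mu v\mid t\in[0,1]\}$ converge uniformly in $t$ to $[p_0,p_1]\subset U$, hence lie in $U\cap N_\R$ for all $\mu$ large enough. For such $\mu$, convexity of $\varphi|_{U\cap N_\R}$ gives $\varphi(q_t+\mu v)\le(1-t)\varphi(q_0+\mu v)+t\varphi(q_1+\mu v)$.

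Next I would let $\mu\to\infty$ in this inequality. On the right, upper semicontinuity (Definition \ref{def:1}\ref{item:1}) yields $\limsup_\mu\varphi(q_j+\mu v)\le\varphi(p_j)$ for $j=0,1$. On the left, applying condition \ref{def:1}\ref{item:3} at the base point $q_t+\mu_0 v\in N_\R$ — which is legitimate for $\mu_0$ large because continuity of $\mu\mapsto q_t+\mu v$ on $[0,\infty]$ forces $\{q_t+\mu v\mid\mu\in[\mu_0,\infty]\}\subset U$ for $\mu_0\gg 0$ — gives $\varphi(p_t)=\varphi((q_t+\mu_0 v)+\infty v)\le\varphi(q_t+\mu_0 v)$, and therefore $\varphi(p_t)\le\liminf_\mu\varphi(q_t+\mu v)$. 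Combining the two bounds with the subadditivity of $\limsup$ gives $\varphi(p_t)\le(1-t)\varphi(p_0)+t\varphi(p_1)$, which is the desired convexity. This is essentially the computation underlying Lemma \ref{lemm:3}.

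For the second stage, assume $\varphi|_{U\cap N(\sigma)}\not\equiv-\infty$ and put $A\coloneqq\{x\in U\cap N(\sigma)\mid\varphi(x)=-\infty\}$ and $B\coloneqq(U\cap N(\sigma))\setminus A$, the locus where $\varphi$ takes a finite value. Both sets are open. Indeed, if $x_0\in U\cap N(\sigma)$ with a ball $B(x_0,r)\subset U\cap N(\sigma)$, then for $x\in B(x_0,r/2)$ the point $x$ is the midpoint of $[x_0,\,2x-x_0]\subset B(x_0,r)$, so convexity gives $\varphi(x)\le\tfrac12\varphi(x_0)+\tfrac12\varphi(2x-x_0)$; since $\varphi$ takes values in $\Rinf$, taking $x_0\in A$ forces $\varphi(x)=-\infty$, so $A$ is open. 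Likewise, if $x_0\in B$ then for $x\in B(x_0,r/2)$ the point $x_0$ is the midpoint of $[x,\,2x_0-x]\subset B(x_0,r/2)$, whence $\varphi(x_0)\le\tfrac12\varphi(x)+\tfrac12\varphi(2x_0-x)$ with $\varphi(2x_0-x)<+\infty$; as $\varphi(x_0)>-\infty$ this forces $\varphi(x)>-\infty$, so $B$ is open. Since $U\cap N(\sigma)$ is connected and equals $A\sqcup B$, one of the two is empty: either $\varphi\equiv-\infty$ on $U\cap N(\sigma)$, or $\varphi|_{U\cap N(\sigma)}$ is a finite convex function, as claimed.

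The main obstacle is the first stage, and within it the topological bookkeeping: one must check that the translated segments $[q_0+\mu v,\,q_1+\mu v]$ eventually lie in $U$ \emph{uniformly in} $t$, and that far tails of each half‑line $\mu\mapsto q_t+\mu v$ lie in $U$, both of which rest on the continuity properties of the partial compactification $N_\Sigma$ and of the $N_\R$‑action on it established in \cite[\S 3.1]{burgos-gubler-jell-kuennemann1}. Once convexity of $\varphi|_{U\cap N(\sigma)}$ is in hand, the dichotomy in the second stage is routine.
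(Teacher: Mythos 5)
Your proof is correct and follows essentially the same route as the paper: lift the points of $N(\sigma)$ into $U\cap N_\R$ along the direction $v\in\relint(\sigma)$, apply convexity of $\varphi|_{U\cap N_\R}$ on the translated segments, and pass to the limit $\mu\to\infty$ using upper semicontinuity together with condition \ref{item:3} of Definition \ref{def:1} (the paper packages this limit step via Lemma \ref{lemm:3}, while you use one-sided $\limsup$/$\liminf$ bounds, which is equivalent). The only other difference is that you write out the standard argument that a convex $\Rinf$-valued function on a connected open set is finite or identically $-\infty$, which the paper simply cites as known.
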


\begin{proof}
  We start by showing that $\varphi|_{U\cap N(\sigma )}$ is
    convex. Let $p,q,r\in U\cap N(\sigma )$ and $\lambda \in (0,1)$ with
    $q=\lambda p+(1-\lambda)r$ and $[p,r] \subset U$. We choose  $v\in
    \relint(\sigma )$ and
    points $p'$, $q'$ and $r'$ in $U\cap N_{\R}$ such 
    that $q'=\lambda p'+(1-\lambda)r'$,
    \begin{displaymath}
      p=p'+\infty v,\quad  q=q'+\infty v,\quad  r=r'+\infty v 
    \end{displaymath}
    and for every $\mu \ge 0$ the inclusion $[p+\mu v,r+\mu v]\subset
    U$ holds. This can easily be achieved by the openness of $U$ and
    the compactness of $[p,r]$.    
    Then
    \begin{displaymath}
      \varphi(q'+ \mu v)\le \lambda \varphi(p'+ \mu v)+(1-\lambda )\varphi(r'+ \mu v)
    \end{displaymath}
    for all $\mu\geq 0$.
    Therefore, using Lemma \ref{lemm:3}, we get
    \begin{displaymath}
      \varphi(q)=\lim_{\mu \to \infty} \varphi(q'+ \mu v) 
      \le \lim_{\mu \to \infty}\bigr(\lambda \varphi(p'+ \mu
      v)+(1-\lambda )\varphi(r'+ \mu v)\bigr)
      \le \lambda \varphi(p)+(1-\lambda )\varphi(r)
    \end{displaymath}
    showing that $\varphi|_{U\cap N(\sigma )}$ is
    convex. The claim follows from the fact that a convex function on a
    connected open set with values in $\R_{-\infty}$ is either finite
    or identically $-\infty$.  
\end{proof}

\begin{proof}[Proof of Theorem \ref{thm:1}] 
We recall first some useful facts. 
Fix $p \in N_\Sigma$, then there is a unique $\sigma \in \Sigma$ with $p \in N(\sigma)$ and we denote 
by $\pi_\sigma:N_\R \to N(\sigma)$ the quotient map.  
For $p_0 \in N_\R$ with $\pi_\sigma(p_0)=p$ and an open bounded convex neighbourhood $\Omega$ of $p_0$ {in $N_\R$},   
set  
\begin{equation}\label{eq:20}
U(\sigma ,\Omega,p_0) \coloneqq \coprod _{\tau  \prec \sigma  }\pi _{\tau }(\Omega+\sigma){\subset N_\Sigma }.
\end{equation}
It follows from the second description of the topology of $N_\Sigma$ in \cite[Remark 3.1.2]{burgos-gubler-jell-kuennemann1} that the sets $U(\sigma ,\Omega,p_0)$ form a basis of \emph{open} neighbourhoods of 
$p$ in ${N_\Sigma}$. Note that the strata $\pi _{\tau }(\Omega+\sigma)$ of the basic open sets $U(\sigma ,\Omega,p_0)$ are open convex subsets  of $N(\tau)$. We will use often that $[u,u+\infty v] \subset U(\sigma ,\Omega,p_0)$ for any $u \in U(\sigma ,\Omega,p_0) \cap N_\R$ and any $v \in \sigma$.

To prove \ref{psh condition} $\Rightarrow$
\ref{continuity condition},  we have to show continuity of a psh function
$\varphi\colon U \to \Rinf$  in $p \in U$. 
We choose an open neighbourhood $U(\sigma,\Omega,p_0)$ of $p$ in $U$ as above, fix $v\in \relint(\sigma )$  and let $\varepsilon>0$. 
For $\mu \in [0,\infty]$, let $\varphi_{\mu }\colon
\Omega\to \R_{-\infty}$, $q \mapsto \varphi(q+\mu v)$. 
By Lemmata \ref{lemm:1} and \ref{lemm:3}, we get a decreasing net $(\varphi_\mu)_{\mu \in \R_+}$ of convex functions on $\Omega$ which converges pointwise to $\varphi_\infty$. 
Since $\pi_\sigma(\Omega)$ is an open convex neighbourhood of $p=p_0+\infty v$ in $N(\sigma)$, Lemma \ref{lemm:2} shows that $\varphi_\infty= \varphi \circ (\pi_\sigma|_\Omega)$ is either a finite  convex function  or identically $-\infty$.

We assume first $\varphi(p)>-\infty$ and hence $\varphi_\infty$ is a finite function on $\Omega$. 
By \cite[Theorem 10.8]{rockafellar1970}, the convergence of
$\varphi_{\mu  }$ to $\varphi_\infty$
is uniform on 
compact subsets of $\Omega$.  
Thus we can find a compact neighborhood $W$ of $p_0$ in $\Omega$  and $M \gg 0$ such that 
\begin{equation}\label{eq:1}
\varphi(p)-\varepsilon = \varphi_\infty(p_0)-\varepsilon \le \varphi_\mu(w)=\varphi(w+\mu v) \le \varphi_\infty(p_0)+\varepsilon = \varphi(p)+\varepsilon  
\end{equation}
for all $\mu \in [M,\infty]$ and all $w\in W$. We pick an open convex neighbourhood $\Omega_0$ of $p_0$ contained in $W$ and set $V \coloneqq U(\sigma,\Omega_0+Mv,p_0+M v)$. Since $\pi_\sigma(p_0+\mu v)=p$, we see that $V$ is a basic open neighbourhood of $p$ in $U$. Continuity in $p$ follows from the claim  
\begin{equation} \label{epsilon continuity}
-\varepsilon + \varphi(p) \leq \varphi(q) \leq \varphi(p) + \varepsilon
\end{equation}
for any $q \in V$. 
There is a unique $\tau \prec \sigma$ with $q \in N(\tau)$. Using \eqref{eq:20}, there is $q_0 \in \Omega_0$ and $s \in \sigma$ such that $q=\pi_\tau(q')$ for $q' \coloneqq q_0+Mv+s$. 
We pick $v' \in \relint(\tau)$. Then $q= q'+\infty v'$ and $[q',q'+\infty v'] \subset V=U(\sigma,\Omega_0+Mv,p_0+M v)$. By Lemma \ref{lemm:3}, the restriction of $\varphi$ to  $[q',q'+\infty v']$ is continuous and hence it is enough to prove \eqref{epsilon continuity} for $q \in V \cap N_\R$. Then we have $q=q_0+Mv+s$. 
We have $[q,q+\infty v] \subset V$ and $[q_M,q_M+\infty s] \subset V$ for $q_M \coloneqq q_0+Mv \in V$.
By Lemmata \ref{lemm:1} and \ref{lemm:3}, the restrictions of $\varphi$ to $[q_M,q_M+\infty s]$ and to $[q,q+\infty v]$ are continuous and decreasing. 
Together with \eqref{eq:1} for $w=q_0$, this shows
$$\varphi(p)-\varepsilon \leq \varphi_\infty(q) = \varphi(q+\infty v)   \leq \varphi(q) = \varphi(q_0+Mv+s)  \leq \varphi(q_M) \leq \varphi(p)+\varepsilon.$$

We assume now  $\varphi(p)=-\infty$. 
By Lemma \ref{lemm:3}, there is  $M \gg 0$ such that
$\varphi_M(p_0)=\varphi(p_{0}+Mv)<-1/\varepsilon$.
Since $\varphi_M$ is convex, it is continuous. 
There is an open convex neighbourhood $\Omega_0$ of $p_0$ in $\Omega$ such that 
\begin{equation} \label{eq:1-}
\varphi_M(w)  =
\varphi(w+ M v) <-1/\varepsilon 
\end{equation}
for all $w\in \Omega_0$.
Let $q \in V \coloneqq U(\sigma,\Omega_0+ Mv,p_0+ MV)$. Then $q=\pi_\tau(q')$ for $q' \coloneqq q_M+s$,   $q= q'+\infty v'$ and $[q',q'+\infty v'] \subset V$ as in the first case. 
The same arguments show that the restrictions of $\varphi$ to $[q',q'+\infty v']$ and to $[q_M,q_M+\infty s]$ are continuous and decreasing, hence 
$$\varphi(q) = \varphi(q'+\infty v')\leq \varphi(q')= \varphi(q_M+s)\leq \varphi(q_M)=\varphi(q_0+Mv)<-1/\varepsilon$$ 
by using \eqref{eq:1-} for $w=q_0 \in \Omega$ on the right. This shows continuity of $\varphi$ in $p$.

Obviously, \ref{continuity condition} implies \ref{strongly usc
    condition} and \ref{strongly usc
    condition} implies \ref{lim sup condition}. 
It remains to prove that
\ref{lim sup condition}  yields 
\ref{psh condition} and so we assume that 
$\varphi|_{U\cap N_\R}$ is convex and \eqref{eq:15} is
satisfied. Note that \eqref{eq:15} easily yields that $\varphi$ is usc. 
It remains to check \ref{item:3} of Definition \ref{def:1}. 
We argue by contradiction and assume that there is $p' \in N_\R$ and
$v\in \relint(\sigma)$ 
for some $\sigma \in \Sigma$ such that $[p',p'+\infty v]
\subset U$ and $\varphi(p'+\infty v)> \varphi(p')$.

The function $g \colon \R_{\geq 0} \to \R$, $\mu \mapsto
\varphi(p'+\mu v)$ is convex. Using \eqref{eq:15} and $\varphi(p'+\infty v) \neq \infty$, we get $\lim_{\mu \to \infty} g(\mu)<
\infty$. 
Lemma \ref{lemm:1} yields that $g$ is monotone
decreasing. 	
Note that $p \coloneqq p'+ \infty v =\pi_\sigma(p')\in N(\sigma)$. There is $p_0 \in N_\R$  with $\pi_\sigma(p_0)=p$ and  a bounded open convex neighbourhood $\Omega$ of $p_0$ such that 
$U(\sigma,\Omega,p_0) \subset U$. There is $M \gg 0$ such that $p_M' \coloneqq p'+Mv \in U(\sigma,\Omega,p_0)$. 
By assumption, there is $\varepsilon > 0$ such that 
$\varphi(p') < \varphi(p)- \varepsilon$. Since $\varphi(p_M')\leq \varphi(p')$ and 
$\varphi|_{U\cap N_\R}$ continuous,  
there is a bounded open convex neighbourhood $\Omega_M$ of $p_M'$ in $U(\sigma,\Omega,p_0) \cap N_\R$ such that $\varphi(w)  < \varphi(p)-\varepsilon$ for all $w \in \Omega_M$.
We claim that
\begin{equation} \label{last contradiction}
\varphi(q)  < \varphi(p)-\varepsilon
\end{equation}
for all $q$ in the  open neighbourhood $V \coloneqq U(\sigma,\Omega_M,p_M')$ of $p$ in $U$. This contradicts \eqref{eq:15}. 

To prove \eqref{last contradiction}, let $q \in V$. Then there is $\tau \prec \sigma$, $q_M \in \Omega_M$ and $s \in \sigma$ such that $q = \pi_\tau(q')$ for $q' \coloneqq q_M+s$. Note that $[q_M,q_M+\infty s]\subset V$. For $v' \in \relint(\tau)$, we have $q= q'+ \infty v'$ and $[q',q'+\infty v'] \subset V$.
By  \eqref{eq:15} and  Lemma \ref{lemm:1} again, the restrictions of $\varphi$ to $[q',q'+\infty v']$ and to $[q_M,q_M+\infty s]$ are decreasing.  Using $q_M \in \Omega_M$, we get
$$\varphi(q) \leq \varphi(q')=\varphi( q_M+s) \leq \varphi(q_M)  <  \varphi(p)-\varepsilon$$
proving \eqref{last contradiction}. This finishes the proof of ~\ref{lim sup condition} $\Rightarrow$ \ref{psh condition}.
\end{proof}

We give now two results which hold similarly for complex psh functions.

\begin{cor} \label{extension of psh functions}
Let $U$ be an open subset of $N_\Sigma$ and let $\varphi$ be a psh
function on $U \cap N_\R$. If  $\varphi$ is locally bounded from above
near the boundary $U \setminus U \cap N_\R$, then $\varphi$ extends
uniquely to a psh function on $U$.	
\end{cor}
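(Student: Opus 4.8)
The plan is to read off both the extension and its uniqueness from the characterization of psh functions in Theorem \ref{thm:1}\,\ref{lim sup condition}. First I would record that $U\cap N_\R$ is dense in $U$: by \eqref{eq:20}, every basic open neighbourhood $U(\sigma,\Omega,p_0)$ of a point of $N_\Sigma$ contains, as the stratum indexed by the face $\{0\}\prec\sigma$, the set $\pi_{\{0\}}(\Omega+\sigma)=\Omega+\sigma\subset N_\R$, which is nonempty. Hence every nonempty open subset of $N_\Sigma$ meets $N_\R$, and in particular for every $x\in U\setminus N_\R$ there exist points of $U\cap N_\R$ converging to $x$, so the expression $\limsup_{y\to x,\,y\in U\cap N_\R}\varphi(y)$ is taken over a nonempty filter and makes sense in $[-\infty,+\infty]$.

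Next I would define the candidate extension $\hat\varphi\colon U\to\Rinf$ by $\hat\varphi|_{U\cap N_\R}\coloneqq\varphi$ and $\hat\varphi(x)\coloneqq\limsup_{y\to x,\,y\in U\cap N_\R}\varphi(y)$ for $x\in U\setminus N_\R$. The hypothesis that $\varphi$ is locally bounded from above near $U\setminus(U\cap N_\R)$ means precisely that for each such $x$ there is a neighbourhood $W$ of $x$ in $U$ and a constant $C$ with $\varphi\le C$ on $W\cap N_\R$; hence $\hat\varphi(x)\le C<\infty$, so $\hat\varphi$ does take values in $\Rinf=\R\cup\{-\infty\}$.

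Then I would check that $\hat\varphi$ is psh by verifying condition \ref{lim sup condition} of Theorem \ref{thm:1}. Its restriction to $U\cap N_\R$ is $\varphi$, which is convex: a psh function on the open subset $U\cap N_\R$ of $N_\R$ is, by Definition \ref{def:1}, exactly a usc convex function (condition \ref{item:3} there being vacuous since $[p,p+\infty v]$ is not contained in $N_\R$ unless $v=0$). Moreover, for each $x\in U\setminus N_\R$ we have, using $\hat\varphi=\varphi$ on $U\cap N_\R$,
\[
\hat\varphi(x)=\limsup_{\substack{y\to x\\ y\in U\cap N_\R}}\varphi(y)=\limsup_{\substack{y\to x\\ y\in U\cap N_\R}}\hat\varphi(y),
\]
which is \eqref{eq:15} for $\hat\varphi$. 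Thus Theorem \ref{thm:1} applies and $\hat\varphi$ is psh (and in particular continuous) on $U$, so it is a psh extension of $\varphi$.

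Finally, uniqueness follows from the same characterization: if $\psi$ is any psh function on $U$ with $\psi|_{U\cap N_\R}=\varphi$, then by Theorem \ref{thm:1}\,\ref{lim sup condition} we have $\psi(x)=\limsup_{y\to x,\,y\in U\cap N_\R}\psi(y)=\limsup_{y\to x,\,y\in U\cap N_\R}\varphi(y)=\hat\varphi(x)$ for all $x\in U\setminus N_\R$, whence $\psi=\hat\varphi$. There is no genuinely hard step here; the only points requiring care are that the $\limsup$ defining $\hat\varphi$ does not attain $+\infty$ — which is exactly the local boundedness hypothesis — and that $U\cap N_\R$ is dense, so that this $\limsup$ is over a nonempty filter and Theorem \ref{thm:1}\,\ref{lim sup condition} is genuinely applicable.
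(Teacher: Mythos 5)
Your proof is correct and follows essentially the same route as the paper: define the extension by $\limsup$ over $U\cap N_\R$, use the local boundedness to see the values stay in $\Rinf$, and invoke the characterization \ref{lim sup condition} of Theorem \ref{thm:1} both to conclude that the extension is psh and (for uniqueness) that any psh extension is forced to equal this $\limsup$. You simply spell out the details (density of $U\cap N_\R$, vacuity of condition \ref{item:3} on $U\cap N_\R$) that the paper leaves implicit.
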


\begin{proof}
We define the extension of $\varphi$ to $a \in U \setminus N_\R$ by
\begin{equation} \label{def:2-var}
\varphi(a) =  \limsup_{\substack{x \in U \cap N_\R\\x \to a }} \varphi(x).
\end{equation}	
Using that $\varphi$ is locally bounded near the boundary, we see that
$\varphi(a)<\infty$.
Since $\varphi$ is psh on $U\cap N_\R$, it is convex on $U\cap
N_\R$. By Theorem \ref{thm:1}, we deduce that $\varphi$ is psh on
$U$. 
\end{proof}

\begin{prop} \label{maximum principle}
If $\Sigma$ is a complete fan, then any psh function on $N_\Sigma$ is constant.	
\end{prop}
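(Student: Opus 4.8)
The plan is to prove first that $\varphi$ is constant on the dense open stratum $N_\R$, and then to bootstrap this to all of $N_\Sigma$ by means of the $\limsup$ characterization of psh functions in Theorem~\ref{thm:1}. So let $\varphi$ be a psh function on $U=N_\Sigma$. Applying Lemma~\ref{lemm:2} with $\sigma$ the zero cone, so that $N(\sigma)=N_\R$ is connected, we find that $\varphi|_{N_\R}$ is either identically $-\infty$ or a finite convex function. In the first case $\varphi$ is already constant: by part~\ref{lim sup condition} of Theorem~\ref{thm:1} we have $\varphi(x)=\limsup_{y\in N_\R,\,y\to x}\varphi(y)=-\infty$ for every $x\in N_\Sigma\setminus N_\R$. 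Hence we may assume $\varphi|_{N_\R}\colon N_\R\to\R$ is finite and convex, and in particular continuous.

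Next I would fix arbitrary points $p,q\in N_\R$ and set $v\coloneqq q-p\in N_\R$. This is the only place where completeness of $\Sigma$ enters: since $\Sigma$ is complete we have $v\in|\Sigma|$, so the boundary point $p+\infty v$ exists in $N_\Sigma$ and $[p,p+\infty v]\subset N_\Sigma=U$, which makes condition~\ref{item:3} of Definition~\ref{def:1} applicable to this half-line. The function $\mu\mapsto\varphi(p+\mu v)$ is convex on $\R$, being the restriction of the convex function $\varphi|_{N_\R}$ to an affine line, and by continuity of $\varphi$ from Theorem~\ref{thm:1} together with condition~\ref{item:3} we get $\lim_{\mu\to\infty}\varphi(p+\mu v)=\varphi(p+\infty v)\le\varphi(p)<\infty$. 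Lemma~\ref{lemm:1} then shows that this function is monotone decreasing, whence $\varphi(q)=\varphi(p+v)\le\varphi(p)$. Exchanging the roles of $p$ and $q$ gives $\varphi(p)\le\varphi(q)$ as well, so $\varphi|_{N_\R}$ equals a constant $c\in\R$.

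Finally I would propagate to the boundary exactly as in the degenerate case above: by part~\ref{lim sup condition} of Theorem~\ref{thm:1}, for any $x\in N_\Sigma\setminus N_\R$ one has $\varphi(x)=\limsup_{y\in N_\R,\,y\to x}\varphi(y)=c$, so $\varphi\equiv c$ on all of $N_\Sigma$. I do not expect any genuine obstacle here; each step is a direct application of results already established in this section, the only points needing a little care being the bookkeeping around the value $-\infty$ (dealt with by the case split via Lemma~\ref{lemm:2}) and the role of completeness in guaranteeing $q-p\in|\Sigma|$. As an alternative to the monotonicity step one may observe that $N_\Sigma$ is compact precisely when $\Sigma$ is complete, so the continuous function $\varphi$ is bounded above on $N_\Sigma$ and a convex function on $N_\R$ bounded above is constant; this yields the same conclusion and again uses completeness in an essential way.
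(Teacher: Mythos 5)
Your proof is correct and follows essentially the same route as the paper's: restrict to lines (resp.\ rays) in $N_\R$, use condition \ref{item:3} of Definition \ref{def:1} --- applicable to every direction precisely because $\Sigma$ is complete --- together with Lemma \ref{lemm:1} to get monotone decrease and hence constancy on $N_\R$, then pass to the boundary. Your write-up is merely a bit more explicit than the paper's about the $\varphi\equiv-\infty$ case and the extension of the constant to $N_\Sigma\setminus N_\R$, which the paper leaves implicit.
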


\begin{proof}
Let $\varphi\colon N_\Sigma \to \Rinf$ be a psh function. 
For any line $\ell$ in $N_\R$, the restriction of $\varphi$ is a convex function. 
We may write $\ell = p+\R v$ for any $p \in \ell$ and a non-zero $v \in N_\R$. 
By condition \ref{item:3} in Definition \ref{def:1}, we have $\lim_{t \to \infty}f(p\pm tv)<\infty$. 
By Lemma \ref{lemm:1}, the convex functions $\varphi(p\pm tv)$ are
decreasing in $t \geq 0$ and hence constant.  
Since this holds for any line, we get the claim.
\end{proof}

\begin{prop} \label{functoriality of psh}
Let $L\colon N' \to N$ be a homomorphism of free abelian groups of finite rank, let $E\colon N_{\Sigma'}' \to N_\Sigma$ be an $L$-equivariant morphism of tropical toric varieties as in Definition \ref{new morphism of toric tropical varieties} and let $\varphi \colon U \to \Rinf$ be a function on an open subset $U$ of $N_\Sigma$.
\begin{enumerate}
\item \label{psh lifts}
If $\varphi$ is psh on $U$, then $\varphi \circ E$ is psh on $ E^{-1}(U)$.
\item \label{psh descends}
If $E$ is a surjective proper map and if $\varphi \circ E$ is psh on $ E^{-1}(U)$, then $\varphi$ is psh on $U$.
\end{enumerate}
\end{prop}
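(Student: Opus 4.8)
The plan is to reduce everything to the characterization of psh functions given in Theorem \ref{thm:1}, using criterion \ref{continuity condition}: a function is psh if and only if it is continuous and convex on the open torus part. The key structural input is that an $L$-equivariant morphism $E\colon N'_{\Sigma'} \to N_\Sigma$ restricts to the affine-linear map $L_\R\colon N'_\R \to N_\R$ on the dense strata, and more generally sends each stratum $N'(\sigma')$ affinely into the stratum $N(\sigma)$ of the cone $\sigma$ containing $L_\R(\relint(\sigma'))$; compatibility with the limits $p + \infty v$ follows from continuity of $E$ together with \cite[Lemma 3.1.4]{burgos-gubler-jell-kuennemann1}, i.e.\ $E(q + \infty v) = E(q) + \infty L_\R(v)$ whenever the relevant compactified half-lines lie in the respective open sets.

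For part \ref{psh lifts}: assume $\varphi$ is psh on $U$. By Theorem \ref{thm:1}\ref{continuity condition}, $\varphi$ is continuous on $U$ and $\varphi|_{U \cap N_\R}$ is convex. Since $E$ is continuous, $\varphi \circ E$ is continuous on $E^{-1}(U)$. On $E^{-1}(U) \cap N'_\R$ the map $E$ restricts to the affine-linear map $L_\R$, so $(\varphi \circ E)|_{E^{-1}(U) \cap N'_\R} = (\varphi|_{U \cap N_\R}) \circ L_\R$ is a composition of a convex function with an affine-linear map, hence convex. By Theorem \ref{thm:1} again, $\varphi \circ E$ is psh. (Alternatively, one can verify the three conditions of Definition \ref{def:1} directly: usc and convexity as above, and condition \ref{item:3} using $E(p + \infty v) = L_\R(p) + \infty L_\R(v)$ and the fact that $v \in |\Sigma'|$ forces $L_\R(v) \in |\Sigma|$ by properness-type considerations on the supports — but the Theorem \ref{thm:1} route is cleaner.)

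For part \ref{psh descends}: assume $E$ is surjective and proper and $\psi \coloneqq \varphi \circ E$ is psh on $E^{-1}(U)$, hence continuous there. Surjectivity of $E$ together with properness gives that $E$ is a topological quotient map (a proper continuous surjection onto a locally compact Hausdorff space is closed, hence a quotient map), so $\varphi$ is continuous on $U$ because $\varphi \circ E = \psi$ is continuous. It remains to show $\varphi|_{U \cap N_\R}$ is convex. Here I would use that $E$ restricted to the dense strata is the surjective linear map $L_\R\colon N'_\R \to N_\R$ (surjectivity of $L_\R$ follows from surjectivity of $E$ together with density of the torus orbits): given $x, y \in U \cap N_\R$ with $[x,y] \subset U \cap N_\R$ and $t \in [0,1]$, pick preimages $x', y' \in N'_\R$ under $L_\R$; by convexity of the fiber-translating structure one can choose them so that $[x',y']$ maps onto $[x,y]$ and lies in $E^{-1}(U)$ — more simply, since $L_\R$ is a linear surjection, $L_\R^{-1}(U \cap N_\R)$ is open and convex-fiberwise, and for any segment in $U \cap N_\R$ we may lift its endpoints to a segment in $E^{-1}(U) \cap N'_\R$ mapping onto it. Then $\psi$ convex on that segment and $\psi = \varphi \circ L_\R$ on it yields $\varphi((1-t)x + ty) = \psi((1-t)x' + ty') \le (1-t)\psi(x') + t\psi(y') = (1-t)\varphi(x) + t\varphi(y)$. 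Thus $\varphi|_{U \cap N_\R}$ is convex, and Theorem \ref{thm:1}\ref{continuity condition} gives that $\varphi$ is psh.

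The main obstacle I anticipate is the bookkeeping in part \ref{psh descends} of lifting a segment $[x,y] \subset U \cap N_\R$ to a segment $[x',y'] \subset E^{-1}(U) \cap N'_\R$ with $L_\R([x',y']) = [x,y]$: one must know that $E^{-1}(U) \cap N'_\R = L_\R^{-1}(U \cap N_\R)$ (which holds since $E$ is $L_\R$-equivariant and $U \cap N_\R$ is the preimage of $U$ under the inclusion of the dense stratum) and then exploit that a linear surjection admits a linear (or at least affine) section to lift the whole segment at once; alternatively one lifts the two endpoints to points in the (open) set $L_\R^{-1}(U \cap N_\R)$ lying in a common affine slice, using openness to stay inside. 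This is routine but is the only place where surjectivity and properness of $E$ are genuinely needed, and it is worth stating carefully.
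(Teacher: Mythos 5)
Part \ref{psh descends} of your proposal is essentially the paper's argument and is fine: surjectivity forces $E|_{N'_\R}\colon N'_\R \to N_\R$ to be affine and surjective, convexity descends by lifting a segment $[x,y]\subset U\cap N_\R$ to a segment $[x',y']$ with $E([x',y'])=[x,y]$ (which automatically lies in $E^{-1}(U)\cap N'_\R$, so the lifting issue you flag is immediate), and continuity descends because a proper continuous surjection onto a locally compact Hausdorff space is a quotient map; Theorem \ref{thm:1} then applies.

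Part \ref{psh lifts}, however, has a genuine gap. You assume that $E$ maps the dense stratum $N'_\R$ into the dense stratum $N_\R$, writing $(\varphi\circ E)|_{N'_\R} = (\varphi|_{U\cap N_\R})\circ L_\R$. This is false in general: by Remark \ref{toric morphism motivation}, an $L$-equivariant morphism is the tropical analogue of a translation followed by a toric morphism \emph{to a stratum} of $X_\Sigma$, so $E(N'_\R)$ is contained in $N(\sigma)$ for a single, possibly nonzero, cone $\sigma\in\Sigma$. A concrete example: $N'=N=\Z$, $\Sigma'=\{0\}$, $\Sigma=\{0,\R_{\ge 0}\}$, $L=\id$, and $E\equiv\infty$ is continuous and $L_\R$-equivariant because $N'_\R$ acts trivially on the boundary point. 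Your structural preamble, which predicts the target stratum from $L_\R(\relint(\sigma'))$ alone, misses this possibility. When $\sigma\neq\{0\}$, convexity of $\varphi|_{U\cap N_\R}$ says nothing directly about $(\varphi\circ E)|_{N'_\R}=\varphi|_{U\cap N(\sigma)}\circ(\text{affine map})$; what is needed is that the restriction of a psh function to a boundary stratum is convex. This is exactly Lemma \ref{lemm:2}, whose proof is not a formality: it uses condition \ref{item:3} of Definition \ref{def:1} together with Lemmas \ref{lemm:1} and \ref{lemm:3} to pass convexity to the limit along rays into the stratum. With that lemma inserted, your part \ref{psh lifts} closes as you intend ($\varphi\circ E$ is continuous since $\varphi$ is continuous by Theorem \ref{thm:1}, and convex on $N'_\R$, hence psh). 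Note that in part \ref{psh descends} this issue cannot arise, since surjectivity of $E$ already forces $\sigma=\{0\}$.
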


\begin{proof}
We set $U' \coloneqq E^{-1}(U)$. 
Let $\varphi$ be psh on $U$. 
Note that $E(N_\R')\subset N(\sigma)$  for a unique $\sigma \in \Sigma$ and hence it follows from Lemma \ref{lemm:2} that $\varphi \circ E$ is convex on $U' \cap N_\R'$. 
By Theorem \ref{thm:1}, $\varphi$ is continuous. 
It follows that $\varphi\circ E$ is continuous and hence psh.
This proves \ref{psh lifts}.

To prove \ref{psh descends},
we  observe that surjectivity of $E$ yields that $E|_{N'_{\R}}\colon N'_{\R}\to N_{\R}$ is affine and surjective. 
Since $\varphi\circ E$ is $psh$, the restriction $\varphi\circ
E|_{U' \cap N'_{\R}}$ is convex. 
Therefore $\varphi|_{N_{\R}\cap U}$ is also convex.
Using that $E$ is proper and surjective, it is clear that $\varphi$ is continuous if and only if $\varphi \circ E$ is continuous. 
By Theorem \ref{thm:1}, we get \ref{psh descends}.
\end{proof}

\subsection{Relations to Lagerberg currents and complex geometry}
For Proposition \ref{thm:2} below, we assume that the fan $\Sigma$ is
smooth to obtain a characterization of psh functions on  
$N_{\Sigma}$ in terms of classical psh functions on the complex toric manifold
$X^\an_{\Sigma,\infty}$ using the complex tropicalization map
$\trop_\infty \colon X^\an_{\Sigma,\infty} \to N_\Sigma$.

\begin{prop}\label{thm:2} We assume that $\Sigma $ is a smooth fan in $N_\R$. 
Let $U$ be an open subset of $N_\Sigma$. Then $\varphi\colon U\to
\Rinf$ is psh if and only if $\varphi\circ
\trop_\infty$ is a psh function on  $\trop_\infty^{-1}(U)$.    
\end{prop}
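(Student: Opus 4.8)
The plan is to transport the statement across $\trop_\infty$ to the classical dictionary relating convex functions on $N_\R$ to compact-torus-invariant plurisubharmonic functions on the torus, and then to handle the boundary $N_\Sigma \setminus N_\R$ via standard regularity and extension properties of complex psh functions. Throughout I would write $V \coloneqq \trop_\infty^{-1}(U)$ and set $A \coloneqq V \setminus \torus_\infty^{\rm an}$ for the part of $V$ lying over $N_\Sigma \setminus N_\R$; since $\Sigma$ is smooth, $A$ is, in a suitable chart around any of its points, a normal crossing divisor inside some $\C^k \times (\C^\times)^{n-k}$, hence in particular a closed, Lebesgue-null, pluripolar subset of the complex manifold $V$. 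I shall use freely that $\trop_\infty$ is continuous, surjective and \emph{open}, the openness because $\trop_\infty \colon V \to U$ is the topological quotient by the compact torus $\SS$ (cf.\ \ref{SS-invariant forms, sets and functions}). Passing to connected components, we may assume $\varphi$ is not identically $-\infty$, and then reduce to $\varphi$ finite on $U \cap N_\R$, a convex function on a connected open subset of $N_\R$ with values in $\Rinf$ being finite or identically $-\infty$.

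\emph{First implication.} Assuming $\varphi$ is psh on $U$, I would first invoke Theorem \ref{thm:1} to get that $\varphi$ is continuous on $U$, so that $\psi \coloneqq \varphi \circ \trop_\infty$ is continuous, hence locally bounded above, on $V$. On the dense open subset $V \cap \torus_\infty^{\rm an} \subset (\C^\times)^n$ the function $\psi$ is the composition of the convex function $\varphi|_{U \cap N_\R}$ with the coordinatewise map $z \mapsto (-\log|z_1|, \dots, -\log|z_n|)$; writing $\varphi|_{U \cap N_\R}$ locally as a supremum of affine functions exhibits $\psi$ there as a supremum of pluriharmonic functions, and since $\psi$ is continuous this upper envelope is plurisubharmonic. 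As $\psi$ is locally bounded above near the pluripolar set $A$, the removable singularity theorem for plurisubharmonic functions \cite[\S I.5]{demailly_agbook_2012} provides a psh extension of $\psi|_{V \setminus A}$ to all of $V$; by density of $V \setminus A$ and continuity of $\psi$, this extension coincides with $\psi$, so $\psi$ is psh on $V$.

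\emph{Second implication.} Assuming $\psi \coloneqq \varphi \circ \trop_\infty$ is psh on $V$, I would verify the hypotheses of Theorem \ref{thm:1}, condition \ref{lim sup condition}: that $\varphi|_{U \cap N_\R}$ is convex and that \eqref{eq:15} holds at each $x \in U \setminus N_\R$. For convexity, fix $u_0 \in U \cap N_\R$ and $a \in N_\R$ such that a neighbourhood of the segment $\{u_0 + ta \mid t \in [0,1]\}$ lies in $U \cap N_\R$, choose $z^0 \in \torus_\infty^{\rm an}$ with $\trop_\infty(z^0) = u_0$, and pull $\psi$ back along the holomorphic curve $\lambda \mapsto (e^{\lambda a_1} z^0_1, \dots, e^{\lambda a_n} z^0_n)$, whose tropicalization is $\lambda \mapsto u_0 - (\Re \lambda) a$; the pullback is the subharmonic function $\lambda \mapsto \varphi(u_0 - (\Re \lambda) a)$, which depends only on $\Re \lambda$, whence $t \mapsto \varphi(u_0 + t a)$ is convex, and varying $u_0, a$ gives convexity of $\varphi|_{U \cap N_\R}$. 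For \eqref{eq:15}, fix $x \in U \setminus N_\R$ and $\tilde x \in \trop_\infty^{-1}(x)$. Since $A$ is Lebesgue-null, the sub-mean-value inequality for the psh function $\psi$ gives $\varphi(x) = \psi(\tilde x) = \limsup_{V \setminus A \ni \tilde y \to \tilde x} \psi(\tilde y)$ (i.e.\ $\psi$ is strongly usc). Each such $\tilde y$ lies in $\torus_\infty^{\rm an}$ and satisfies $\trop_\infty(\tilde y) \in U \cap N_\R$ with $\trop_\infty(\tilde y) \to x$; conversely, openness of $\trop_\infty$ lets one lift any sequence in $U \cap N_\R$ tending to $x$ to a sequence in $V \cap \torus_\infty^{\rm an}$ tending to $\tilde x$. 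Hence $\psi(\tilde x) = \limsup_{U \cap N_\R \ni y \to x} \varphi(y)$, which is \eqref{eq:15}, and Theorem \ref{thm:1} shows $\varphi$ is psh on $U$.

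The main obstacle is the boundary analysis in the second implication: extracting the exact $\limsup$ identity \eqref{eq:15} needs both that psh functions are strongly upper semicontinuous (obtained from the sub-mean-value inequality together with $A$ being Lebesgue-null) and that one can transfer this statement through $\trop_\infty$, which is possible only because $\trop_\infty$ is an open map. In the first implication the only non-formal inputs are the classical convex/psh correspondence on $(\C^\times)^n$ and the removable singularity theorem across the toric boundary, both of which genuinely use that $\Sigma$ is smooth so that the boundary is locally a normal crossing divisor.
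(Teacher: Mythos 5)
Your proposal is correct. The direction ``$\varphi$ psh on $U$ $\Rightarrow$ $\varphi\circ\trop_\infty$ psh'' is essentially the paper's argument: reduce to the torus via the classical convex/psh dictionary (the paper approximates by smooth convex functions where you take an upper envelope of pluriharmonic functions, an immaterial difference), then extend across the toric boundary using continuity from Theorem \ref{thm:1} and Demailly's removable singularity theorem \cite[Theorem I.5.24]{demailly_agbook_2012}. For the converse direction your boundary analysis genuinely differs from the paper's. The paper verifies Definition \ref{def:1} directly: upper semicontinuity of $\varphi$ follows from properness and surjectivity of $\trop_\infty$ (superlevel sets are images of closed sets), and the monotonicity condition \ref{item:3} is obtained by extending the one-parameter subgroup $t\mapsto v_0(t)\cdot x$ to a holomorphic disc $\widetilde h\colon \mathbb{E}\to X^{\an}_{\Sigma,\infty}$ meeting the boundary stratum at $0$ and applying the sub-mean-value inequality on the circle. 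You instead verify the limsup characterization \ref{lim sup condition} of Theorem \ref{thm:1}, transferring the strong upper semicontinuity of the complex psh function $\varphi\circ\trop_\infty$ (relative to the Lebesgue-null boundary) through $\trop_\infty$ by exploiting that this map is open, being the topological quotient by $\SS_\infty$; note that the reference for this quotient statement in the complex setting is \cite[Remark 3.1.3]{burgos-gubler-jell-kuennemann1} rather than \ref{SS-invariant forms, sets and functions}, which concerns the non-archimedean torus. Your route avoids constructing the holomorphic disc through the boundary point but leans on the full strength of the equivalence (i)$\Leftrightarrow$(iv) in Theorem \ref{thm:1}, which is available since that theorem precedes the proposition; the paper's disc argument establishes the raw Definition \ref{def:1} with less input from Theorem \ref{thm:1}. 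Both arguments are complete.
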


\begin{proof}
We note first that the statement holds for an open subset $U$ of
  $N_\R$. Indeed, if $\varphi \circ \trop_\infty$ is psh, then
  \cite[Theorem I.5.13]{demailly_agbook_2012} shows that $\varphi$ is
  psh. The converse implication follows by approximating
  $\varphi$ locally and uniformly by  smooth convex functions.

Now we allow $U$ to hit the boundary. 
Assume  that $\varphi\circ \trop_\infty $ is psh. 
The above yields that $\varphi|_{N_{\R}\cap U}$ is convex.
By definition of psh functions in
complex geometry, the function 
$\varphi\circ \trop_\infty $ is usc.
Using that $\trop_\infty$ is surjective, we get
\begin{align*}
\{x\in U\,|\,\varphi(x)\geq t\}
=\trop \bigl(\{y\in \trop_\infty^{-1}(U)\,|
\,\varphi\circ \trop (y)\geq t\}\bigr)
\end{align*}
for all $t\in \Rinf$ and these sets are closed as the map 
$\trop_\infty$ is proper.
Therefore $\varphi$ is usc.

We next prove that  \ref{item:3} in Definition \ref{def:1} is satisfied. 
Let $p\in N_{\R}$ and $v\in |\Sigma |$ 
with $[p,p+\infty v] \subset U$. 
{There is a unique cone $\sigma \in \Sigma$ with $v \in \relint(\sigma)$. 
Choose a vector $v_0 \in N$ with 
$v_{0}\in \relint(\sigma )$.  
This can always be done because the fan is rational. 
The description of the topology of $N_\Sigma$ given in \cite[Remark 3.1.2]{burgos-gubler-jell-kuennemann1} and the proof of \cite[Lemma 3.1.4]{burgos-gubler-jell-kuennemann1} show that there is $\mu_0 \geq 0$ such that 
$p+\mu_0 v+ \mu v_0 \in U$ for all $\mu \ge 0$ and that $p+\infty v = p+\mu_0 v+ \infty v_0$.
We have seen that $\varphi|_{N_{\R}\cap U}$ is convex, $\varphi$ is usc and does not take the value $\infty$, hence Lemma  \ref{lemm:1} shows that $\varphi(p)\ge \varphi(p+\mu _{0} v)$.
Thus is enough to prove that $\varphi(p+\mu _{0} v)\ge \varphi(p+\infty v)$.} 
We denote by $\mathbb{E}$ the complex unit disc and 
let $\mathbb{E}^* \coloneqq \mathbb{E} \setminus \{0\}$. 
We choose a point $x\in X^\an_{\Sigma,\infty}$ with 
$\trop_\infty(x)=p+\mu _{0}v$ and consider the map
  \begin{displaymath}
    h\colon \mathbb{E}^\ast\longrightarrow X^\an_{\Sigma,\infty}, \quad t \longmapsto v_{0}(t)\cdot x
  \end{displaymath}
where we view $N$ as 
the group of $1$-parameter subgroups of the torus $\T$. 
This map extends to a holomorphic curve 
$\widetilde h\colon \mathbb{E}\to X^\an_{\Sigma,\infty}$ 
with $\trop_\infty(\widetilde h(0))=p+\infty v$ and
  $\widetilde h(\mathbb E) \subset \trop_\infty^{-1}(U)$. 
The restriction of $\varphi\circ \trop$ to $\mathbb E$ is a psh function. Therefore
\begin{align*}
\varphi(p+\infty v) &= \varphi(\trop_\infty(\widetilde h(0)))\\
&\le \int_{0}^{1}\varphi(\trop_\infty(\widetilde h(e^{2\pi i\theta})))d\theta
= \varphi(\trop_\infty(\widetilde h(1)))=\varphi(p+\mu _{0}v).
\end{align*}

Conversely, assume that $\varphi$ is psh. 
{The beginning of the proof yields that the restriction of  $\varphi\circ \trop_\infty$ to $\trop_\infty^{-1}(U)\cap \T_\infty^{\rm an}$ is psh.
Using that $\varphi$ is continuous by Theorem  \ref{thm:1},} we deduce from  \cite[Theorem I.5.24]{demailly_agbook_2012} that $\varphi\circ\trop_\infty$ is psh.  
\end{proof}

Recall from Remark \ref{locally integrable gives current} that a function $\varphi$ on an open subset of $N_\Sigma$ with $\varphi|_{U \cap N_\R}$ locally integrable yields a Lagerberg current $[\varphi]$ on $U$. 
This applies if $\varphi|_{U \cap N_\R}$ is finite and convex.

  \begin{thm} \label{psh and positivity of Lagerberg current}
    Let $\Sigma$ be a fan and let $\varphi\colon U \to \Rinf$ be
    a function on a connected open subset $U \subset N_\Sigma$ such
    that $\varphi \not\equiv -\infty$. Then $\varphi$ is psh if and only if
    \begin{enumerate}
    \item \label{psh and positivity of Lagerberg current-i}    
    $\varphi$ is strongly upper semicontinuous;
    \item \label{psh and positivity of Lagerberg current-ii}
    $\varphi|_{U\cap N_\R}$ is  locally integrable  and
      $d'd''[\varphi]$ is a positive Lagerberg current on $U$.
    \end{enumerate}
  \end{thm}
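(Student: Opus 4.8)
The plan is to reduce everything to the case of a smooth fan via toric resolution of singularities, and then to transport the statement to the complex toric manifold $X_{\Sigma,\infty}^{\an}$, where it becomes the classical Lelong characterization of psh functions recalled in Section \ref{section: complex situation}.

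For the implication ``$\varphi$ psh $\Rightarrow$ \ref{psh and positivity of Lagerberg current-i}, \ref{psh and positivity of Lagerberg current-ii}'': by Theorem \ref{thm:1} the function $\varphi$ is continuous, hence strongly upper semicontinuous, since $U\cap N_\R$ is dense in $U$ with null complement; this gives \ref{psh and positivity of Lagerberg current-i}. Moreover $\varphi|_{U\cap N_\R}$ is convex and, since $\varphi\not\equiv-\infty$, it is finite by Lemma \ref{lemm:2} applied to the cone $\{0\}$, hence locally integrable, so $[\varphi]\in D^{0,0}(U)$ is defined. To prove positivity of $d'd''[\varphi]$, pick a smooth fan $\Sigma'$ subdividing $\Sigma$ and let $E\colon N_{\Sigma'}\to N_\Sigma$ be the induced $\id_N$-equivariant morphism of tropical toric varieties, which is proper and surjective (Remark \ref{remark-toric-morphism}). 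Then $\varphi\circ E$ is psh on $E^{-1}(U)$ by Proposition \ref{functoriality of psh}, and $E_*[\varphi\circ E]=[\varphi]$ because $E$ restricts to the identity on $N_\R$ and top-degree forms are supported there (Remark \ref{same top dim forms}). Since $E_*$ commutes with $d'd''$ and respects positivity of currents (Remark \ref{remark-toric-morphism}), we may assume $\Sigma=\Sigma'$ smooth. Now $v\coloneqq\varphi\circ\trop_\infty$ is psh on $V\coloneqq\trop_\infty^{-1}(U)$ by Proposition \ref{thm:2}, so $v$ is locally integrable and $dd^c[v]$ is a closed positive current. By Proposition \ref{projection formula trop current cont function} we have $\trop_{\infty,*}[v]=[\varphi]$, and dualizing \eqref{eq:3} gives $d'd''[\varphi]=\trop_{\infty,*}(dd^c[v])$. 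As $dd^c[v]$ is a closed positive current in $D^{1,1}(V)^{\SS,F}$, \cite[Theorem 7.1.5]{burgos-gubler-jell-kuennemann1} (see \ref{comparison with complex forms}) shows that $d'd''[\varphi]$ is a closed positive Lagerberg current, proving \ref{psh and positivity of Lagerberg current-ii}.

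For the converse, assume \ref{psh and positivity of Lagerberg current-i} and \ref{psh and positivity of Lagerberg current-ii}. By Theorem \ref{thm:1} \ref{strongly usc condition} it suffices to show that $\varphi|_{U\cap N_\R}$ is convex, so we may replace $U$ by $W\coloneqq U\cap N_\R$ and use the trivial (smooth) fan. Set $f\coloneqq\varphi|_W$ and $v\coloneqq f\circ\trop_\infty$ on $V\coloneqq\trop_\infty^{-1}(W)\subset\T_\infty^{\an}$. Then $v$ is locally integrable by Fubini along the compact fibres of $\trop_\infty$, and $v$ is strongly upper semicontinuous: this follows from strong upper semicontinuity of $f$ together with the facts that $\trop_\infty$ is an open proper surjection with compact connected fibres and that preimages of null sets are null, via a Fubini argument in the fibre direction. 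Furthermore $\trop_{\infty,*}(dd^c[v])=d'd''[\varphi]$ is positive by hypothesis; this current is closed and $\SS$- and $F$-invariant, and since for $W\subset N_\R$ every compactly supported form in $A^{\bullet,\bullet}(V)^{\SS,F}$ is a pull-back along $\trop_\infty$ (see \ref{comparison with complex forms}), the map $\trop_{\infty,*}$ is injective on currents in $D^{\bullet,\bullet}(V)^{\SS,F}$. Hence $dd^c[v]$ is the unique closed positive current in $D^{1,1}(V)^{\SS,F}$ pushed by $\trop_{\infty,*}$ to $d'd''[\varphi]$, so in particular $dd^c[v]\ge 0$. The Lelong characterization then gives that $v$ is psh on $V$, Proposition \ref{thm:2} shows that $f$ is psh, hence convex, and Theorem \ref{thm:1} \ref{strongly usc condition} finally yields that $\varphi$ is psh on $U$.

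I expect the main obstacle to be the transport of positivity through $\trop_\infty$ in the converse direction: deducing $dd^c[v]\ge 0$ from positivity of the push-forward $\trop_{\infty,*}(dd^c[v])$ rests on the injectivity of $\trop_{\infty,*}$ on $\SS$- and $F$-invariant currents over an open subset of $N_\R$, which in turn uses that such currents are detected by pull-back forms. A secondary point requiring care is the compatibility of $\trop_\infty$ and of the resolution map $E$ with strong upper semicontinuity and with the operator $d'd''$, together with the identifications $\trop_{\infty,*}[v]=[\varphi]$ and $E_*[\varphi\circ E]=[\varphi]$.
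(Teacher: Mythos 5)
Your proof of the forward implication is essentially the paper's: reduce to a smooth fan by toric resolution and push forward the positive current $dd^c[\varphi\circ\trop_\infty]$ via Proposition \ref{projection formula trop current cont function} and \eqref{eq:3} (the paper merely does the smooth case first and the resolution second). The converse, however, is where you genuinely diverge. The paper stays entirely on the real side: following Lagerberg's Proposition 2.5, it convolves $\varphi|_{U\cap N_\R}$ with smoothing kernels, uses positivity of $d'd''[\varphi]$ on $U\cap N_\R$ to see that the regularizations are convex, passes to the weak limit to get a convex function $g$ with $[g]=[\varphi]$, and then invokes strong upper semicontinuity to conclude $\varphi|_{U\cap N_\R}=g$ everywhere. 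You instead transport the problem to $\T_\infty^{\an}$ and pull positivity \emph{back} along $\trop_\infty$, which forces you to establish two nontrivial inputs that the paper's route avoids: (a) that strong upper semicontinuity and local integrability are inherited by $v=f\circ\trop_\infty$ (your Fubini argument in the fibre direction is correct but needs to be written out: for a null set $E\subset V$ one shows that the set of $w$ whose fibre meets a fixed neighbourhood basis of $x$ only inside $E$ is null in $W$); and (b) that $\trop_{\infty,*}$ is injective on $D^{1,1}(V)^{\SS,F}$, so that the closed positive preimage of $d'd''[\varphi]$ furnished by \cite[Theorem 7.1.5]{burgos-gubler-jell-kuennemann1} must equal $dd^c[v]$. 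Point (b) is sound but goes beyond what this paper quotes: you must check that $dd^c[v]$ is actually $F$-invariant and that an $\SS$- and $F$-invariant current is determined by its values on $A_c(V)^{\SS,F}=\trop_\infty^*A_c(W)$. So both arguments succeed; the paper's is more elementary and self-contained for this direction, while yours is consistent with the paper's general tropical-to-complex comparison philosophy at the cost of leaning harder on the $(\SS,F)$-formalism of \cite{burgos-gubler-jell-kuennemann1}.
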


\begin{proof}
Let $\varphi$ be a psh function.
We show that $\varphi$ satisfies \ref{psh and positivity of Lagerberg
  current-i} and \ref{psh and positivity of Lagerberg current-ii}.
We first consider the case where the fan $\Sigma$ is smooth.
Assume  that $\varphi$ is psh. 
By Theorem \ref{thm:1} the function $\varphi$ is strongly upper
semicontinuous and the restriction $\varphi|_{U\cap N_\R}$ is  locally
integrable. 
By Proposition \ref{thm:2}, the function $\trop_\infty^*(\varphi) \coloneqq \varphi \circ \trop_\infty$ is psh on $V\coloneqq \trop_\infty^{-1}(U)$ and hence $dd^c [\trop_\infty^*(\varphi)]$ is a positive current on $V$. 
Note that
\begin{equation} \label{projection formula for psh}
\trop_{\infty,\ast}[\trop_\infty^{\ast}(\varphi)]=[\varphi]
\end{equation}
by Proposition \ref{projection formula trop current cont function}. 
Using \ref{comparison with complex forms} and \eqref{projection
  formula for psh}, we conclude that  
\begin{equation} \label{projection formula combined with d'd''}
d'd''[\varphi]= \trop_{\infty,\ast}dd^c[\trop_\infty^{\ast}(\varphi)]
\end{equation}
is a positive current on $U$.

Now we consider the case of a general fan $\Sigma$.
Assume that $\varphi$ is psh. 
We choose a smooth subdivison $\Sigma'$ of $\Sigma$ in $N_\R$.
There is a unique proper surjective morphism of tropical toric varieties $g\colon N_{\Sigma'}\to N_\Sigma$ which extends the identity on $N_\R$.
By Proposition \ref{functoriality of psh} the function $\varphi'\coloneqq \varphi\circ r$ is  psh on $r^{-1}(U)$. 
As $\Sigma'$ is smooth, $\varphi'$ satisfies conditions \ref{psh and positivity of Lagerberg current-i} and \ref{psh and positivity of Lagerberg current-ii}.
Then $\varphi$ is strongly usc as well and $\varphi|_{U\cap N_\R}=\varphi'|_{U\cap N_\R}$ is locally integrable.
From $[\varphi]=r_*[\varphi']$ we conclude that $d'd''[\varphi]=r_*d'd''[\varphi']$ is positive as well.

For the converse implication, we need only that $\varphi$ satisfies  \ref{psh and positivity of Lagerberg current-i} and the weaker condition
\begin{enumerate}
\item[\textit{(ii')}] 
\textit{\label{psh and positivity of Lagerberg current-iii} $\varphi|_{U\cap N_\R}$ is  locally integrable and $d'd''[\varphi]$ is a positive Lagerberg current on $U\cap N_\R$.}
\end{enumerate}	  
As in the proof of \cite[Proposition 2.5]{lagerberg-2012},  
we construct from $\varphi|_{U\cap N_\R}$ a sequence $(\varphi_\epsilon)_\epsilon$
of convex smooth functions on $U\cap N_\R$ such that $\varphi_\epsilon$
converges weakly to a convex function $g$ on $U\cap N_\R$.
From $[\varphi|_{U\cap N_\R}]=[g]$, we get that $\varphi|_{U \cap N_\R}=g$ outside of a null set.
As $\varphi$ is strongly usc, this shows convexity of $\varphi|_{U \cap N_\R}=g$. 
Theorem \ref{thm:1} implies that $\varphi$ is psh.
\end{proof}

\subsection{Regularisation}
{Recall that $\Sigma$ is a fan and $U$ is an open subset of the partial compactification $N_\Sigma$.}
For a face $\tau$ of $\sigma \in \Sigma$, the canonical map $N(\tau)\to N(\sigma)$ is denoted by $\pi_{\sigma,\tau}$.
We say that a function $\varphi\colon U \to \R$ is \emph{constant towards the boundary} if  for each
$\sigma \in \Sigma $ and each $p\in U_{\sigma }\coloneqq U\cap
N(\sigma )$ there is a neighborhood $V$ of $p$ such that  for all 
$\tau\prec \sigma$ we have 
\begin{align} \label{compatibility along the boundary}
\text{$V_\tau = (\pi_{\sigma ,\tau }|_{V_\tau})^{-1}(V_\sigma)$ and $\varphi|_{V_\tau}=(\pi_{\sigma ,\tau }|_{V_\tau})^*   (\varphi|_{V_{\sigma }})$.} 
\end{align}
By definition, smooth functions on $U$ are constant towards the boundary.

We recall that a smoothing kernel in $N_\R$ is a non-negative smooth
function $\eta\colon N_\R \to \R$ with compact support and
$\int_{N_\R} \eta(y) dy=1$.  
Here $dy$ denotes the Haar measure on $N_\R$ such that the lattice $N$
has covolume one.

\begin{lem} \label{existence of smoothing kernels}
Let $\varphi\colon U \to \R$ be a continuous function which is constant towards the boundary and let $U'$ be a relatively compact open subset of $U$.
Then there exists a smoothing kernel $\eta$ with compact support in $N_\R$ such that the convolution 
	 \begin{align*}
      \varphi\star \eta\colon U'\cap N_\R\longrightarrow \R,\,\,
      \varphi \star \eta(x) = \int_{N_\R } \varphi(x-y) \eta(y) dy
    \end{align*}
is defined  and  extends uniquely to a smooth function $\varphi\star \eta \colon U'\to \R$.
\end{lem}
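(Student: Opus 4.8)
The plan is to reduce everything to a single local statement about a point $p \in U \setminus N_\R$, since away from the boundary the claim is the classical fact that convolving a continuous function against a smoothing kernel yields a smooth function that converges uniformly on compacta. So the real content is: near a boundary point, why does the convolution of $\varphi|_{U \cap N_\R}$ (which a priori is only defined on the open dense part) extend smoothly across the stratum $N(\sigma)$?

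First I would fix $\sigma \in \Sigma$ and a point $p \in U' \cap N(\sigma)$, and use the hypothesis that $\varphi$ is constant towards the boundary. By \eqref{compatibility along the boundary}, on a neighbourhood $V$ of $p$ we have $\varphi|_{V_\tau} = (\pi_{\sigma,\tau}|_{V_\tau})^* (\varphi|_{V_\sigma})$ for all $\tau \prec \sigma$; in particular, taking $\tau = (0)$ (or the minimal cone), the function $\varphi$ on $V \cap N_\R$ is the pullback of a continuous function $\bar\varphi$ on the open subset $V_\sigma$ of $N(\sigma) = N_\R / \langle \sigma \rangle_\R$ under the linear quotient $\pi_\sigma \colon N_\R \to N(\sigma)$. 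The point of this is that $\varphi|_{V \cap N_\R}$ is invariant under translation by $\langle \sigma \rangle_\R$. Now pick a complement, i.e.\ a lattice decomposition $N = N'' \oplus N'$ where $N''_\R = \langle \sigma \rangle_\R$ and $N'_\R$ maps isomorphically to $N(\sigma)_\R$; this is possible because $\langle \sigma \rangle$ is a saturated sublattice (the cone is rational and we may refine; or simply work with the real decomposition, though to keep the Haar-measure normalisation on $N$ one wants a lattice splitting — this is where a little care is needed). Write the smoothing kernel as a product $\eta = \eta'' \otimes \eta'$ of smoothing kernels on $N''_\R$ and $N'_\R$ respectively. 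Then for $x \in N_\R$, using translation-invariance of $\varphi$ in the $N''_\R$-direction and $\int_{N''_\R} \eta''(y'')\, dy'' = 1$, the convolution $\varphi \star \eta$ depends only on the $N'_\R$-component of $x$ and equals the convolution $\bar\varphi \star \eta'$ computed on $N(\sigma)_\R$.

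Having reduced to this product form, the extension across the boundary becomes transparent: $\varphi \star \eta$, expressed in the coordinates $N = N'' \oplus N'$, is constant in the $N''$-directions and equals a convolution of the continuous function $\bar\varphi$ on $N(\sigma)$ against the smoothing kernel $\eta'$, which is smooth on $V_\sigma$ by the classical statement. By the local description of the topology of $N_\Sigma$ near $N(\sigma)$ from \cite[Remark 3.1.2]{burgos-gubler-jell-kuennemann1} and the definition of smooth Lagerberg forms on $N_\Sigma$ (a function constant towards the boundary in the $\langle\sigma\rangle$-directions and smooth on the stratum is smooth on $N_\Sigma$ near the stratum), this glued function $\varphi \star \eta$ is a smooth function on a neighbourhood of $p$ in $N_\Sigma$. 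Uniqueness of the extension is clear because $U' \cap N_\R$ is dense in $U'$ and the extension must be continuous. Finally, since $U'$ is relatively compact, $\overline{U'}$ is covered by finitely many such neighbourhoods, and one chooses the common support radius of $\eta$ small enough (and compatible with the product decompositions on overlaps — one can even fix a single lattice splitting adapted to each cone, patching via a partition of the boundary into the finitely many strata) so that $\varphi \star \eta$ is defined on all of $U' \cap N_\R$ and extends smoothly to $U'$.

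The main obstacle I anticipate is the bookkeeping around the lattice splitting $N = N'' \oplus N'$ and the Haar-measure normalisation: the statement's smoothing kernels are normalised so that $N$ has covolume one, and one must check that the product kernel $\eta'' \otimes \eta'$ (with the induced covolume-one normalisations on $N''$ and $N'$) is again of the right type and that the reduced convolution on $N(\sigma) = N/N''$ uses exactly the lattice $N(\sigma) = N'$ with its own covolume-one Haar measure — this is where the hypothesis "constant towards the boundary" (rather than merely "continuous up to the boundary") does the essential work, since it is what forces the $\langle\sigma\rangle_\R$-translation-invariance that makes the convolution descend. A secondary subtlety is that different cones $\sigma$ require different splittings, so to get a single kernel $\eta$ working uniformly on $U'$ one argues stratum by stratum, shrinking $U'$ to relatively compact pieces near each stratum and noting that a function which is smooth in a neighbourhood of each stratum (compatibly) is smooth on $N_\Sigma$; alternatively one observes the support condition on $\eta$ is open and the finitely many constraints can be met simultaneously.
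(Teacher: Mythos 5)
Your core mechanism is the right one and is the same as the paper's: the hypothesis ``constant towards the boundary'' forces $\varphi$ to factor through $\pi_\sigma$ near a point of $N(\sigma)$, and this invariance is what makes the convolution extend across the stratum. Where you diverge is in how you exploit it. The paper never decomposes the kernel: it chooses, for each $p$ in a finite cover of $\overline{U'}$, neighbourhoods $V'(p)\Subset V(p)$ and a symmetric convex $W(p)$ with $V'(p)\cap N_\R + W(p)\subset V(p)\cap N_\R$, takes $\eta$ supported in $W=\bigcap_p W(p)$, and then observes that for $x,x'\in V'(p)\cap N_\R$ with $\pi_\sigma(x)=\pi_\sigma(x')$ one has $\pi_\sigma(x-y)=\pi_\sigma(x'-y)$, hence $\varphi(x-y)=\varphi(x'-y)$ for \emph{every} $y$ in the support; the two convolution integrals therefore agree term by term, so $\varphi\star\eta$ is itself constant on $\pi_\sigma$-fibres and extends by \eqref{compatibility along the boundary}. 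This pointwise comparison of integrands buys you everything your product decomposition was meant to deliver, with no lattice splitting, no Haar-measure bookkeeping, and crucially no dependence of the kernel's shape on $\sigma$.

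That last point is where your write-up has a soft spot rather than a mere inelegance: your smoothness argument near the stratum of $\sigma$ uses that $\eta=\eta''\otimes\eta'$ for a splitting $N=N''\oplus N'$ adapted to $\sigma$, but a single compactly supported kernel cannot in general be a product with respect to the several different splittings required by the different cones meeting $\overline{U'}$. Your proposed fixes (arguing stratum by stratum, or noting the support condition is open) address the support-size constraint but not the product-form constraint, so as written the step producing one $\eta$ that works for all strata is not closed. The repair is exactly the observation above: drop the factorisation of the integral and compare integrands pointwise, after which any kernel supported in $W$ works simultaneously for every stratum. With that substitution your argument coincides with the paper's.
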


We call such an $\eta$ a \emph{smoothing kernel for $\varphi$ and $U'$}.

\begin{proof}
	Since $\varphi$ is constant towards the boundary, we can pick for any $p \in U$ an open neighbourhood $V(p)$ such that $\varphi$ satisfies \eqref{compatibility along the boundary} for $V = V(p)$. 
	We choose an open neighbourhood $V'(p)$ of $p$ which is relatively compact in $V(p)$ and hence there is a relatively compact convex subset $W(p)\subset N_\R$, symmetric with
	respect to zero and such that
	\begin{equation}\label{eq:36}
	V'(p)\cap N_{\R}+W(p) \subset V(p)\cap N_{\R}.
	\end{equation}
	Since the closure of $U'$ is compact in $U$, it can be covered
        by open  subsets $V'(p)$ with $p$ ranging over a finite $I
        \subset U$. Then $W \coloneqq \bigcap_{p \in I} W(p)$ is a
        relatively compact convex and symmetric subset of $N_\R$ with
        $U'\cap N_{\R}+W \subset U\cap N_{\R}$ by \eqref{eq:36}. For a
        smoothing kernel $\eta$ with compact support in $W$, we
        conclude   that the convolution $\varphi\star
	\eta$  is well defined on $U'\cap N_\R$.
    Since $\eta$ is smooth, it is clear that  $\varphi\ast
	\eta$ is smooth on $U'\cap N_{\R}$.
	
We have to check that $\varphi \ast \eta$ can be extended to a smooth function on $U'$.
We can check this locally. 
Hence it is enough to check for any $p \in I$ that  $\varphi \star \eta$ extends from $V'(p) \cap N_\R$ to a smooth function on $V'(p)$.
There is a unique $\sigma \in \Sigma$ with $p \in N(\sigma)$.
Let $\pi_\sigma\colon N_\R \to N_\sigma$ be the quotient map. 
For $x, x' \in V'(p) \cap N_\R$ with $\pi_\sigma(x)=\pi_\sigma(x'$) and any $y \in W(p)$, we have $\pi_\sigma(x-y)=\pi_\sigma(x'-y)$. Since $\varphi$ satisfies \eqref{compatibility along the boundary} on $V(p)$, we get $\varphi(x-y)=\varphi(x'-y)$ and hence 	
\[
\varphi \star \eta(x) = \int_{N_\R } \varphi(x-y) \eta(y) dy = \int_{N_\R } \varphi(x'-y) \eta(y) dy = \varphi \star \eta(x').
\]
Since $V'(p)$ is dense, we deduce that $\varphi \star \eta$ has a unique extension from $V'(p) \cap N_\R$ to a function on $V'(p)$ satisfying \eqref{compatibility along the boundary}. 
By definition, this extension is smooth.
    
Uniqueness is clear by density of $U' \cap N_\R$ in $U'$ and continuity.
\end{proof}

\begin{lem} \label{monotone property}
Let $\varphi$  be {a} continuous real function on  $U$ 
which is constant towards the boundary.  
Let $\eta$ be a smoothing kernel for  $\varphi$ and {a} relatively compact open subset $U'$ of $U$.
If $\varphi$ is psh in $U$, then $\varphi\star \eta$ is psh in $U'$ and $\varphi\star
  \eta \ge \varphi$ in $U'$.
\end{lem}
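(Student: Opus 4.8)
The plan is to establish both assertions first on the dense open subset $U'\cap N_\R$ by classical convexity, and then to transport them to all of $U'$ via the characterization of psh functions in Theorem \ref{thm:1}.

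First I would recall from the construction in the proof of Lemma \ref{existence of smoothing kernels} that there is a relatively compact, convex, symmetric subset $W\subset N_\R$ with $\supp(\eta)\subset W$ and $U'\cap N_\R+W\subset U\cap N_\R$; moreover I use that $\eta$ is symmetric about the origin, so that $\int_{N_\R}z\,\eta(z)\,dz=0$ (this is what yields $\varphi\star\eta\geq\varphi$, and can be arranged since $W$ is symmetric). Since $\varphi$ is psh, Theorem \ref{thm:1} gives that $\varphi|_{U\cap N_\R}$ is convex. For convexity of $\varphi\star\eta$ on $U'\cap N_\R$: if $a,b\in U'\cap N_\R$ with $[a,b]\subset U'\cap N_\R$ and $t\in[0,1]$, then for every $z\in W$ the segment $[a-z,b-z]=[a,b]-z$ lies in $U'\cap N_\R+W\subset U\cap N_\R$, so convexity of $\varphi$ gives $\varphi\bigl((1-t)(a-z)+t(b-z)\bigr)\leq(1-t)\varphi(a-z)+t\varphi(b-z)$; integrating against $\eta(z)\,dz$ yields convexity of $\varphi\star\eta$ on $U'\cap N_\R$. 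For the inequality, fix $x\in U'\cap N_\R$: the set $x+W$ is convex and contained in $U\cap N_\R$, and $\varphi$ is convex there, so Jensen's inequality for the probability measure $\eta(z)\,dz$ gives
\[
\varphi\star\eta(x)=\int_{N_\R}\varphi(x-z)\,\eta(z)\,dz\ \geq\ \varphi\Bigl(x-\int_{N_\R}z\,\eta(z)\,dz\Bigr)=\varphi(x).
\]

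Next I would pass to $U'$. By Lemma \ref{existence of smoothing kernels}, $\varphi\star\eta$ extends uniquely to a smooth function on $U'$, which we keep denoting $\varphi\star\eta$; in particular it is continuous on $U'$, and its restriction to $U'\cap N_\R$ is convex by the previous paragraph. Hence, by the equivalence of \ref{psh condition} and \ref{continuity condition} in Theorem \ref{thm:1}, $\varphi\star\eta$ is psh on $U'$. Finally, $\varphi$ is continuous on $U'$ by hypothesis, $\varphi\star\eta$ is continuous on $U'$, and $\varphi\star\eta\geq\varphi$ holds on the dense subset $U'\cap N_\R$; therefore $\varphi\star\eta\geq\varphi$ on all of $U'$.

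The point requiring care is the convexity transfer on $U'\cap N_\R$: since $U'$ need not be convex, one must check that the translates $[a,b]-z$ with $z\in W$ stay inside $U\cap N_\R$, which is precisely guaranteed by the inclusion $U'\cap N_\R+W\subset U\cap N_\R$ built into the smoothing kernel. The only other subtlety is the use of the symmetry (vanishing barycenter) of $\eta$ in the inequality $\varphi\star\eta\geq\varphi$; beyond these, everything is routine, the substantive content having been packaged into Theorem \ref{thm:1} and Lemma \ref{existence of smoothing kernels}.
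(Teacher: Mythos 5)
Your proof is correct and follows the same route as the paper's, which simply asserts that convexity of $\varphi|_{U\cap N_\R}$ ``readily implies'' both claims on $U'\cap N_\R$ and then invokes Lemma \ref{existence of smoothing kernels} and Theorem \ref{thm:1}; you have merely written out the convexity and Jensen computations in detail. Your explicit appeal to a symmetric (zero-barycenter) kernel is indeed what makes $\varphi\star\eta\ge\varphi$ work -- a hypothesis the paper's definition of smoothing kernel leaves implicit -- so flagging it is a point in your favour rather than a deviation.
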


\begin{proof}
If $\varphi$ is psh, then $\varphi|_{ U\cap N_{\R}}$ is convex. This readily implies that $\varphi\star \eta \ge \varphi$ and that $\varphi\star \eta $ is convex on $U'\cap N_{\R}$. By Lemma \ref{existence of smoothing kernels}, we know that $\varphi\star \eta$ is continuous on $U'$ and hence 
    Theorem \ref{thm:1} shows that it is psh.
\end{proof}

\begin{lem} \label{sequence of smoothing kernels}
Let $\varphi$ be a continuous real function on  $U$
and let $U'$ be a relatively compact open subset of $U$. 
Then there exists a sequence of smoothing kernels 
$(\eta_{k})_{k\ge 1}$  for $\varphi$ and $U'$
such that $(\varphi\star \eta_{k})_{k\ge 1}$ converges pointwise to
$\varphi$ {on $U'$}. If $\varphi$ is psh, then the convergence is monotone decreasing. 
\end{lem}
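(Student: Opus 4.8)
The plan is to build the sequence by scaling a single fixed smoothing kernel and exploiting that $\overline{U'}$ is compact in $U$. First I would fix, using Lemma \ref{existence of smoothing kernels}, a smoothing kernel $\eta_0$ for $\varphi$ and a relatively compact open set $U''$ with $\overline{U'}\subset U''\subset\overline{U''}\subset U$; this provides a symmetric relatively compact convex $W\subset N_\R$ with $U''\cap N_\R+W\subset U\cap N_\R$, and we may assume $\supp(\eta_0)\subset W$. For $k\ge 1$ set $\eta_k(y)\coloneqq k^{\,n}\eta_0(ky)$, which is again a non-negative smooth function with $\int_{N_\R}\eta_k=1$ and $\supp(\eta_k)\subset \tfrac1k W\subset W$. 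For $k$ large enough these are smoothing kernels for $\varphi$ and $U'$ in the sense of Lemma \ref{existence of smoothing kernels}, because the compatibility \eqref{compatibility along the boundary} only needed a small enough symmetric convex translate; so by that lemma each $\varphi\star\eta_k$ extends uniquely to a smooth function on $U'$.

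Next I would prove pointwise convergence $\varphi\star\eta_k\to\varphi$ on $U'$. On $U'\cap N_\R$ this is the classical statement that convolving a continuous function with an approximate identity supported in shrinking neighbourhoods of $0$ converges pointwise (indeed locally uniformly, by uniform continuity of $\varphi$ on the compact set $\overline{U''}\cap N_\R$). For a boundary point $p\in U'\setminus N_\R$ lying in $N(\sigma)$, I would use that both $\varphi$ and $\varphi\star\eta_k$ are constant towards the boundary on a neighbourhood $V$ of $p$: writing $\pi_\sigma\colon N_\R\to N(\sigma)$ for the quotient map, both functions on $V$ factor through $\pi_\sigma$, and the induced function $\varphi\star\eta_k$ on $V_\sigma$ is obtained by convolving the induced function $\varphi|_{V_\sigma}$ with the pushforward of $\eta_k$ under $\pi_\sigma$ (a smoothing kernel on $N(\sigma)$ supported in a set shrinking to $0$). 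Hence convergence at $p$ reduces to the classical interior statement on $N(\sigma)$. Alternatively, and more simply, one may invoke continuity of $\varphi$ together with $\varphi\star\eta_k=\varphi$ in the limit along the retraction to the boundary, but the factorisation argument is the clean route.

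Finally, if $\varphi$ is psh then Lemma \ref{monotone property} gives $\varphi\star\eta_k\ge\varphi$ on $U'$ for each $k$, so it remains to check that the sequence is monotone decreasing. This follows from convexity of $\varphi$ on $U''\cap N_\R$ together with the scaling structure: for $j<k$ one writes $\eta_j$ as a convolution-type average of translates of $\eta_k$ at smaller scale—more precisely, using $\eta_j=\eta_k\star(\text{rescaled }\eta_0)$ up to the self-similar parametrisation, so that $\varphi\star\eta_j=(\varphi\star\eta_k)\star(\text{smoothing kernel})\ge\varphi\star\eta_k$ on $U'$ by Lemma \ref{monotone property} applied to the psh function $\varphi\star\eta_k$ on $U''$. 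Choosing the $W(p)$'s in the proof of Lemma \ref{existence of smoothing kernels} uniformly so that all these nested convolutions stay within $U$, one gets $\varphi\star\eta_j\ge\varphi\star\eta_k$ for all $j\le k$. I expect the main technical obstacle to be exactly this monotonicity step: arranging the kernels $\eta_k$ in a self-consistent nested family (so that coarser convolutions are convolutions of finer ones by an honest smoothing kernel) while keeping every intermediate convolution defined on a fixed relatively compact set containing $\overline{U'}$; the convergence and the $\ge\varphi$ parts are then routine.
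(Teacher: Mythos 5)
Your construction $\eta_k(y)=k^n\eta_0(ky)$ and your treatment of pointwise convergence (weak convergence of $\eta_k\,dy$ to $\delta_0$ on $N_\R$, plus factoring through $\pi_\sigma$ at boundary strata using that $\varphi$ is constant towards the boundary) coincide with the paper's proof, as does the use of Lemma \ref{monotone property} for $\varphi\star\eta_k\ge\varphi$. The gap is in the monotonicity step. The identity you rely on, $\eta_j=\eta_k\star\rho$ for some smoothing kernel $\rho$ when $j<k$, is false for dilates of a general fixed kernel $\eta_0$: on the Fourier side it would force $\hat\rho(\xi)=\hat\eta_0(\xi/j)/\hat\eta_0(\xi/k)$, and for a compactly supported smooth $\eta_0$ the transform $\hat\eta_0$ has zeros and this quotient is not the transform of a positive measure. (Gaussians do have this semigroup property, but they are not compactly supported, and compact support inside $W$ is exactly what makes the convolution well defined on $U'$.) Your fallback of ``arranging the kernels in a self-consistent nested family'' would amount to a different construction (e.g.\ infinite convolutions of bumps with summable supports) and is not carried out; as written, the monotonicity claim does not follow.

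The paper sidesteps this entirely with a one-line convexity estimate that needs no relation between the kernels beyond the scaling: writing $x-\tfrac{y}{k}=\tfrac1k\,x+\tfrac{k-1}{k}\bigl(x-\tfrac{y}{k-1}\bigr)$ and using convexity of $\varphi$ on $U'\cap N_\R$ gives
\begin{equation*}
(\varphi\star\eta_k)(x)=\int\varphi\Bigl(x-\frac{y}{k}\Bigr)\eta(y)\,dy
\le \frac1k\varphi(x)+\frac{k-1}{k}(\varphi\star\eta_{k-1})(x)\le(\varphi\star\eta_{k-1})(x),
\end{equation*}
the last inequality by $\varphi\le\varphi\star\eta_{k-1}$ from Lemma \ref{monotone property}. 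Replacing your factorization argument by this estimate closes the gap; the rest of your proposal is sound.
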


\begin{proof}
Let $\eta$ and $W$ be as in the proof of Lemma \ref{existence of smoothing kernels}.
For  $k\ge 1$, we consider {the smoothing kernels $\eta_{k}(x)=k^n\eta(kx)$}.
Since $W$ is convex and symmetric,
    the support of  $\eta_{k}$
    is also contained in $W$. On $N_\R$, the measures $\eta_{k}dx $
    converge weakly to the Dirac delta measure $\delta _{0}$ centered
    at zero. 
{Since $\varphi$ is constant towards the boundary,} $(\varphi\star \eta_{k})_{k\ge 1}$ converges pointwise to
    $\varphi$ on $U'$. If $\varphi$ is psh, then Lemma \ref{monotone property} shows that $\varphi\star \eta_{k}$ is psh and $\geq \varphi$, hence $\varphi\star \eta_{k}$ and $\varphi$ restrict to convex functions on $U'\cap N_{\R}$ and  we get
\begin{align*}
(\varphi\star\eta_{k})(x)&=\int_{\R^n } \varphi(x-y) k^n\eta(ky)dy 
= \int_{\R^n } \varphi\Bigl(x-\frac{y}{k}\Bigr) \eta(y) dy\\
&\le \int_{\R^n } \left(\frac{1}{k}\varphi(x)+\frac{k-1}{k}\varphi\Bigl(x-\frac{y}{k-1}\Bigr)\right)\eta(y) dy\\ 
&=\frac{1}{k}\varphi(x) +\frac{k-1}{k}(\varphi\star\eta_{k-1})(x) \le(\varphi\star \eta_{k-1})(x) 
\end{align*}
for $x \in U' \cap N_\R, k \geq 2$.   Hence the convergence is monotonically decreasing on $U'$.
\end{proof}

The next Theorem gives \emph{local regularization} of psh functions. 
In \S \ref{global-regularization}, we will discuss \emph{global regularization} of $\theta$-psh functions.

\begin{thm} 
\label{thm:3} 
Let $\Sigma$ be  a fan and let $U$ be an open subset of $N_\Sigma$. 
A function
$\varphi \colon U \rightarrow \Rinf $ is psh if and only if 
 $\varphi$ is locally a decreasing limit of smooth psh functions.
\end{thm}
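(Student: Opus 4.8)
The plan is to prove both implications, with the forward direction being essentially immediate and the interesting content concentrated in the reverse direction, which requires a patching argument to glue together the local regularizations produced by Lemma \ref{sequence of smoothing kernels}.

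First I would dispose of the easy direction. Suppose $\varphi$ is locally a decreasing limit of smooth psh functions. Smooth psh functions are psh by definition, and plurisubharmonicity is a local property by Proposition \ref{prop:1-3}\ref{item:prop1-3:1}, so it suffices to check the psh condition in a neighbourhood of each point; there $\varphi$ is a decreasing pointwise limit of psh functions, hence psh by Proposition \ref{prop:1-3}\ref{item:prop1-3:3}. This settles ``if''.

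For the converse, assume $\varphi$ is psh. By Theorem \ref{thm:1}, $\varphi$ is continuous (in particular finite-valued, say after noting that on the locus where $\varphi=-\infty$ one argues by the same theorem that this forces $\varphi\equiv-\infty$ on connected components, a case handled separately or simply absorbed since one may regularize $-\infty$ trivially by the constant net $-k$). The key local input is Lemma \ref{sequence of smoothing kernels}: given a relatively compact open $U'\Subset U$, provided $\varphi$ is constant towards the boundary on a neighbourhood of $\overline{U'}$, there is a sequence of smoothing kernels $(\eta_k)$ with $\varphi\star\eta_k$ smooth on $U'$ (Lemma \ref{existence of smoothing kernels}), psh on $U'$ (Lemma \ref{monotone property}), and decreasing pointwise to $\varphi$. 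The subtlety is that a general psh $\varphi$ need not be constant towards the boundary, but smooth functions are; this is why the statement only claims \emph{local} regularization. So I would fix $p\in U$, choose a basic open neighbourhood as in the proof of Theorem \ref{thm:1} — i.e.\ some $U(\sigma,\Omega,p_0)$ — and observe that on such a chart $\varphi$ can be modified or the convolution can be performed compatibly with the boundary stratification, because the basic open sets respect the projections $\pi_{\sigma,\tau}$. More precisely, by Theorem \ref{thm:1} $\varphi$ restricted to $U(\sigma,\Omega,p_0)$ is continuous, and one checks that it \emph{is} constant towards the boundary on a slightly smaller basic open set: this follows because the psh condition forces $\varphi(u+\infty v)=\lim_{\mu\to\infty}\varphi(u+\mu v)$ together with Lemma \ref{lemm:2}, so on the stratum $N(\tau)$ the function is the limit of the values on $N_\R$, which is exactly the compatibility \eqref{compatibility along the boundary} once one knows $\varphi$ factors appropriately — this is the content already extracted in the proof of Theorem \ref{thm:1} via the uniform convergence on compacta of $\varphi_\mu\to\varphi_\infty$.

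Having arranged that, for each $p$ I obtain a relatively compact basic open $U'\ni p$ with $\overline{U'}\subset U$ on which $\varphi$ is constant towards the boundary, apply Lemma \ref{sequence of smoothing kernels} to get a sequence $\varphi\star\eta_k$ of smooth psh functions on $U'$ decreasing pointwise to $\varphi$, and this exhibits $\varphi$ locally as a decreasing limit of smooth psh functions, as required. The main obstacle, and the step I would spend the most care on, is precisely verifying that psh functions are constant towards the boundary on basic open charts: one must show the equalities $V_\tau=(\pi_{\sigma,\tau}|_{V_\tau})^{-1}(V_\sigma)$ and $\varphi|_{V_\tau}=\pi_{\sigma,\tau}^\ast(\varphi|_{V_\sigma})$, which amounts to proving $\varphi(p+\infty v)$ depends only on the stratum point $\pi_\sigma(p)$ and not on the chosen representative $p$ nor on the direction $v\in\relint(\sigma)$ — this is exactly Lemma \ref{lemm:2} combined with Lemma \ref{lemm:3}, so the obstruction is more bookkeeping than genuine difficulty, but it is where the argument must be assembled carefully. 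Everything else reduces cleanly to the cited lemmata and to Proposition \ref{prop:1-3}.
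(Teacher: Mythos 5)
The ``if'' direction and the overall skeleton (reduce to a relatively compact chart, then convolve) match the paper, but there is a genuine gap in the converse, and it sits exactly at the step you flag as ``more bookkeeping than genuine difficulty.'' Your claim that a psh function is constant towards the boundary on a (slightly smaller) basic open chart is false. Being psh only forces $\varphi$ to be \emph{decreasing} along rays into the boundary, with $\varphi(p+\infty v)=\lim_{\mu\to\infty}\varphi(p+\mu v)$ (Lemmata \ref{lemm:1} and \ref{lemm:3}); the compatibility \eqref{compatibility along the boundary} would require $\varphi(x)=\varphi(\pi_\sigma(x))$ \emph{identically} on a neighbourhood of the stratum, which is much stronger. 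A counterexample: $N=\Z$, $\Sigma=\{0,\R_{\ge 0}\}$, so $N_\Sigma=(-\infty,\infty]$, and $\varphi(u)=e^{-u}$ with $\varphi(\infty)=0$. This is continuous, convex, hence psh, but on no neighbourhood of $\infty$ does it agree with the pullback of its boundary value. Consequently Lemmata \ref{existence of smoothing kernels} and \ref{sequence of smoothing kernels} do not apply to $\varphi$ itself near the boundary, and your convolution step breaks down there. (The uniform convergence $\varphi_\mu\to\varphi_\infty$ you invoke from the proof of Theorem \ref{thm:1} only gives $|\varphi-\varphi\circ\pi_\sigma|\le\varepsilon$ near the boundary, not equality.)

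The paper fills this gap with an intermediate approximation that you are missing: after truncating to $\varphi_M=\max(\varphi,-M)$ to reduce to finite $\varphi$, it replaces $\varphi$ by $g(x)=\max_{\tau\prec\sigma}\bigl(\varphi(\pi_\tau(x))+\varepsilon_\tau\bigr)$ with $\varepsilon_{\tau_1}<\varepsilon_{\tau_2}$ for $\tau_1\prec\tau_2$. Since $\varphi$ decreases towards the boundary, near a point of $N(\sigma)$ the maximum is attained by the term with the largest $\varepsilon_\tau$ among the relevant strata, which makes $g$ genuinely constant towards the boundary while $|g-\varphi|\le\varepsilon$; each $g$ is psh by Propositions \ref{functoriality of psh} and \ref{prop:1-3}. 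Only to these $g$'s does one apply the smoothing kernels, and a final diagonal argument via Dini's theorem assembles the three layers of approximation (truncation, boundary-constant psh, smooth) into a single decreasing sequence. Your proposal needs this construction (or an equivalent one) inserted before the convolution step; as written, the argument fails at every boundary point where $\varphi$ is not already locally a pullback from the stratum.
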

\begin{proof}
By Proposition \ref{prop:1-3}, any decreasing limit of psh functions is psh. Conversely, we assume that $\varphi$ is psh. The claim is local and so we may assume $U$ connected. The claim is obvious for $\varphi \equiv -\infty$ and so we may assume that $\varphi$ is not identically equal to $-\infty$.
The difficulty is that our definition of a smooth function on $U$ means that it is a smooth function on $U \cap N_\R$ 
which is constant towards the boundary, 
therefore a function $\varphi$ with $\varphi\circ \trop$  smooth is not necessarily smooth on $U$.

By Proposition \ref{prop:1-3} the function $\varphi$ can be approximated monotonically {decreasing by the} finite psh functions $\varphi_M \coloneqq \max(\varphi, -M)$ for $M \in \N$.

Now we show that a finite function $\varphi$ is locally a decreasing limit of continuous psh functions which are constant towards the boundary. 
So let us check this in  $p\in  U \cap N(\sigma )$. Passing to an open neighbourhood, we may assume that $\Sigma $
contains a single maximal cone $\sigma $ {and that $U$ satisfies the first condition in \eqref{compatibility along the boundary} for $V(p)=U$.}
Fix $\varepsilon > 0$ and let $\varepsilon_\sigma \coloneqq \varepsilon$.
For each 
$\tau \in \Sigma$ with $\tau \prec \sigma$, we  fix $\varepsilon_\tau>0$
satisfying {$\varepsilon_{\tau_1} < \varepsilon_{\tau_2}$}
whenever $\tau_1 \prec \tau_2$. 
Since we are assuming that $\Sigma $ contains a single maximal
  cone, there is a unique continuous extension
 $\pi_\tau\colon N_\Sigma \to \overline{N(\tau)}$ of the quotient map
 $N_\R \to N(\tau)$.  
Using that $\pi_\tau(U)=U \cap N(\tau)$, we define
\begin{align*}
g \colon U \longrightarrow \R,\,\,\,
x \longmapsto \max_{\tau \prec \sigma} 
\varphi(\pi_{\tau}(x)) + \varepsilon_\tau.
\end{align*}

Since $\pi_\tau$ is an equivariant morphism of tropical toric varieties and $\varphi$ is psh, Proposition \ref{functoriality of psh} shows that $\varphi\circ \pi_\tau$ is psh and hence $g$ is psh by Proposition \ref{prop:1-3}.
{Using that $\varphi$ is decreasing towards the boundary, we have $|\varphi-g| \leq\varepsilon$.} 
{Since $\varphi$ is a finite function which is continuous by Theorem \ref{thm:1}, the choice of the family $\varepsilon_\tau$ shows that $g$ is constant towards the boundary.}   
Choosing
$\varepsilon _{k}$ and $\varepsilon _{\sigma ,k}$ as above
 converging monotonically to zero for $k \to \infty$, we obtain a sequence
of continuous psh functions which are constant towards the boundary and  converge monotonically decreasing to $\varphi$. 

Now assume that $\varphi$ is a finite psh function which is constant towards the boundary. We prove the claim for $\varphi$ locally in $p \in U$. We pick a relatively compact open neighbourhood $U'$ of $p$ in $U$. Then
applying the sequence of smoothing kernels $\eta_{k}$ from Lemma \ref{sequence of smoothing kernels}
 we see
that $\varphi$ is a decreasing limit of smooth psh functions on $U'$.

Finally, applying a standard diagonal argument {based on Dini's theorem},
we deduce from the above steps that locally any psh function $\varphi$ 
is a limit of a decreasing sequence of smooth psh functions.
\end{proof}

\section{Bedford Taylor calculus on a partial compactification}
\label{sec:bedf-tayl-calc}
In this section, $\Sigma$ denotes a smooth fan in $N_\R$ for a free abelian group $N$ of rank $n$.
The goal  is to develop a Bedford--Taylor calculus on
the partial compactification $N_{\Sigma }$ using the Bedford Taylor
calculus on the complex manifold $X_\Sigma^{\an}$. 
{At the end, functoriality will help us to construct Bedford--Taylor products for psh functions also for non-smooth fans.}

\subsection{Locally bounded case} \label{locally bounded case}

Let $U$ be an open subset of $N_{\Sigma }$. 
{Let $D_{\cl,+}^{p,p}(U)$ denote the space of positive Lagerberg currents on $U$ of bidegree $(p,p)$ which are 
closed with respect to the differentials $d'$ and $d''$.}
Let $u_{1},\ldots,u_{q}$ be smooth psh functions on $U$ and let $T\in D^{p,p}(U)_{\cl,+}$ with $p+q\le n$. 
{From the calculus of smooth Lagerberg forms and currents on $U$ (see \S \ref{section: tropical situation} and \cite[Section 3]{burgos-gubler-jell-kuennemann1}),} we get {products}
\begin{align}\label{eq:38}
u_{1}d'd''u_{2}\wedge\ldots \wedge d'd''u_{q}\wedge T&\in D^{p+q-1,p+q-1}(U),\\
d'd''u_{1}\wedge\ldots \wedge d'd''u_{q}\wedge T&\in D_{\cl,+}^{p+q,p+q}(U).\label{eq:39}
\end{align}
The second type of currents is called \emph{Monge-Amp\`ere currents}. 
We want to extend the construction of the products  {\eqref{eq:38} and \eqref{eq:39}} to locally bounded psh functions. 
By Theorem \ref{thm:1}, a psh function is locally bounded if and only if it is finite continuous.

\begin{rem} \label{compatibility with complex MA}
For smooth psh functions $u_1,\ldots,u_q$ on $U$ and a current $T$ as above, we write $v_{j}=\trop_\infty^{\ast}(u_{j})$ for $j=1,\ldots,q$.
By Proposition \ref{thm:2}, the functions $v_{j}$ are psh on $V= \trop_\infty^{-1}(U)$. 
By the main theorem in \cite{burgos-gubler-jell-kuennemann1} recalled in \ref{comparison with complex forms}, there is a unique {closed positive current $S \in D^{p,p}(V)^{\SS,F}$ with $\trop_{\infty,\ast}(S)=T$.}
The projection formula \cite[Proposition 5.1.7]{burgos-gubler-jell-kuennemann1} and \eqref{eq:3} imply
{\begin{align*}
u_{1}d'd''u_{2}\wedge\ldots \wedge d'd''u_{q}\wedge T
&= \trop_{\infty,\ast} (v_{1} dd^c v_{2} \wedge \ldots \wedge dd^c v_{q}\wedge S),\\
d'd''u_{1}\wedge\ldots \wedge d'd'' u_{q}\wedge T
&=\trop_{\infty,\ast}( dd^c v_{1} \wedge \ldots \wedge dd^c v_{q}\wedge S).
\end{align*}}
\end{rem}

\begin{thm}\label{thm:7}
{For any open subset $U$ of $N_\Sigma$,  locally bounded  psh functions $u_{1},\ldots,u_{q}$ on $U$ and $T\in D^{p,p}(U)_{\cl,+}$, there are unique Lagerberg currents} 
\begin{align*}
u_{1}d'd''u_{2}\wedge \ldots \wedge d'd''u_{q} \wedge T & \in D^{p+q-1,p+q-1}(U), \\
d'd''u_{1} \wedge \ldots \wedge d'd''u_{q} \wedge T & \in D_{\cl,+}^{p+q,p+q}(U)
\end{align*}
such that these products are given locally in $U$ and satisfy the following two properties:
\begin{enumerate}
\item \label{item:16} 
If $u_1, \ldots, u_q$ are smooth, then the
product currents agree with \eqref{eq:38} and
\eqref{eq:39}.
\item \label{item:17}  
If there are psh functions  $\{u_{j,k}\}_{k\ge 0}$ converging locally
uniformly to $u_{j}$ for  
$j=1,\ldots,q$ and if there are closed positive
Lagerberg currents  $\{T_{k}\}_{k\ge 0}$ converging weakly to $T$,  
then we have the weak convergence of Lagerberg currents
\begin{align*}
u_{1,k} d'd''u_{2,k} \wedge \ldots \wedge d'd''u_{q,k} \wedge T_{k} 
&\xrightarrow{k\to \infty} u_{1} d'd''u_{2} \wedge\ldots \wedge d'd''u_{q} \wedge T, \\
d'd''u_{1,k} \wedge \ldots \wedge d'd''u_{q,k} \wedge T_{k}& \xrightarrow{k\to \infty} 
d'd''u_{1} \wedge \ldots \wedge d'd''u_{q} \wedge T.
\end{align*}
\end{enumerate}
\end{thm}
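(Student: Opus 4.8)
The plan is to transport the entire Bedford--Taylor construction from the complex toric manifold $X_{\Sigma,\infty}^{\rm an}$ to $N_\Sigma$ via $\trop_\infty$, exactly as announced in the introduction and as set up in Remark \ref{compatibility with complex MA}. First I would reduce the statement to a \emph{local} one: since the asserted products are required to be given locally in $U$, and since uniqueness of a locally defined current is automatic once it is known on a basis of opens, it suffices to construct the products on each member of a basis of open subsets of $N_\Sigma$ and check that the local pieces glue. On such a small open set $U$ I set $V\coloneqq\trop_\infty^{-1}(U)$, $v_j\coloneqq\trop_\infty^*(u_j)=u_j\circ\trop_\infty$, and let $S\in D^{p,p}(V)^{\SS,F}$ be the unique closed positive current with $\trop_{\infty,*}(S)=T$, whose existence is the main theorem of \cite{burgos-gubler-jell-kuennemann1} recalled in \ref{comparison with complex forms}.

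Next I would \emph{define} the two products by
\begin{align*}
u_1 d'd''u_2\wedge\ldots\wedge d'd''u_q\wedge T &\coloneqq \trop_{\infty,*}\bigl(v_1 dd^c v_2\wedge\ldots\wedge dd^c v_q\wedge S\bigr),\\
d'd''u_1\wedge\ldots\wedge d'd''u_q\wedge T &\coloneqq \trop_{\infty,*}\bigl(dd^c v_1\wedge\ldots\wedge dd^c v_q\wedge S\bigr),
\end{align*}
where the right-hand sides use the complex Bedford--Taylor products from \eqref{first Bedford-Taylor product} and \eqref{second Bedford-Taylor product}. For this to make sense I must check three things. (a) Each $v_j$ is a locally bounded psh function on $V$: pluri\-sub\-harm\-onic\-ity is Proposition \ref{thm:2}, and local boundedness follows since $\trop_\infty$ is proper and $u_j$ is locally bounded. (b) The complex products in question are $\SS$- and $F$-invariant, hence lie in $D^{\bullet,\bullet}(V)^{\SS,F}$ so that $\trop_{\infty,*}$ applies: invariance of $S$ is given, invariance of the $v_j$ is clear because they factor through $\trop_\infty$, and the Bedford--Taylor product of invariant objects is invariant since it is obtained as a weak limit of smooth products of averaged regularizations, on which the group action is continuous; one may also invoke uniqueness of the Bedford--Taylor product together with equivariance. (c) The current $dd^c v_1\wedge\ldots\wedge dd^c v_q\wedge S$ is closed and positive, so by \cite[Theorem 7.1.5]{burgos-gubler-jell-kuennemann1} its pushforward lands in $D_{\cl,+}^{p+q,p+q}(U)$, giving the stated target.

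Then I would verify properties \ref{item:16} and \ref{item:17}. Compatibility with the smooth case \ref{item:16} is precisely the content of Remark \ref{compatibility with complex MA}, which rests on the projection formula \cite[Proposition 5.1.7]{burgos-gubler-jell-kuennemann1} and the operator identity \eqref{eq:3}. For continuity \ref{item:17}, given psh $u_{j,k}\to u_j$ locally uniformly and closed positive $T_k\to T$ weakly, I set $v_{j,k}\coloneqq\trop_\infty^*(u_{j,k})$ and let $S_k$ be the unique closed positive invariant current with $\trop_{\infty,*}(S_k)=T_k$; then $v_{j,k}\to v_j$ locally uniformly on $V$, and $S_k\to S$ weakly because $\trop_{\infty,*}$ restricted to closed positive invariant currents is a homeomorphism onto closed positive Lagerberg currents by \ref{comparison with complex forms}. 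The complex Bedford--Taylor product is continuous under locally uniform (in particular monotone) convergence of locally bounded psh functions together with weak convergence of the closed positive current factor, so the complex products converge weakly; applying $\trop_{\infty,*}$, which is weakly continuous by Remark \ref{remark-toric-morphism}\ref{remark-toric-morphism:3}, yields the asserted convergence. Finally, uniqueness: any family of currents satisfying \ref{item:16} and the convergence in \ref{item:17} is forced, because by Theorem \ref{thm:3} every psh $u_j$ is locally a decreasing (hence locally uniform, by Dini) limit of smooth psh functions, on which the product is pinned down by \ref{item:16}; the local gluing of the pieces so constructed is immediate since $\trop_\infty$ and the complex product are both local. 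The main obstacle I expect is point (b)--(c): ensuring that the complex Bedford--Taylor products genuinely stay inside the subspace $D^{\bullet,\bullet}(V)^{\SS,F}$ on which $\trop_{\infty,*}$ is defined, and more subtly that the correspondence of closed positive currents under $\trop_{\infty,*}$ is compatible with the Bedford--Taylor operations (so that the two definitions glue to a coherent object independent of the chosen local presentation); the projection formula and the uniqueness characterization of the complex product are the tools that make this work, but the bookkeeping of invariance through the regularization limits is the delicate part.
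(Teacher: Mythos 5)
Your proposal is correct and follows essentially the same route as the paper: pull back the $u_j$ to psh functions $v_j$ on $V=\trop_\infty^{-1}(U)$ via Proposition \ref{thm:2}, lift $T$ to the unique closed positive $S\in D^{p,p}(V)^{\SS,F}$, define the products as $\trop_{\infty,*}$ of the complex Bedford--Taylor products, and deduce \ref{item:16}, \ref{item:17} and uniqueness from Remark \ref{compatibility with complex MA}, the complex continuity results, and the regularization of Theorem \ref{thm:3} combined with Dini. The extra care you take about invariance of the complex products under $\SS$ and $F$ is harmless but not strictly needed, since $\trop_{\infty,*}$ is defined by duality against $\trop_\infty^*$ on compactly supported forms.
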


\begin{proof}
For $j=1,\ldots,q$, Proposition \ref{thm:2} shows that  $v_{j}\coloneqq \trop_\infty^{\ast}(u_{j})$ is psh on $V \coloneqq \trop_\infty^{-1}(U)$. 
As seen in Remark \ref{compatibility with complex MA},  there is a unique {closed positive $S \in D^{p,p}(V)^{\SS,F}$ with $\trop_{\infty,\ast}(S)=T$.}
In  \cite[\S III.3]{demailly_agbook_2012}
there is an inductive definition of 
{$v_{1} dd^c v_{2} \wedge \ldots \wedge dd^c v_{q} \wedge S
\in D^{p+q-1,p+q-1}(V)$ and 
$dd^c v_{1} \wedge \ldots \wedge dd^c v_{q} \wedge S 
 \in D_{\cl,+}^{p+q,p+q}(V)$.}
Then we define
\begin{align*}
u_{1}d'd''u_{2}\wedge\ldots \wedge d'd''u_{q}\wedge T
& \coloneqq \trop_{\infty,\ast}(v_{1} dd^c v_{2} \wedge \ldots \wedge dd^c v_{q}\wedge S)   \\
d'd''u_{1}\wedge\ldots \wedge d'd'' u_{q}\wedge T
& \coloneqq \trop_{\infty,\ast}( dd^c v_{1} \wedge \ldots dd^c
 v_{q} \wedge S).
\end{align*}
This definition is local in $U$ as the complex construction is local in $V$. By Remark \ref{compatibility with complex MA}, we get \ref{item:16}. 
By Theorem \ref{thm:1}, any 
psh function on $U$ is continuous
and hence \ref{item:17} follows from the corresponding fact on complex manifolds given in \cite[Corollary III.3.6]{demailly_agbook_2012}.
It follows from the regularization in Theorem \ref{thm:3} that these
two properties and locality in $U$ characterize the product currents
uniquely. 
\end{proof}

\subsection{The unbounded case} \label{unbounded case} 

We will now consider a version of the previous theorem for unbounded
psh-functions. 
Similarly as in \cite[\S 3.4]{demailly_agbook_2012}, we will replace uniform convergence by pointwise monotone convergence 
and the sequence $(T_k)$ by one current $T$. 
Additionally, the loci of unboundedness of the psh functions and $\supp(T)$ have to
intersect properly.

We still assume that $\Sigma$ is a smooth fan in $N_\R$ and that $U$ is an open subset of $N_\Sigma$. 
A \emph{stratum} of $U$ is a connected component of $U \cap N(\sigma)$ for some $\sigma \in \Sigma$. 
A \emph{strata subset} of $U$ is a union of strata of $U$.
Given a psh function $u$ on  $U$, we will write
\begin{displaymath}
 L(u) \coloneqq \{x\in U\,\mid\, u(x)=-\infty\}.
\end{displaymath}
{By Theorem \ref{thm:1} and Lemma \ref{lemm:2}, the set $L(u)$ is a closed strata subset of $U$.}

In the following definition, the closed strata subsets $D_1, \ldots, D_q$
should morally be viewed as supports of divisors.

\begin{definition} \label{proper intersection}
Let $B$ be a closed strata subset of $U$  and let $D_1, \ldots, D_q$ be closed strata subsets of $U$ of codimension at least one.
{For $d \in \N$}, we say that \emph{$D_1, \ldots, D_q$ intersect $B$ 
{$d$-properly}} if for any subset $I \subset  \{1,\ldots,q\}$
	we have
	\begin{equation} \label{proper intersection eq}
	\dim\left( B\cap \bigcap_{j\in I}D_{j}\right) \le d-|I|
	\end{equation}
\end{definition}

Note that for $I=\emptyset$, 
this gives that the dimension of $B$ is bounded by $d$.

\begin{thm}\label{thm:8}
{Let $D_1, \ldots , D_q$ be closed strata subsets of $U$ of codimension at least one which intersect the closed strata subset $B$ of $U$ $(n-p)$-properly for some $p \in \N$.}  
For $j\in\{1,\ldots,q\}$, let $u_{j}$ be a psh function on $U$ with $L(u_{j}) \subset D_{j}$ and let {$T \in D_{\cl,+}^{p,p}(U)$} with $\supp(T) \subset B$. 
Then there are product currents
\begin{align*}
u_{1}d'd''u_{2}\wedge\ldots \wedge d'd''u_{q}\wedge T
& \in D^{p+q-1,p+q-1}(U),  \\
d'd''u_{1} \wedge \ldots \wedge d'd''u_{q} \wedge T 
& \in D_{\cl,+}^{p+q,p+q}(U)
\end{align*}
that agree with the ones in Theorem \ref{thm:7} when the functions $u_{j}$ are locally bounded. 
The formation of the product currents is local in $U$. 
If there are decreasing sequences $\{u_{j,k}\}_{k\ge 0}$ of psh functions converging pointwise   to $u_{j}$ for $j=1,\ldots,q$, then we have 
\begin{align*}
u_{1,k} d'd''u_{2,k} \wedge \ldots \wedge d'd''u_{q,k} \wedge T
&\xrightarrow{k\to \infty} u_{1} d'd''u_{2} \wedge \ldots \wedge d'd''u_{q} \wedge T,  \\
d'd''u_{1,k} \wedge \ldots \wedge d'd''u_{q,k} \wedge T
& \xrightarrow{k\to \infty} d'd''u_{1} \wedge \ldots \wedge d'd''u_{q} \wedge T
\end{align*}
as weak convergence of currents. 
These conditions determine the product currents  uniquely. 
\end{thm}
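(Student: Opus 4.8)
The strategy is the same "transport to the complex world via $\trop_\infty$" that proved Theorems \ref{thm:7} and \ref{tropical Bedford-Taylor in intro}, now applied with the unbounded Bedford--Taylor theory of Demailly \cite[\S III.4]{demailly_agbook_2012}. First I would pull everything back: set $V \coloneqq \trop_\infty^{-1}(U)$, $v_j \coloneqq \trop_\infty^\ast(u_j)$, which are psh on $V$ by Proposition \ref{thm:2}, and let $S \in D^{p,p}(V)^{\SS,F}$ be the unique closed positive current with $\trop_{\infty,\ast}(S)=T$, as in \ref{comparison with complex forms} and Remark \ref{compatibility with complex MA}. The unbounded loci satisfy $L(v_j) = \trop_\infty^{-1}(L(u_j)) \subset \trop_\infty^{-1}(D_j)$ and $\supp(S) \subset \trop_\infty^{-1}(B)$. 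The key point is that the $(n-p)$-proper intersection of $D_1,\ldots,D_q$ with $B$ in $N_\Sigma$ translates, under the proper surjection $\trop_\infty$ (whose fibres are the compact tori $\SS\cdot x$, of real dimension equal to the codimension of the corresponding stratum), into the condition that Demailly requires for his product to be defined: for each $I \subset \{1,\ldots,q\}$,
\[
\dim_{\C}\Bigl( \trop_\infty^{-1}(B) \cap \bigcap_{j \in I} \trop_\infty^{-1}(D_j) \Bigr) \le n - (p+|I|) \quad\text{(complex codimension at least } p+|I|\text{),}
\]
because $\trop_\infty$ is "dimension-doubling" compatibly on strata: a stratum $N(\sigma)$ of real dimension $k$ has preimage an $\SS$-orbit-bundle of complex dimension $k$. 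This is the verification that the dimension bound \eqref{proper intersection eq} is exactly what makes the complex product well-defined.

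With that in hand, Demailly's theory yields the currents $v_1 dd^c v_2 \wedge \ldots \wedge dd^c v_q \wedge S \in D^{p+q-1,p+q-1}(V)$ and $dd^c v_1 \wedge \ldots \wedge dd^c v_q \wedge S \in D^{p+q,p+q}_{\cl,+}(V)$, together with their locality in $V$ and continuity along decreasing sequences \cite[Theorem III.4.5, Proposition III.4.9]{demailly_agbook_2012}. One must check these complex currents lie in the $\SS$- and $F$-invariant subspace $D^{\bullet,\bullet}(V)^{\SS,F}$: this follows because $S$ and all the $v_j$ are $\SS$-invariant, $F$ acts as the identity on functions factoring through $\trop_\infty$, and the inductive construction of the product is built out of $\SS$-invariant, $F$-invariant data; invariance is preserved at each step since both group actions commute with $dd^c$ and with multiplication by invariant locally bounded psh functions on the complement of the poles. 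Then define
\[
u_1 d'd''u_2 \wedge \ldots \wedge d'd''u_q \wedge T \coloneqq \trop_{\infty,\ast}\bigl(v_1 dd^c v_2 \wedge \ldots \wedge dd^c v_q \wedge S\bigr),
\]
and analogously for the Monge--Amp\`ere current, using that $\trop_{\infty,\ast}$ maps closed positive $\SS,F$-invariant currents isomorphically onto closed positive Lagerberg currents (\ref{comparison with complex forms}). Agreement with Theorem \ref{thm:7} in the locally bounded case is immediate from Remark \ref{compatibility with complex MA}, since both constructions push forward the same complex current; locality in $U$ follows from locality in $V$ together with $\trop_\infty$ being proper (so pullback and pushforward commute with restriction to open subsets). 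The convergence statement for decreasing sequences: pulling back, $v_{j,k} \coloneqq \trop_\infty^\ast(u_{j,k})$ decrease pointwise to $v_j$, Demailly's continuity gives weak convergence of the complex products, and $\trop_{\infty,\ast}$ is weakly continuous by Remark \ref{remark-toric-morphism} \ref{remark-toric-morphism:3}; this also forces uniqueness once one invokes the local regularization of Theorem \ref{thm:3} to write any psh $u_j$ as a decreasing limit of smooth (hence locally bounded) psh functions and reduce to Theorem \ref{thm:7}.

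I expect the main obstacle to be the dimension-counting translation between the tropical $(n-p)$-proper intersection condition of Definition \ref{proper intersection} and the complex-codimension hypothesis under which Demailly's unbounded product is legitimate. One has to be careful that $\trop_\infty^{-1}$ of a codimension-$c$ closed strata subset is a closed analytic-type subset of $V$ of complex codimension exactly $c$ (not merely $\ge c$), and that intersections of such preimages are preimages of the intersections — this is where the stratified structure of $\trop_\infty$ and the fact that $\SS$-orbits are the fibres (\ref{SS-invariant forms, sets and functions}) must be used precisely. A secondary, more routine point is verifying that the relevant $\{-\infty\}$-loci $L(v_j)$ are small enough (pluripolar, contained in the complex-analytic strata) for Demailly's machinery; this follows from Proposition \ref{thm:2} and the explicit description of strata, but should be spelled out. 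For a non-smooth fan, one would additionally reduce to the smooth case exactly as in Remark \ref{projection formula for unbounded psh} via a toric resolution $r\colon N_{\Sigma'} \to N_\Sigma$ and the projection formula, noting that $r$ maps strata subsets to strata subsets and preserves the proper-intersection condition.
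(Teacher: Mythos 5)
Your proposal is correct and follows essentially the same route as the paper: pull back along $\trop_\infty$, check that the $(n-p)$-proper intersection condition \eqref{proper intersection eq} translates (via the dimension-doubling $\dim_\R\trop_\infty^{-1}(N(\sigma))=2\dim_\R N(\sigma)$ on strata) into the hypothesis of \cite[Theorem III.4.5]{demailly_agbook_2012}, apply Demailly's unbounded Bedford--Taylor product to $v_j$ and $S$, and push forward with $\trop_{\infty,*}$, deducing locality, continuity along decreasing sequences, and uniqueness via Theorem \ref{thm:3} exactly as in the bounded case. The only difference is that you spell out the $\SS,F$-invariance check and the non-smooth reduction, neither of which is needed in the paper's proof (the section assumes $\Sigma$ smooth, and positivity of the pushforward is handled by \cite[Proposition 5.1.13]{burgos-gubler-jell-kuennemann1}).
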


\begin{proof}
Let $V$, $v_{j}$ and $S$ be as in the proof of Theorem \ref{thm:7}. 
For every $I\subset \{1,\ldots,q\}$, the set
\begin{math}
B\cap \bigcap_{j\in I}D_{j}
\end{math}
is a closed stata subset of $U$. 
If $N(\sigma)$ is a stratum of $N_{\Sigma }$, then
\begin{displaymath}
\dim_{\R}(\trop^{-1}(N(\sigma)))=2\dim_{\C}(\trop^{-1}(N(\sigma)))=2\dim_{\R}(N(\sigma))
\end{displaymath}
and hence our Assumption \eqref{proper intersection eq} yields
\begin{displaymath}
\dim_{\R} \trop^{-1} \left( B\cap \bigcap_{j\in I} D_{j} \right) \le 2n-2p-2|I|<2n-2p-2|I|+1. 
\end{displaymath}
Therefore, the functions $v_{j}$ and the current {$S$} satisfy the hypotheses of 
\cite[Theorem III.4.5]{demailly_agbook_2012}, {where the role of $p$ and $n-p$ is interchanged}. 
It follows that the currents 
\begin{align*}
v_{1}dd^c v_{2}\wedge \ldots \wedge dd^c v_{q}\wedge S
&\in D^{p+q-1,p+q-1}(V),  \\ 
dd^c v_{1}\wedge\ldots \wedge dd^c v_{q}\wedge S
&\in D_{\cl,+}^{p+q,p+q}(V) 
\end{align*}
are well defined. 
As in Theorem \ref{thm:7}, we define the product currents on $U$ as the
direct image of the corresponding currents in $V$ with respect to $\trop \colon V \to U$. 
The continuity result is a consequence of the weak continuity of $\trop_*$ and 
\cite[Theorem III.4.5]{demailly_agbook_2012}. 
Uniqueness of the product currents 
follows again from  the regularization in Theorem \ref{thm:3} and
  from locality in $U$.  
Since the current $dd^c v_{1}\wedge\ldots \wedge dd^c v_{q}\wedge S$ is closed and positive, the same is true 
for $d'd''u_{1} \wedge \ldots \wedge d'd''u_{q} \wedge T$ by \cite[Proposition 5.1.13]{burgos-gubler-jell-kuennemann1}.
\end{proof}

\begin{rem} \label{comparison of complex BT product and tropical BT product}
	In the above proof, we have seen that the tropical Bedford--Taylor products  of the psh functions $u_1,\ldots, u_q$ and  $T \in D_{\cl,+}^{p,p}(U)$ satisfying the hypotheses in Theorem \ref{thm:8}  are compatible with the Bedford--Taylor products of the psh functions $v_j \coloneqq u_j \circ \trop_\infty$ on the complex manifold $V = \trop_\infty^{-1}(U)$ and the unique closed positive $S \in D^{p,p}(V)^{\SS,F}$ with $\trop_{\infty,\ast}(S)=T$  in the sense that we have
	\begin{align*}
	u_{1}d'd''u_{2}\wedge\ldots \wedge d'd''u_{q}\wedge T
	& = \trop_{\infty,\ast}(v_{1} dd^c v_{2} \wedge \ldots \wedge dd^c v_{q}\wedge S)\\
	d'd''u_{1}\wedge\ldots \wedge d'd'' u_{q}\wedge T
	& = \trop_{\infty,\ast}( dd^c v_{1} \wedge \ldots dd^c
	v_{q} \wedge S).
	\end{align*}  
\end{rem}

\subsection{Functoriality}

Let $N,N'$ be free abelian groups of finite rank $n$ and $n'$
respectively.
As in {Definition \ref{new morphism of toric tropical varieties}}, we consider a homomorphism $L\colon N' \to N$  and a proper  $L$-equivariant morphism  $E\colon N'_{\Sigma'} \to N_\Sigma$ of tropical toric varieties for fans $\Sigma$ and $\Sigma'$ in $N_\R$ and $N_\R'$, respectively. 
Let $U$ be an open subset of $N_\Sigma$ and $U' \coloneqq E^{-1}(U)$.

\begin{lem} \label{projection formula smooth}
For $T \in D^{r,s}(U')$ and $\alpha \in \AS^{p,q}(U)$ 
we have
\begin{align*}
\alpha \wedge E_* T  = E_* ( E^* \alpha \wedge T) \in D^{r+n-n'+p, s+n-n'+q}(U).
\end{align*}
\end{lem}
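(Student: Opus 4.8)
The plan is to prove the identity
\[
\alpha \wedge E_* T = E_*(E^*\alpha \wedge T)
\]
by testing both sides against an arbitrary compactly supported smooth Lagerberg form and reducing everything to the ordinary projection formula for forms under pullback, which is already built into the definitions of $E^*$ and $E_*$ recorded in Remark \ref{remark-toric-morphism}.

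First I would recall the set-up. Since $E$ is proper, Remark \ref{remark-toric-morphism}\ref{remark-toric-morphism:3} gives a continuous pullback $E^*\colon A_c^{\bullet,\bullet}(U) \to A_c^{\bullet,\bullet}(U')$ and, by duality, the pushforward $E_*\colon D^{\bullet,\bullet}(U') \to D^{\bullet+m-n,\bullet+m-n}(U)$, where here $m = n'$ so the degree shift is $n'-n$ on the current side; with the indexing of the statement, $E_*T \in D^{r+n-n',s+n-n'}(U')$ wait — let me keep to the statement: $E_*T$ has bidegree shifted so that the final current lands in $D^{r+n-n'+p,\,s+n-n'+q}(U)$. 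The pairing defining $E_*$ is $(E_*T)(\beta) = T(E^*\beta)$ for $\beta \in A_c(U)$. On the form side, wedging with a fixed smooth $\alpha \in A^{p,q}(U)$ is a continuous operation, and $E^*$ is a homomorphism of bigraded algebras, so $E^*(\alpha \wedge \beta) = E^*\alpha \wedge E^*\beta$.

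The computation then is short. Fix a test form $\beta \in A_c^{\bullet,\bullet}(U)$ of complementary degree. Unwinding the definition of the wedge product of a current with a smooth form and of $E_*$, one has
\[
\bigl(\alpha \wedge E_*T\bigr)(\beta) = (E_*T)(\alpha \wedge \beta) = T\bigl(E^*(\alpha \wedge \beta)\bigr) = T\bigl(E^*\alpha \wedge E^*\beta\bigr) = \bigl(E^*\alpha \wedge T\bigr)(E^*\beta) = E_*\bigl(E^*\alpha \wedge T\bigr)(\beta).
\]
Since $\beta$ was arbitrary, the two currents coincide, and a quick bookkeeping of bidegrees (using $\dim N = n$, $\dim N' = n'$, and the $(n'-n)$-shift built into $E_*$) confirms that both sides lie in $D^{r+n-n'+p,\,s+n-n'+q}(U)$. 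One should also note the sign convention: wedging a current with a form on the left versus right introduces the usual Koszul sign $(-1)^{(p+q)(r+s)}$, but this sign is the same on both sides of the claimed identity (it appears once when we move $\alpha$ past $T$ on the left and once when we move $E^*\alpha$ past $T$ on the right, and $E^*$ preserves total degree), so it cancels and does not affect the statement as written.

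I do not expect any real obstacle here: the only point requiring a word of care is making sure the convention for $\alpha \wedge T$ (as opposed to $T \wedge \alpha$) is applied consistently, and that the bidegree shift of $E_*$ matches the one used in the rest of the section; both are fixed by the references to \cite[Section 3]{burgos-gubler-jell-kuennemann1} and Remark \ref{remark-toric-morphism}. Everything else is a formal consequence of duality and the fact that $E^*$ is an algebra homomorphism.
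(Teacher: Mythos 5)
Your proof is correct and matches the paper's argument, which is exactly the one-line observation that the identity follows from testing against a compactly supported form $\beta \in A^{n'-r-p,n'-s-q}_c(U)$ and unwinding the definitions of $E_*$ and $E^*$. Your chain of equalities is the explicit version of that computation, so there is nothing to add.
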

\begin{proof}
This follows directly from testing against a form $\beta \in
A^{n'-r-p,n'-s-q}_c(U)$.
\end{proof}

By Proposition \ref{functoriality of psh}, the pull-back of a psh function on $U$ with respect to $E$ is psh.

\begin{prop} \label{projection formula for locally bounded psh}
In the above setting,  assume that $\Sigma,\Sigma'$ are smooth fans. Let $T$ be a closed positive current on $U'$ and 
let $u_1,\ldots,u_q$ be locally bounded psh-functions on $U$. 
Then 
\begin{align*}
E_* \left(E^*(u_1) d'd'' E^*(u_2) \wedge \ldots \wedge E^*(u_q) \wedge T \right) = 
u_1 d'd'' u_2 \wedge \ldots \wedge u_q \wedge E_*T
\end{align*}
\end{prop}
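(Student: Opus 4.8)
The plan is to reduce the statement to the corresponding projection formula for the complex Bedford--Taylor product on the complex toric manifolds $X_{\Sigma,\infty}^{\an}$ and $X_{\Sigma',\infty}^{\an}$, using the by-now-familiar dictionary $\trop_{\infty,\ast}$ between complex currents and Lagerberg currents from \ref{comparison with complex forms}. Set $V \coloneqq \trop_\infty^{-1}(U) \subset X_{\Sigma,\infty}^{\an}$ and $V' \coloneqq \trop_\infty^{-1}(U') \subset X_{\Sigma',\infty}^{\an}$. Since $E$ is a proper $L$-equivariant morphism of tropical toric varieties, Remark \ref{toric morphism motivation} provides a corresponding proper $\rho$-equivariant morphism $\psi\colon X_{\Sigma'} \to X_{\Sigma}$ of toric varieties, and on analytifications $\psi^{\an}$ restricts to a proper map $V' \to V$ which is $\SS$-equivariant (for the maximal compact tori) and satisfies $\trop_\infty \circ \psi^{\an} = E \circ \trop_\infty$. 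Thus the pull-back $(\psi^{\an})^*$ of complex forms and the push-forward $(\psi^{\an})_*$ of complex currents are compatible with $E^*$ and $E_*$ under $\trop_{\infty,\ast}$, and $(\psi^{\an})_*$ preserves the subspaces of $\SS$- and $F$-invariant currents.

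First I would write $v_j \coloneqq \trop_\infty^*(u_j) = u_j \circ \trop_\infty$, which by Proposition \ref{thm:2} are locally bounded psh functions on $V$, and let $S \in D^{p,p}(V')^{\SS,F}$ be the unique closed positive current with $\trop_{\infty,\ast}(S) = T$, as in Remark \ref{compatibility with complex MA}. Note that $(\psi^{\an})^*(v_j) = u_j \circ \trop_\infty \circ \psi^{\an} = u_j \circ E \circ \trop_\infty = \trop_\infty^*(E^*(u_j))$, so the complex pull-backs of the $v_j$ represent the tropical functions $E^*(u_j)$. By Remark \ref{comparison of complex BT product and tropical BT product} (applied on both sides, once on $U'$ and once on $U$), the two sides of the claimed identity are the $\trop_{\infty,\ast}$-images of
\[
(\psi^{\an})_*\bigl((\psi^{\an})^*(v_1)\, dd^c (\psi^{\an})^*(v_2) \wedge \ldots \wedge dd^c (\psi^{\an})^*(v_q) \wedge S\bigr)
\]
and of
\[
v_1\, dd^c v_2 \wedge \ldots \wedge dd^c v_q \wedge (\psi^{\an})_*(S)
\]
respectively. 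So it suffices to prove the projection formula for the complex Bedford--Taylor product along the proper map $\psi^{\an}\colon V' \to V$.

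This complex projection formula is standard: for smooth psh functions it is just Lemma \ref{projection formula smooth} (its complex analogue, i.e.\ \cite[Proposition 5.1.7]{burgos-gubler-jell-kuennemann1} applied in the complex setting, combined with $(\psi^{\an})^* dd^c = dd^c (\psi^{\an})^*$), and the general locally bounded case follows by local regularization — approximate each $u_j$ locally uniformly by decreasing sequences of smooth psh functions via Theorem \ref{thm:3}, so that by Proposition \ref{functoriality of psh}\ref{psh lifts} the $E^*(u_j)$ are likewise approximated, and pass to the limit using the weak continuity statement \ref{item:17} of Theorem \ref{thm:7} on both $U$ and $U'$ together with the weak continuity of $E_*$ from Remark \ref{remark-toric-morphism}\ref{remark-toric-morphism:3}. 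The main obstacle I anticipate is the bookkeeping needed to verify that $\psi^{\an}$ genuinely carries $V'$ properly into $V$ and is $\SS$-equivariant, so that $(\psi^{\an})_*$ really does land in $D^{p,p}(V)^{\SS,F}$ and commutes with $\trop_{\infty,\ast}$; once that compatibility is in place, the identity propagates formally from the smooth case by regularization. Alternatively, one can bypass the complex picture entirely and prove the formula directly in the tropical world by the same regularization argument, using Lemma \ref{projection formula smooth} in the smooth case and the continuity in Theorem \ref{thm:7}; I would present whichever is shorter, but the complex route makes the continuity input cleanest.
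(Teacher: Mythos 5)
Your fallback route --- prove the identity directly in the tropical world, using Lemma \ref{projection formula smooth} for smooth $u_j$ and then local regularization via Theorem \ref{thm:3} together with the locality and continuity statements of Theorem \ref{thm:7} and the weak continuity of $E_*$ --- is exactly the paper's proof, and it is complete as you describe it. Since you say you would present whichever route is shorter, you would land on the correct argument.

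Your primary (complex) route, however, has a genuine gap at the step you dismiss as ``bookkeeping'': the claim that $(\psi^{\an})_*$ carries $D^{p,p}(V')^{\SS',F}$ into $D^{p,p}(V)^{\SS,F}$ is false for a general $L$-equivariant $E$. If $L\colon N'\to N$ is not surjective (e.g.\ the inclusion of a subtorus, say $\psi\colon \mathbb G_{\rm m}\to\mathbb G_{\rm m}^2$, $z\mapsto(z,1)$), the push-forward of the current of integration over $V'$ is supported on a single translate of a subtorus orbit and is not invariant under the full compact torus $\SS$ of the target. Consequently $(\psi^{\an})_*(S)$ need not be the canonical $\SS$- and $F$-invariant lift $\tilde S$ of $E_*T$ furnished by \cite[Theorem 7.1.5]{burgos-gubler-jell-kuennemann1}, and Remark \ref{comparison of complex BT product and tropical BT product} identifies the tropical product $u_1\,d'd''u_2\wedge\ldots\wedge E_*T$ with $\trop_{\infty,*}(v_1\,dd^c v_2\wedge\ldots\wedge\tilde S)$, not with $\trop_{\infty,*}(v_1\,dd^c v_2\wedge\ldots\wedge(\psi^{\an})_*(S))$. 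One can repair this by showing that $\trop_{\infty,*}$ of the complex Bedford--Taylor product with $\SS$-invariant bounded psh factors depends only on $\trop_{\infty,*}$ of the current factor (true for smooth factors by the projection formula \cite[Proposition 5.1.7]{burgos-gubler-jell-kuennemann1}, then by regularization), but that extra step is essentially the direct tropical regularization argument in disguise, so the complex detour buys nothing here. Note that in the paper's main application (Remark \ref{Bedford-Taylor for non-smooth}, where $L=\id_N$ and $\psi$ is a toric resolution) the invariance does hold, but the proposition is stated for arbitrary $L$.
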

\begin{proof}
When the $u_i$ are smooth, this follows from Lemma \ref{projection formula smooth}. 
In general, we use Theorem \ref{thm:3}  to approximate the $u_i$ 
{locally by  decreasing sequences of smooth psh functions}.
Then we use locality and continuity of the product in Theorem \ref{thm:7} and of $E_*$. 
\end{proof}

\begin{rem} \label{projection formula for unbounded psh}
In the same setting, the projection formula in Proposition \ref{projection formula for locally bounded psh} holds more generally for unbounded psh-functions $u_1,\ldots,u_q$ on $U$ if we assume $L(u_j)	\subset D_j$ and $\supp(T) \subset B$ for strata subsets $D_j$ of codimension at least one in $U$ and a strata subset $B$ of $U'$ such that $E^{-1}(D_1), \ldots, E^{-1}(D_q)$ intersect $B$ $(m-p)$-properly.

Indeed, then $E_*(T) \in D^{n-m+p,n-m+p}(U)$ and $D_1, \ldots, D_q$ intersect the closed stata subset $E(B)$ $(m-p)$-properly. The projection formula follows as above by using Theorem \ref{thm:8}.
\end{rem}

\begin{rem}  \label{Bedford-Taylor for non-smooth}  
We can use functoriality to define Bedford--Taylor products also in the case of a non-smooth fan $\Sigma$ in $N_\R$. Let $u_1, \ldots, u_q$ be locally bounded psh functions on the open subset $U$ of $N_\Sigma$. By toric resolution of singularities, there is a smooth fan $\Sigma'$ in $N_\R$ refining $\Sigma$. The associated morphism $E\colon N_{\Sigma'}\to N_\Sigma$ is a proper $\id_{N}$-equivariant morphism of tropical toric varieties and we define the product currents by
\begin{align*}
u_{1}d'd''u_{2}\wedge\ldots \wedge d'd''u_{q}  \coloneqq  E_* \left(E^*(u_1) d'd'' E^*(u_2) \wedge \ldots \wedge E^*(u_q)  \right)
& \in D^{q-1,q-1}(U),  \\
d'd''u_{1} \wedge \ldots \wedge d'd''u_{q} \coloneqq
E_* \left(E^*(u_1) d'd'' E^*(u_2) \wedge \ldots \wedge E^*(u_q) \right) & \in D_{\cl,+}^{q,q}(U).
\end{align*}
It follows from Proposition \ref{projection formula for locally bounded psh} that the definition of the product currents does not depend on the choice of $\Sigma'$.  Obviously, the above product currents are still local in $U$ and are continuous along monotonically decreasing sequences of psh functions. Moreover,  the projection formula in Proposition \ref{projection formula for locally bounded psh} holds also for not necessarily smooth fans.
\end{rem}

\section{Semipositive metrics and $\theta$-psh functions} 
\label{section: semipositive metrics and theta-psh functions}

We recall {properties of} semipositive metrics on line bundles over complex manifolds or non-archimedean analytic spaces and {discuss} the equivalent concept of $\theta$-psh functions for a closed first Chern current $\theta$ of the  line bundle. 
We  also introduce $\theta$-psh functions on tropical toric varieties. 
For toric varieties, {tropical $\theta$-psh functions} serve as a link between invariant $\theta$-psh functions in the complex and the non-archimedean situation.
We prove correspondence theorems which will later be applied to solve Monge--Amp\`ere equations.

\subsection{The complex setting} \label{subsection complex setting}

Let $L$ be a holomorphic line bundle on a complex manifold $X$. 
We first introduce singular psh metrics on $L$ and $\theta$-psh functions in this general setting.
Then we restrict to the toric setting.

\begin{art} \label{recall metric}
A {\it continuous metric $\metr$ on $L$} is given by 
 continuous functions 
$-\log\|s\|\colon U \to \R$ for all frames (i.e.~nowhere vanishing sections) $s \in H^0(U,L)$ on open subsets $U$ of $X$ such that for frames $s$ on $U$ {and $s'$ on $U'$} we have $\log \|s\| - \log \|s'\|= \log |s/s'|$ {on $U\cap U'$}.
	
A {\it singular metric} of $L$ is defined similarly, 
but without assuming continuity and with allowing the function $-\log\|s\|$ to take the value $-\infty$ {as well}. 
\end{art} 

\begin{definition} \label{psh metric for complex} 
A singular metric $\metr$ of $L$ is called {\it psh} 
if for all local frames $s$ of $L$ on any open subset $U$ of $X$, the function $-\log \|s\|{\colon U\to \Rinf}$ is psh. 
\end{definition}

\begin{rem} \label{Lelong's characterization}
If $X$ is connected, then a singular metric $\metr$ of $L$ is psh if and only if 
 either the function $-\log \|s\|$ is  identically $-\infty$ 
or  the function $-\log \|s\|$ is strongly usc as defined in \eqref{def:2'}, locally integrable and the first Chern current  
$c_1(L,\metr)\coloneqq {dd^c}[-\log \|s\|]$ is positive for all
{local} frames $s$ {of $L$} {\cite[Th\'eor\`eme II.3]{lelong68:psh}.} 
Note that a smooth metric is psh if and only if the first Chern form
is a positive form.
Some authors would call the latter form semipositive, 
but we use here the positivity notions of forms and currents given in 
\cite[\S III.1.A, \S III.1.B]{demailly_agbook_2012} following also our conventions from 
Section \ref{sec:posit-real-compl}. 
\end{rem}

\begin{definition} \label{theta psh}
We fix a continuous reference metric $\metr_0$ on $L$ and we set
$\theta \coloneqq c_1(L,\metr_0)$ for the associated first Chern
current. 
A {\it $\theta$-psh function} $\varphi$ on $X$ is a function $\varphi \colon X \to \Rinf$ such that for every connected open subset $U$ of $X$, we have either $\varphi|_U \equiv -\infty$ or $\varphi|_U$ is strongly usc, locally integrable and $dd^c [\varphi|_U]+ \theta$ is a positive current on $U$.
\end{definition}

{Observe that if $\metr_0$ is a smooth metric, then $\theta$ is a smooth
$(1,1)$-form on $X$.}

\begin{rem} \label{psh vs theta psh in complex}
Note that every singular metric $\metr$ on $L$ induces a  function 
\[
\varphi {\coloneqq - \log( \|\phantom{s}\|/\|\phantom{s}\|_0)}\coloneqq - \log( \|s\|/\|s\|_0)\colon X \to \Rinf
\]
independent of the choice of a local frame $s$ of $L$. 
Obviously, the singular metric $\metr$ is psh if and only if the
function $\varphi$ is $\theta$-psh. 
\end{rem}

\begin{rem} \label{Zhang semipositive metrics for complex}
For a line bundle $L$ on a complex {smooth} proper variety $Y$, 
there is another semipositivity notion for metrics on $L$ introduced
by Zhang \cite{zhang-95}: 
Let $\Yan$ be the complex analytification of $Y$.
A metric $\metr$ on $\Lan$ is called {\it semipositive}, if $\metr$ is
a uniform limit of smooth psh metrics  
$\metr_k$ on $\Lan$, i.e.~the Chern forms  $c_1(h^*\Lan,h^*\metr_k)$
are assumed to be positive forms.
Obviously, a semipositive metric is continuous.
It is easy to see 
that for every  semipositive metric $\metr$ on $\Lan$, the first Chern current $c_1(L,\metr)$ is a positive current on $\Xan$ and 
hence $\metr$ is a psh metric as in Definition \ref{psh metric for complex}. 
The converse requires existence of global regularization and 
holds for $L$ ample \cite[Theorem 4.6.1]{maillot-thesis}. 	
\end{rem}

\begin{art} \label{toric semipositive metrics for complex}
{Let $N$ be a lattice with dual lattice $M$.
Let $\Sigma$ be a smooth fan in $N_\R$ with associated smooth complex toric variety $X_\Sigma$. 
Recall from \cite[Definition 3.3.4]{bps-asterisque}, that a \emph{toric line bundle $L$ on $X_\Sigma$} is given by a line bundle $L$ on $X_\Sigma$ together with a fixed trivialization of the fiber of $L$ over the origin of the generic torus $\T$ of $X_\Sigma$.
A meromorphic section $s$ of a toric line bundle is called \emph{toric} if 
it is regular and nowhere vanishing on $\T$ and the fixed trivialization is induced by  $s$.
The choice of a toric section of a toric line bundle corresponds
precisely to the choice of a $\T$-linearization of the toric line bundle
\cite[Remark 3.3.6]{bps-asterisque}.}

{Let $s$ be a {toric meromorphic section} of a {toric line bundle} $L$ on $X_\Sigma$. 
Associated with $L$ and $s$ there is a piecewise linear function $\Psi=\Psi(L,s)\colon \Sigma\to \R$, whose construction we now recall: 
The invariant Cartier divisor of $s$ is given on the affine open subset $U_\sigma=\Spec\, \C[\sigma^\vee\cap M]$ associated 
to the cone $\sigma \in \Sigma$ by a character $\chi^{-m}$ and 
then $\Psi$ is given on $\sigma$ by the corresponding linear form  $m \in M$.} 

{For a continuous metric $\metr$ on $\Lan$, we call $g \coloneqq -\log \|s\|$ the \emph{associated Green function for $\div(s)$}. 
A metric on $\Lan$ is called \emph{toric} if it is invariant under pull-back with respect to the action of the compact torus $\SS$ in $\torusan$.}

If $\Sigma$ is complete, then a result of Burgos, Philippon and Sombra 
\cite[Theorem 4.8.1]{bps-asterisque} shows
that {for a fixed toric meromorphic section $s$ of $L$} the invariant continuous semipositive toric metrics on $\Lan$ 
are in bijection to the concave functions $\psi$ on $N_\R$ 
with $\psi = \Psi + O(1)$. 
The correspondence is given by 
\begin{equation}\label{archimedean-toric-corr}
\psi \circ \trop = \log \|s \|.
\end{equation}
We will generalize this {correspondence} below. 
The \emph{canonical metric} on a {nef} toric line bundle $L$
on the proper toric variety $X_\Sigma$ corresponds to the choice $\psi =\Psi$. 
This canonical metric {does not depend on the choice of $s$ and} is continuous but  not necessarily smooth. 
\end{art}

\subsection{The Lagerberg setting} \label{subsection: Lagerberg setting}

Let $\Sigma$ be a fan  in $N_\R$ for a lattice $N$ of rank $n$. 
Our goal is to transfer the notions from the previous subsection to an open subset $U$ of the partial compactification $N_\Sigma$. 
As usual, we set $M \coloneqq \Hom_\Z(N,\Z)$.

In the tropical world, there is no perfect analogue of line bundles
and so we prefer to establish the analogue of $\theta$-psh
functions. According to \ref{toric semipositive metrics for complex}, 
a good way to replace the holomorphic line bundle is to
fix a function $\Psi\colon |\Sigma| \to \R$ 
which is piecewise linear with respect to the fan $\Sigma$ 
which means that $\Psi$ is linear on each cone $\sigma$ of $\Sigma$. 
We also require that the slope of $\Psi|_\sigma$ is integral, 
i.e. $\Psi|_\sigma = m_\sigma$ for some $m_\sigma \in M$. 
This makes sure that there is an associated line bundle $L$ on the complex toric variety $X_\Sigma$ 
by the reverse of the construction in  \ref{toric semipositive metrics for complex}. 
This will be important for comparing to semipositive metrics in the complex setting. 
Note that $\Psi$ might be seen as a tropical Cartier divisor on $N_\Sigma$ and 
linear equivalence is given by adding linear functions from $M$
{\cite{allermann-rau-2010}}.

The role of continuous metrics is played {in the following} by Green functions. 

\begin{definition} \label{tropical Green function} 
A \emph{{continuous Green function} for $\Psi$} is a function $g\colon U \cap N_\R \to \R$ such that for all $\sigma \in \Sigma$ and all $m_\sigma\in M$ with $\Psi|_\sigma=m_\sigma|_\sigma$, there exists an open neighbourhood $\Omega_\sigma$ of $U\cap N(\sigma)$ in $U$ and a continuous function $h_\sigma\colon \Omega_\sigma\to\R$ which extends the function $g+m_\sigma|_{\Omega_\sigma\cap N_\R}$. 	
We say that  $g$ is a \emph{smooth Green function for $\Psi$} if we may choose $h_\sigma$ smooth for all $\sigma \in \Sigma$.

A \emph{{continuous singular Green function} for $\Psi$} is
  defined as above, but 
  allowing the continuous function $h_\sigma$ to take the value
  $-\infty$. {Note that the function $g$ is still assumed to have
finite values on $U \cap N_\R$. Therefore $h_{\sigma }$ can only take the value
$-\infty$  on $U\setminus N_{\R}$.}
\end{definition}

\begin{rem} \label{smooth and canonical tropical Green functions}
A partition of unity argument shows that there exists a smooth Green function for $\Psi$. 
In case of a complete fan, a canonical {continuous} Green function is
given by $g \coloneqq -\Psi|_{U\cap N_\R}$.
\end{rem}

We use in the following that a
	real valued continuous or   
	more generally locally integrable function $f$ on $U \cap N_\R$ induces a 
	Lagerberg current $[f] \in D^{0,0}(U)$, see Remark \ref{locally integrable gives current}.

\begin{definition} \label{Lagerberg theta}
Let $g$ be a continuous singular Green function for $\Psi$. 
For a cone $\sigma \in \Sigma$, we pick an open subset $\Omega_\sigma$ and
$m_\sigma \in M$ as in Definition \ref{tropical Green function} and
define \emph{the first Chern current} $c_1(\Psi, g)$ on
$\Omega_\sigma$ by  $c_1(\Psi, g) \coloneqq d'd''[g + m_\sigma]$.  
Since the open sets $\Omega_\sigma$ cover $U$ and since the first Chern currents agree on overlappings, we get a well-defined Lagerberg current $c_1(\Psi, g)\in D^{1,1}(U)$. 
\end{definition}

It is easy to see that 
$c_1(\Psi, g)$ is a $d'$- and $d''$-closed symmetric Lagerberg current in $D^{1,1}(U)$ and does not depend on the choice of the  linear functions $m_\sigma$.

{We fix a reference {continuous} Green function $g_0$ for $\Psi$ and we set $\theta \coloneqq c_1(\Psi, g_0){\in D^{1,1}(U)}$.}

\begin{definition} \label{theta psh for Lagerberg}
A function $\varphi \colon U \to \Rinf$ is called \emph{$\theta$-psh}
if it is strongly upper semicontinuous (see \eqref{def:2})  
and for any connected component $W$ of $U$, 
the function $\varphi$ either restricts to a
locally integrable function  $W \cap N_\R \to \R$  and 
$d'd''[\varphi] + \theta$ is a positive Lagerberg current on $W$ or
$\varphi$ is identically $-\infty$ on $W$. 
\end{definition}

The following characterization of $\theta $-psh functions uses 
the open sets $\Omega_\sigma$ and $m_\sigma \in M$ from
Definition \ref{tropical Green function} for the reference
{continuous} Green function $g_0$ instead of $g$.

\begin{prop}\label{lemm:4} 
A function $\varphi\colon U \to \Rinf$  is $\theta$-psh if and only if 
$\varphi+g_0+m_\sigma|_{\Omega_\sigma \cap N_\R}$ extends to a psh
function on $\Omega_\sigma$ for all $\sigma \in \Sigma$.
Hence any $\theta$-psh function is continuous.
\end{prop}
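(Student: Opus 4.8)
The plan is to reduce the statement to the two main characterizations of psh functions already available, namely Theorem \ref{thm:1} (a psh function is continuous) and Theorem \ref{psh and positivity of Lagerberg current} (psh $\Leftrightarrow$ strongly usc plus positive Lagerberg current), using that by construction $\theta$ is \emph{locally} the first Chern current of a continuous Green function. First I would set up the local picture: fix $\sigma\in\Sigma$, choose the open neighbourhood $\Omega_\sigma$ of $U\cap N(\sigma)$ and the slope $m_\sigma\in M$ as in Definition \ref{tropical Green function} for the reference Green function $g_0$, and let $h_\sigma\colon\Omega_\sigma\to\R$ be the continuous extension of $(g_0+m_\sigma)|_{\Omega_\sigma\cap N_\R}$; this is finite-valued since $g_0$ is a continuous (not singular) Green function. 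By Definition \ref{Lagerberg theta} together with Remark \ref{locally integrable gives current} we have $\theta=d'd''[h_\sigma]$ on $\Omega_\sigma$, hence $d'd''[\varphi]+\theta=d'd''[\varphi+h_\sigma]$ on $\Omega_\sigma$ for any $\varphi$ with $\varphi|_{U\cap N_\R}$ locally integrable. I would also record the trivial facts that $\varphi$ is strongly usc iff $\varphi+h_\sigma$ is (adding the continuous $h_\sigma$ does not change the $\limsup$-condition \eqref{def:2}), that $\varphi|_{U\cap N_\R}$ is locally integrable iff $(\varphi+h_\sigma)|_{U\cap N_\R}$ is, and that the sets $\Omega_\sigma$, $\sigma\in\Sigma$, cover $U$.

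For the implication ``$\theta$-psh $\Rightarrow$ $\varphi+h_\sigma$ is psh on each $\Omega_\sigma$'': on a connected component $W$ of $U$ with $\varphi|_W\equiv-\infty$ one has $\varphi+h_\sigma\equiv-\infty$ on $\Omega_\sigma\cap W$, which is psh; otherwise $\varphi|_{W\cap N_\R}$ is finite and locally integrable with $d'd''[\varphi]+\theta\ge 0$ on $W$, so on each connected component $W'$ of $\Omega_\sigma$ — which meets $N_\R$, as $N_\R$ is dense in $N_\Sigma$ by \eqref{eq:20} — the function $\varphi+h_\sigma$ is strongly usc, not identically $-\infty$, locally integrable on $W'\cap N_\R$ and satisfies $d'd''[\varphi+h_\sigma]\ge 0$; Theorem \ref{psh and positivity of Lagerberg current} then yields that $\varphi+h_\sigma$ is psh on $W'$, hence on $\Omega_\sigma$. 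Conversely, assume $\varphi+h_\sigma$ is psh on every $\Omega_\sigma$. Strong upper semicontinuity of $\varphi$ is local and follows from that of $\varphi+h_\sigma$ (Theorem \ref{thm:1}). Fix a connected component $W$ of $U$ and set $S\coloneqq\{x\in W\cap N_\R\mid\varphi(x)=-\infty\}$. On each connected piece of $\Omega_\sigma\cap N_\R$ the convex function $(\varphi+h_\sigma)|_{N_\R}$, being everywhere $<+\infty$, is either nowhere $-\infty$ or identically $-\infty$ (compare the proof of Lemma \ref{lemm:2}), so $S$ is open and closed in $W\cap N_\R$. Since $W\cap N_\R$ is connected — the basic open sets in \eqref{eq:20} meet $N_\R$ in convex, hence connected, sets and $N_\R$ is dense, so this follows from the description of the topology of $N_\Sigma$ in \cite{burgos-gubler-jell-kuennemann1} — either $S=W\cap N_\R$, in which case \eqref{def:2} applied with the null set $E=W\setminus N_\R$ forces $\varphi\equiv-\infty$ on $W$, or $S=\emptyset$, in which case $\varphi|_{W\cap N_\R}$ is finite and locally integrable and $d'd''[\varphi]+\theta=d'd''[\varphi+h_\sigma]\ge 0$ on each $\Omega_\sigma$, hence on $W$. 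This is precisely Definition \ref{theta psh for Lagerberg}.

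Finally, the continuity assertion is immediate from the equivalence just proved: if $\varphi$ is $\theta$-psh then $\varphi+h_\sigma$ is psh, hence continuous by Theorem \ref{thm:1}, so $\varphi=(\varphi+h_\sigma)-h_\sigma$ is continuous on each $\Omega_\sigma$ and therefore on $U$. I expect the only delicate point to be the bookkeeping around the $-\infty$-locus, i.e.\ matching the per-component dichotomy in Definition \ref{theta psh for Lagerberg} with the local psh condition; there both the connectedness of $W\cap N_\R$ and the use of \emph{strong} (rather than ordinary) upper semicontinuity are essential, while the rest is a formal consequence of the local identity $\theta=d'd''[h_\sigma]$ and the already established properties of psh functions.
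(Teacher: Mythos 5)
Your proof is correct and follows essentially the same route as the paper: reduce to the local identity $d'd''[\varphi+h_\sigma]=d'd''[\varphi]+\theta$ on the covering sets $\Omega_\sigma$, apply Theorem \ref{psh and positivity of Lagerberg current} for the equivalence, and deduce continuity from Theorem \ref{thm:1}. The only difference is that you spell out the bookkeeping around the $-\infty$-locus (connectedness of $W\cap N_\R$ and the open-closed argument for $S$), which the paper's proof compresses into the reduction ``we may assume $U$ connected and $\varphi\not\equiv-\infty$''.
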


\begin{proof}   
We may assume that $U$ is connected and that  $\varphi$ is not
identically $-\infty$ on $U$. 
By Definition \ref{tropical Green function}, the function $g_0+ m_\sigma$ extends to a continuous function $h_\sigma\colon \Omega_\sigma \to \R$ for any $\sigma \in \Sigma$. 
On $\Omega_\sigma$, we have 
\begin{displaymath}
d'd''[\varphi+h_\sigma]=d'd''[\varphi+g_0+m_\sigma] = d'd''[\varphi]+\theta. 
\end{displaymath}	
Since the open subsets $\Omega_\sigma$ cover $U$, it follows from Theorem \ref{psh and positivity of Lagerberg current} that $\varphi$ is $\theta$-psh if and only if  $\varphi+h_\sigma$ is psh on $\Omega_\sigma$ for every $\sigma \in \Sigma$. 
Now Theorem \ref{thm:1} yields continuity. 
\end{proof}

  \begin{prop}\label{prop:4'}
    A function $\varphi\colon U \to \Rinf$ is $\theta $-psh if and
    only if the restriction of
    $\varphi + g_{0}$ to $U\cap N_{\R}$ is convex and for every point $x\in
            U\setminus N_{\R}$ we have
            \begin{equation*}
              {\varphi}(x)=\limsup_{\substack{y\in U\cap N_{\R}\\y\to x}}{\varphi}(y).
            \end{equation*} 
  \end{prop}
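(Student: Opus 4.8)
The plan is to deduce Proposition \ref{prop:4'} from Proposition \ref{lemm:4} together with the characterization of psh functions in Theorem \ref{thm:1} (specifically the equivalence \ref{psh condition} $\Leftrightarrow$ \ref{lim sup condition}). First I would reduce to the case where $U$ is connected and $\varphi$ is not identically $-\infty$, since both the definition of $\theta$-psh and the claimed condition are local on connected components, and the statement is trivial on components where $\varphi \equiv -\infty$ (there $\varphi + g_0$ is the constant $-\infty$ on $U \cap N_\R$, which we exclude by the finiteness convention on Green functions — so in fact I should note that $g_0$ is finite on $U\cap N_\R$, hence $\varphi+g_0 \equiv -\infty$ forces $\varphi \equiv -\infty$; the "either/or" in Definition \ref{theta psh for Lagerberg} handles this cleanly and I will simply assume $\varphi\not\equiv-\infty$).

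Next, by Proposition \ref{lemm:4}, $\varphi$ is $\theta$-psh if and only if for every $\sigma \in \Sigma$ the function $\varphi + g_0 + m_\sigma|_{\Omega_\sigma\cap N_\R}$ extends to a psh function on $\Omega_\sigma$, where $\Omega_\sigma$ and $m_\sigma \in M$ are as in Definition \ref{tropical Green function}. Now apply Theorem \ref{thm:1}, condition \ref{lim sup condition}, to each such extension: it is psh on $\Omega_\sigma$ if and only if its restriction to $\Omega_\sigma \cap N_\R$ is convex and, for every $x \in \Omega_\sigma \setminus N_\R$, it equals $\limsup_{y \in \Omega_\sigma\cap N_\R,\, y\to x}$ of itself. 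Since $m_\sigma$ is a linear (hence continuous and affine) function on $N_\R$ that extends continuously over $\Omega_\sigma$, adding $m_\sigma$ neither affects convexity on $N_\R$ nor the $\limsup$ condition (the $\limsup$ of $\varphi+g_0+m_\sigma$ at $x$ equals $\limsup(\varphi+g_0)$ at $x$ plus the value $m_\sigma(x)$, because $m_\sigma$ is continuous there). So the condition for $\sigma$ becomes: $\varphi+g_0$ is convex on $\Omega_\sigma\cap N_\R$, and for $x\in\Omega_\sigma\setminus N_\R$ one has $(\varphi+h_\sigma)(x) = \limsup_{y\to x}(\varphi+h_\sigma)(y)$, equivalently $\varphi(x) = \limsup_{y\in U\cap N_\R,\, y\to x}\varphi(y)$ using continuity of $h_\sigma$.

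Finally I would glue over $\sigma$. Since the $\Omega_\sigma$ cover $U$ and $\Omega_\sigma \cap N_\R$ forms an open cover of $U \cap N_\R$, convexity of $\varphi + g_0$ on each $\Omega_\sigma\cap N_\R$ is equivalent to convexity of $\varphi + g_0$ on $U\cap N_\R$ — here I must be slightly careful: convexity is not a purely local property, but local convexity on an open cover of a (not necessarily convex) open set $U\cap N_\R$ is exactly the notion used throughout (Definition \ref{definition-psh-open-case} only tests segments contained in the set), and this matches how convexity enters Theorem \ref{thm:1}; alternatively one observes that $d'd''[\varphi+g_0]+\theta - \theta = d'd''[\varphi+g_0]$ being positive is genuinely local. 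And every point $x \in U\setminus N_\R$ lies in some $\Omega_\sigma$, so the $\limsup$ condition at $x$ is covered. The converse direction runs the same equivalences backwards. The main obstacle I anticipate is the bookkeeping around convexity being only "locally" checkable and the interaction of $g_0$ (finite on $N_\R$ but $h_\sigma$ possibly $-\infty$ on the boundary) with the $\limsup$ identity; once one notes $g_0$ is finite on $U\cap N_\R$ and $h_\sigma$ is continuous (into $\R\cup\{-\infty\}$) so that translation by it commutes with $\limsup$ on $U\cap N_\R$, everything reduces to Theorem \ref{thm:1} applied locally.

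\begin{proof}
We may assume that $U$ is connected. If $\varphi \equiv -\infty$, then both conditions are vacuously equivalent (the restriction of $\varphi+g_0$ to $U\cap N_\R$ equals $-\infty$ identically since $g_0$ is finite there, and $\varphi$ is $-\infty$ at every boundary point, so both sides of the stated equivalence hold). So assume $\varphi\not\equiv-\infty$.

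By Proposition \ref{lemm:4}, the function $\varphi$ is $\theta$-psh if and only if for every $\sigma\in\Sigma$ the function $\varphi+g_0+m_\sigma|_{\Omega_\sigma\cap N_\R}$ extends to a psh function on $\Omega_\sigma$; here $\Omega_\sigma$ and $m_\sigma\in M$ are as in Definition \ref{tropical Green function} for $g_0$, and by that definition $g_0+m_\sigma$ extends to a continuous function $h_\sigma\colon\Omega_\sigma\to\R$, so the extension in question is $\varphi+h_\sigma$. Applying the equivalence \ref{psh condition} $\Leftrightarrow$ \ref{lim sup condition} of Theorem \ref{thm:1} to $\Omega_\sigma$, the function $\varphi+h_\sigma$ is psh on $\Omega_\sigma$ if and only if its restriction to $\Omega_\sigma\cap N_\R$ is convex and for every $x\in\Omega_\sigma\setminus N_\R$ we have
\begin{equation*}
(\varphi+h_\sigma)(x)=\limsup_{\substack{y\in\Omega_\sigma\cap N_\R\\ y\to x}}(\varphi+h_\sigma)(y).
\end{equation*}
Since $h_\sigma$ is continuous on $\Omega_\sigma$ with finite values, convexity of $\varphi+h_\sigma$ on $\Omega_\sigma\cap N_\R$ is equivalent to convexity of $\varphi+g_0$ there (the difference $h_\sigma-g_0=m_\sigma$ is linear on $N_\R$), and the displayed identity is equivalent to
\begin{equation*}
\varphi(x)=\limsup_{\substack{y\in\Omega_\sigma\cap N_\R\\ y\to x}}\varphi(y),
\end{equation*}
because subtracting the continuous function $h_\sigma$ from both sides commutes with the $\limsup$.

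It remains to let $\sigma$ vary. The sets $\Omega_\sigma$ form an open cover of $U$ by Definition \ref{tropical Green function}, hence the sets $\Omega_\sigma\cap N_\R$ form an open cover of $U\cap N_\R$. Positivity of $d'd''[\varphi+g_0]$ on $U\cap N_\R$ is a local property, so $\varphi+g_0$ restricts to a convex function on each $\Omega_\sigma\cap N_\R$ if and only if it restricts to a convex function on $U\cap N_\R$ (equivalently, use that convexity of a function on the open set $U\cap N_\R$ in the sense of Definition \ref{definition-psh-open-case}, testing only segments contained in the set, is detected on any open cover). Moreover every $x\in U\setminus N_\R$ lies in some $\Omega_\sigma$, and for that $\sigma$ the $\limsup$ over $y\in\Omega_\sigma\cap N_\R$ with $y\to x$ coincides with the $\limsup$ over $y\in U\cap N_\R$ with $y\to x$, as $\Omega_\sigma$ is an open neighbourhood of $x$ in $U$. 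Combining these observations with the previous paragraph, $\varphi$ is $\theta$-psh if and only if $\varphi+g_0$ restricts to a convex function on $U\cap N_\R$ and $\varphi(x)=\limsup_{y\in U\cap N_\R,\ y\to x}\varphi(y)$ for all $x\in U\setminus N_\R$, which is the assertion.
\end{proof}
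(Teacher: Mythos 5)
Your proof is correct and follows essentially the same route as the paper, whose own proof is the one-line citation ``Follows from Theorem \ref{thm:1}, Theorem \ref{psh and positivity of Lagerberg current} and Proposition \ref{lemm:4}''; you have simply written out the reduction via Proposition \ref{lemm:4} to the psh case on each $\Omega_\sigma$, the application of the equivalence \ref{psh condition} $\Leftrightarrow$ \ref{lim sup condition} of Theorem \ref{thm:1}, and the gluing over $\sigma$. The two subtleties you flag (locality of convexity on the open cover, and commuting the finite continuous $h_\sigma$ past the $\limsup$) are handled correctly.
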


  \begin{proof}
    Follows from Theorem \ref{thm:1}, Theorem \ref{psh and positivity of Lagerberg
      current} and Proposition \ref{lemm:4}.
  \end{proof}

Next, we deal with functoriality of $\theta$-psh functions. 

\begin{rem} \label{functoriality and theta-psh}
Let $L\colon N' \to N$ be a homomorphism of free abelian groups of finite rank.
Let $E\colon N_{\Sigma'}' \to N_\Sigma$ be an $L$-equivariant morphism of tropical toric varieties as in {Definition \ref{new morphism of toric tropical varieties}}. 
Then $\Psi' \coloneqq \Psi \circ L$ is a function on $|\Sigma'|$ which is piecewise linear with respect to $\Sigma'$. 
There exists a unique cone $\sigma \in \Sigma$ such that $E(N_\R')\subset N(\sigma)$.  Adding to $\Psi$  a linear function in $M$, we may assume that $\Psi|_\sigma=0$. 
Then $g_0' \coloneqq g_0 \circ E$ is a {continuous} Green function for $\Psi'$ on $U' \coloneqq E^{-1}(U)$ and we set $\theta' \coloneqq c_1(\Psi',g_0')=E^*(\theta)$. 
If we use the characterization of psh functions in Proposition \ref{lemm:4}, we can conclude from Proposition \ref{functoriality of psh} the following  properties of functions $\varphi\colon U \to \Rinf$: 	
\begin{enumerate}
\item \label{theta-psh lifts}
If $\varphi$ is $\theta$-psh on $U$, then $\varphi \circ E$ is $\theta'$-psh on $U'$.
\item \label{theta-psh descends}
If $E$ is a surjective proper map and if $\varphi \circ E$ is $\theta'$-psh on $U'$, then $\varphi$ is $\theta$-psh on $U$.
\end{enumerate}
\end{rem}

\begin{prop} \label{descend of psh}
	Let $E:N_{\Sigma'} \to N_\Sigma$ be a proper surjective $L$-equivariant morphism of tropical toric varieties,  $\theta' \coloneqq E^*(\theta)$ and $U' \coloneqq E^{-1}(U)$ as above. If $\varphi':U' \to \Rinf$ is a $\theta'$-psh function, then there is a unique $\theta$-psh function $\varphi:U \to \Rinf$ with $\varphi' = \varphi \circ E$.
\end{prop}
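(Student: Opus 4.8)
The plan is to descend $\varphi'$ along $E$ by first descending its restriction to the dense stratum $N'_\R$, then extending over the boundary. To begin, I would record that $\varphi'$ is continuous by Proposition \ref{lemm:4}, and that by Proposition \ref{prop:4'} the function $\varphi'+g_0'$ is convex on $U'\cap N'_\R$, where $g_0'=g_0\circ E$ is the reference Green function on $U'$ coming from Remark \ref{functoriality and theta-psh}.

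The core step is to show that $\varphi'$ is constant along the fibres of $E|_{N'_\R}$. This is where properness of $E$ enters: by Remark \ref{remark-toric-morphism}\ref{remark-toric-morphism:2} properness means $|\Sigma'|=L_\R^{-1}(|\Sigma|)$, and since $0\in|\Sigma|$ this forces $\ker(L_\R)\subseteq|\Sigma'|$. Fix $y'\in U'\cap N'_\R$ and $v\in\ker(L_\R)$. By $L_\R$-equivariance and continuity, $E$ is constant equal to $E(y')$ on both compactified half-lines $[y',y'+\infty v]$ and $[y',y'+\infty(-v)]$, so the whole compactified line through $y'$ in direction $v$ lies in $U'=E^{-1}(U)$. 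On it $g_0'=g_0\circ E$ is likewise constant, so $\mu\mapsto\varphi'(y'+\mu v)$ differs by a constant from the convex function $(\varphi'+g_0')|_{\{y'+\mu v\mid\mu\in\R\}}$, hence is convex on $\R$; leaving aside the trivial case where it is identically $-\infty$, it is real-valued and its limits at $\pm\infty$ equal $\varphi'(y'+\infty(\pm v))\in\Rinf$ by continuity of $\varphi'$. A convex function on $\R$ with both limits $<\infty$ is constant (applying Lemma \ref{lemm:1} at $+\infty$ and at $-\infty$, it is non-increasing towards $+\infty$ and non-decreasing towards $-\infty$, hence constant by convexity), so $\varphi'(y'+v)=\varphi'(y')$. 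Since $E|_{N'_\R}\colon N'_\R\to N_\R$ is affine, surjective and $L_\R$-equivariant (cf.\ the proof of Proposition \ref{functoriality of psh}\ref{psh descends}), its fibres are exactly the cosets of $\ker(L_\R)$, so there is a unique $\tilde\varphi\colon U\cap N_\R\to\Rinf$ with $\varphi'|_{U'\cap N'_\R}=\tilde\varphi\circ E$; moreover $\tilde\varphi+g_0$ is convex on $U\cap N_\R$, because its pullback along the affine surjection $E|_{N'_\R}$ is the convex function $\varphi'+g_0'$.

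Next I would extend $\tilde\varphi$ over $U\setminus N_\R$. For $x\in U\setminus N_\R$ I claim that $\limsup_{u\to x,\,u\in U\cap N_\R}\tilde\varphi(u)<\infty$. If this failed, I would pick $u_n\to x$ in $U\cap N_\R$ with $\tilde\varphi(u_n)\to+\infty$, lift each $u_n$ to some $y'_n\in N'_\R$ with $E(y'_n)=u_n$ (surjectivity of $E|_{N'_\R}$), and use properness of $E$ to extract a subsequence with $y'_n\to z'$ and $E(z')=x$; then upper semicontinuity of $\varphi'$ at $z'$ would give $\varphi'(z')\ge\limsup_n\varphi'(y'_n)=\limsup_n\tilde\varphi(u_n)=+\infty$, contradicting $\varphi'(z')\in\Rinf$. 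Granting the claim, I would set $\varphi\coloneqq\tilde\varphi$ on $U\cap N_\R$ and $\varphi(x)\coloneqq\limsup_{u\to x,\,u\in U\cap N_\R}\tilde\varphi(u)$ for $x\in U\setminus N_\R$; by Proposition \ref{prop:4'} this $\varphi$ is then $\theta$-psh on $U$.

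Finally I would check $\varphi\circ E=\varphi'$ and uniqueness. By Remark \ref{functoriality and theta-psh}\ref{theta-psh lifts} the function $\varphi\circ E$ is $\theta'$-psh on $U'$, and by the core step it agrees with $\varphi'$ on the dense subset $U'\cap N'_\R$; since Proposition \ref{prop:4'} determines a $\theta'$-psh function on $U'$ from its restriction to $U'\cap N'_\R$ (the value at a boundary point being the limit superior along $N'_\R$), it follows that $\varphi\circ E=\varphi'$ on all of $U'$. The same principle applied on $U$ shows that any $\theta$-psh $\varphi_1$ with $\varphi_1\circ E=\varphi'$ coincides with $\varphi$ on $U\cap N_\R$, hence on $U$, giving uniqueness. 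The main obstacle I anticipate is the core step: recognising that properness of $E$ is exactly what puts $\ker(L_\R)$ inside $|\Sigma'|$, so that the relevant compactified lines sit inside $U'$ and Lemma \ref{lemm:1} becomes applicable; the properness-and-semicontinuity argument bounding $\tilde\varphi$ near the boundary is a second, more routine, technical point.
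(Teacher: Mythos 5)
Your proof is correct, but it is organized differently from the paper's. The paper's argument works fiber by fiber over \emph{all} of $U$ at once: for $x\in U$ the fiber $E^{-1}(x)$ is identified with a tropical toric variety $N_{\Sigma''}$, properness forces $\Sigma''$ to be complete, and then a local potential $\varphi'+g_0\circ E+m_\sigma\circ E$ restricts to a psh function on $N_{\Sigma''}$ which is constant by the maximum principle (Proposition \ref{maximum principle}); constancy on every fiber gives the descended $\varphi$ immediately, and Remark \ref{functoriality and theta-psh}\ref{theta-psh descends} then shows $\varphi$ is $\theta$-psh. You instead prove constancy only on the fibers over the dense stratum, by the explicit ``convex on $\R$ with finite limits at $\pm\infty$, hence constant'' argument along lines in $\ker(L_\R)$ --- which is in substance a one-dimensional instance of the maximum principle, and your observation that properness is exactly what places $\ker(L_\R)$ inside $|\Sigma'|$ is the right way to make the half-lines land in $U'$. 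You then have to pay for this at the boundary: the $\limsup$ extension, the properness-plus-usc argument that the $\limsup$ is not $+\infty$, and the separate check that $\varphi\circ E=\varphi'$ also at boundary points via Proposition \ref{prop:4'}. All of these steps are sound (the case $\varphi'\equiv-\infty$ on a component is absorbed by convexity of $\varphi'+g_0'$ on the connected preimage of a component of $U\cap N_\R$, and uniqueness follows as you say from density of the open stratum). The trade-off is that the paper's route is shorter and treats interior and boundary fibers uniformly, while yours is more elementary and does not need to recognize the boundary fibers as complete tropical toric varieties.
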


\begin{proof}
Let  $x \in U$. Since $E$ is $L$-equivariant, the fiber $E^{-1}(x)$ is
isomorphic to a partial compactification $N_{\Sigma''}$. Using that
$E$ is proper, we conclude that the fiber is compact and hence
$\Sigma''$ is a complete fan. Let $\sigma \in \Sigma$ be the unique
cone with $x \in N(\sigma)$. It follows from Proposition \ref{lemm:4}
that $\varphi'+g_0\circ E + m_\sigma \circ E$ extends to a psh
function $\psi$ on neighbourhood of $E^{-1}(x)$.
By Proposition \ref{functoriality of psh}, the pull-back of
$\psi$ to $N_{\Sigma''}$ is psh and hence constant by
Proposition \ref{maximum principle}. We conclude that
$\varphi'$ is constant on each fiber and hence there is a
unique $\varphi:U \to \Rinf$ with $\varphi'=\varphi \circ
E$. By Remark \ref{functoriality and theta-psh} \ref{theta-psh
  descends}, the function $\varphi$ is $\theta$-psh. 
\end{proof}

\begin{rem} \label{toric resolution} 
If $\Sigma$ is not a smooth fan, then we can apply toric resolution of singularities \cite[Theorem 11.1.9]{cox-little-schenck} to get a smooth fan $\Sigma'$ 
{which is a subdivision of} $\Sigma$. 
The induced morphism $E\colon  N_{\Sigma'} \to N_\Sigma$ of tropical toric varieties is $\id_N$-equivariant and proper. 
By Remark \ref{functoriality and theta-psh},  the function $\varphi\colon U \to \Rinf$ is $\theta$-psh if and only if the function $\varphi' \coloneqq \varphi \circ E$ is $E^*(\theta)$-{psh} on $U' \coloneqq E^{-1}(U)$. 
Moreover, we can use Proposition \ref{descend of psh} to descend a $E^*(\theta)$-psh function on $U'$ to a $\theta$-psh function on $U$.
\end{rem}
 
Bedford--Taylor theory extends to $\theta$-psh functions as follows. 
Similarly as in Theorem \ref{thm:8}, we consider closed strata subsets $D_1,\ldots,D_q$ of $U$ of codimension at least one intersecting a closed strata subset $B$ of $U$ $(n-p)$-properly for some $p \in \N$.

\begin{thm} \label{MA-currents for theta-psh}
Assume that $\Sigma$ is smooth. For $j=1,\ldots,q$, let $g_j$ be a {continuous} Green function for the piecewise linear function $\Psi_j$ with respect to the fan $\Sigma$ and let $\theta_j \coloneqq c_1(\Psi_j,g_j)$. For  $\theta_j$-psh functions $\varphi_j$ with $L(\varphi_j) \subset D_j$ and any $T \in D_{\cl,+}^{p,p}(U)$ with $\supp(T) \subset B$,  there are unique product currents
\begin{align*}
(\varphi_1+g_1)(d'd''\varphi_{2}+\theta_2)\wedge\ldots \wedge (d'd''\varphi_{q}+\theta_q)\wedge T
& \in D^{p+q-1,p+q-1}(U),  \\
(d'd''\varphi_{1}+\theta_1)\wedge\ldots \wedge (d'd''\varphi_{q}+\theta_q) \wedge T 
& \in D_{\cl,+}^{p+q,p+q}(U) 
\end{align*}
such that the products are local in $U$, depend only on the factors and not on the choice of $\varphi_j,\Psi_j,g_j,\theta_j$, and agree with the product currents in Theorem \ref{thm:8} in case that all $\theta_j=0$. All properties of  product currents for psh functions seen in Section \ref{sec:bedf-tayl-calc} extend to the above product currents.
\end{thm}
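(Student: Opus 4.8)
\textbf{Proof plan for Theorem \ref{MA-currents for theta-psh}.}
The plan is to reduce everything to the untwisted Bedford--Taylor products of Theorem \ref{thm:8} by a local computation. Fix a point and pass to a connected open neighbourhood on which, for each $j$, the Green function $g_j$ together with a local linear piece $m_{j,\sigma}\in M$ of $\Psi_j$ extends to a continuous function $h_{j,\sigma}$ as in Definition \ref{tropical Green function}; by Proposition \ref{lemm:4} the function $\psi_j\coloneqq\varphi_j+h_{j,\sigma}$ is then a genuine psh function on this neighbourhood, and $d'd''[\psi_j]=d'd''[\varphi_j]+\theta_j$. First I would observe that $L(\psi_j)=L(\varphi_j)\subset D_j$, since $h_{j,\sigma}$ is finite on $U\cap N_\R$ and the value $-\infty$ can only occur on the boundary (Definition \ref{tropical Green function}), so the proper-intersection hypothesis on $D_1,\dots,D_q$ and $B=\supp(T)$ is exactly what is needed to apply Theorem \ref{thm:8} to $\psi_1,\dots,\psi_q$ and $T$. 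One then \emph{defines}, locally,
\[
(d'd''\varphi_{1}+\theta_1)\wedge\ldots\wedge(d'd''\varphi_{q}+\theta_q)\wedge T\coloneqq d'd''\psi_1\wedge\ldots\wedge d'd''\psi_q\wedge T,
\]
and similarly for the first product one puts $(\varphi_1+g_1)=\psi_1-m_{1,\sigma}$ in front, so that the first product becomes $\psi_1\,d'd''\psi_2\wedge\ldots\wedge d'd''\psi_q\wedge T$ minus the linear correction term $m_{1,\sigma}\,d'd''\psi_2\wedge\ldots$; this is again a well-defined current by Theorem \ref{thm:8} since multiplication by the smooth function $m_{1,\sigma}$ and by the locally bounded psh functions $\psi_1$ both make sense there.

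The second step is well-definedness, i.e.~independence of the auxiliary choices. If $h'_{j,\sigma}$ is another continuous local extension of $g_j+m'_{j,\sigma}$, then $h_{j,\sigma}-h'_{j,\sigma}=m_{j,\sigma}-m'_{j,\sigma}$ is a (smooth) linear function, so $\psi_j$ and $\psi'_j$ differ by a smooth function and $d'd''\psi_j=d'd''\psi'_j$; the multilinearity and locality of the products in Theorem \ref{thm:8}, together with the fact that adding a smooth function changes $\psi_1\,d'd''\psi_2\wedge\cdots$ only by the term already isolated as $m_{1,\sigma}d'd''\psi_2\wedge\cdots$, give that the two defining expressions for the twisted products coincide. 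Independence of the choice of the $g_j$ themselves (not just $\Psi_j$) is the statement that two Green functions for the same $\Psi_j$ differ by a continuous function that is a local psh difference; here I would note that replacing $g_j$ by $\tilde g_j$ replaces $\varphi_j$ by $\tilde\varphi_j=\varphi_j+(g_j-\tilde g_j)$, which is $\tilde\theta_j$-psh for $\tilde\theta_j=c_1(\Psi_j,\tilde g_j)$, while $\psi_j$ is unchanged, so the products agree. The gluing of the locally defined currents into a global current in $D^{\bullet,\bullet}(U)$ then follows because on overlaps the two local definitions agree by what was just shown.

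The third step records the remaining assertions. That the products agree with those of Theorem \ref{thm:8} when all $\theta_j=0$ is immediate, since then one may take $g_j\equiv 0$, $m_{j,\sigma}=0$, $\psi_j=\varphi_j$. Positivity and closedness of the Monge--Amp\`ere current follow from the corresponding facts in Theorem \ref{thm:8}. Continuity along monotone decreasing sequences: if $\varphi_{j,k}\downarrow\varphi_j$ with $L(\varphi_{j,k})\subset D_j$, then $\psi_{j,k}\coloneqq\varphi_{j,k}+h_{j,\sigma}\downarrow\psi_j$, and the weak convergence statement of Theorem \ref{thm:8} applied to the $\psi_{j,k}$, plus continuity of multiplication by the fixed smooth function $m_{1,\sigma}$ and weak continuity of $\trop_{\infty,*}$, give the claim; similarly locality, compatibility with local uniform limits in the bounded case, and the functoriality/projection formulas of Section \ref{sec:bedf-tayl-calc} transfer verbatim by writing each $\theta_j$-psh function locally as a psh function plus a Green function and invoking Remark \ref{functoriality and theta-psh}. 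The main obstacle is not any single deep point but rather the bookkeeping in the second step: one must check that the various natural-looking local definitions of the \emph{non-closed} product $(\varphi_1+g_1)(d'd''\varphi_2+\theta_2)\wedge\cdots$ all agree, which amounts to carefully tracking how the linear pieces $m_{j,\sigma}$ enter and cancel, and to using that a continuous locally integrable function whose $d'd''$ is a well-understood current is determined up to the ambiguity one expects — here Theorem \ref{psh and positivity of Lagerberg current} and the uniqueness clause of Theorem \ref{thm:8} are the tools that make this rigorous.
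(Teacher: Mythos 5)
Your proposal is correct and follows essentially the same route as the paper's proof: localize to neighbourhoods $\Omega_\sigma$ on which $g_j+m_{j,\sigma}$ extends to a continuous $h_{j,\sigma}$, set $u_j\coloneqq\varphi_j+h_{j,\sigma}$ (psh by Proposition \ref{lemm:4}), apply Theorem \ref{thm:8}, and check that the outcome depends only on $d'd''u_j=d'd''\varphi_j+\theta_j$ because a continuous function with vanishing $d'd''$ is affine on each component; locality then glues the local definitions. The one point where you diverge is the non-closed product: the paper simply defines it as $u_1\,d'd''u_2\wedge\cdots\wedge T$, absorbing the linear piece $m_{1,\sigma}$ into the first factor, whereas you subtract the correction term $m_{1,\sigma}\,d'd''\psi_2\wedge\cdots\wedge d'd''\psi_q\wedge T$; be aware that $m_{1,\sigma}$ is not a smooth function on $U$ in the paper's sense (it is unbounded towards the boundary of $N_\Sigma$), so this extra product requires the remark that the closed positive current $d'd''\psi_2\wedge\cdots\wedge T$ has measure coefficients, or else one should keep $u_1$ itself as the zeroth factor as the paper does.
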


\begin{proof}
Let $\sigma \in \Sigma$ and for $j=1,\ldots,q$ let $m_{\sigma,j} \in M$
such that $\Psi_j|_\sigma=m_{\sigma,j}|_\sigma$. There is an open
neighbourhood $\Omega_\sigma$ of $U \cap N(\sigma)$ in $U$ such that
for  $j=1,\ldots,q$ we have a continuous map  $h_{j,\sigma}:\Omega_\sigma \to \R$ extending $g_j+m_{\sigma,j}|_{\Omega_\sigma \cap N_\R}$. By definition of $\theta_j$-psh, the function $u_j \coloneqq \varphi_j+h_{j,\sigma}$ is psh on $\Omega_\sigma$. Using Theorem \ref{thm:8}, we define product currents on $\Omega_\sigma$ by 
\begin{align*} 
(\varphi_1+g_1)(d'd''\varphi_{2}+\theta_2)\wedge\ldots \wedge (d'd''\varphi_{q}+\theta_q)\wedge T
& \coloneqq u_{1}d'd''u_{2}\wedge\ldots \wedge d'd''u_{q}\wedge T   ,  \\
(d'd''\varphi_{1}+\theta_1)\wedge\ldots \wedge (d'd''\varphi_{q}+\theta_q) \wedge T 
& \coloneqq d'd''u_{1} \wedge \ldots \wedge d'd''u_{q} \wedge T.
\end{align*}
Since a continuous function on $\Omega_\sigma$ satisfies $d'd''[f]=0$ if and only if $f$ is an affine bounded function on each connected component of $\Omega_\sigma$, we deduce that the above definition of the product currents on $\Omega_\sigma$ depends only on the factors and not on  $\varphi_j$, $g_j$ and not on the choice of $m_{\sigma,j}$. Since the products in Theorem \ref{thm:8} are local and since the $\Omega_\sigma$ cover $U$, we get globally defined product currents. All the required properties are obvious from the corresponding properties for products of psh functions in Section \ref{sec:bedf-tayl-calc} as they can be checked locally. Uniqueness is clear by construction. 
\end{proof}

\begin{rem} \label{MA-currents for theta-psh in non-smooth case}
If $\Sigma$ is not necessarily smooth and $\Psi_j,g_j,\theta_j$ are as in Theorem \ref{MA-currents for theta-psh}, then for locally bounded $\theta_j$-psh functions $\varphi_j$ on $U$, there are unique product currents 
\begin{align*}
(\varphi_1+g_1)(d'd''\varphi_{2}+\theta_2)\wedge\ldots \wedge (d'd''\varphi_{q}+\theta_q)
& \in D^{q-1,q-1}(U),  \\
(d'd''\varphi_{1}+\theta_1)\wedge\ldots \wedge (d'd''\varphi_{q}+\theta_q) 
& \in D_{\cl,+}^{q,q}(U) 
\end{align*}
such that the products are local in $U$, depend only on the factors and not on $\varphi_j$ and $\theta_j$, and agree with the product currents in Remark \ref{Bedford-Taylor for non-smooth} in case that all $\theta_j=0$.
This follows again by applying locally Remark \ref{Bedford-Taylor for non-smooth}.  
All the properties of the product currents of psh functions seen in Section \ref{sec:bedf-tayl-calc} extend to the products above.	
\end{rem}

{Now we want to compare the Lagerberg setting with the complex situation. 
We assume that $\Sigma$ is a smooth fan.
By \cite[\S 3.3]{bps-asterisque}, the piecewise linear function $\Psi$ induces a toric line bundle $L$ {on $X_\Sigma$ together} with a toric meromorphic section $s$  on $X_\Sigma$. 
Let $V \coloneqq \trop_\infty^{-1}(U)\subset X_{\Sigma}^\an$. 
The reference {continuous} Green function $g_0$ gives rise to a {unique} continuous metric $\metr_0$ on $\Lan|_V$ {such that}  
\begin{equation} \label{induced reference metric on complex}
\|s\|_0 \coloneqq \exp \circ (-g_0).
\end{equation}

Let ${\theta_\infty} \coloneqq c_1(L,\metr_0)\in D^{1,1}(V)$
be the associated first Chern current . 
Choosing $m_\sigma\in M$ with $\Psi|_\sigma{=m_\sigma|_\sigma}$, the function $g_0+m_\sigma$ extends from $U \cap N_\R$ to a continuous real function $h_\sigma$ on a neighbourhood $\Omega_\sigma$ of $U \cap N(\sigma)$ for any $\sigma \in \Sigma$. 
This corresponds to a change to a nowhere vanishing section over   $W_ \sigma \coloneqq  \trop_\infty^{-1}(\Omega_\sigma)$ and hence 
 $\theta_\infty$ is given on $W_\sigma$ by 
\begin{equation} \label{induced Theta on complex}
\theta_\infty =  dd^c[h_\sigma\circ \trop_\infty]=dd^c [(g_0+
m_\sigma)\circ \trop_\infty].
\end{equation}}
For $\theta \coloneqq c_1(\Psi, g_0)$, Proposition \ref{projection formula trop current cont function} and \eqref{eq:3} yield that
\begin{equation} \label{normalizations}
\trop_{\infty,*}(\theta_\infty)=  \theta.
\end{equation}

\begin{thm} \label{main comparison thm of psh}
Let  $\Sigma$ be a smooth fan,  $U$  an open subset of $N_\Sigma$ and
$V \coloneqq
\trop_\infty^{-1}(U)$. In the above notation, the following conditions
are equivalent for a
function $\varphi\colon U \to \Rinf$:
\begin{enumerate}
\item\label{item:20} The function $\varphi$ is $\theta$-psh on $U$.
\item\label{item:21} The function $\varphi\circ \trop_\infty$ is
  $\theta_\infty$-psh on $V$.
\item\label{item:22} The singular metric $\metr_\varphi \coloneqq
  e^{-\varphi\circ \trop_\infty} \metr_0$ on $\Lan|_V$ is psh. 
\end{enumerate}	
This induces bijections between the set of $\theta$-psh functions on $U$, 
the set of $\SS$-invariant $\theta_{\infty}$-psh functions on $V$ and the set of singular psh toric metrics of $\Lan|_V$. 
\end{thm}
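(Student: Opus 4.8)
The plan is to unwind the definitions and reduce the three conditions to the already established correspondence for ordinary psh functions (Proposition \ref{thm:2}), using the explicit local description \eqref{induced Theta on complex} of $\theta_\infty$. All conditions are local on connected components, so as in the proof of Proposition \ref{lemm:4} I would argue one cone $\sigma \in \Sigma$ at a time and discard the case $\varphi \equiv -\infty$.

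\textbf{Step 1: \ref{item:20} $\Leftrightarrow$ \ref{item:21}.} For each $\sigma \in \Sigma$ I choose $m_\sigma \in M$ with $\Psi|_\sigma = m_\sigma|_\sigma$ and the continuous extension $h_\sigma\colon \Omega_\sigma \to \R$ of $g_0 + m_\sigma|_{\Omega_\sigma \cap N_\R}$ from Definition \ref{tropical Green function}, and set $W_\sigma \coloneqq \trop_\infty^{-1}(\Omega_\sigma)$, so the $W_\sigma$ cover $V$. By Proposition \ref{lemm:4} (or rather its proof), $\varphi$ is $\theta$-psh on $U$ if and only if $\varphi + h_\sigma$ is psh on $\Omega_\sigma$ for every $\sigma$; by Proposition \ref{thm:2} this is equivalent to $\varphi \circ \trop_\infty + h_\sigma \circ \trop_\infty = (\varphi + h_\sigma)\circ \trop_\infty$ being psh on $W_\sigma$ for every $\sigma$. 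By \eqref{induced Theta on complex}, $h_\sigma \circ \trop_\infty$ is a continuous local potential of $\theta_\infty$ on $W_\sigma$, so Definition \ref{theta psh} shows that this last condition says exactly that $\varphi \circ \trop_\infty$ is $\theta_\infty$-psh on $V$.

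\textbf{Step 2: \ref{item:21} $\Leftrightarrow$ \ref{item:22}.} For every local frame $s$ of $\Lan|_V$ one has $-\log(\|s\|_{\metr_\varphi}/\|s\|_0) = \varphi \circ \trop_\infty$, so Remark \ref{psh vs theta psh in complex}, applied to $\Lan|_V$ with the continuous reference metric $\metr_0$ (for which $\theta_\infty = c_1(\Lan|_V, \metr_0)$), gives that $\metr_\varphi$ is psh if and only if $\varphi \circ \trop_\infty$ is $\theta_\infty$-psh.

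\textbf{Step 3: the bijections.} Since $\trop_\infty$ is surjective and $\SS$-invariant, Steps 1--2 make $\varphi \mapsto \varphi \circ \trop_\infty$ an injection of the set of $\theta$-psh functions on $U$ into the set of $\SS$-invariant $\theta_\infty$-psh functions on $V$. For surjectivity, given an $\SS$-invariant $\theta_\infty$-psh function $\psi$ on $V$, the fact that $\trop_\infty$ realizes $U$ as the topological quotient $V/\SS$ (see \ref{comparison with complex forms}) shows that $\psi$ is constant on the fibers of $\trop_\infty$, hence $\psi = \varphi \circ \trop_\infty$ for a unique $\varphi\colon U \to \Rinf$; by Step 1 this $\varphi$ is $\theta$-psh, and automatically continuous by Proposition \ref{lemm:4}. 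Finally, $\metr_0$ is toric by construction (see \eqref{induced reference metric on complex}), so $\metr_\varphi = e^{-\varphi\circ\trop_\infty}\metr_0$ is toric exactly when $\varphi \circ \trop_\infty$ is $\SS$-invariant; together with Step 2 this matches the singular psh toric metrics of $\Lan|_V$ bijectively with the $\SS$-invariant $\theta_\infty$-psh functions on $V$, hence with the $\theta$-psh functions on $U$. I expect the only genuinely delicate point to be this descent: an $\SS$-invariant $\theta_\infty$-psh function on the complex manifold $V$ need not be continuous a priori, but once it descends to $U$ it is forced to be $\theta$-psh, and hence continuous, by Step 1; everything else is bookkeeping with local potentials via \eqref{induced Theta on complex}.
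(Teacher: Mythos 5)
Your proposal is correct and follows essentially the same route as the paper's proof: reduce \ref{item:20}$\Leftrightarrow$\ref{item:21} locally to the psh case via Proposition \ref{lemm:4} and then apply Proposition \ref{thm:2}, deduce \ref{item:21}$\Leftrightarrow$\ref{item:22} from Remark \ref{psh vs theta psh in complex}, and obtain the bijections from the identification $U = V/\SS$. Your extra remark on the a priori continuity of the descended function is a worthwhile clarification of a point the paper leaves implicit, but it is not a different argument.
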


\begin{proof}
By Remark \ref{psh vs theta psh in complex}, the equivalence of \ref{item:21} and \ref{item:22} is obvious. 
The equivalence of \ref{item:20} and \ref{item:21} can be checked locally in $U$. 
Using Proposition \ref{lemm:4}, 
we easily reduce the claim to the case of psh functions.
Then  \ref{item:20} is equivalent to \ref{item:21} by
Proposition \ref{thm:2}. 

By \cite[Remark 3.1.3]{burgos-gubler-jell-kuennemann1}, the tropicalization map induces the
identification
$\Xsigmaan/\SS = N_\Sigma$ and 
hence the $\SS$-invariant functions $V \to \Rinf$ correspond
bijectively to the functions $U \to \Rinf$.  
This makes the remaining claims obvious.
\end{proof}

\begin{rem} \label{comparison thm for continuous psh}
Clearly, {a} function $\varphi{\colon U\to \Rinf}$ is  continuous real valued if and only if $\varphi\circ \trop_\infty$ 
is continuous real valued on $V$ {and the latter holds}
if and only if $\metr_\varphi$ is a continuous metric on $\Lan|_V$. 	
It follows that Theorem \ref{main comparison thm of psh} is a generalization of the archimedean part 
of a result of Burgos, Philippon and Sombra   \cite[Theorem 4.8.1]{bps-asterisque} 
where the continuous case for a complete fan $\Sigma$ and for $U=N_\Sigma$ was handled.
\end{rem}

\begin{rem} \label{inverse map on Theta psh functions}
We give  a canonical map from the set of 
$\theta_\infty$-psh functions on $V$ to the set of $\theta$-psh
functions on $U$, which induces the inverse of the construction in
Theorem \ref{main comparison thm of psh}:

Let $\Phi\colon V \to \Rinf$ be any $\theta_\infty$-psh function on $V
\coloneqq \trop_\infty^{-1}(U)$. We denote by $\Phi^{\rm av}$ the
$\SS$-invariant function on $V$ obtained from $\Phi$
averaging over the fibers of $\trop_{\infty}$ with respect to the probability
Haar measure on $\SS$. Since $\theta_\infty$ is $\SS$-invariant by the
construction in \eqref{induced reference metric on complex}, we
conclude that $\Phi^{\rm av}$ is an $\SS$-invariant $\theta_\infty$-psh
function on $V$. 
Hence, there is a function $\varphi\colon U
\to \Rinf$ with $\Phi^{\rm av} = \varphi \circ \trop_\infty$.
By Theorem \ref{main comparison thm of psh}, the function $\varphi$ is
$\theta$-psh.
\end{rem}

\begin{rem} \label{smooth comparison}
Suppose that we have chosen a smooth reference Green function $g_0$ and 
hence $\theta$ is a smooth Lagerberg form. 
Then our normalizations of $\trop_\infty^*$ yield that 
\begin{align*}
\trop_\infty^*(\theta)=  \theta_\infty.
\end{align*}
Note that if $\varphi$ is a smooth $\theta$-psh function on $U$, 
then $\varphi \circ \trop_\infty$ is a smooth $\theta_\infty$-psh function on $V$ and 
hence $\metr_\varphi$ is a smooth psh metric on $\Lan|_V$. 
However, the converse is not true, i.e.~a smooth psh toric metric on $\Lan|_V$ is not necessarily 
of this type as the smooth functions on $U$ have to be constant towards the boundary. 	
\end{rem}

\begin{rem} \label{generalization for any piecewise linear Psi}
We have assumed through  Subsection \ref{subsection: Lagerberg setting} 
that the piecewise linear function $\Psi$ on the fan $\Sigma$ has integral slopes. 
The reason was to get an induced  line bundle $L$ on $X_\Sigma$. 
If one is willing to abandon this geometric point of view working exclusively 
with $\theta_\infty$-psh functions on $V$ for $\theta_\infty$ defined
by \eqref{induced Theta on complex},
then the equivalence of \ref{item:20} and \ref{item:21} in Theorem
\ref{main comparison thm of psh}
holds also without the assumption that the slopes of $\Psi$ are integral.
\end{rem}

\subsection{The non-archimedean setting} \label{non-archimedean setting} 
We consider a non-archimedean field $K$ with valuation $v \coloneqq -\log | \phantom{a}|$. 
We will {study}  psh metrics and $\theta$-psh functions in non-archimedean geometry. 
Let  $X$ be a boundaryless separated  $K$-analytic space of pure dimension $n$. {Observe that $X$ is then a good $K$-analytic space whose topology is Hausdorff \cite[4.2.4.2]{temkin2015}.} 
We define \emph{continuous metrics} and \emph{singular metrics}  on a line bundle $L$ over $X$ 
 as in the complex case in   \ref{recall metric}. We will use forms and currents on $X$ introduced in \S \ref{section: non-archimedean situation}.

\begin{art} \label{metrics in non-arch}
{As in complex geometry, a \emph{continuous metric} on $L$ is given by
  continuous functions $-\log\|s\|\colon U \to \R$ for all frames $s$
  on open subsets $U$ of $X$ such that for {frames $s$ on $U$ and $s'$
    on $U'$ we have $\log\|s\|-\log\|s'\|=\log|s/s'|$ on $U\cap
    U'$}. A \emph{singular metric} is defined similarly, but without
  assuming continuity and with allowing the function $-\log\|s\|$ to
  take the value $-\infty$. A \emph{continuous  singular metric} is a
  singular metric such that the functions $-\log\|s\|\colon U \to
  \Rinf$ are continuous for all frames $s$ of $L$ on open subsets $U$
  of $X$.}  
\end{art}

\begin{art} \label{recall Chambert-Loir and Ducros}
A {Borel} measurable function $f\colon X \to \R \cup \{\pm \infty\}$ is called \emph{locally integrable}, if $\int_{{X}} |f | \eta < \infty$ for all positive forms $\eta \in A_c^{n,n}(X)$. 
{Hence a locally integrable function $f$ yields a} well-defined current $[f]$ on $X$.  
A singular metric $\metr$ on $L$ is called \emph{locally integrable} if the functions $-\log\|s\|\colon U \to \Rinf$ are locally integrable for all frames $s$ of $L$ on open subsets $U$ of $X$.

For a locally integrable singular metric $\metr$ on $L$, the \emph{first Chern current} $c_1(L,\metr) \in D^{1,1}(X)$ 
is defined similarly as in complex geometry. 
Locally for a frame $s$ of $L$ over $U$, it is given by
$c_1(L,\metr)|_U= d'd''[-\log \|s\|]$.
For a smooth metric, we get a smooth Chern form.
\end{art}

Pluripotential theory in non-archimedean geometry is not 
{yet as well developed } as in complex geometry. 
In \cite[{\S 5}]{chambert-loir-ducros}, a local approach to continuous finite psh functions is given in terms of positive currents. 
We generalize this {approach} slightly below by allowing the continuous functions to take also the value $-\infty$. 
We will apply this generalization in the toric setting and there we
expect that invariant psh functions are always of this form for all
reasonable notions of psh functions.
We refer to \cite{bfj-singular} for a global approach to $\theta$-psh functions in case of a discretely valued field of residue characteristic zero where the functions are only assumed to be usc.

\begin{definition}\label{theta psh functions in non-arch}
  We fix a continuous reference metric $\metr_0$ on $L$ and set
  $\theta \coloneqq c_1(L,\metr_0)$ as a current on $X$. {Let
    $U\subset X$ 
    be an open set.} A continuous
  function $f\colon U \to \Rinf$ is called \emph{$\theta$-psh} on $U$
  if for every connected component $C$ of $U$, the restriction $f|_C$
  is either identically $-\infty$ or locally integrable and
  $d'd''[f|_C]{+\theta}$ is a positive current on $C$. 
		
  We say that a function $f\colon X \to \Rinf$ is \emph{psh} if
  it is $\theta$-psh for $\theta=0$.
\end{definition}

\begin{art} \label{psh metric for non-arch} 
A continuous singular metric $\metr$ of $L$ is called {\it psh} if for
any  frame $s$ of $L$ on an open  $U \subset X$ 
the function $-\log \|s\|$ is psh on $U$.  
A smooth metric $\metr$ is psh if and only if $c_1(L,\metr)$ is a
positive form.
Similarly to Remark \ref{psh vs theta psh in complex}, 
our choice of a reference metric $\metr_0$ leads to a bijection of the set of continuous singular psh metrics of $L$ 
onto the set of continuous $\theta$-psh functions $X \to \Rinf$. 
\end{art}

\begin{rem} \label{locally psh}
Following \cite[\S 6.3]{chambert-loir-ducros},  a (continuous) metric on $L$ is called 
\emph{locally psh approximable} if it is locally a uniform limit of smooth psh metrics on $L$. 
It is obvious that a locally psh approximable metric is psh in the sense of \ref{psh metric for non-arch}.
\end{rem}
	
\begin{rem} \label{Zhang semipositive for non-archimedean}
If $L$ is a line bundle on  a proper variety $Y$ over $K$, 
then Zhang \cite{zhang-95} introduced \emph{semipositive metrics} of $L$ as 
uniform limits of metrics of $L$ induced by semipositive model metrics (see \cite{gubler-martin} for details). 
Here, we call $\metr$ a \emph{model metric} on $L$ if there is a $k \in \N \setminus \{0\}$ 
such that $\metr^{\otimes k}$ is a metric of $L^{\otimes k}$ induced
by a line bundle $\Lcal$ 
on a proper scheme ${\Ycal}$ over the valuation ring $\kcirc$ such that 
\begin{align*}
({Y},L^{\otimes k}) = ({\Ycal} \otimes_\kcirc K, \Lcal|_{{\Ycal} \otimes_\kcirc K}).
\end{align*} 
The model metric is called \emph{semipositive} if $\Lcal$ is nef.   By construction, semipositive metrics are continuous on ${Y^\an}$ 
where the latter  denotes the analytification of ${Y}$ as a Berkovich space. 
We note that ${Y^\an}$ is a good $K$-analytic space whose topology is Hausdorff.
\end{rem}

\begin{art} \label{toric semipositive in non-arch}
Let $X_\Sigma$ be a toric variety over $K$ induced by a fan $\Sigma$  in 
$N_\R$ for the lattice $N$. 
In \ref{non-archimedean tropicalization with boundary}, we have defined the tropicalization map 
$\trop_v\colon \Xsigmaan \to N_\Sigma$ 
in the non-archimedean setting.  
Again, a toric line bundle $L$ with toric meromorphic section $s$ is given by a piecewise linear function $\Psi$ 
on the fan $\Sigma$ such that $\Psi$ has integral slopes. 
Here and in the following, we always refer to \cite{bps-asterisque} for more details about the toric setting.

Let $\torus$ be the dense torus of $\Xsigmaan$ with character lattice $M=\Hom_\Z(N,\Z)$. 
Then we have the compact torus     
\begin{align*}
\SS \coloneqq \SS_v \coloneqq \{x \in \torusan \mid |\chi^m(x)|=1 \; \forall m \in M\}
\end{align*}
in $\torusan$. 
We note that $N_\Sigma$ is the \emph{canonical skeleton} of $\Xsigmaan$ given as follows. 
The map $\trop_v \colon \Xsigmaan \to N_\Sigma$ has a canonical section $\iota \colon N_\Sigma \to \Xsigmaan$ which identifies $N_\Sigma$ homeomorphically with a closed subset of $\Xsigmaan$ and then $\trop_v$ is a strong deformation retraction.
 
For any open subset $U$ of $N_\Sigma$ and $V \coloneqq \trop^{-1}(U)$, we get {from \ref{SS-invariant forms, sets and functions}} that 
\begin{equation} \label{invariant continuous functions}
\trop_v^*\colon  C(U) \longrightarrow C(V)^{\SS}, 
f \longmapsto f \circ \trop_v
\end{equation}
is an isomorphism from the space of continuous functions on $U$ onto the space of $\SS$-invariant 
continuous functions on $V$ and the inverse is given by restriction to  $U=V \cap N_\Sigma$. 
Obviously, the same holds for the spaces of usc functions on $U$ and $V$.
    
If $\Sigma$ is a complete fan and if the valuation on $K$ is non-trivial, 
then the $\SS$-invariant  semipositive toric metrics on $L$ are in bijection to the concave functions $\psi$ on $N_\R$ 
with $\psi = \Psi + O(1)$ by a result of Burgos, Philippon and Sombra 
(see \cite[Theorem 4.8.1]{bps-asterisque} for discrete valuations and  \cite[Theorem II]{gubler-hertel} for
algebraically closed non-trivially valued non-archimedean fields 
which implies the general case by \cite[Lemma 3.3]{gubler-martin}). 
The correspondence is given by 
\begin{align}\label{non-archimedean-toric-corr}
\psi \circ \trop_v = \log \|s \|.
\end{align}
The \emph{canonical metric} on a nef toric line bundle $L$
on the proper toric variety $X_\Sigma$ corresponds to the choice $\psi \coloneqq \Psi$ and again {this metric} is  not necessarily smooth.  
\end{art}

\begin{rem} \label{projection formula for forms in non-arch}
Let $\Sigma$ be any fan in $N_\R$. For an open subset $U$ of $N_\Sigma$, we set $V \coloneqq \trop_v^{-1}(U)$. 
It follows from the definition of integration of top dimensional forms on $V$  that  
\begin{equation} \label{current of integration in non-arch}
\trop_{v,*}(\delta_V)=\delta_U
\end{equation} 
for the currents of integration over $V$ and over $U$.
If $\omega$ is any smooth Lagerberg form of type $(p,p)$ on $U$, 
then $\omega$ is positive on $U$ if and only if $\trop_v^*(\omega)$ is positive on $V$. 
To prove this, we note  $\omega$ positive obviously implies $\trop_v^*(\omega)$. 
On the other hand, assume that $\trop_v^*(\omega)$ is positive. 
By autoduality of positivity {\cite[Corollary 2.2.5]{burgos-gubler-jell-kuennemann1}}, it is enough to show that the current $[\omega]$ 
is positive which follows from the other direction and \eqref{current of integration in non-arch}. 	
\end{rem}

\begin{art}\label{hypotheses}
In the following, we fix a 
 fan $\Sigma$ in $N_\R$ for a
lattice $N$ of rank $n$. Let $(L,s)$ be a toric line bundle on $X_\Sigma$ inducing the piecewise linear function $\Psi$ as in 
\ref{toric semipositive in non-arch}. 
Let $U$ be an open subset of $N_\Sigma$ and let $V \coloneqq \trop_{v}^{-1}(U)$. 
{As before Definition  \ref{theta psh for Lagerberg}, we fix a reference {continuous} Green function $g_0$ for $\Psi$ and set $\theta \coloneqq c_1(\Psi, g_0){\in D^{1,1}(U)}$.}
Our choices also determine a continuous reference metric $\metr_0$ on $\Lan|_V$ by requiring that {$-\log \|s\|_0=g_0$}. 
Then we set $\theta_v \coloneqq c_1(L,\metr_0)$ as the corresponding
first Chern current on $V$. 
\end{art}

\begin{prop} \label{push-forward of psh in non-arch} Under the
    hypotheses \ref{hypotheses}, let $\varphi_v$ be a $\theta_v$-psh
    function on $V$ and let $\varphi$ be the restriction of
    $\varphi_v$ to the canonical skeleton $V \cap
    N_\Sigma=U$.
    \begin{enumerate} \item \label{invariance of Theta}
      We have $\trop_{v,*}(\theta_v)=\theta$.
    \item \label{case of smooth reference metric for non-arch}	
      If $\metr_0$ is a smooth metric, then $\theta \in A^{1,1}(U)$ and 
      $\trop_v^*(\theta)=\theta_v$.
    \item \label{restriction of Theta-psh to canonical skeleton}
      The function $\varphi$ is  $\theta$-psh on $U$.
    \item \label{pushforward and Theta for non-arch}
      If
        $\varphi_v|_C\not\equiv-\infty$ for any connected
        component $C$ of $V$, then
      $[\varphi]=\trop_{v,*}([\varphi_v])$.
    \end{enumerate}
\end{prop}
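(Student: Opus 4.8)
The statement bundles four assertions about the tropicalization map $\trop_v\colon V \to U$ in the non-archimedean toric setting, and the plan is to reduce each to a combination of the projection-formula machinery already established (in particular Proposition \ref{projection formula trop current cont function}'s non-archimedean analogue, the identity \eqref{eq:3}, and the material in \ref{push-forward with respect to trop} and \ref{toric semipositive in non-arch}) together with the complex comparison results from \S\ref{subsection: Lagerberg setting}. First I would treat \ref{invariance of Theta}: since $\theta_v = c_1(L,\metr_0)$ is given locally over $W_\sigma \coloneqq \trop_v^{-1}(\Omega_\sigma)$ by $d'd''[(g_0+m_\sigma)\circ\trop_v]$, and since $(g_0+m_\sigma)\circ\trop_v = h_\sigma \circ \trop_v$ for the continuous extension $h_\sigma$, the non-archimedean analogue of Proposition \ref{projection formula trop current cont function} gives $\trop_{v,*}[h_\sigma\circ\trop_v]=[h_\sigma]$; then pushing forward the local identity $\theta_v|_{W_\sigma}=d'd''[h_\sigma\circ\trop_v]$ and using that $\trop_{v,*}$ commutes with $d'd''$ yields $\trop_{v,*}(\theta_v)=d'd''[h_\sigma]=\theta$ on $\Omega_\sigma$, hence globally. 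This requires first recording the non-archimedean version of Proposition \ref{projection formula trop current cont function}, whose proof is the verbatim analogue using \cite[Corollaire 3.2.3]{chambert-loir-ducros} and uniform approximation of continuous functions by smooth ones, combined with Proposition \ref{lem: support on skeleton} to control supports.

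For \ref{case of smooth reference metric for non-arch}, if $\metr_0$ is smooth then $g_0$ is a smooth Green function, so $\theta = c_1(\Psi,g_0)$ is a smooth Lagerberg form in $A^{1,1}(U)$; the identity $\trop_v^*(\theta)=\theta_v$ then follows from the fact that $\trop_v^*$ commutes with $d'd''$ (the non-archimedean analogue of \eqref{eq:3}, immediate from Proposition \ref{prop: non-archimedean pull-back on toric variety}) applied locally to $\theta|_{\Omega_\sigma}=d'd''[h_\sigma]$ with $h_\sigma$ smooth, giving $\trop_v^*(\theta)|_{W_\sigma}=d'd''[h_\sigma\circ\trop_v]=\theta_v|_{W_\sigma}$.

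For \ref{restriction of Theta-psh to canonical skeleton}, I would argue locally on $\Omega_\sigma$: by definition $\varphi_v$ being $\theta_v$-psh means $\varphi_v + h_\sigma\circ\trop_v$ is psh on $W_\sigma$ in the sense of \ref{psh metric for non-arch}/Definition \ref{theta psh functions in non-arch}. Since this function is $\SS$-invariant (both $\varphi_v$ restricted... here one needs $\varphi_v$ to be $\SS$-invariant, which is not hypothesized — so instead I restrict to the skeleton directly), its restriction to $U \cap \Omega_\sigma = \Omega_\sigma$ via the section $\iota$ is $\varphi + h_\sigma$, and I must show this is psh on $\Omega_\sigma$. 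The cleanest route is via the characterization: $\varphi+h_\sigma$ is continuous (as a restriction of a continuous function), and on $\Omega_\sigma\cap N_\R$ one checks convexity — this is where the real content lies, since one must see that a non-archimedean psh function restricts to a convex function on the skeleton. I would deduce convexity by restricting to the segments: for $p,q\in \Omega_\sigma\cap N_\R$ with $[p,q]$ in the domain, choosing points in $V$ over $[p,q]$ and using that the skeleton $N_\R$ is swept out by the images of the maps $\trop_v$ restricted to suitable affinoid tori, the subharmonicity of $\varphi_v+h_\sigma\circ\trop_v$ along analytic annuli tropicalizing to $[p,q]$ forces midpoint convexity, and continuity upgrades this to convexity; together with continuity and Theorem \ref{thm:1} (or Proposition \ref{prop:4'}) this gives $\theta$-psh. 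The cleaner argument, which I would prefer, uses \cite[\S 6.3]{chambert-loir-ducros} or the toric correspondence \eqref{non-archimedean-toric-corr}: locally a psh metric's potential, being a uniform limit / supremum of functions of the form $\log|f|$ with $f$ regular invertible, tropicalizes to a sup of affine functions, hence is convex on $N_\R$; I would invoke this together with continuity.

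Finally \ref{pushforward and Theta for non-arch}: under the non-degeneracy hypothesis $\varphi_v|_C\not\equiv -\infty$, the function $\varphi_v$ is locally integrable (psh metrics are locally integrable by \ref{recall Chambert-Loir and Ducros}, applied to the local psh potential $\varphi_v+h_\sigma\circ\trop_v$), and $\varphi$ is locally integrable on $U\cap N_\R$; then $[\varphi]=\trop_{v,*}[\varphi_v]$ is exactly the non-archimedean analogue of Proposition \ref{projection formula trop current cont function}, but now for the possibly-$-\infty$-valued $\varphi_v$ — I would reduce to the finite case by writing $\varphi_v = \lim (\varphi_v+h_\sigma\circ\trop_v) - h_\sigma\circ\trop_v$ and truncating $\varphi_v$ from below by $\max(\varphi_v,-k)$, which are continuous, applying the continuous case, and passing to the limit using dominated convergence against each compactly supported positive $(n,n)$-form (legitimate since $\varphi_v$ is locally integrable). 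The main obstacle I anticipate is \ref{restriction of Theta-psh to canonical skeleton}: extracting convexity of the skeleton restriction of a non-archimedean psh function is the one step that genuinely uses the structure of Chambert-Loir–Ducros forms and the tropical geometry of analytic tori rather than formal pushforward identities, and it is the place where one must be careful not to secretly assume $\SS$-invariance of $\varphi_v$.
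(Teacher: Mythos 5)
Your treatments of \ref{invariance of Theta}, \ref{case of smooth reference metric for non-arch} and \ref{pushforward and Theta for non-arch} are essentially sound and close to the paper's (for \ref{invariance of Theta} the paper approximates a local continuous potential of $\theta$ uniformly by smooth functions via \cite[Corollary 3.2.13]{burgos-gubler-jell-kuennemann1}, which is the same idea as your continuous projection formula). For \ref{pushforward and Theta for non-arch} your truncation-and-dominated-convergence detour is unnecessary: by Proposition \ref{lem: support on skeleton} the form $\trop_v^*(\alpha)$ is supported on the skeleton $V\cap N_\Sigma$, where $\varphi_v$ coincides with $\varphi$, so
\begin{align*}
\int_V \varphi_v\, \trop_v^*(\alpha) = \int_{V\cap N_\Sigma} \varphi\, \trop_v^*(\alpha) = \int_U \varphi\,\alpha
\end{align*}
directly, with no regularization of $\varphi_v$ at all.

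The genuine gap is in \ref{restriction of Theta-psh to canonical skeleton}, and you correctly sense that this is the dangerous step. Both routes you propose rely on properties of non-archimedean psh functions that are not available from Definition \ref{theta psh functions in non-arch}, which only gives you continuity, local integrability, and positivity of the current $d'd''[\varphi_v]+\theta_v$. Route (a) needs subharmonicity of $\varphi_v$ along analytic annuli lying over segments of $N_\R$; extracting a one-dimensional restriction statement from positivity of a Chambert-Loir--Ducros current would require a slicing theory that the paper does not develop and that is not routine. Route (b) assumes the potential is locally a uniform limit of suprema of functions $\log|f|$, i.e.~that $\varphi_v$ is \emph{locally psh approximable} in the sense of Remark \ref{locally psh}; but the paper only asserts the implication locally psh approximable $\Rightarrow$ psh, and the converse (for invariant functions) is one of the equivalences of Theorem \ref{main correspondence theorem for semipositiv in non-arch}, whose proof uses the present proposition --- so invoking it here would be circular.

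The fix is that you do not need to extract convexity from the non-archimedean side at all: parts \ref{invariance of Theta} and \ref{pushforward and Theta for non-arch}, which you have already established, give
\begin{align*}
d'd''[\varphi]+\theta = d'd''\trop_{v,*}[\varphi_v]+\trop_{v,*}(\theta_v)=\trop_{v,*}\bigl(d'd''[\varphi_v]+\theta_v\bigr),
\end{align*}
and $\trop_{v,*}$ takes positive currents to positive Lagerberg currents (duality with Remark \ref{projection formula for forms in non-arch}). Since $\varphi$ is continuous as the restriction of a continuous function, Theorem \ref{psh and positivity of Lagerberg current} (through the local reduction of Proposition \ref{lemm:4}) converts positivity of this Lagerberg current into the statement that $\varphi$ is $\theta$-psh; in particular the convexity of $\varphi+g_0+m_\sigma$ on $N_\R$ is an output of the Lagerberg-side regularization in that theorem, not something you must prove on the Berkovich side. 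This is the paper's argument.
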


\begin{proof}
If the metric $\metr_0$ is smooth, then \ref{case of smooth reference metric for non-arch} follows easily from the definitions and in this special case we deduce \ref{invariance of Theta} from \ref{case of smooth reference metric for non-arch} and \eqref{current of integration in non-arch}. 
In general, $\theta$ has locally a finite continuous potential $\rho$ with $d'd''[\rho]=\theta$. 
By \cite[Corollary 3.2.13]{burgos-gubler-jell-kuennemann1}, we have locally a uniform approximation of $\rho$ by smooth functions and so we deduce  \ref{invariance of Theta} from the smooth case.

To prove the remaining claims, 
we may assume that $\varphi_v$ is not identically $-\infty$ on any
connected component of $V$.  
Since $\varphi$ is the restriction of $\varphi_v$, it is clear that
$\varphi$ is {continuous}. 
For $\alpha \in A_c^{n,n}(U)$, the $(n,n)$-form $\trop_v^*(\alpha)$ has support in the canonical skeleton $V \cap N_\Sigma$ 
by Proposition \ref{lem: support on skeleton}
and hence the associated measure also has support in $V \cap N_\Sigma$. We get 
\begin{align*}
\int_V \varphi_v \trop_v^*(\alpha) = \int_{V \cap N_\Sigma} \varphi \trop_v^*(\alpha) 
= \int_{V \cap N_\Sigma} \trop_v^*(\varphi \alpha)
= \int_V \trop_v^*(\varphi \alpha) = \int_U \varphi \alpha 
\end{align*}
using \eqref{current of integration in non-arch} as above for the last equality. 
We get $[\varphi]=\trop_*([\varphi_v])$ proving \ref{pushforward and Theta for non-arch}. Note that
\begin{align*}
  d'd''[\varphi] + \theta
  = d'd''\trop_{v,*}[\varphi_v]+\trop_{v,*}(\theta_v)
  = \trop_{v,*}(d'd''[\varphi_v]+ \theta_v).
\end{align*}
Since $\trop_{v,*}$ maps positive currents to positive Lagerberg currents 
(use duality in Remark \ref{projection formula for forms in non-arch}), 
Theorem \ref{psh and positivity of Lagerberg current}
yields that $d'd''[\varphi] + \theta$ is positive and hence
  $\varphi$ is $\theta$-psh.
\end{proof}

\begin{thm} \label{main correspondence theorem for semipositiv in non-arch}
Under the hypotheses \ref{hypotheses}, the following conditions are
equivalent 
for a function $\varphi\colon  U \to \Rinf$.
\begin{enumerate}
\item\label{item:23} 
The function $\varphi$ is $\theta$-psh on $U$.
\item\label{item:24}  
The function $\varphi \circ \trop_v$ is {a continuous $\theta_v$-psh
  function $V\to \Rinf$.}
\item\label{item:25}  
{$\metr_\varphi \coloneqq e^{-\varphi \circ \trop_v}\metr_0 $ is a continuous singular psh metric on $\Lan|_V$.}
\end{enumerate} 
This induces bijections between the set of 
$\theta$-psh
functions $ U \to \Rinf$, the set of $\SS$-invariant continuous
$\theta_v$-psh functions $V\to \R$ and the set
of toric continuous singular psh metrics of $\Lan|_V$. 
Moreover, the following are equivalent:
\begin{enumerate}[label = \it{(\roman*')},ref = \it{(\roman*')}]
\item\label{item:26} The function $\varphi$ is finite and $\theta$-psh on $U$. 
\item\label{item:27} The function $\varphi \circ \trop_v$ is finite, 
  $\theta_v$-psh and continuous on $V$.
\item\label{item:28} The  metric $\metr_\varphi \coloneqq e^{-\varphi
    \circ \trop_v}\metr_0 $ on $\Lan|_V$ is a continuous psh  metric.
\item\label{item:29} The metric $\metr_\varphi$ is locally psh
  approximable on $\Lan|_V$.
\end{enumerate}
If the valuation on $K$ is non-trivial,  $\Sigma$ is complete and  $U = N_\Sigma$ (hence $V=\Xsigmaan$), then {\ref{item:26}--\ref{item:29}} are also equivalent to:
\begin{enumerate}[resume*] 
\item\label{item:30}  $\metr_\varphi$ is a continuous semipositive metric on $\Lan$.
\end{enumerate}
\end{thm}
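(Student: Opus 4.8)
The plan is to prove Theorem \ref{main correspondence theorem for semipositiv in non-arch} in several stages, treating first the equivalences among \ref{item:23}--\ref{item:25}, then the finite versions \ref{item:26}--\ref{item:29}, and finally the global semipositivity statement \ref{item:30}.

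\textbf{Equivalences \ref{item:23}--\ref{item:25} and the bijections.} The equivalence of \ref{item:24} and \ref{item:25} is immediate: by construction $\metr_\varphi = e^{-\varphi\circ\trop_v}\metr_0$ and $\metr_0$ corresponds to the Green function $g_0$, so the local weight of $\metr_\varphi$ with respect to a toric frame differs from $-(\varphi\circ\trop_v)$ by the continuous potential of $\theta_v$; thus $\metr_\varphi$ is a continuous singular psh metric exactly when $\varphi\circ\trop_v$ is a continuous $\theta_v$-psh function on $V$ (compare \ref{psh metric for non-arch} and the reasoning of Remark \ref{psh vs theta psh in complex}). For \ref{item:23}$\Rightarrow$\ref{item:24}: if $\varphi$ is $\theta$-psh on $U$, then by Proposition \ref{lemm:4} we may work locally and reduce to psh functions; pulling back along $\trop_v$, the function $\trop_v^*(\varphi+h_\sigma)$ is continuous by \eqref{invariant continuous functions}, and by \ref{hypotheses} and \eqref{invariant continuous functions} it is $\SS$-invariant; then we need that pull-back of a tropical psh function is a non-archimedean psh function. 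This is the key non-archimedean input and should be deduced as follows: $\varphi+h_\sigma$ is a continuous psh function on $\Omega_\sigma$, locally a decreasing limit of smooth psh functions by Theorem \ref{thm:3}, its pull-back is then a decreasing limit of smooth forms with $\trop_v^*$ of positive Lagerberg forms, which are positive by Remark \ref{projection formula for forms in non-arch}, so the limit current $d'd''[\trop_v^*(\varphi+h_\sigma)]$ is positive; continuity and local integrability are clear. For \ref{item:24}$\Rightarrow$\ref{item:23}, this is exactly Proposition \ref{push-forward of psh in non-arch}\ref{restriction of Theta-psh to canonical skeleton}: the restriction of $\varphi\circ\trop_v$ to $U$ is $\varphi$ itself, so $\varphi$ is $\theta$-psh. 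The stated bijections then follow from \eqref{invariant continuous functions}, which identifies $\SS$-invariant (continuous) functions on $V$ with (continuous) functions on $U$, together with \ref{psh metric for non-arch}.

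\textbf{Equivalences \ref{item:26}--\ref{item:29}.} The equivalences \ref{item:26}$\Leftrightarrow$\ref{item:27}$\Leftrightarrow$\ref{item:28} are just the finite-valued restriction of the previous part, noting that $\varphi$ is finite iff $\varphi\circ\trop_v$ is finite (surjectivity of $\trop_v$), and that a finite continuous $\theta_v$-psh function is precisely what Chambert-Loir--Ducros call a psh metric weight in \ref{psh metric for non-arch}. For \ref{item:29}$\Rightarrow$\ref{item:28} one uses Remark \ref{locally psh}: a locally psh approximable metric is psh. The interesting direction is \ref{item:26}$\Rightarrow$\ref{item:29}: given a finite $\theta$-psh function $\varphi$ on $U$, Theorem \ref{thm:3} lets us write $\varphi$ locally as a decreasing (hence, by Dini on the compact skeleton pieces, locally uniform) limit of smooth psh functions $\varphi_k$ on $U$; after adding $h_\sigma$ we get smooth $\theta$-psh functions, and $\trop_v^*$ sends smooth $\theta_v$-psh... here one must be careful, because as Remark \ref{smooth comparison} warns, $\trop_v^*$ of a smooth function on $U$ is smooth on $V$, and the Chern form $c_1$ of the resulting metric is the pull-back of a positive Lagerberg form, hence positive by Remark \ref{projection formula for forms in non-arch}. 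Thus each $\metr_{\varphi_k}$ is a smooth psh metric on $\Lan|_V$ and $\metr_{\varphi_k}\to\metr_\varphi$ uniformly locally, giving local psh approximability.

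\textbf{The global statement \ref{item:30}.} Now assume $K$ non-trivially valued, $\Sigma$ complete, $U=N_\Sigma$, $V=\Xsigmaan$. Here I would invoke the toric correspondence of Burgos--Philippon--Sombra and Gubler--Hertel recalled in \ref{toric semipositive in non-arch}: an $\SS$-invariant continuous metric on $\Lan$ is semipositive iff it corresponds via \eqref{non-archimedean-toric-corr} to a concave function $\psi$ on $N_\R$ with $\psi=\Psi+O(1)$. On the other hand, by Proposition \ref{prop:4'} (applied with the canonical Green function, or rather by unwinding: $\varphi$ is finite $\theta$-psh iff $\varphi+g_0$ is convex on $N_\R$, equivalently $-(\varphi+g_0)=\psi$ is concave, with the $O(1)$ bound coming from $g_0=\Psi+O(1)$ on a complete fan, cf.\ Remark \ref{smooth and canonical tropical Green functions}) the finite $\theta$-psh functions on $N_\Sigma$ correspond exactly to such $\psi$. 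So \ref{item:26}$\Leftrightarrow$\ref{item:30} by matching the two correspondences \eqref{non-archimedean-toric-corr} and $\psi=-(\varphi+g_0)$. The main obstacle is the careful bookkeeping across three normalizations ($\Psi$, $g_0$, $\metr_0$) and making sure the $O(1)$ condition for BPS/GH matches the convexity-plus-growth condition on $N_\Sigma$; also, strictly speaking we should note that a toric semipositive metric is automatically $\SS$-invariant (it is a limit of model metrics which can be taken invariant by averaging, as in the remark about Haar measure in Section \ref{section: notation}), so that the BPS/GH classification applies. Everything else is a routine assembly of Theorem \ref{thm:3}, Proposition \ref{push-forward of psh in non-arch}, and Remark \ref{projection formula for forms in non-arch}.
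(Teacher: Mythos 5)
Your proposal is correct and follows essentially the same route as the paper: reduce \ref{item:23}$\Leftrightarrow$\ref{item:24} to the psh case via Proposition \ref{lemm:4}, use Theorem \ref{thm:3} plus positivity of $\trop_v^*$ (Remark \ref{projection formula for forms in non-arch}) for the pull-back direction and Proposition \ref{push-forward of psh in non-arch} for the converse, Dini and Remark \ref{locally psh} for \ref{item:26}--\ref{item:29}, and the Burgos--Philippon--Sombra/Gubler--Hertel classification matched against Proposition \ref{prop:4'} for \ref{item:30}. The only step you wave off as ``clear'' that the paper actually argues is local integrability of $\varphi\circ\trop_v$ when $\varphi=-\infty$ on boundary strata: one needs that compactly supported $(n,n)$-forms on $V$ live in $\torusan\cap V$, where $\varphi\circ\trop_v$ is finite and continuous.
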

\begin{proof}
It follows from Remark \ref{psh metric for non-arch} that our choice
of a continuous reference metric  
$\metr_0$ on $\Lan$ induces a bijection from the set of continuous
$\theta_v$-psh functions $V\to \Rinf$  
onto the set of continuous singular psh metrics on $\Lan|_V$ proving  the equivalence
of \ref{item:24} and \ref{item:25} 
as well as the equivalence of \ref{item:27} and \ref{item:28}. 
We will show now the equivalence of \ref{item:23} and \ref{item:24}
which also proves the 
equivalence of \ref{item:26} and \ref{item:27}.

Assume first that $\varphi \circ \trop_v$ is a continuous
$\theta_v$-psh function $V \to \Rinf$.  
Then Proposition \ref{push-forward of psh in non-arch} shows that 
$\varphi = \varphi \circ \trop_v|_{V\cap N_\Sigma}$ is $\theta$-psh on $U=V \cap N_\Sigma$. 
Conversely, we assume that $\varphi$ is $\theta$-psh on $U$. 
We want to prove that $\varphi \circ \trop_v$ is $\theta_v$-psh on $V$. 
Similarly as in the proof of Theorem \ref{main comparison thm of psh},
this can be checked locally and again we just have to prove that
$\varphi \circ \trop_v$ is psh on $V$ for any psh function $\varphi$ on
$U$. Since $\varphi$ is  continuous by Theorem \ref{thm:1} and since $\trop_v$ is continuous, it is clear
that $\varphi \circ \trop_v$ is continuous. We may assume that $U$ is connected
and that $\varphi$ is not identically $-\infty$. Then $\varphi$ is a
finite continuous function on $U \cap N_\R$. Since any $(n,n)$-form on
$V$ has  support in $\torusan \cap V$ as the boundary is of lower
dimension (see \cite[Lemme 3.2.5]{chambert-loir-ducros} or
\cite[Corollary 5.12]{gubler-forms}), the continuity of the function
$\varphi \circ \trop_v \colon \torusan \cap V \to \R$ yields that  
$\varphi \circ \trop_v$ is locally integrable (see \cite[Proposition 6.13]{gubler-kuenne2017}). 
As we may argue locally in $U$, 
the regularization in Theorem \ref{thm:3} 
shows that 
we may even assume that the function $\varphi$ is  a decreasing limit of smooth psh functions $\varphi_n$ on $U$. 
Since $\trop_v^*(d'd''(\varphi_n))=d'd''(\varphi_n \circ \trop_v)$ and 
since the pull-back of positive Lagerberg forms with respect to $\trop_v$ is positive 
(see Remark \ref{projection formula for forms in non-arch}), 
we conclude that $\varphi_n \circ \trop_v$ is psh. 
By the dominated convergence theorem, we deduce that $d'd''[\varphi \circ \trop_v]$ is a positive current on $V$.
This proves the equivalence of \ref{item:23} and \ref{item:24}.

If $\varphi$ is a finite continuous function in the above argument, 
then Dini's theorem shows that $\varphi$ is locally the uniform limit of the smooth psh functions 
$\varphi_n$ which shows that \ref{item:26} yields \ref{item:29}. 
We have noted in Remark \ref{locally psh} that \ref{item:29} yields
\ref{item:28}. We conclude that \ref{item:26} to \ref{item:29} are all
equivalent.

The functions $\varphi \circ \trop_v$ are
$\SS$-invariant which in turn is equivalent for the metrics
$\metr_\varphi$ to be toric. Conversely, it follows from 
\ref{toric semipositive in non-arch} that any $\SS$-invariant continuous
function $\varphi_v\colon V \to \Rinf$ is of the form $\varphi_v = \varphi \circ
\trop_v$ for a unique continuous function $\varphi\colon U \to \Rinf$. 
By Proposition \ref{push-forward of psh in non-arch} again, it follows
that $\varphi$ is $\theta$-psh. 
We get a bijection from the set of $\theta$-psh functions on $U$ 
onto the set of $\SS$-invariant continuous $\theta_v$-psh functions $V\to \Rinf$.

We assume  that  the valuation $v$ is non-trivial, $\Sigma$ is complete and $U=N_\Sigma$. 
It follows from the above that continuity of $\varphi$ and $\metr_\varphi$ are equivalent. 
For a continuous  function $\varphi\colon U \to \R$, we deduce from the result of Burgos, Philippon and Sombra mentioned in 
\ref{toric semipositive in non-arch} that $\metr_\varphi$ is semipositive if and only if 
$\psi = \log \|s\|_\varphi = -g_0 - \varphi$ 
is concave on $N_\R$.
{The latter condition} is equivalent for $\varphi$ to be $\theta$-psh by Proposition \ref{prop:4'}.
This proves the equivalence of \ref{item:26} and \ref{item:30}.  
\end{proof}

\begin{rem} \label{generalization for any piecewise linear Psi in non-arch}
Similarly as in Remark \ref{generalization for any piecewise linear Psi}, 
the equivalences of \ref{item:23} with \ref{item:24} and of
\ref{item:26} with \ref{item:27} hold more generally  
for any piecewise linear function $\Psi$ on the fan $\Sigma$ without
assuming integral slopes. 
\end{rem}

\subsection{Global regularization}\label{global-regularization} Our goal is to show a global regularization result for $\theta$-psh functions. 
In this subsection, we fix a piecewise linear function $\Psi$ on the fan $\Sigma$ of $N_\R$ with integral slopes. So far, we have used Green functions only as a reference to define $\theta$. We will show first that the concept of $\theta$-psh functions is equivalent to considering convex singular Green functions for $\Psi$. The latter have the advantage that we can omit a reference Green function and the corresponding $\theta$ which means that we do not have to impose regularity conditions on $\theta$ later during the regularization.

\begin{prop} \label{equivalence theta-psh and convex green functions}
	Let $g_0$ be a {continuous} Green function for $\Psi$ on
        a {connected} open subset
        $U$ of $N_\Sigma$  and let $\theta \coloneqq
        c_1(\Psi,g_0)$. Then the map $\varphi \to \varphi+g_0$ is an
        isomorphism from the cone $\{\varphi\colon U \to \Rinf \mid
        \text{$\varphi$ $\theta$-psh, $\varphi \not \equiv
          -\infty$}\}$ onto the cone of 
        continuous singular Green
        functions $g\colon U \cap N_\R \to \R$ for $\Psi$ {that are convex}.
\end{prop}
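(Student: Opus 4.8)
The plan is to show that the two cones are in bijection via $\varphi \mapsto g \coloneqq \varphi + g_0$, with inverse $g \mapsto \varphi \coloneqq g - g_0$. Both maps are clearly affine and respect the cone structure (addition and scaling by positive reals), so the only real content is that the map sends one cone into the other and vice versa. I would organize the proof around the local characterization in Proposition \ref{lemm:4}.

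First I would recall the setup: by Definition \ref{tropical Green function}, for each $\sigma \in \Sigma$ we have an open neighbourhood $\Omega_\sigma$ of $U \cap N(\sigma)$ in $U$ and an integral linear form $m_\sigma \in M$ with $\Psi|_\sigma = m_\sigma|_\sigma$, so that $g_0 + m_\sigma|_{\Omega_\sigma \cap N_\R}$ extends to a continuous function $h_\sigma$ on $\Omega_\sigma$. By Proposition \ref{lemm:4}, a function $\varphi\colon U \to \Rinf$ is $\theta$-psh if and only if $\varphi + h_\sigma = \varphi + g_0 + m_\sigma|_{\Omega_\sigma \cap N_\R}$ extends to a psh function on $\Omega_\sigma$ for every $\sigma \in \Sigma$. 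Now set $g \coloneqq \varphi + g_0$ on $U \cap N_\R$. The condition ``$\varphi + g_0 + m_\sigma$ extends to a psh function on $\Omega_\sigma$'' says precisely that $g + m_\sigma|_{\Omega_\sigma \cap N_\R}$ extends to a psh function on $\Omega_\sigma$. Since a psh function is in particular continuous (Theorem \ref{thm:1}), this says that $g$ is a continuous singular Green function for $\Psi$ (taking $h_\sigma$ in Definition \ref{tropical Green function} to be this psh extension, which is allowed to take the value $-\infty$ on $U \setminus N_\R$), and moreover the extension is psh, which on $\Omega_\sigma \cap N_\R$ forces $g|_{U \cap N_\R} = \varphi|_{U \cap N_\R} + g_0|_{U \cap N_\R}$ to be convex. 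Conversely, if $g$ is a convex continuous singular Green function for $\Psi$, then on each $\Omega_\sigma$ the extension $g + m_\sigma$ is continuous with convex restriction to $\Omega_\sigma \cap N_\R$, hence psh by Theorem \ref{thm:1} (condition \ref{lim sup condition} there, using that continuity gives the required $\limsup$ equality towards the boundary), so $\varphi = g - g_0$ satisfies the criterion of Proposition \ref{lemm:4} and is $\theta$-psh.

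One point that needs a little care: in Definition \ref{tropical Green function} a continuous singular Green function is required to have \emph{finite} values on $U \cap N_\R$, while on the $\theta$-psh side we must make sure that a $\theta$-psh $\varphi$ with $\varphi \not\equiv -\infty$ is indeed finite on $U \cap N_\R$. This is exactly the content of the local integrability / Lelong-type characterization together with continuity: by Proposition \ref{lemm:4} such a $\varphi$ is continuous, and on the connected set $U \cap N_\R$ the convex function $g = \varphi + g_0$ is either finite or identically $-\infty$ by Lemma \ref{lemm:2}; since $\varphi \not\equiv -\infty$ and a psh extension to $\Omega_\sigma$ cannot be $-\infty$ on an open subset of $N_\R$ while being finite somewhere, $g$ is finite on $U \cap N_\R$, so $g$ genuinely lands in the target cone. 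In the other direction the finiteness of $g$ on $U \cap N_\R$ is part of the hypothesis, and it guarantees $\varphi = g - g_0$ is finite there, hence $\varphi \not\equiv -\infty$.

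The main obstacle, such as it is, is purely bookkeeping: matching up the ``extends to a psh function'' clause of Proposition \ref{lemm:4} with the ``extends to a continuous function $h_\sigma$ (possibly $-\infty$)'' clause of Definition \ref{tropical Green function}, and checking that ``convex on $U \cap N_\R$ plus psh extension on each $\Omega_\sigma$'' is the same data in both directions. There is no hard analysis here; everything reduces to Theorem \ref{thm:1}, Lemma \ref{lemm:2}, and Proposition \ref{lemm:4}, all of which are already available. I would close by noting explicitly that the map and its inverse are mutually inverse affine maps of cones, which is immediate from $g = \varphi + g_0$.
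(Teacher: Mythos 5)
Your proof is correct and follows essentially the same route as the paper: the paper invokes Proposition \ref{prop:4'} directly (``$\varphi$ is $\theta$-psh iff $\varphi+g_0$ is convex on $U\cap N_\R$ plus the boundary $\limsup$ condition''), whereas you inline its content by combining Proposition \ref{lemm:4} with Theorem \ref{thm:1}, which is the same underlying argument. The finiteness point you worry about is in fact already built into Definition \ref{theta psh for Lagerberg} (on a connected component a $\theta$-psh function not identically $-\infty$ restricts to a real-valued function on $U\cap N_\R$), so that paragraph can be shortened, but nothing is wrong.
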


\begin{proof}
Since $g_0$ is a continuous Green function for $\Psi$, it is clear for a function $\varphi\colon U \to \Rinf$ with $\varphi \not \equiv -\infty$ that $\varphi$ is continuous if and only if $\varphi + g_0$ is a {continuous} singular Green function for $\Psi$. If such a $\varphi\colon U \to \Rinf$ is continuous, then it follows from 
Proposition \ref{prop:4'} that $\varphi$ is a $\theta$-psh function if and only if $(\varphi + g_0)|_{U \cap N_\R}$ is convex.
\end{proof}

\begin{definition} \label{rational piecewise affine functions}
A \emph{rational piecewise affine function} $f$ on $N_\R$ is given by a finite polyhedral complex $\Pi$ with support equal to $N_\R$ and with vertices in $N_\Q$ such that for every $\tau \in \Pi$, there are $m_\tau \in M_\Q$ and $\gamma_{\tau} \in \Q$ with $f = m_\tau + \gamma_\tau$ on $\tau$, where $M \coloneqq \Hom_\Z(N,\Z)$. 
\end{definition}

We start with global regularization by rational piecewise affine functions.

\begin{prop} \label{global regularization by rational piecewise affine functions}
Let $\Psi$ be a piecewise linear concave function on the complete fan $\Sigma$ with integral slopes. 
Then every convex continuous singular Green function for $\Psi$ is the point\-wise limit of a decreasing sequence of rational piecewise affine convex continuous Green functions for $\Psi$.
\end{prop}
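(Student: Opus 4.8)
The statement is purely about convex functions on $N_\R$ together with boundary behaviour along $N_\Sigma$, so the plan is to work entirely on $N_\R$ and then check that the approximants remain Green functions for $\Psi$, i.e.\ extend continuously (with the correct linear shifts) to the strata $N(\sigma)$. Write $g$ for the given convex continuous singular Green function for $\Psi$ on $N_\Sigma$. By Proposition~\ref{equivalence theta-psh and convex green functions} (applied with the canonical Green function, or directly), $g|_{N_\R}$ is a finite convex function on all of $N_\R$, and since $\Psi$ is concave with integral slopes, $g + \Psi$ is convex as well and, crucially, \emph{bounded above} towards each ray of a cone $\sigma$: the assumption that $g$ is a Green function for $\Psi$ means $g + m_\sigma$ extends continuously to a neighbourhood of $N(\sigma)$, so along $[p, p+\infty v]$ with $v\in\relint(\sigma)$ the function $g + m_\sigma$ stays bounded. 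This is the recession/growth control that will make the approximation possible.

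\textbf{Step 1: reduce to approximating $g+\Psi$ from above by rational piecewise affine convex functions with bounded recession.} Set $h \coloneqq g + \Psi$ on $N_\R$; this is finite, convex and continuous. The recession function $h_\infty$ of $h$ (in the sense of Rockafellar, \cite[Section~8]{rockafellar1970}) satisfies $h_\infty \le 0$ because $h + m_\sigma|_{N(\sigma)}$ extends continuously to $N(\sigma)$ for every $\sigma$, forcing $h$ to grow sublinearly (indeed to be bounded above) along every direction in $|\Sigma| = N_\R$. I would record this as a lemma: a continuous convex $h$ on $N_\R$ is $(-\Psi)$-bounded, i.e.\ $g = h - \Psi$ is a Green function for $\Psi$, \emph{iff} $h_\infty \le 0$, iff $h$ is bounded on each translate of $|\Sigma|\cap(\text{cone})$; the content is that the extension condition in Definition~\ref{tropical Green function} for $g$ translates to $h$ being bounded above along the cones.

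\textbf{Step 2: construct the decreasing rational piecewise affine approximants.} The standard device is the Legendre/support-function description: $h = \sup_{u\in \partial h(N_\R)} (\langle u,\cdot\rangle - h^*(u))$, where $h^*$ is the Legendre conjugate and, by Step~1, $\operatorname{dom}(h^*)$ is a bounded convex set (its recession cone is trivial because $h_\infty\le 0$). Pick a sequence of finite subsets $S_k \subset (\operatorname{dom} h^*)\cap M_\Q$, increasing, with $\bigcup_k S_k$ dense in $\operatorname{dom} h^*$, and with rational values $\gamma_u \ge h^*(u)$ for $u\in S_k$ chosen so that $\langle u,x\rangle - \gamma_u \le h(x)$ everywhere and $\gamma_u \downarrow h^*(u)$; then
\[
h_k \coloneqq \max_{u\in S_k}\bigl(\langle u,\cdot\rangle - \gamma_u\bigr)
\]
is a rational piecewise affine convex function, $h_k \le h_{k+1}$ is false — I need $h_k$ \emph{decreasing}, so instead I would approximate from above: fix a rational piecewise affine convex $h_0 \ge h$ with bounded recession (e.g.\ using the boundedness of $\operatorname{dom} h^*$, take finitely many vertices of a large polytope containing it), and then, using density of the finite sets and the fact that on any compact set a continuous convex function is a decreasing limit of $\max$ of finitely many rational affine majorants (a Dini-type argument), build $h_k$ rational piecewise affine convex with $h_k \downarrow h$ pointwise on $N_\R$ and each $h_k$ still having recession $\le 0$. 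Each $g_k \coloneqq h_k - \Psi$ is then convex (difference of convex and the concave integral-slope $\Psi$), rational piecewise affine, and a Green function for $\Psi$ by Step~1 (bounded recession), and $g_k \downarrow g$ on $N_\R$. Because $g$ is continuous on all of $N_\Sigma$ and the $g_k$ are Green functions for $\Psi$, the decreasing limit automatically matches $g$ on the strata, so $g_k \to g$ pointwise on $N_\Sigma$.

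\textbf{Main obstacle.} The delicate point is \emph{simultaneously} keeping the approximants (i) rational piecewise affine, (ii) convex, (iii) decreasing to $g$ pointwise, and (iv) Green functions for $\Psi$ — the last being a genuine boundary condition, not just a condition on $N_\R$. The right framing is that (iv) $\Leftrightarrow$ bounded recession of $h = g+\Psi$ $\Leftrightarrow$ $\operatorname{dom} h^*$ bounded, after which all approximants built from finitely many rational affine pieces with slopes drawn from (a lattice-rational dense subset of) $\operatorname{dom} h^*$ automatically satisfy (iv). Achieving "decreasing" at the same time as "rational" requires care: the cleanest route is to first get a \emph{continuous} decreasing approximation $\tilde g_k \downarrow g$ by convex Green functions (e.g.\ via inf-convolution with small quadratics, or via $\max$ over an $\varepsilon$-net of supporting affine functions with slightly enlarged intercepts), then perturb each $\tilde g_k$ slightly to a rational piecewise affine one staying between $g$ and $\tilde g_{k-1}$, using that rational affine functions are dense among affine functions and that the polyhedral decomposition can be taken with rational vertices by a small rational perturbation of the breakpoints. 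I expect the write-up to lean on \cite[Sections 8, 10, 13]{rockafellar1970} for the recession-cone and conjugacy facts and on a routine compactness/Dini argument for the monotone convergence.
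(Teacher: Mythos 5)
There are two genuine errors that break your argument as written. First, Step~1 asserts that $h=g+\Psi$ is convex because $g$ is convex and $\Psi$ is concave; this is false (a convex plus a concave function is in general neither), and the failure occurs inside the class at hand: for $n=1$, $\Psi(u)=-|u|$ and $g(u)=\sqrt{1+u^{2}}$, the function $g$ is a smooth convex Green function for $\Psi$ (since $\sqrt{1+u^{2}}-|u|\to 0$ at $\pm\infty$), but $g+\Psi=\sqrt{1+u^{2}}-|u|$ has a concave kink at $0$ and is not convex. Consequently the Legendre conjugate $h^{*}$, the set $\operatorname{dom}(h^{*})$ and the recession function $h_{\infty}$ on which all of Step~2 rests are not available. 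The convex object to dualize is $g$ itself: by Lemma \ref{lemm:8} the Green-function condition amounts to $g+\Psi=g-\Phi_{\Delta}$ being bounded above, equivalently $\Delta(g)=\operatorname{dom}(g^{*})\subset\Delta$. Second, the device you invoke to obtain a \emph{decreasing} approximation --- that a convex function is, on compacta, a decreasing limit of maxima of finitely many affine majorants --- is false: the pointwise infimum of all affine majorants of a convex function is its upper \emph{concave} envelope, not the function (for $x^{2}$ on $[-1,1]$ every affine majorant takes a value $\geq 1$ at the origin). Approximation from above by convex piecewise affine functions has to go through the dual construction, i.e.\ conjugating a finite maximum of affine minorants of $g^{*}$ (equivalently, the convex interpolation through finitely many rational points of the graph of $g$); your construction would not converge to $g$.

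You also leave the singular case untreated: when $g+\Psi$ is unbounded below, $g$ takes the value $-\infty$ on part of the boundary while every approximant is a finite Green function, so pointwise convergence at such boundary points needs an argument (it does hold, because the boundary value of a convex Green function is the infimum of $g+m_{\sigma}$ along rays into $\sigma$ and infima commute with decreasing limits, but your one-line appeal to continuity is not that argument). The paper's proof circumvents all three issues: since $g+\Psi$ is bounded above, the truncations $g_{k}=\max(g,-\Psi-k)$ are non-singular convex continuous Green functions decreasing to $g$; each has $g_{k}+\Psi$ bounded, hence by \cite[Propositions 2.5.23, 2.5.24]{bps-asterisque} is a \emph{uniform} limit of rational piecewise affine convex functions, which are automatically Green functions; and a diagonal choice $g_{k,j_{k}}+\varepsilon_{k}$ with suitable $\varepsilon_{k}>0$ produces the decreasing sequence. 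To repair your proof you should either reproduce that uniform approximation (which is precisely the dual construction indicated above) or restructure around the truncation and an $\varepsilon_{k}$-shifted diagonal.
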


\begin{proof} 
By definition of a Green function and compactness of $N_\Sigma$, the function $g+\Psi$ is bounded from above on $N_\R$.  
For $k \in \N$, let $g_k \coloneqq \max(g,-\Psi-k)$. 
Since $g$ is a convex continuous singular Green function for $\Psi$ and $-\Psi-k$ is a convex continuous Green function for $\Psi$, it is clear that $g_k$ is a convex continuous Green function for $\Psi$ with $g_k+\Psi$ bounded. 
Since $g_k + \Psi$ is bounded, it follows from \cite[Propositions 2.5.23, 2.5.24]{bps-asterisque} that the convex function $g_k$ is the uniform limit of a sequence of convex rational piecewise affine functions $(g_{k,j})_{j\in \N}$ on $N_\R$. 
Obviously, we have $g_{k,j}+\Psi$ bounded for all $k,j$ and  hence $g_{k,j}$ is a continuous Green function for $\Psi$.
As $(g_k)_{k \in \N}$ is a decreasing sequence converging pointwise to $g$, and $(g_{k,j})_{j\in \N}$ is a sequence converging uniformly to $g_{k}$, one can find  $j_{k} \in \N$
and {$\varepsilon _{k} \in \R_{>0}$} such that the sequence $(g_{k,j_{k}}+\varepsilon _{k})_{k\in \N}$ of
rational piecewise affine continuous Green  functions for $\Psi $ is decreasing and converges pointwise to $g$.
\end{proof}

Now we obtain our main global regularization result by smooth Green functions for $\Psi$. 

\begin{thm}  \label{global regularization theorem} 
Let $\Psi$ be a piecewise linear concave function on the complete fan $\Sigma$ of $N_\R$ with integral slopes and let $g$ be a convex singular Green function for $\Psi$. Then $g$ is the pointwise limit of a decreasing sequence of convex smooth Green functions for $\Psi$. 
\end{thm}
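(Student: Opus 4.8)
The plan is to reduce the theorem to the already-established regularization by rational piecewise affine functions (Proposition \ref{global regularization by rational piecewise affine functions}) combined with a convolution argument adapted to the toric boundary conditions. First I would invoke Proposition \ref{global regularization by rational piecewise affine functions} to write the given convex singular Green function $g$ for $\Psi$ as the pointwise limit of a decreasing sequence $(g_k)_{k \in \N}$ of rational piecewise affine convex continuous Green functions for $\Psi$. Each $g_k$ has the property that $g_k + \Psi$ is bounded on $N_\R$, equivalently $g_k$ extends to a continuous function on the compact space $N_\Sigma$ (after the local shifts $g_k + m_\sigma$), so $g_k$ is in particular constant towards the boundary in the sense of \S\ref{sec: Plurisubharmonic functions on partial compactifications} once we account for the piecewise linear twist by $\Psi$.

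Next I would regularize each $g_k$ individually by convolution. The subtlety is that a naive convolution of $g_k$ with a smoothing kernel on $N_\R$ need not be constant towards the boundary, because $g_k$ itself is only a Green function, not a function on $N_\Sigma$ that is constant towards the boundary. The clean way around this is to work locally on the open sets $\Omega_\sigma$ from Definition \ref{tropical Green function}: on $\Omega_\sigma$ the function $h_{k,\sigma} \coloneqq g_k + m_\sigma$ extends to a continuous function which (because $\Psi$ is piecewise linear and $g_k$ is rational piecewise affine with $g_k + \Psi$ bounded) is continuous on $N_\Sigma$ and constant towards the boundary. By Lemma \ref{monotone property} and Lemma \ref{sequence of smoothing kernels}, choosing a smoothing kernel $\eta$ for $h_{k,\sigma}$ and a relatively compact $U' \Subset N_\Sigma$, the convolutions $h_{k,\sigma} \star \eta$ are smooth, convex on $N_\R$, constant towards the boundary, and satisfy $h_{k,\sigma} \star \eta \ge h_{k,\sigma}$ with $h_{k,\sigma} \star \eta_j \searrow h_{k,\sigma}$ along a suitable sequence. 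Undoing the shift, $g_{k} \star \eta_j$ (interpreted via the local pictures, which patch because the shifts $m_\sigma - m_\tau$ are linear and convolution commutes with translation by linear functionals) is a smooth convex Green function for $\Psi$ that decreases to $g_k$. Since $N_\Sigma$ is compact, the convergence $g_k \star \eta_j \searrow g_k$ is uniform by Dini's theorem.

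Finally I would assemble the diagonal sequence. Given the decreasing sequence $g_k \searrow g$ (pointwise) and for each $k$ a sequence of smooth convex Green functions $g_{k,j} \searrow g_k$ (uniformly, hence pointwise), a standard diagonal extraction — as already used in the proof of Theorem \ref{thm:3} and in Proposition \ref{global regularization by rational piecewise affine functions} — produces indices $j_k$ and positive reals $\varepsilon_k \to 0$ such that $(g_{k,j_k} + \varepsilon_k)_{k \in \N}$ is a decreasing sequence of smooth convex Green functions for $\Psi$ converging pointwise to $g$. The additive perturbation by $\varepsilon_k$ keeps each function a convex smooth Green function for $\Psi$ (adding a constant changes nothing about convexity or the Green function property) and is exactly what forces monotonicity of the diagonal sequence.

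The main obstacle I anticipate is bookkeeping the interaction between the convolution and the toric boundary: one must check carefully that smoothing $g_k$ locally on each $\Omega_\sigma$ yields a globally well-defined smooth Green function for $\Psi$, i.e.\ that the locally smoothed pieces agree on overlaps and that the resulting object is genuinely constant towards the boundary in the precise sense of \eqref{compatibility along the boundary}. This is where the hypothesis that $\Psi$ has integral slopes and that $g_k + \Psi$ is bounded is used essentially: it guarantees the local extensions $h_{k,\sigma}$ are continuous on $N_\Sigma$, so that Lemma \ref{existence of smoothing kernels} and Lemma \ref{monotone property} apply verbatim to them. Once this patching is set up, the convexity statements and the monotone convergence are routine consequences of the lemmata in \S\ref{sec: Plurisubharmonic functions on partial compactifications} together with Dini's theorem on the compact space $N_\Sigma$.
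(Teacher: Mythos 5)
Your proposal is correct in substance, but it takes a genuinely different route from the paper in the key smoothing step. Both arguments begin identically, reducing via Proposition \ref{global regularization by rational piecewise affine functions} to the problem of smoothing a piecewise affine convex continuous Green function $g_k$, and both end with the same diagonal extraction. The paper's middle step, however, writes $g_k = \max(a_1,\ldots,a_p)$ as a finite maximum of affine functions and replaces $\max$ by Demailly's regularized maximum $M_\varepsilon$; this immediately produces a smooth convex function within $\varepsilon$ of $g_k$ uniformly, and the boundary behaviour is inherited from the fact that $g_k+\Psi$ is constant towards the boundary. You instead convolve with smoothing kernels via Lemmas \ref{existence of smoothing kernels} and \ref{sequence of smoothing kernels}, which also works and has the merit of reusing machinery already set up in \S\ref{sec: Plurisubharmonic functions on partial compactifications}, but it forces exactly the bookkeeping you anticipate. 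The cleanest way to discharge it is to define $g_k\star\eta$ globally on $N_\R$ (so there is nothing to patch) and verify the Green-function property locally: $(g_k\star\eta)+m_\sigma$ differs from $(g_k+m_\sigma)\star\eta$ only by an additive constant, and $g_k+m_\sigma=(g_k+\Psi)+(m_\sigma-\Psi)$ is constant towards the boundary near $N(\sigma)$ --- the second summand because $m_\sigma-m_\tau$ vanishes on $\langle\sigma\rangle_\R$ for every cone $\tau$ of $\Sigma$ having $\sigma$ as a face, the first by the same observation the paper itself invokes for piecewise affine Green functions. Two small imprecisions should be fixed: $h_{k,\sigma}$ extends continuously only to $\Omega_\sigma$, not to all of $N_\Sigma$; and Dini's theorem cannot be applied to $g_k\star\eta_j$ on $N_\Sigma$ directly (these functions do not extend to the boundary), only to the locally shifted extensions, after which a finite cover of the compact space $N_\Sigma$ gives uniform convergence. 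Net effect: the paper's regularized maximum is shorter and sidesteps the convolution-versus-boundary interaction entirely, while your argument is more uniform with the local regularization theory already developed in the paper.
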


\begin{proof}
Using piecewise linear regularization in Proposition \ref{global regularization by rational piecewise affine functions}, it is enough to show that every piecewise affine convex continuous Green function $g$ for $\Psi$ is a uniform limit of convex smooth Green functions. 
Such a function $g$ can be written as 
\[
g = \max(a_1, \ldots, a_p)
\]
for affine functions $a_1,\ldots, a_p$ on $N_\R$. 
For any $\varepsilon > 0$, we replace $\max$ by the regularized maximum $M_\varepsilon$ from \cite[Lemma I.5.18]{demailly_agbook_2012} to obtain a function $g_\varepsilon$. 
The properties of the regularized maximum show that $g_\varepsilon$ is a smooth convex function on $N_\R$ with $|g-g_\varepsilon| \leq \varepsilon$. 
This shows that $g_\varepsilon$ is a convex continuous Green function for $\Psi$. 
Since $g$ is piecewise affine and a continuous Green function for $\Psi$, it is clear that $g+\Psi$ is constant towards the boundary which  implies that $g_\varepsilon$ is a smooth Green function in the sense of Definition \ref{tropical Green function}.  
\end{proof}

\begin{cor} \label{global regularization for theta-psh}
Let $\Psi$ be a piecewise linear concave function on the complete fan $\Sigma$ with integral slopes and let $g_0$ be a smooth Green function for $\Psi$ with $\theta \coloneqq c_1(\Psi,g_0)$. Then  any $\theta$-psh function on $N_\Sigma$ is the pointwise limit of a decreasing sequence of smooth $\theta$-psh functions.
\end{cor}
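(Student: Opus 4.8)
The plan is to deduce Corollary \ref{global regularization for theta-psh} directly from the global regularization result for convex Green functions established in Theorem \ref{global regularization theorem}, translating between the language of $\theta$-psh functions and that of convex singular Green functions for $\Psi$ via Proposition \ref{equivalence theta-psh and convex green functions}.

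First I would let $\varphi$ be a $\theta$-psh function on $N_\Sigma$, where $\theta = c_1(\Psi,g_0)$ for the fixed smooth Green function $g_0$. If $\varphi \equiv -\infty$ the statement is trivial (it is the limit of the constant sequence, or we simply exclude it), so I may assume $\varphi \not\equiv -\infty$. By Proposition \ref{equivalence theta-psh and convex green functions}, applied with $U = N_\Sigma$ (which is connected since $\Sigma$ is complete, so $N_\Sigma$ is compact and connected), the function $g \coloneqq \varphi + g_0$ is a convex continuous singular Green function for $\Psi$. Here I use that $\Psi$ is concave so that $-\Psi$ (hence the canonical Green function) makes sense as in Remark \ref{smooth and canonical tropical Green functions}; more importantly, Theorem \ref{global regularization theorem} requires $\Psi$ concave, which is part of our hypothesis.

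Next I would apply Theorem \ref{global regularization theorem} to $g$: it yields a decreasing sequence $(g_k)_{k\ge 1}$ of convex smooth Green functions for $\Psi$ converging pointwise to $g$ on $N_\Sigma$. Setting $\varphi_k \coloneqq g_k - g_0$, I note that each $\varphi_k$ is a smooth function on $N_\Sigma$ (difference of a smooth Green function for $\Psi$ and the smooth Green function $g_0$; the pieces $\Psi|_\sigma = m_\sigma$ cancel, so $\varphi_k$ is genuinely in $A^{0,0}(N_\Sigma)$, constant towards the boundary). Since $g_k$ is convex, Proposition \ref{equivalence theta-psh and convex green functions} — or directly Proposition \ref{prop:4'} together with the fact that $(\varphi_k + g_0)|_{N_\R} = g_k$ is convex — shows $\varphi_k$ is $\theta$-psh; and being smooth, it is a smooth $\theta$-psh function. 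The sequence $(\varphi_k)$ is decreasing because $(g_k)$ is, and converges pointwise to $g - g_0 = \varphi$. This gives exactly the assertion.

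I do not expect a serious obstacle here; the corollary is essentially a repackaging of Theorem \ref{global regularization theorem} through the dictionary of Proposition \ref{equivalence theta-psh and convex green functions}. The one point requiring a little care is checking that $g_k - g_0$ really is a \emph{smooth function on $N_\Sigma$} in the sense of this paper (smooth on $N_\R$ and constant towards the boundary), rather than merely a smooth Green function for $\Psi$; this follows because on a neighbourhood $\Omega_\sigma$ of $N(\sigma)$ both $g_k + m_\sigma$ and $g_0 + m_\sigma$ extend smoothly, so their difference $g_k - g_0$ extends smoothly across the boundary stratum and satisfies the compatibility \eqref{compatibility along the boundary} inherited from both Green functions. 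A secondary point is that one should make sure the hypotheses of Theorem \ref{global regularization theorem} are met, namely that $\Psi$ is piecewise linear concave with integral slopes on the complete fan $\Sigma$ and that $g = \varphi + g_0$ is a convex singular Green function for $\Psi$ — both of which are guaranteed by our standing assumptions and the previous step.
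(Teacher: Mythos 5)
Your proposal is correct and follows exactly the paper's route: the paper proves this corollary by combining Proposition \ref{equivalence theta-psh and convex green functions} with Theorem \ref{global regularization theorem}, which is precisely your translation $\varphi \mapsto g = \varphi + g_0$, regularization of $g$, and translation back. Your additional check that $g_k - g_0$ is genuinely smooth on all of $N_\Sigma$ (constant towards the boundary) is a worthwhile detail the paper leaves implicit.
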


\begin{proof}
	This follows from Proposition \ref{equivalence theta-psh and convex green functions} and Theorem \ref{global regularization theorem}.
\end{proof}

\section{Toric Monge--Amp\`ere equations}\label{section: non-archimedean BT and MA}

In this section, we compare the tropical Bedford--Taylor product with
the Bedford--Taylor product defined by Chambert--Loir and Ducros on
the corresponding toric variety over any non-archimedean field $K$  with valuation $v \coloneqq -\log | \phantom{a}|$.
Using Bedford--Taylor products one can define the
   Monge--Amp\`ere operators and look at
  the Monge--Amp\`ere equations.
We will extend the correspondence theorems to
Monge--Amp\`ere equations comparing the complex solutions with the
tropical solutions and with the non-archimedean solutions.

\subsection{Bedford--Taylor theory on a non-archimedean toric variety} \label{subsection: Bedford--Taylor theory on a non-archimedean toric variety}

Chambert-Loir and Ducros have introduced a wedge product of first Chern
currents of locally psh approximable metrized line bundles on a 
{boundaryless separated equidimenional} 
Berkovich space, similar to the Bedford--Taylor product in complex
differential geometry. The product is constructed first in the case of
smooth metrics where it is  the wedge product of smooth first Chern
forms.  Psh approximable metrics are locally uniform limits of smooth
psh metrics and then the Bedford--Taylor product is obtained locally
as a weak limit of currents from the smooth case (see \cite[\S 5.6, \S
6.4]{chambert-loir-ducros}).

In this subsection we will show that, for toric metrics, the above
product in the non-archimedean case corresponds to our Bedford--Taylor
product on the tropicalization.
{In the case of a smooth fan, as a consequence of Remark \ref{comparison of complex BT product and tropical BT product}, we obtain a precise
correspondence to the complex Bedford--Taylor product on the
associated complex toric variety.}

We keep the assumptions and notations from Theorem \ref{main
  correspondence theorem for semipositiv in non-arch}, i.e.
$X_\Sigma$ is  any toric variety
over a non-archimedean field $K$ with associated fan $\Sigma$ in
$N_\R$, also $U$ is an open subset of $N_\Sigma$ and $V \coloneqq
\trop_v^{-1}(U)$. For $j=1, \ldots, p$, we consider a toric line bundle
$L_j$ on $X_\Sigma$ with non-trivial toric meromorphic section $s_j$
and corresponding piecewise linear function $\Psi_j$ on $\Sigma$.
{We fix a {continuous} Green function $g_{j,0}\colon U \cap
  N_\R\to \R$ for $\Psi_j$ (see  Definition \ref{tropical Green
    function})} and denote by  $\theta _{j}=c_{1}(\Psi_{j}
,\psi_{j,0})$ the first Chern current from Definition \ref{Lagerberg
  theta}.

\begin{thm} \label{comparision to non-archimedean Bedford-Taylor} For
  $j=1, \ldots, q$, let $\metr_j$ be the locally psh approximable toric
  metric on $L_j^{\rm an}$ corresponding to the continuous
  $\theta_j$-psh function $\varphi_j\colon U \to \R$ by Theorem
  \ref{main correspondence theorem for semipositiv in non-arch}. Then
\[
\trop_{v,*}\bigl(c_1(L_1,\metr_1) \wedge \ldots \wedge c_1(L_q,\metr_q)\bigr) =(d'd''[\varphi_1] + \theta_1)\wedge \ldots \wedge ( d'd''[\varphi_q]+ \theta_q)
\]
as positive Lagerberg currents on $U$. 
On the left hand side, we use the Bedford--Taylor product on the Berkovich space $V$ introduced by Chambert--Loir and Ducros and on the right hand side, we use the product introduced in {Remark \ref{MA-currents for theta-psh in non-smooth case}}. 
\end{thm}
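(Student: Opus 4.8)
The plan is to argue locally on $U$ and to reduce both sides to a Bedford--Taylor product of honest (local) psh functions, after which the asserted identity becomes a formal consequence of the projection formula for $\trop_v$. Since the tropical product on the right is local in $U$ by Remark \ref{MA-currents for theta-psh in non-smooth case} and the Chambert--Loir--Ducros product on the left is local on $V$, it is enough to prove the identity on a neighbourhood of an arbitrary point $x\in U\cap N(\sigma)$. There I would pick a basic open neighbourhood $\Omega_\sigma$ of $x$ contained in $U$, together with $m_{j,\sigma}\in M$ and continuous functions $h_{j,\sigma}\colon\Omega_\sigma\to\R$ extending $g_{j,0}+m_{j,\sigma}|_{\Omega_\sigma\cap N_\R}$ as in Definition \ref{tropical Green function}, and set $u_j\coloneqq\varphi_j+h_{j,\sigma}$, a locally bounded psh function on $\Omega_\sigma$. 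Multiplying the toric frame $s_j$ by a suitable character one obtains a nowhere vanishing section $s_j'$ of $L_j$ over $\trop_v^{-1}(\Omega_\sigma)$ with $-\log\|s_j'\|_{\metr_j}=u_j\circ\trop_v$, so that $c_1(L_j,\metr_j)=d'd''[u_j\circ\trop_v]$ there and, by Theorem \ref{main correspondence theorem for semipositiv in non-arch}, $u_j\circ\trop_v$ is a locally bounded psh function on $\trop_v^{-1}(\Omega_\sigma)$; moreover, by the very definition of the tropical product in Theorem \ref{MA-currents for theta-psh} (resp.\ Remark \ref{MA-currents for theta-psh in non-smooth case}) one has $(d'd''[\varphi_1]+\theta_1)\wedge\cdots\wedge(d'd''[\varphi_q]+\theta_q)=d'd''[u_1]\wedge\cdots\wedge d'd''[u_q]$ on $\Omega_\sigma$. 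Thus everything reduces to the statement that, for locally bounded psh functions $u_1,\dots,u_q$ on an open $W\subset N_\Sigma$ with $V_W\coloneqq\trop_v^{-1}(W)$,
\[
\trop_{v,*}\bigl(d'd''[u_1\circ\trop_v]\wedge\cdots\wedge d'd''[u_q\circ\trop_v]\bigr)=d'd''[u_1]\wedge\cdots\wedge d'd''[u_q],
\]
with the Chambert--Loir--Ducros product on $V_W$ on the left and the tropical product on $W$ on the right.

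I would prove this identity first for \emph{smooth} psh functions $u_j$. In that case $\omega\coloneqq d'd''u_1\wedge\cdots\wedge d'd''u_q$ is a smooth Lagerberg $(q,q)$-form on $W$; since $\trop_v^*$ is a homomorphism of bigraded sheaves of algebras compatible with $d'$ and $d''$ (Proposition \ref{prop: non-archimedean pull-back on toric variety}), the Chambert--Loir--Ducros product of the smooth psh functions $u_j\circ\trop_v=\trop_v^*(u_j)$ equals the current $[\trop_v^*(\omega)]$. Applying the projection formula \eqref{projection formula non-arch} with $\alpha=\omega$ and $T=\delta_{V_W}$ and using $\trop_{v,*}(\delta_{V_W})=\delta_W$ from \eqref{current of integration in non-arch}, we get $\trop_{v,*}[\trop_v^*(\omega)]=\omega\wedge\trop_{v,*}(\delta_{V_W})=[\omega]$, which is exactly the right-hand side in the smooth case (there the tropical product is by definition the current of the wedge of forms). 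For the general locally bounded case I would invoke the local regularisation of Theorem \ref{thm:3}: after shrinking $W$, each $u_j$ is a pointwise decreasing, hence by Dini's theorem locally uniform, limit of smooth psh functions $u_{j,k}$ on $W$. The associated smooth psh metrics $e^{-u_{j,k}\circ\trop_v}$ then converge locally uniformly to $\metr_j$ on $V_W$, so by the construction and continuity of the Chambert--Loir--Ducros product along locally uniform limits of psh-approximable metrics \cite[\S 5.6, \S 6.4]{chambert-loir-ducros} the left-hand products for the $u_{j,k}$ converge weakly to the one for the $u_j$; pushing forward by the weakly continuous map $\trop_{v,*}$ and using the smooth case, these push-forwards equal $d'd''[u_{1,k}]\wedge\cdots\wedge d'd''[u_{q,k}]$, which in turn converge weakly to $d'd''[u_1]\wedge\cdots\wedge d'd''[u_q]$ by the continuity of the tropical product along locally uniform convergence (Theorem \ref{thm:7}, property \ref{item:17}, resp.\ Remark \ref{MA-currents for theta-psh in non-smooth case}). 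Uniqueness of weak limits yields the identity, and positivity is then automatic since the right-hand side is a positive Lagerberg current by Theorem \ref{MA-currents for theta-psh}.

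The step I expect to be the main obstacle is the bookkeeping around the non-archimedean Bedford--Taylor product: one must check that the metrics $e^{-u_{j,k}\circ\trop_v}$ produced by the tropical regularisation really are admissible approximants, so that the local weak limit defining the Chambert--Loir--Ducros product may be computed along this sequence, and that this product is weakly continuous along such sequences with all $q$ factors varying simultaneously. This is precisely what the reduction in the first paragraph is designed to handle, since it turns the statement into one purely about Bedford--Taylor products of psh functions on $W$ and on $V_W$, where the correspondence Theorem \ref{main correspondence theorem for semipositiv in non-arch} and the results of \cite{chambert-loir-ducros} apply directly. Once this is arranged, the only genuinely new computation is the smooth-case identity, which is immediate from Proposition \ref{prop: non-archimedean pull-back on toric variety} and the projection formulas \eqref{projection formula non-arch} and \eqref{current of integration in non-arch}.
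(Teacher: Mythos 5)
Your proposal is correct and follows essentially the same route as the paper's proof: reduce locally to the untwisted case of finite psh functions via local potentials (the paper cites Proposition \ref{lemm:4} and multilinearity where you change the frame by a character), prove the smooth case from the compatibility of $\trop_v^*$ with $d'$, $d''$ and products together with the projection formula \eqref{projection formula non-arch} and \eqref{current of integration in non-arch}, and conclude by local regularization (Theorem \ref{thm:3}), Dini's theorem, and the continuity of both Bedford--Taylor products under uniform convergence.
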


\begin{proof}
The claim is local in the target $U$ of $\trop_v$, hence, by using Proposition \ref{lemm:4} and the multilinearity of the Bedford-Taylor product we may assume that $\theta = 0$ and that the functions $\varphi_{i}$ are finite continuous psh functions on $U$. 
It remains to show that the identity
\begin{equation} \label{Bedford-Taylor comparision for non-archimedean psh} 
\trop_{v,*}\left(d'd''[\varphi_1 \circ \trop_v]\wedge \ldots \wedge d'd''[\varphi_q \circ \trop_v] \right) = d'd''[\varphi_1] \wedge \ldots \wedge d'd''[\varphi_q]
\end{equation} 
of Lagerberg currents on $U$ holds in this case. Arguing locally on $U$, the regularization in Theorem \ref{thm:3} shows that we may assume that the continuous psh functions $\varphi_j$ are decreasing limits of smooth psh functions on $U$. 
By Dini's theorem and arguing again locally in $U$, we may assume that the limit is uniform. 
Since the Bedford--Taylor type product of Chambert--Loir and Ducros on the left hand side of \eqref{Bedford-Taylor comparision for non-archimedean psh} is continuous with respect to uniform convergence of psh metrics and weak limits of currents \cite[Corollaire 5.6.5]{chambert-loir-ducros} and since Theorem \ref{thm:7} gives a similar kind of continuity on the right hand side of \eqref{Bedford-Taylor comparision for non-archimedean psh}, it is enough to prove the identity \eqref{Bedford-Taylor comparision for non-archimedean psh} for smooth psh functions $\varphi_j$ on $U$. 
Then the claim follows from the compatibility of $\trop_v^{\ast}$ with $d$, $d'$ and the product of smooth forms, the projection formula \eqref{projection formula non-arch} and equation \eqref{current of integration in non-arch}.
\end{proof}

\subsection{The Monge-Amp\`ere equation}
\label{sec:monge-ampere-equat}
In this subsection, we recall some facts about the complex, the non-archimedean, and the real Monge-Amp\`ere equation.

\begin{rem}\label{rem:3}   
 Let $X$ be either a complex manifold or a boundaryless separated Berkovich analytic
 space of {pure} dimension $n$ and $V\subset X$ an open subset. 
Then to each top degree positive current $T\in D^{n,n}(V)$ we can
associate a positive Radon measure $\lambda (T)$ on $U$ and conversely
every positive Radon measure defines a positive current
\cite[Proposition 5.4.6]{chambert-loir-ducros}.
To avoid cumbersome notation,  we will identify the spaces of positive Radon measures and that of top degree positive currents. 
Similarly, if $\Sigma $ is a 
fan, $N_{\Sigma }$ is the corresponding partial compactification and $U\subset N_{\Sigma }$ is an open subset, then we can identify the space of positive Lagerberg currents on $U$ with the space of positive Radon measures on $U$. 
\end{rem}

We will introduce the Monge--Amp\`ere measure in the  three situations mentioned above. 
First, we do it from the local perspective restricing our attention to  continuous metrics. 
Then we will also allow certain singular metrics in a global compact setting. 

\begin{art} \label{complex MA-operator for continuous}
Let $L$ be a holomorphic line bundle on a complex manifold $X$ of pure dimension $n$. Let $\metr$ be a continuous psh metric on $L$. Then Bedford--Taylor theory gives $c_1(L,\metr)^{\wedge n} $ as a positive $(n,n)$ current. We view it as a positive Radon measure on $X$ which we call the \emph{associated Monge--Amp\`ere measure}. 

Fix a  continuous reference metric $\metr_0$ on $L$ with first Chern current $\theta \coloneqq c_1(L,\metr_0)$.
Using the Bedford--Taylor products for locally bounded psh functions from \eqref{second Bedford-Taylor product},
we
define the Monge--Amp\`ere measure
\begin{equation} \label{definition of MA in terms of metric}
(dd^c \varphi+\theta)^{\wedge n}  \coloneqq c_1(L,\metr_\varphi)^{\wedge n} 
\end{equation}
for any continuous or more generally bounded $\theta$-psh function as the corresponding  bounded metric $\metr_\varphi \coloneqq e^{-\varphi}\metr_0$ on $L$ is psh.
\end{art} 

\begin{art} \label{non-arch MA for locally psh}
Let $L$ be a line bundle on a boundaryless separated Berkovich analytic space $X$ of pure dimension $n$ over a non-archimedean field $K$ with valuation $v \coloneqq -\log | \phantom{a}|$. 
For a locally psh-approximable metric $\metr$ on $L$,  the \emph{Monge--Amp\`ere measure} $c_1(L,\metr)^{\wedge n}$ is defined as a positive Radon measure by using Theorem \ref{comparision to non-archimedean Bedford-Taylor}. 

Fixing a continuous reference metric $\metr_0$ on $L$ with associated first Chern current $\theta$, we define  $(d'd''\varphi+\theta)^{\wedge n}$ for a $\theta$-psh function $\varphi$ corresponding to a locally psh-approximable metric $\metr_\varphi$ similarly as in \eqref{definition of MA in terms of metric}.
\end{art}

\begin{art} \label{tropical MA for continuous}
Let $\Sigma$ be a fan in $N_\R$ for a free abelian group $N$ of rank $n$ and let $U$ be an open subset of $N_\Sigma$. 
In this tropical toric situation, we stick  to $\theta$-psh functions, where $\Psi$ is a piecewise linear function 
on $\Sigma$ with integral slopes and $\theta \coloneqq c_1(g_0,\Psi)$ for a fixed {continuous} Green function $g_0$ for $\Psi$ (see \S \ref{subsection: Lagerberg setting}). 
For a $\theta$-psh function $\varphi\colon U \to \R$, Remark \ref{MA-currents for theta-psh in non-smooth case} gives $(d'd'' \varphi+\theta)^{\wedge n}$ as a positive Lagerberg current of type $(n,n)$ on $U$ which we identify again with a positive Radon measure on $U$ called the \emph{Monge--Amp\`ere measure of $\varphi$}. 
\end{art}

\begin{rem} \label{locality and continuity of MA}
The constructions of the Monge--Amp\`ere measures in \ref{complex MA-operator for continuous}--\ref{tropical MA for continuous} are continuous along uniformly converging sequences (\cite[Corollary I.3.6]{demailly_agbook_2012}, \cite[Corollaire 5.6.5]{chambert-loir-ducros}, Theorem \ref{thm:7} and Remark \ref{MA-currents for theta-psh in non-smooth case}) and local in $X$ resp.~$U$. 
\end{rem}

\begin{rem} \label{compatibility of MA-measures}
If $U$ is an open subset of $N_\Sigma$ for a smooth fan $\Sigma$ in $N_\R$ and if $\varphi$ is a $\theta$-psh function on $U$ as in \ref{tropical MA for continuous}, then $\varphi_\infty \coloneqq \varphi \circ \trop_\infty$ is a $\theta_\infty$-psh function on $\trop_\infty^{-1}(U)$ by Theorem \ref{main comparison thm of psh} and we have the compatiblity
\begin{equation*} \label{formula for compatibility of MA measures}
\trop_{\infty,*}(dd^c \varphi_\infty + \theta_\infty)^{\wedge n} = (dd^c \varphi+\theta)^{\wedge n}
\end{equation*}
of Monge--Amp\`ere measures using Remark \ref{comparison of complex BT product and tropical BT product} and Theorem \ref{MA-currents for theta-psh}. 
If $v$ is the valuation of a non-archimedean field $K$, then the same compatiblity holds replacing $\infty$ by $v$. 
This is based on Theorem \ref{main correspondence theorem for semipositiv in non-arch} and Theorem \ref{comparision to non-archimedean Bedford-Taylor}, and does not need that $\Sigma$ is smooth.
\end{rem}

We now discuss the global Monge-Amp\`ere equation starting with the complex case.  

\begin{art} \label{global complex MA operator and Ecal}
Let $(X,\omega)$ be a connected compact K\"ahler manifold of dimension $n$. 
Recall that an \emph{$\omega$-psh function} is a {strongly} usc
function $u\colon X \to \Rinf$ which is either identically $-\infty$
or is locally integrable with $dd^c [u] + \omega$ a positive current.
Let $\mu$ be a positive Radon measure on $X$ such that
\begin{displaymath}
\mu(X) = \int_{X}\omega ^{\wedge n}. 
\end{displaymath}
We are interested in solutions of the \emph{Monge-Amp\`ere equation}
\begin{equation}\label{eq:4}
(\omega + dd^{c}u)^{\wedge n}=\mu 
\end{equation}
with $u$ an $\omega$-psh function. 
First, we will specify the class of Radon measures and the class of $\omega$-psh functions that are allowed in that equation. 

Locally, $\omega$ is of the form $dd^c w$ for a smooth real function $w$ and hence \ref{complex MA-operator for continuous} gives a well-defined Monge--Amp\`ere measure $(\omega + dd^{c}u)^{\wedge n}$ for any  bounded $\omega$-psh function $u$ on $X$.
Guedj and Zeriahi  \cite{guedj-zeriahi2007} introduced the
space $\Ecal(X,\omega )$ of \emph{$\omega$-psh functions $u$ with full mass $\int_X \omega^n$} on $X \setminus \{u = - \infty\}$. 
More precisely, for an $\omega$-psh function $u$ on $X$ one defines the \emph{non-pluripolar Monge--Amp\`ere operator} by a strong convergence of measures 
\begin{equation} \label{non-pluripolar MA operator}
\mu _{u} \coloneqq \lim_{k\to \infty}1_{\{u > -k\}}\bigl(\omega + dd^c(\max(u,-k))\bigr)^{\wedge n}
\end{equation}
using that $\max(u,-k)$ is  bounded. 
By Stokes's theorem on $X$, one has $\mu _{u} (X)\leq \int_{X}\omega ^{n}$. 
Hence $\mu_u$ is a positive Radon measure on $X$. 
Guedj and Zeriahi define 
\[
\Ecal(X,\omega ) \coloneqq \Bigl\{u:X \to \Rinf \,\Big|\, \text{$u$ is $\omega$-psh and $\mu _{u} (X)=\int_{X}\omega ^{n}$}\Bigr\}.
\]
For $u \in \Ecal(X,\omega)$, the \emph{Monge-Amp\`ere measure} is defined by $(dd^c u+\omega)^{\wedge n} \coloneqq \mu_u$. 
The continuous and more generally the (locally) bounded $\omega$-psh functions on $X$ are included in $\Ecal(X,\omega )$ and the above Monge--Amp\`ere measure agrees with the previously constructed Monge--Amp\`ere measures on these subspaces.

Guedj and Zeriahi introduced also the subspace $\Ecal^{1}(X,\omega )$
of $\Ecal(X,\omega)$ of $\omega$-psh functions of finite energy.
It is given by all $u \in \Ecal(X,\omega)$ 
such that $u\in L^{1}((\omega +dd^{c}u)^{\wedge n})$. 
The main result of \cite{guedj-zeriahi2007}
is the existence of solutions of the Monge--Amp\`ere equation in the
space $\Ecal(X,\omega)$ and uniqueness up to adding constants in the
subspace $\Ecal^{1}(X,\omega )$.
Later, Dinew proved in \cite{dinew2009} uniqueness of the solution up to adding constants in $\Ecal(X,\omega)$.
\end{art}

\begin{thm}[\cite{guedj-zeriahi2007,dinew2009}]\label{thm:4}
Let $X$ be a complex K\"ahler manifold and $\omega $ a K\"ahler form on $X$. 
Let $\mu $ be a positive Radon measure on $X$ that does not charge any pluripolar set and satisfies $\mu (X)=\int_{X}\omega^{n}$. 
Then there exists a function $u\in \Ecal(X,\omega )$ such that
\begin{displaymath}
  (\omega +dd^{c}u)^{\wedge n}=\mu .
\end{displaymath}
Moreover, if $u_{1}$ and $u_{2}$ are two solutions in $\Ecal(X,\omega )$, then $u_{1}-u_{2}$ is constant. 
\end{thm}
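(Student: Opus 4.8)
Theorem \ref{thm:4} is a restatement of the combined results of Guedj--Zeriahi \cite{guedj-zeriahi2007} and Dinew \cite{dinew2009}, so the natural approach is to cite those papers and explain how the two halves of the statement are obtained from them. First I would reduce to the situation covered in \cite{guedj-zeriahi2007}: a compact K\"ahler manifold $(X,\omega)$ and a positive Radon measure $\mu$ which puts no mass on pluripolar sets, normalized so that $\mu(X)=\int_X\omega^{\wedge n}$. Since $X$ is a K\"ahler manifold (which for us means a compact connected complex manifold admitting a K\"ahler form) and $\omega$ is a K\"ahler form, the class $[\omega]\in H^{1,1}(X,\R)$ is big and nef, so the variational or capacity-theoretic machinery of \cite{guedj-zeriahi2007} applies verbatim.

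For \textbf{existence}, I would invoke the main theorem of \cite{guedj-zeriahi2007}: given $\mu$ as above, there is $u\in\Ecal(X,\omega)$ with $(\omega+dd^c u)^{\wedge n}=\mu$, where the Monge--Amp\`ere measure on the left is the non-pluripolar one from \eqref{non-pluripolar MA operator}; the hypothesis that $\mu$ does not charge pluripolar sets is exactly what is needed for a solution to lie in $\Ecal(X,\omega)$ rather than merely in a larger class, and the normalization $\mu(X)=\int_X\omega^{\wedge n}$ is forced by Stokes' theorem, as recalled in \ref{global complex MA operator and Ecal}. For \textbf{uniqueness up to an additive constant}, the argument of \cite{guedj-zeriahi2007} gives this only in the finite-energy subclass $\Ecal^1(X,\omega)$; the full statement in $\Ecal(X,\omega)$ is the theorem of Dinew \cite{dinew2009}, which I would cite directly. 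Concretely, if $u_1,u_2\in\Ecal(X,\omega)$ both solve the equation, then $(\omega+dd^c u_1)^{\wedge n}=(\omega+dd^c u_2)^{\wedge n}$, and Dinew's comparison principle forces $u_1-u_2$ to be constant on the connected manifold $X$.

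Since the result is a direct quotation of the cited literature, there is essentially no new mathematical content to supply; the only point requiring a word of care is checking that our normalizations and sign conventions for $d^c$ and for positivity of currents (fixed in Section \ref{sec:posit-real-compl}) match those of \cite{guedj-zeriahi2007,dinew2009}, and that our space $\Ecal(X,\omega)$ as defined in \ref{global complex MA operator and Ecal} is literally theirs. Thus the ``proof'' is simply:

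\begin{proof}
Existence of a solution $u\in\Ecal(X,\omega)$ is the main theorem of Guedj--Zeriahi \cite{guedj-zeriahi2007}, using that $\mu$ charges no pluripolar set and $\mu(X)=\int_X\omega^{\wedge n}$. Uniqueness up to an additive constant, for solutions in the full class $\Ecal(X,\omega)$, is the theorem of Dinew \cite{dinew2009}.
\end{proof}
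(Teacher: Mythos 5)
Your proposal is correct and matches the paper exactly: the paper gives no proof of Theorem \ref{thm:4}, citing it as the combination of the existence result of Guedj--Zeriahi \cite{guedj-zeriahi2007} with Dinew's uniqueness theorem \cite{dinew2009} in the full class $\Ecal(X,\omega)$, precisely as you describe. Your remark that Guedj--Zeriahi only give uniqueness in $\Ecal^1(X,\omega)$ and that Dinew is needed for $\Ecal(X,\omega)$ is also exactly the attribution made in \ref{global complex MA operator and Ecal}.
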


\begin{art} \label{global non-arch MA operator and Ecal}
In the case of a non-archimedean field $K$, we stick 
to a  projective geometrically integral $K$-variety $Y$ of dimension $n$ with an ample line bundle $L$.
We follow  the global approach to pluripotential theory on $\Yan$   by
Boucksom, Favre and Jonsson in \cite{bfj-singular} and
\cite{bfj-solution} with generalizations in
\cite{bj:singular_metrics}. 
We refer to \cite[Definition 2.1]{bj:singular_metrics} for the
definition of a \emph{Fubini--Study--metric}. 
In the non-trivially valued case, the metric $\metr$ of $L$ is a Fubini--Study metric if and only if $\metr^{\otimes m}$ is a model metric associated to a globally generated model of $L$, see \cite[Theorem 5.14]{boucksom-eriksson2018}. 
Fubini--Study metrics are globally  decreasing (uniform) limits of smooth psh metrics (see \cite[Corollaire 6.3.4]{chambert-loir-ducros}, \cite[Lemma 2.9]{bj:singular_metrics}) and there is always a Fubini--Study metric of $L$. 
In the following, we fix a Fubini--Study metric $\metr_0$ of $L$ as a reference metric.

A possibly singular metric $\metr$ on $L$ is called \emph{semipositive} if it
is the pointwise limit of an increasing net of Fubini--Study metrics
of $L$.  
 As in \cite[Definition 5.1]{bj:singular_metrics}, we exclude the
 metric which is identically equal to $\infty$ outside the zero
 section (note that in \emph{loc.cit.~}~a logarithmic notion of
 metrics is used).
It follows from \cite[Theorem 7.8]{boucksom-eriksson2018} that for a
non-trivially valued field and a for a continuous metric on $\Lan$,
the above semipositivity notion agrees with the one introduced by
Zhang (see Remark \ref{Zhang semipositive for non-archimedean}).

The {Monge--Amp\`ere measure} of a Fubini--Study metric $\metr$ is defined by \ref{non-arch MA for locally psh}. 
It is shown in \cite[\S 6.2]{bj:singular_metrics} that there is a unique extension of the Monge--Amp\`ere measure to the space of (locally) bounded metrics which is continuous along increasing nets of singular semipositive metrics. 
This positive Radon measure has total mass $\deg_L(Y)$ (note that the measures were normalized in \cite{bj:singular_metrics} to obtain
probablity measures). 

The Monge--Amp\`ere measure $c_1(L,\metr)^{\wedge n}$ for (locally) bounded metrics $\metr$ on $L$ satisfies  the locality principle from \cite[Theorem 5.1]{bfj-singular}. 
In \emph{loc.cit.}, this is shown for a discretely valued field $K$ of residue characteristic zero, but the arguments extend to the general case using that the Monge--Amp\`ere operator for Fubini--Study metrics is local in the analytic topology. 
For any semipositive metric $\metr$ on $L$, we define the
\emph{non-pluripolar Monge--Amp\`ere operator}
\begin{equation} \label{non-arch non-pluripolar MA-operator}
\mu_{\metr} \coloneqq \lim_{k \to \infty} 1_{\{\metr < e^k\metr_0 \}} c_1(L,\min(\metr,e^k\metr_0))^{\wedge n}
\end{equation}
similary as in \eqref{non-pluripolar MA operator} using the Monge--Amp\`ere measure for (locally) bounded metrics on the right hand side.  Locality shows as in \cite[\S 6.3]{bfj-solution} that this is a limit of positive Radon measures. Moreover, the total mass of the limit is at most $\deg_L(Y)$. Similarly as in the complex case, we define 
$\Ecal(L)$ to be the set of singular semipositive metrics $\metr$ on
$L$ with $\mu_\metr(\Yan)= \deg_L(Y)$ and for such a metric we define
the \emph{Monge--Amp\`ere measure} 
$$c_1(L,\metr)^{\wedge n} \coloneqq
\mu_\metr.$$
In analogy with \ref{global complex MA operator and Ecal}, we define
$\Ecal^{1}(L)$ to be the set of metrics $\metr$ in $\Ecal(L)$ such
that $-\log(\metr/\metr_0)$ is integrable with respect to
$c_1(L,\metr)^n$. In \cite[\S 6.1]{bj:singular_metrics},
$\Ecal^{1}(L)$ is defined as the set of possibly singular metrics of
$L$ with finite energy. Using that the summands of the energy  in
\cite[\S 3.8]{bj:singular_metrics} are decreasing, the properties of
the Monge--Amp\`ere operator for metrics of finite energy given in
\cite[\S 6.2]{bj:singular_metrics} show that the definitions
agree. Note that $\Ecal^1(L)$ contains all singular (locally) bounded
semipositive metrics and hence all continuous semipositive metrics.

 A positive Radon measure  $\mu$ on $\Yan$ is called of \emph{finite energy} if 
\begin{equation} \label{measure of finite energy} 
{\int_\Yan -\log\frac{\metr_{\phantom{0}}}{\metr_0}\,d \mu} > - \infty
\end{equation}
for all $\metr \in  \Ecal^{1}(L)$. 
This is independent of the choice of $L$ and $\metr_0$ \cite[Corollary 7.10]{bj:singular_metrics}. 
We define $\Mcal^r(\Yan)$ as the space of positive Radon measures $\mu$ {on $\Yan$} with $\mu(\Yan)=r$. 
\end{art}

\begin{thm}[\cite{bj:singular_metrics}]\label{thm:9} 
Let $K$ be a field that is complete with respect to a discrete or trivial valuation and such that the residue field has characteristic zero, $Y$ a smooth projective variety over $K$ and $L$ an ample line bundle on $Y$ of degree $r$. 
Then the Monge-Amp\`ere measure induces a bijection $\Ecal^{1}(L)/\R\longrightarrow \Mcal^{r}(Y^{\an})$.
\end{thm}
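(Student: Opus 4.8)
The plan is to derive Theorem~\ref{thm:9} from the solution of the non-archimedean Monge--Amp\`ere equation of Boucksom, Favre and Jonsson \cite{bfj-solution} and its extension by Boucksom and Jonsson \cite{bj:singular_metrics}, once the objects set up in \ref{global non-arch MA operator and Ecal} are matched with theirs. Most of that bookkeeping is already in place: the space $\Ecal^{1}(L)$ agrees with the space of metrics of finite energy of \cite[\S 6.1]{bj:singular_metrics}, the operator in \eqref{non-arch non-pluripolar MA-operator} agrees with the non-pluripolar Monge--Amp\`ere operator constructed there, and in the non-trivially valued case the semipositivity notion coincides with Zhang's by \cite{boucksom-eriksson2018}. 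With these identifications the assertion is \cite[Theorem~B]{bj:singular_metrics}, and it remains to recall the structure of that proof and to point to where the hypotheses are used.

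First, the map is well defined and descends: for $\metr\in\Ecal^{1}(L)\subset\Ecal(L)$ the Monge--Amp\`ere measure $\mu_{\metr}=c_{1}(L,\metr)^{\wedge n}$ is by the very definition of $\Ecal(L)$ a positive Radon measure of total mass $\deg_{L}(Y)=r$, so it lies in $\Mcal^{r}(\Yan)$, and replacing $\metr$ by $c\metr$ shifts $-\log(\metr/\metr_{0})$ by a constant and hence leaves $\mu_{\metr}$ unchanged; so one obtains a well-defined map $\Ecal^{1}(L)/\R\to\Mcal^{r}(\Yan)$. Injectivity is the non-archimedean counterpart of the complex uniqueness recalled in Theorem~\ref{thm:4}: two metrics in $\Ecal^{1}(L)$ with equal Monge--Amp\`ere measure differ by a scalar, which follows from the domination (comparison) principle for finite-energy metrics, obtained from integration by parts together with the concavity properties of the Monge--Amp\`ere energy functional; uniqueness up to scaling is in any case subsumed by the general theorem of Yuan and Zhang \cite{yuan-zhang}.

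The hard part will be surjectivity. Given a target measure $\mu$ one produces a solution by the variational method: one maximises a functional of the shape $\metr\mapsto E(\metr)-\int_{\Yan}\bigl(-\log(\metr/\metr_{0})\bigr)\,d\mu$, where $E$ is the Monge--Amp\`ere energy, over $\Ecal^{1}(L)/\R$. The existence of a maximiser $\metr$ rests on compactness of $\Ecal^{1}(L)/\R$ together with suitable semicontinuity of the functional, and the identification of the Euler--Lagrange equation of $\metr$ with $c_{1}(L,\metr)^{\wedge n}=\mu$ rests on the differentiability of $E$ and the orthogonality relation. The hypothesis that $K$ is discretely or trivially valued of residue characteristic zero enters in an essential way precisely here: it is what allows one to pass to SNC models (resolution of singularities, respectively semistable reduction), to approximate $\mu$ by measures supported on their dual complexes, and to keep the Monge--Amp\`ere operator under control in the model case; the capacity and energy estimates behind these steps, in particular the density of dual-complex-supported measures and the orthogonality relation, are the substantial core of \cite{bfj-solution,bj:singular_metrics}. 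For the present paper nothing new has to be proved --- the only work is checking that our normalisations coincide with those of \cite{bj:singular_metrics}, after which the cited result applies directly --- but it is exactly this use of resolution of singularities that fails over an arbitrary non-archimedean field, which is why the toric solvability statement Theorem~\ref{solution of non-arch global toric MAeq} over arbitrary $K$ has to be established later by separate, combinatorial means.
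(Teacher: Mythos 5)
Your proposal is correct and matches the paper exactly: the paper gives no proof of this statement at all, stating it as an imported result (Theorem B of \cite{bj:singular_metrics}) after the dictionary between its normalisations and those of Boucksom--Jonsson has been set up in \ref{global non-arch MA operator and Ecal}, which is precisely what you do, and your recap of the variational argument (energy functional, compactness of $\Ecal^1(L)/\R$, differentiability and orthogonality, SNC models forcing the discrete/trivial valuation and residue characteristic zero hypotheses) is a faithful account of the cited proof. One small caveat: the Yuan--Zhang uniqueness theorem applies to continuous semipositive metrics rather than to all of $\Ecal^1(L)$, so injectivity is not ``subsumed'' by it in general and really does rest on the domination principle for finite-energy metrics, which is the mechanism you correctly name first.
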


The case where the residue field of $K$ has positive or mixed characteristic remains open.  

\begin{art} \label{real setting for MA}
As we will see in the next section, for toric varieties, it is also useful to study the real Monge-Amp\`ere equation.
The real Monge--Amp\`ere problem that is relevant for us is the second boundary problem that deals with the asymptotic behavior of the solutions.  
Let $N$ be a free abelian group of finite rank $n$ and let $M \coloneqq \Hom_\Z(N,\Z)$. 
We choose a basis $u_1,\ldots, u_n$ of $M$ leading to coordinates on $N_\R$ and to an identification $N_\R \simeq \R^n$. Let $\lambda$ be the corresponding Lebesgue measure.
For a $C^{2}$ function $\varphi$ on $N$, the Monge-Amp\`ere measure is given by
\begin{equation}\label{real-ma}
\MA(\varphi)=n!\det\bigl((\varphi_{ij})_{1 \leq i,j \leq n}\bigr)\lambda,\qquad \varphi_{ij}=\frac{\partial^{2} \varphi}{\partial u_{i}\partial u_{j}}.
\end{equation}
This real Monge-Amp\`ere operator can be extended to arbitrary convex functions on $N_{\R}$. 
It is then a positive Radon measure, not necessarily absolutely continuous with respect to $\lambda$.
\end{art}

\begin{rem}\label{rem:4} 
Through the identification of Remark \ref{rem:3}, the Monge--Amp\`ere operator for convex functions is given by
\begin{displaymath}
\MA(\varphi)= (d'd''\varphi)^{\wedge n}.
\end{displaymath}
This is a direct computation in the case of a $C^{2}$ function and follows in general from regularization using the continuity of both sides for decreasing sequences of convex functions.
\end{rem}

Each convex function $\varphi$ on $N_\R$ has a \emph{stability set} $\Delta (\varphi) \subset M_\R$ defined as the convex set
\begin{displaymath}
  \Delta (\varphi)=\{x\in M_{\R}\mid \text{the map }u\mapsto \varphi(u)-\langle
  u,x\rangle \text{ is bounded below}\}.
\end{displaymath}

The following result solves the \emph{second boundary problem.}

\begin{thm}\label{thm:6}
Let $\Delta \subset M_\R$ be a \emph{convex body}, i.e.~a compact convex set with non empty interior $\Delta ^{\circ}$.
Let $\mu $ be a positive Radon measure {on $N_\R$} such that $\mu (N_{\R})=n!\vol(\Delta )$. 
Then there exists a convex function $\varphi\colon N_{\R} \to \R$ such that
\begin{gather*}
\MA(\varphi) = \mu  \quad {\rm and}  \quad 
\Delta ^{\circ} \subset \Delta (\varphi) \subset \Delta.
\end{gather*}
Moreover, if $\varphi_{1}$ and $\varphi_{2}$ are two such solutions, then $\varphi_{1}-\varphi_{2}$ is constant.
\end{thm}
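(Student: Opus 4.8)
The plan is to recognize Theorem~\ref{thm:6} as the \emph{second boundary problem} for the real Monge--Amp\`ere operator and to solve it by optimal transport. Work with the identification $N_\R\simeq\R^n$ and the dual coordinates on $M_\R\simeq N_\R^{*}$ induced by the basis $u_1,\dots,u_n$, and let $\nu$ denote the Lebesgue measure on $M_\R$ normalized so that $\nu(\Delta)=\vol(\Delta)$. The hypothesis $\mu(N_\R)=n!\vol(\Delta)$ says precisely that the measures $n!\,\nu|_\Delta$ on $M_\R$ and $\mu$ on $N_\R$ have equal total mass. Since $n!\,\nu|_\Delta$ is absolutely continuous and compactly supported, Brenier's optimal transport theorem produces a lower semicontinuous proper convex function $\psi\colon M_\R\to\Rsup$, finite $\nu$-almost everywhere on $\Delta$ and with effective domain contained in $\Delta$, whose $\nu$-a.e.\ defined gradient satisfies $(\nabla\psi)_{*}(n!\,\nu|_\Delta)=\mu$ (here $\nabla\psi$ takes values in $M_\R^{*}=N_\R$, so the pushforward is indeed a measure on $N_\R$).

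Next I would pass to the Legendre dual $\varphi\coloneqq\psi^{*}\colon N_\R\to\R$, $\varphi(u)=\sup_{x\in\Delta}\bigl(\langle u,x\rangle-\psi(x)\bigr)$. Being proper and convex, $\psi$ is minorized by an affine function, hence bounded below on the compact set $\Delta$, so $\varphi$ is finite on all of $N_\R$ and convex, and $\psi=\varphi^{*}$ since $\psi$ is closed. Standard convex duality identifies the stability set $\Delta(\varphi)$ with $\overline{\operatorname{dom}(\psi)}$; as $\operatorname{dom}(\psi)\subseteq\Delta$ is dense in $\Delta$ (its complement in $\Delta$ is $\nu$-null) and $\relint(\Delta)=\Delta^\circ\ne\emptyset$, one gets the sandwich $\Delta^\circ\subseteq\Delta(\varphi)\subseteq\Delta$. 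To check $\MA(\varphi)=\mu$, I would use Aleksandrov's description: with the normalization of \eqref{real-ma}, $\MA(\varphi)(E)=n!\,\nu\bigl(\partial\varphi(E)\bigr)$ for every Borel $E\subseteq N_\R$, where $\partial\varphi$ is the subdifferential. Since $\varphi=\psi^{*}$ with $\psi$ closed convex, $\partial\varphi$ and $\partial\psi$ are mutually inverse, so $\partial\varphi(E)=\{x\in\Delta\mid\partial\psi(x)\cap E\ne\emptyset\}$, which coincides off a $\nu$-null set with $(\nabla\psi)^{-1}(E)$; hence $\MA(\varphi)(E)=n!\,\nu\bigl((\nabla\psi)^{-1}(E)\bigr)=(\nabla\psi)_{*}(n!\,\nu|_\Delta)(E)=\mu(E)$. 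This gives existence.

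For uniqueness, suppose $\varphi_1,\varphi_2$ are two solutions with $\Delta^\circ\subseteq\Delta(\varphi_i)\subseteq\Delta$. Then each $\psi_i\coloneqq\varphi_i^{*}$ is a closed convex function with $\operatorname{int}\operatorname{dom}(\psi_i)=\Delta^\circ$, and the computation above shows that $\nabla\psi_i$ transports $n!\,\nu|_\Delta$ to $\mu$. By the uniqueness part of Brenier's theorem, $\nabla\psi_1=\nabla\psi_2$ holds $\nu$-almost everywhere on $\Delta$, so $\psi_1-\psi_2$ is constant on the connected open set $\Delta^\circ$; thus $\psi_1=\psi_2+c$ as closed convex functions, and dualizing yields $\varphi_1=\psi_1^{*}=(\psi_2+c)^{*}=\psi_2^{*}-c=\varphi_2-c$.

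The step I expect to be the main obstacle is the bookkeeping with effective domains and null sets: that the optimal potential $\psi$ can be chosen with $\operatorname{dom}(\psi)\subseteq\Delta$, that $\partial\varphi$ agrees with $(\nabla\psi)^{-1}$ outside a Lebesgue-null set (which rests on a.e.\ differentiability of convex functions and Aleksandrov's identity $\MA(\varphi)(\cdot)=n!\,|\partial\varphi(\cdot)|$), and that these inclusions translate exactly into the stated sandwich for $\Delta(\varphi)$. A less self-contained but shorter alternative, available when $\Delta$ is a lattice polytope, is to take $\Sigma$ to be the normal fan of $\Delta$ and $\Psi$ its support function, extend $\mu$ to $N_\Sigma$ by $\mu(N_\Sigma\setminus N_\R)=0$, and invoke the complex Monge--Amp\`ere solution on the compact toric manifold $X_{\Sigma,\infty}^{\an}$ (Theorem~\ref{thm:4}, or its big-class refinement) together with the correspondence of Theorem~\ref{main comparison thm of psh}; general convex bodies $\Delta$ would then be reached by polytope approximation.
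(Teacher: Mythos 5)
Your proof is correct in substance, but it takes a genuinely different route from the paper, which in fact only \emph{proves} the uniqueness assertion (in Appendix \ref{sec:proof-theor-refthm:6}) and quotes existence from Pogorelov, Bakelman and Berman--Berndtsson. You obtain both halves at once from optimal transport: the convex potential $\psi$ with $(\nabla\psi)_{*}(n!\,\nu|_{\Delta})=\mu$ is dual to the desired solution $\varphi=\psi^{*}$, and the uniqueness of gradient-of-convex transport maps gives uniqueness of $\varphi$ up to a constant. Three points deserve care. First, since $\mu$ is an arbitrary finite positive Radon measure with no moment hypothesis, Brenier's original theorem does not apply; you need McCann's refinement (existence and uniqueness of monotone measure-preserving maps without moment conditions), which is exactly tailored to this situation. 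Second, $\Delta(\varphi)=\operatorname{dom}(\varphi^{*})=\operatorname{dom}(\psi)$ exactly, with no closure; this only helps you, since a convex subset of $\Delta$ containing $\nu$-a.e.\ point of $\Delta$ has relative interior equal to $\Delta^{\circ}$ and hence contains $\Delta^{\circ}$. Third, passing from ``$\psi_{1}-\psi_{2}$ is constant on $\Delta^{\circ}$'' to ``$\psi_{1}=\psi_{2}+c$ everywhere'' requires that a closed proper convex function is the lower-semicontinuous hull of its restriction to the interior of its domain \cite[Theorem 7.5]{rockafellar1970}, using that $\operatorname{int}\operatorname{dom}(\psi_{i})=\Delta^{\circ}$; without this the two potentials could a priori differ on $\partial\Delta$. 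By contrast, the paper's uniqueness proof is elementary and self-contained: a comparison principle for subdifferential images on the set $\{f<g\}$ plus an openness criterion show that $\partial f^{*}(H)\cap\partial g^{*}(H)\neq\emptyset$ for every $H\in\Delta^{\circ}$, whence $df^{*}=dg^{*}$ almost everywhere and one concludes by the same biduality step you use. Your argument buys existence as well and is shorter, at the price of importing a substantial external theorem; the paper's buys a proof from first principles of the one assertion it could not safely cite (Bakelman's published uniqueness proof rests on a false lemma, as the appendix documents). Your closing ``alternative'' via the complex Monge--Amp\`ere equation is essentially the paper's second proof of Theorem \ref{missing-label} and is fine as long as one does not then use Theorem \ref{thm:6} to prove that complex statement.
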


The second boundary problem was originally solved by Pogorelov \cite{pogorelov1964}.
In Bakelman's book  \cite[Theorem 17.1]{bakelman94:_convex_analy}, this was shown for positive Radon measures $\mu$ which are absolutely continuous with respect to the Lebesgue measure. 
Existence of a solution in the more general situation of Theorem \ref{thm:6} was shown by Berman and Berndtsson {\cite[Theorem 2.19]{berman-berndtsson2013}}. 
We add a proof for uniqueness in Appendix \ref{sec:proof-theor-refthm:6}.

The next result will come in handy when we want to understand the solutions of the second boundary value problem as singular psh metrics on toric varieties.

Recall that given a compact convex set $\Delta \subset M_\R$, its \emph{convex support function} is the function $\Phi _{\Delta }\colon N_{\R}\to \R$ defined as
\begin{displaymath}
\Phi_{\Delta } (u)\coloneqq \sup_{x\in \Delta } \langle x,u\rangle. 
\end{displaymath}

\begin{lemma}\label{lemm:8}
Let $\Delta \subset M_\R$ be a compact convex set and $\varphi\colon
N_\R \to \R$ a convex function. Then $\Delta (\varphi) \subset
\Delta$ if and only if $\varphi-\Phi_{\Delta }$ is bounded from above.
\end{lemma}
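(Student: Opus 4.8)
This is an elementary statement about convex functions and support functions, so the plan is to prove both implications directly from the definitions of the stability set $\Delta(\varphi)$ and the convex support function $\Phi_\Delta$. Recall that
\[
\Delta(\varphi) = \{x \in M_\R \mid u \mapsto \varphi(u) - \langle u, x\rangle \text{ is bounded below on } N_\R\}
\]
and $\Phi_\Delta(u) = \sup_{x \in \Delta}\langle x, u\rangle$. Both $\Delta(\varphi)$ and $\Delta$ are closed convex subsets of $M_\R$ (the former because it is an intersection of closed half-spaces coming from the boundedness conditions, or more simply because it equals the domain of the convex conjugate $\varphi^*$).

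First I would prove the implication ``$\varphi - \Phi_\Delta$ bounded above $\Rightarrow \Delta(\varphi) \subset \Delta$''. Suppose $\varphi \leq \Phi_\Delta + C$ for some constant $C$. Take any $x \in M_\R \setminus \Delta$. Since $\Delta$ is compact convex and $x \notin \Delta$, the Hahn--Banach separation theorem gives $u \in N_\R$ with $\langle x, u\rangle > \sup_{y \in \Delta}\langle y, u\rangle = \Phi_\Delta(u)$; write $\delta \coloneqq \langle x, u\rangle - \Phi_\Delta(u) > 0$. By positive homogeneity of $\Phi_\Delta$, for all $t > 0$ we have $\Phi_\Delta(tu) = t\Phi_\Delta(u)$, hence
\[
\varphi(tu) - \langle tu, x\rangle \leq \Phi_\Delta(tu) + C - t\langle u, x\rangle = C - t\delta \xrightarrow{t \to \infty} -\infty,
\]
so $u \mapsto \varphi(u) - \langle u, x\rangle$ is not bounded below, i.e. $x \notin \Delta(\varphi)$. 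This proves $\Delta(\varphi) \subset \Delta$.

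For the converse ``$\Delta(\varphi) \subset \Delta \Rightarrow \varphi - \Phi_\Delta$ bounded above'', I would pass to the convex conjugate $\varphi^*(x) \coloneqq \sup_{u \in N_\R}(\langle u, x\rangle - \varphi(u))$, an $\R \cup \{+\infty\}$-valued convex lower semicontinuous function whose effective domain is exactly $\Delta(\varphi)$. The hypothesis says $\mathrm{dom}(\varphi^*) \subset \Delta$, a bounded set, so $\varphi^*$ is a proper convex function with bounded domain; it is therefore bounded below (a proper convex function is bounded below on any bounded set, being above an affine minorant), and since $\Delta$ is compact and $\varphi^*$ is lsc, $\varphi^*$ attains a finite infimum, say $\varphi^* \geq -C'$ on all of $M_\R$ (using $\varphi^* = +\infty$ off $\Delta$). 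Since $\varphi$ is a finite convex function on all of $N_\R$, the Fenchel--Moreau theorem gives $\varphi = \varphi^{**}$, hence for every $u$,
\[
\varphi(u) = \sup_{x \in M_\R}\bigl(\langle u, x\rangle - \varphi^*(x)\bigr) = \sup_{x \in \Delta}\bigl(\langle u, x\rangle - \varphi^*(x)\bigr) \leq \sup_{x \in \Delta}\langle u, x\rangle + C' = \Phi_\Delta(u) + C',
\]
which is the desired bound. The only mild subtlety, and the step I would be most careful about, is justifying that $\mathrm{dom}(\varphi^*) = \Delta(\varphi)$ and that $\varphi^*$ is bounded below on $M_\R$ — both follow from standard convex analysis (e.g. \cite[\S 12, \S 13]{rockafellar1970}), using that $\varphi$ is real-valued and convex on all of $N_\R$ so that $\varphi^*$ is proper; there is no analytic difficulty beyond this bookkeeping.
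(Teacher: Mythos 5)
Your proof is correct and follows essentially the same route as the paper's: the substantive direction ($\Delta(\varphi)\subset\Delta$ implies $\varphi-\Phi_\Delta$ bounded above) is handled in both cases via Legendre duality and the biconjugation $\varphi=\varphi^{**}$, the only cosmetic difference being that the paper obtains the lower bound $\varphi^*(x)\ge-\varphi(0)$ directly from the definition of the conjugate rather than through an affine-minorant/compactness argument. For the easy direction the paper argues directly that $x\in\Delta(\varphi)$ forces $\Phi_\Delta-\langle\cdot,x\rangle$ to be bounded below and hence $x\in\Delta(\Phi_\Delta)=\Delta$, whereas you take the contrapositive via Hahn--Banach separation; both are standard one-step arguments.
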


\begin{proof}
Let $\varphi^{\ast}\colon \Delta (\varphi)\to \R$ be the
\emph{Legendre dual} to $\varphi$ given by
$\varphi^{\ast}(x)=\sup_{u\in N_{\R}} \langle x,u\rangle -\varphi(u)$.
Then $\varphi^{\ast}$ is a convex function and $\varphi$ is the Legendre dual of $\varphi^{\ast}$ \cite[Corollary 12.2.1]{rockafellar1970}:
\begin{displaymath}
\varphi(u) = \sup_{x\in \Delta (\varphi)}\langle x,u\rangle -\varphi^{\ast}(x).
\end{displaymath}
In particular, we have $\varphi(0)= \sup_{x\in \Delta (\varphi)}
(-\varphi^{\ast}(x)) \in \R$. Assume that $\Delta (\varphi) \subset
\Delta$. Then
  \begin{multline*}
    \varphi(u) = \sup_{x\in \Delta (\varphi)}\langle x,u\rangle
    -\varphi^{\ast}(x) \le \sup_{x\in \Delta (\varphi)}\langle x,u\rangle
    +\varphi(0)\le \sup_{x\in \Delta }\langle x,u\rangle
    +\varphi(0)  =\Psi _{\Delta }(u)+\varphi(0).
  \end{multline*}
  Therefore $\varphi(u)-\Phi_{\Delta }(u)\le \varphi(0)$ for any $u
  \in N_\R$. Conversely, if $\varphi-\Phi_{\Delta }$ is bounded from
  above and $x\in \Delta (\varphi)$, then $\varphi(u)-\langle u,x
  \rangle$ is bounded below. Hence
  \begin{displaymath}
    \Phi_{\Delta }(u)-\langle u,x \rangle=
    (\varphi(u)-\langle u,x \rangle) - (\varphi(u)-\Phi_{\Delta }(u)) 
  \end{displaymath}
  is also bounded below. So $x\in \Delta (\Phi _{\Delta })=\Delta $. 
\end{proof}

\subsection{Toric Monge-Amp\`ere equations} \label{subsection: toric MA equations}

In this subsection we will compare the real, the complex and
the non-archimedean Monge--Amp\`ere equations for 
toric varieties. 
First, we will compare Monge-Amp\`ere equations in the local case and then in the global case. 
The local case will be applied in the next sections to solve Monge--Amp\`ere equations on abelian varieties over non-archimedean fields and the global case leads to a solution of toric Monge--Amp\`ere equations over non-archimedean fields at the end of this subsection.
  
Let $\Sigma$ be a fan in $N_\R$ for a lattice $N$ of rank
$n$. 
Let $K$ be a non-archimedean field. 
Let us denote the valuation of $K$ by $v$. 
We denote by  $X_{\Sigma,\infty}^{\an}$ (resp.~$X_{\Sigma,v}^{\an}$) the complex (resp.~Berkovich) analytification of the associated toric variety over $\C$ (resp.~over $K$) with tropicalization map $\trop_\infty$ (resp.~$\trop_v$) onto
 $N_\Sigma$. 
In the complex case, we assume for simplicity that $\Sigma$ is smooth in order to have $X_{\Sigma,\infty}^{\an}$ as a complex manifold.
Let $U$ be an open subset of $N_\Sigma$ and let $V_\infty \coloneqq \trop_\infty^{-1}(U)$ (resp.~$V_v \coloneqq \trop_v^{-1}(U)$).

Let $\Psi$ be a piecewise linear function on $\Sigma$ with integral slopes, 
{which comes as in \ref{toric semipositive metrics for complex} (resp.~\ref{toric semipositive in non-arch}) from}  a toric line bundle $L_\infty^{\rm an}$ on $X_{\Sigma,\infty}^{\rm an}$ (resp.~$L_v^{\rm an}$ on $X_{\Sigma,v}^{\rm an}$). 
We choose a continuous Green function $g_0\colon U \cap
N_\R\to {\R}$ for $\Psi$ with  Lagerberg current  $\theta=c_{1}(\Psi ,g _{0})$ on $U$ as in \S \ref{subsection: Lagerberg setting}. 
As in \eqref{induced reference metric on complex} and in \ref{hypotheses}, the Green function $g_0$ induces a reference metric on $L_\infty^{\rm an}$ (resp. $L_v^{\rm an}$) and we denote the corresponding first Chern current by $\theta_\infty$ (resp. $\theta_v$).

The Monge--Amp\`ere measures introduced in \ref{complex MA-operator for continuous}, \ref{non-arch MA for locally psh} and \ref{tropical MA for continuous} are related as follows.

\begin{prop} \label{corresponding Monge-Ampere measures}
  Let $\varphi\colon U \to \R$ be a $\theta$-psh function and let  
  $\metr_\infty$ (resp.~$\metr_v$) be the corresponding toric continuous psh
  metric on $L_\infty^{\rm 
  an}|_{V_\infty}$ (resp. $L_v^{\rm an}|_{V_v}$) (see Theorems
\ref{main comparison thm of psh} and \ref{main correspondence theorem
  for semipositiv in non-arch}). Then   
we have the identities
\begin{equation*} 
(\trop_\infty)_* (c_1(L_\infty^{\rm an}|_{V_\infty},\metr_\infty)^{\wedge n}) 
= (d'd''\varphi + \theta)^{\wedge n} 
	=(\trop_v)_*(c_1(L_v^{\rm an}|_{V_v},\metr_{v})^{\wedge n})     
\end{equation*}
of positive Radon measures on $U$.	
Observe that for the left-hand equality {we are assuming} that the fan
$\Sigma$ is smooth whereas for the right-hand equality we are not. 
\end{prop}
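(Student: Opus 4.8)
The strategy is to establish both equalities by reducing to the case of smooth $\theta$-psh functions and then invoking continuity of all three Monge--Amp\`ere operators along uniformly decreasing sequences. For the left-hand equality (requiring $\Sigma$ smooth), the key input is already in Remark \ref{compatibility of MA-measures}: by Theorem \ref{main comparison thm of psh} the function $\varphi_\infty \coloneqq \varphi \circ \trop_\infty$ is $\theta_\infty$-psh on $V_\infty$, and the compatibility
\[
\trop_{\infty,*}\bigl(c_1(L_\infty^{\rm an}|_{V_\infty},\metr_\infty)^{\wedge n}\bigr) = \trop_{\infty,*}(dd^c\varphi_\infty + \theta_\infty)^{\wedge n} = (d'd''\varphi + \theta)^{\wedge n}
\]
follows from Remark \ref{comparison of complex BT product and tropical BT product} together with Theorem \ref{MA-currents for theta-psh}, unwinding that the reference metrics and Green functions are matched by \eqref{normalizations}. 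So the left-hand equality is essentially a bookkeeping exercise: identify $c_1(L_\infty^{\rm an}|_{V_\infty},\metr_\infty) = dd^c[\varphi_\infty] + \theta_\infty$ via Remark \ref{psh vs theta psh in complex} and \ref{complex MA-operator for continuous}, then quote Remark \ref{compatibility of MA-measures}.

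For the right-hand equality, which does not assume $\Sigma$ smooth, the plan is to combine Theorem \ref{main correspondence theorem for semipositiv in non-arch} with Theorem \ref{comparision to non-archimedean Bedford-Taylor}. By the correspondence theorem, the continuous $\theta$-psh function $\varphi$ on $U$ corresponds to a continuous singular psh toric metric $\metr_v = e^{-\varphi\circ\trop_v}\metr_0$ on $L_v^{\rm an}|_{V_v}$; since $\varphi$ is finite continuous (a psh function is locally bounded iff finite continuous by Theorem \ref{thm:1}), this metric is locally psh approximable by the equivalence of \ref{item:26}--\ref{item:29} in that theorem. Then Theorem \ref{comparision to non-archimedean Bedford-Taylor} applied with $q = n$ and all $L_j = L_v$, $\varphi_j = \varphi$ gives precisely
\[
\trop_{v,*}\bigl(c_1(L_v^{\rm an}|_{V_v},\metr_v)^{\wedge n}\bigr) = (d'd''[\varphi] + \theta)^{\wedge n}
\]
as Lagerberg currents on $U$, which under the identification of positive $(n,n)$-Lagerberg currents with positive Radon measures (Remark \ref{rem:3}) is the desired identity. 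The non-smooth case is handled because Theorem \ref{comparision to non-archimedean Bedford-Taylor} and its underlying construction in Remark \ref{MA-currents for theta-psh in non-smooth case} already incorporate toric resolution of singularities.

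\textbf{Main obstacle.} The genuine content has been pushed into the two cited theorems, so the only real care needed is in checking that the normalizations are consistent across the three settings — in particular that $\metr_\infty$ and $\metr_v$ in the statement are indeed the metrics attached to $\varphi$ through the same reference Green function $g_0$, so that $\theta_\infty = c_1(L_\infty^{\rm an},\metr_0)$ and $\theta_v = c_1(L_v^{\rm an},\metr_0)$ both push forward to $\theta = c_1(\Psi,g_0)$ (by \eqref{normalizations} and Proposition \ref{push-forward of psh in non-arch}\ref{invariance of Theta} respectively). Once these identifications are made explicit, the proof is a two-line citation of Remark \ref{compatibility of MA-measures} for the left equality and of Theorems \ref{main correspondence theorem for semipositiv in non-arch} and \ref{comparision to non-archimedean Bedford-Taylor} for the right equality. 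I would also remark that, as already noted in Remark \ref{compatibility of MA-measures}, the right-hand identity does not require smoothness of $\Sigma$ whereas the left-hand one does, matching the parenthetical comment in the statement.
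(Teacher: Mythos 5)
Your proposal is correct and follows essentially the same route as the paper: the authors prove the left-hand equality by the projection formula for $\theta$-psh functions via Remark \ref{comparison of complex BT product and tropical BT product} and Theorem \ref{MA-currents for theta-psh} (exactly the content of Remark \ref{compatibility of MA-measures} that you cite), and the right-hand equality by Theorem \ref{comparision to non-archimedean Bedford-Taylor} together with the definition of the tropical Bedford--Taylor product. Your additional checks on the matching of reference Green functions and on local psh approximability of $\metr_v$ are consistent with the paper's setup and do not change the argument.
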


\begin{proof}
The complex case follows from  the projection formula for $\theta$-psh functions with respect to $\trop_\infty$, see Remark \ref{comparison of complex BT product and tropical BT product} and Theorem \ref{MA-currents for theta-psh}. 
The non-archimedean case follows  from Theorem \ref{comparision to non-archimedean Bedford-Taylor} and the definition of the Bedford--Taylor product on $U$. 
\end{proof}

\begin{art} \label{invariant toric measures}
Recall that the $\SS_\infty$-invariant positive  Radon measures on
$V_\infty$ are in canonical bijection with the positive Radon
measures on $U$ {\cite[Corollary
  5.1.17]{burgos-gubler-jell-kuennemann1}}. We consider a positive
Radon measure on $U$ and we denote the corresponding
$\SS_\infty$-invariant positive Radon measure on $V_\infty$ by $\mu_\infty$, i.e. we have 
\begin{equation} \label{corresponding measures}
\trop_*(\mu_\infty)= \mu.
\end{equation}
In the non-archimedean situation, we have seen in Remark \ref{toric
  semipositive in non-arch} that we may view $N_\Sigma$ as the
canonical skeleton in $X_{\Sigma,v}^{\rm an}$ and hence $U$ may be
viewed as a subset of $V_v$. Let $\mu_v$ be the image measure
of $\mu$ to $V_v$ with respect to this inclusion. Since the
  inclusion is proper as  a section of the proper map $\trop_v$, we deduce that
$\mu_v$ is a positive Radon measure.
\end{art}

\begin{prop} \label{local MA equation} 
Under the above hypotheses, let $\varphi\colon U \to \R$ be a
$\theta$-psh function.
Let $\metr_\infty$ be the corresponding continuous toric psh metric on $L_\infty^{\rm an}|_{V_\infty}$ {and let $\varphi_\infty \coloneqq \varphi \circ \trop_\infty$ be the corresponding $\theta_\infty$-psh function on $V_\infty$ from Theorem \ref{main comparison thm of psh}. 
Similarly, we pick a corresponding continuous toric psh metric $\metr_v$ on $L_v^{\rm an}|_{V_v}$ and a corresponding  $\theta_v$-psh function on $V_v$  from Theorem \ref{main correspondence theorem for semipositiv in non-arch}.}
For the measures $\mu$, $\mu_\infty$ and $\mu_v$ from \ref{invariant toric measures}, the following are
equivalent: 
\begin{enumerate}
\item\label{item:31} The complex Monge--Amp\`ere equation
$c_1(L_\infty^{\rm an}|_{V_\infty},\metr_\infty)^{\wedge n}=
\mu_\infty$ is satisfied on $V_{\infty}$.
\item\label{item:31'} 
The complex Monge--Amp\`ere equation $(dd^c \varphi_\infty + \theta_\infty)^{\wedge n}=\mu_\infty$ is satisfied on $V_{\infty}$.
\item\label{item:32} 
The tropical Monge--Amp\`ere equation $(d'd''\varphi + \theta)^{\wedge
  n}=\mu$ is satisfied on $U$.
\item\label{item:33} 
The non-arch.~Monge--Amp\`ere equation $c_1(L_v^{\rm an}|_{V_v},\metr_{v})^{\wedge n}=\mu_v$ is satisfied on $V_{v}$.
  \item\label{item:33'} 
The non-arch.~Monge--Amp\`ere equation $(dd^c \varphi_v + \theta_v)^{\wedge n}=
\mu_v$ is satisfied on $V_{v}$.  
\end{enumerate}
The equivalences of \ref{item:32}, \ref{item:33} and \ref{item:33'} do not require the fan $\Sigma$ to be smooth.
\end{prop}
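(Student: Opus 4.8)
The plan is to deduce Proposition \ref{local MA equation} directly from the compatibility of Monge--Amp\`ere measures under tropicalization together with the bijectivity of the invariant-measure correspondences. First I would recall the two pushforward identities that are already available: by Proposition \ref{corresponding Monge-Ampere measures} (and Remark \ref{compatibility of MA-measures}) we have
\begin{align*}
(\trop_\infty)_*\bigl(c_1(L_\infty^{\rm an}|_{V_\infty},\metr_\infty)^{\wedge n}\bigr)
&= (d'd''\varphi+\theta)^{\wedge n}
= (\trop_v)_*\bigl(c_1(L_v^{\rm an}|_{V_v},\metr_v)^{\wedge n}\bigr),
\end{align*}
where the left equality needs $\Sigma$ smooth but the right one does not. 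Moreover, by \ref{complex MA-operator for continuous}, \ref{non-arch MA for locally psh} and \ref{tropical MA for continuous}, the Monge--Amp\`ere measure of $\metr_\infty$ (resp.~$\metr_v$) equals $(dd^c\varphi_\infty+\theta_\infty)^{\wedge n}$ (resp.~$(dd^c\varphi_v+\theta_v)^{\wedge n}$), so the equivalences \ref{item:31}$\Leftrightarrow$\ref{item:31'} and \ref{item:33}$\Leftrightarrow$\ref{item:33'} are immediate from the construction of the metric $\metr_\varphi$ in Theorems \ref{main comparison thm of psh} and \ref{main correspondence theorem for semipositiv in non-arch}.

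The core of the argument is then a chain of implications. For \ref{item:32}$\Rightarrow$\ref{item:33}: if $(d'd''\varphi+\theta)^{\wedge n}=\mu$ on $U$, then applying the right-hand pushforward identity above gives $(\trop_v)_*\bigl(c_1(L_v^{\rm an}|_{V_v},\metr_v)^{\wedge n}\bigr)=\mu$; but the left-hand side is the pushforward of an $\SS_v$-invariant measure whose support lies in the canonical skeleton $N_\Sigma$ (here one uses that $\trop_v^*$ of a top-dimensional form is supported on the skeleton, Proposition \ref{lem: support on skeleton}, hence the Bedford--Taylor product of invariant metrics is likewise skeleton-supported), and $\mu_v$ is by construction \ref{invariant toric measures} the unique such measure pushing forward to $\mu$. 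By the uniqueness statement recalled in Section \ref{section: notation} for quotients by compact groups (applied to the proper section $\iota_v$), we conclude $c_1(L_v^{\rm an}|_{V_v},\metr_v)^{\wedge n}=\mu_v$. Conversely \ref{item:33}$\Rightarrow$\ref{item:32} is just applying $(\trop_v)_*$ and using the identity again. The complex equivalences \ref{item:31}$\Leftrightarrow$\ref{item:32} are handled the same way, now using the left-hand pushforward identity, the fact that $\SS_\infty$-invariant Radon measures on $V_\infty$ correspond bijectively to Radon measures on $U$ (cited as \cite[Corollary 5.1.17]{burgos-gubler-jell-kuennemann1}), and that $\mu_\infty$ is the invariant measure with $(\trop_\infty)_*\mu_\infty=\mu$; injectivity of $(\trop_\infty)_*$ on invariant measures gives the reverse direction.

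I would then assemble these into the ring of equivalences \ref{item:31}$\Leftrightarrow$\ref{item:31'}$\Leftrightarrow$\ref{item:32}$\Leftrightarrow$\ref{item:33}$\Leftrightarrow$\ref{item:33'}, noting explicitly that only the steps passing through the complex side invoke smoothness of $\Sigma$, so that \ref{item:32}, \ref{item:33}, \ref{item:33'} are equivalent for an arbitrary fan. The main obstacle, and the point deserving the most care, is the uniqueness/injectivity input in the non-archimedean step: one must be sure that $c_1(L_v^{\rm an}|_{V_v},\metr_v)^{\wedge n}$ is genuinely supported on the canonical skeleton and is $\SS_v$-invariant, so that it is forced to equal $\mu_v$ once its pushforward is known. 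The invariance is clear because $\metr_v$ is toric and the Bedford--Taylor product of Chambert--Loir--Ducros is equivariant; the skeleton support follows from Proposition \ref{lem: support on skeleton} applied to smooth approximants together with continuity of the product along the decreasing approximation (Theorem \ref{comparision to non-archimedean Bedford-Taylor}), and then the uniqueness of invariant lifts of Radon measures under the quotient $\trop_v$ closes the argument.
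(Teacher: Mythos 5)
Your proposal is correct and follows essentially the same route as the paper: the equivalences $(i)\Leftrightarrow(i')$ and $(iii)\Leftrightarrow(iii')$ are immediate, the complex equivalence is settled by Proposition \ref{corresponding Monge-Ampere measures} together with the bijection between $\SS_\infty$-invariant measures on $V_\infty$ and measures on $U$, and the non-archimedean direction $(ii)\Rightarrow(iii)$ is reduced to showing that $c_1(L_v^{\rm an}|_{V_v},\metr_v)^{\wedge n}$ is supported on the canonical skeleton, which both you and the paper obtain from Proposition \ref{lem: support on skeleton} applied to smooth local approximants and passage to the weak limit. The only cosmetic difference is that the paper makes explicit the local reduction to genuine psh functions via Proposition \ref{lemm:4} before regularizing, and for the non-archimedean uniqueness it uses only that $\trop_v$ restricts to the identity on the skeleton rather than the compact-group averaging statement.
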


\begin{proof}
Clearly, \ref{item:31} is equivalent to \ref{item:31'} and
\ref{item:33} is equivalent to \ref{item:33'}.  
Proposition \ref{corresponding Monge-Ampere measures} yields
\begin{displaymath}
(\trop_\infty)_*\bigl(c_1(L_\infty^{\rm an}|_{V_\infty},\metr_\infty)^{\wedge n}\bigr) = (d'd''\varphi + \theta)^{\wedge n}.
\end{displaymath}
Therefore \ref{item:32} follows  from \ref{item:31} by
 \eqref{corresponding measures}. Conversely, if \ref{item:32}
is satisfied, then, equation \ref{item:31} is
satisfied after applying $\trop_{\ast}$. Since the metric
$\metr_{\infty}$ is toric, the current $c_1(L_\infty^{\rm
    an}|_{V_\infty},\metr_\infty)^{\wedge n}$ is
  $\SS_{\infty}$-invariant. Since $\mu _{\infty}$ is also
  $\SS_{\infty}$-invariant, \ref{item:31} is
satisfied by \cite[Corollary
  5.1.17]{burgos-gubler-jell-kuennemann1}.  

Again by Proposition \ref{corresponding Monge-Ampere measures}, we have 
\begin{displaymath}
(\trop_v)_*\bigl(c_1(L_v^{\rm an}|_{V_v},\metr_v)^{\wedge n}\bigr) = (d'd''\varphi + \theta)^{\wedge n}.
\end{displaymath}
Note that $\mu_v$ agrees with $\mu$ on the canonical skeleton
$N_\Sigma$. As $\trop_v$ is the identity on the canonical skeleton, we
have $(\trop_v)_*(\mu_v)=\mu$. We conclude that \ref{item:33} yields
\ref{item:32}.

Now we  show that \ref{item:32} implies \ref{item:33}. Arguing
as above, it is
enough to show that the non-archimedean Monge--Amp\`ere measure
$c_1(L_v^{\rm an}|_{V_v},\metr_v)^{\wedge n}$ has support in the
canonical skeleton $V_v \cap N_\Sigma = U$. We may argue locally and
so, by Lemma \ref{lemm:4}, after replacing $\varphi$ by 
$\varphi+g_0+m_\sigma$, we may assume that $\varphi$ is psh on $U$. 
As a consequence the continuous function $\varphi$ is by Theorem \ref{thm:3} a decreasing limit of smooth psh-functions $\varphi_j$ on $U$. 
By Dini's theorem, the limit is uniform and so by construction the
Bedford--Taylor product	$c_1(L_v^{\rm an}|_{V_v},\metr_v)^{\wedge n}$
on $V_v$ is the weak limit of the currents associated to the smooth
Lagerberg forms $\trop_v^*\bigl((d'd''\varphi_j)^{\wedge
  n}\bigr)\in A^{n,n}(V_v)$. 
By Lemma \ref{lem: support on skeleton} we know that such an
$(n,n)$-form has support in the canonical skeleton $V_v \cap N_\Sigma
= U$ and hence the same is true for the associated Radon
measure. Passing to the weak limit of currents, we prove the
claim.
\end{proof}

\begin{lem}\label{no-charge-pluriplolar-sets-toric}
Let $U$ be an open subset of $N_{\Sigma }$ and assume that the fan $\Sigma$ is smooth.
Let $\mu $ be a positive Radon measure on $U$ such that $\mu (U\cap (N_{\Sigma }\setminus N_{\R}))=0$. 
Let $\mu _{\infty}$ denote the invariant positive Radon measure on
$V\coloneqq\trop_\infty^{-1}(U)$ defined by $\mu $.  
Then $\mu_\infty$ does not charge pluripolar subsets of $X^{\an}_{\Sigma,\infty }$.
\end{lem}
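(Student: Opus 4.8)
The statement to prove is: if $\Sigma$ is smooth, $U \subseteq N_\Sigma$ open, $\mu$ a positive Radon measure on $U$ with $\mu\bigl(U \cap (N_\Sigma \setminus N_\R)\bigr)=0$, and $\mu_\infty$ is the $\SS_\infty$-invariant positive Radon measure on $V := \trop_\infty^{-1}(U)$ with $(\trop_\infty)_*(\mu_\infty)=\mu$, then $\mu_\infty$ does not charge pluripolar subsets of $X^{\an}_{\Sigma,\infty}$.

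\textbf{The approach.} A pluripolar subset $P$ of the complex manifold $X^{\an}_{\Sigma,\infty}$ is, by definition, locally contained in $\{u = -\infty\}$ for some psh function $u$ (not identically $-\infty$ on the relevant component). Since $\mu_\infty$ is a Radon measure, it suffices to check the non-charging property locally and for such model pluripolar sets; so fix a psh function $u$ on an open $W \subseteq X^{\an}_{\Sigma,\infty}$, not identically $-\infty$, and set $P := \{u=-\infty\} \cap W$. We want $\mu_\infty(P)=0$. First I would reduce to the case where $W = \trop_\infty^{-1}(U')$ is $\SS_\infty$-invariant for a basic open $U' \subseteq U$ (shrinking $U$, using that the basic sets $U(\sigma,\Omega,p_0)$ of Theorem~\ref{thm:1} give a basis and that their $\trop_\infty$-preimages are $\SS_\infty$-invariant). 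The key move is to \emph{average $u$ over the compact torus} $\SS_\infty$: replacing $u$ by $u^{\mathrm{av}}$, the average over the $\SS_\infty$-orbits with respect to Haar measure, yields again a psh function (averaging preserves psh-ness since it is an average of the translates $u\circ(\text{rotation by }s)$, each psh, and one uses upper-semicontinuity of the average), and $u^{\mathrm{av}} \le$ (the $\limsup$ of $u$ along the orbit), hence $\{u = -\infty\} \subseteq \{u^{\mathrm{av}} = -\infty\}$ on the locus where the orbit is entirely in the $-\infty$ set — more carefully, since we only need an upper bound on $\mu_\infty(P)$ and $\mu_\infty$ is $\SS_\infty$-invariant, we may instead directly use that $P$ is $\SS_\infty$-invariant after replacing it by $\bigcup_{s\in\SS_\infty} s\cdot P$, which is still pluripolar (countable/compact union argument: actually it is contained in $\{u^{\mathrm{av}}=-\infty\}$) and has the same $\mu_\infty$-measure as $P$ by invariance. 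So WLOG $u$ is $\SS_\infty$-invariant.

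\textbf{Descending to the tropical side.} Now an $\SS_\infty$-invariant psh function $u$ on $\trop_\infty^{-1}(U')$ is of the form $u = \varphi \circ \trop_\infty$ for a psh function $\varphi$ on $U'$ by Proposition~\ref{thm:2} (together with the identification $X^{\an}_{\Sigma,\infty}/\SS_\infty = N_\Sigma$). Then $P = \trop_\infty^{-1}\bigl(\{\varphi=-\infty\}\bigr)$, so $\mu_\infty(P) = \mu\bigl(\{\varphi = -\infty\} \cap U'\bigr)$ because $(\trop_\infty)_*\mu_\infty = \mu$. Thus the problem reduces to showing $\mu\bigl(\{\varphi=-\infty\}\bigr)=0$. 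By Theorem~\ref{thm:1} and Lemma~\ref{lemm:2}, the set $L(\varphi) = \{\varphi = -\infty\}$ is a closed strata subset of $U'$ which, on the dense stratum $U'\cap N_\R$, is the $-\infty$-locus of a finite convex function, hence empty there (a convex function on a connected open subset of $N_\R$ with values in $\Rinf$ is either finite or identically $-\infty$, and we excluded the latter). Therefore $L(\varphi) \subseteq U' \cap (N_\Sigma \setminus N_\R)$, and the hypothesis $\mu\bigl(U \cap (N_\Sigma \setminus N_\R)\bigr)=0$ gives $\mu(L(\varphi))=0$. Combining, $\mu_\infty(P) = 0$. Taking the supremum over a countable exhaustion of a general pluripolar $P$ by such local model pieces finishes the argument.

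\textbf{Main obstacle.} The genuinely delicate point is the averaging step: one must ensure that averaging a psh function over the compact torus action produces an $\SS_\infty$-invariant psh function \emph{without} accidentally making it identically $-\infty$, and that the $-\infty$-locus behaves correctly, i.e. that $\mu_\infty(P) \le \mu_\infty(\{u^{\mathrm{av}}=-\infty\})$ when $P$ is already $\SS_\infty$-invariant. The cleanest route, which I would adopt, is to avoid averaging $u$ itself and instead use the $\SS_\infty$-invariance of $\mu_\infty$ directly: $\mu_\infty(P) = \mu_\infty(s\cdot P)$ for all $s$, and a Fubini argument over $\SS_\infty \times P$ shows that the $\SS_\infty$-saturation $\widehat P := \bigcup_s s\cdot P$ still satisfies $\mu_\infty(\widehat P) = \mu_\infty(P)$ (since a.e. slice has the full measure), while $\widehat P \subseteq \{u^{\mathrm{av}} = -\infty\}$ is pluripolar and $\SS_\infty$-invariant. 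Everything after that is the routine descent described above. The remaining bookkeeping — that pluripolar sets are countable unions of local model pieces and that $\mu_\infty$ is Radon, hence inner/outer regular enough to reduce to these — is standard and I would treat it briefly.
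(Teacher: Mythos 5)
Your overall strategy is the right one and, once repaired, coincides with the paper's: reduce to an $\SS_\infty$-invariant potential by averaging over the compact torus, descend to a psh (convex) function on the tropical side, and observe that its $-\infty$-locus is forced into the boundary, which $\mu$ does not charge. However, the two claims on which you hang the crucial averaging step are both false as stated. First, $\mu_\infty(\widehat{P})=\mu_\infty(P)$ fails: take $P$ a single point in a fiber of $\trop_\infty$ over a point where $\mu$ has an atom; then $\mu_\infty(P)=0$ while $\widehat{P}$ is the whole orbit and carries the atom's mass. Second, $\widehat{P}\subset\{u^{\mathrm{av}}=-\infty\}$ fails: on $\C^*$ with $u(z)=\log|z-1|$ and $P=\{1\}$, the saturation $\widehat{P}$ is the unit circle, yet $u^{\mathrm{av}}(z)=\max(\log|z|,0)$ is finite everywhere, so $\widehat{P}\cap\{u^{\mathrm{av}}=-\infty\}=\emptyset$. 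Knowing $u=-\infty$ at one point of an orbit says nothing about the average over that orbit. Your earlier variant ("$\{u=-\infty\}\subset\{u^{\mathrm{av}}=-\infty\}$ on the locus where the orbit is entirely in the $-\infty$ set") is vacuously true but useless, since $P$ need not contain any full orbit.

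The correct statement, which does follow from invariance of $\mu_\infty$ and Fubini, is the inequality $\mu_\infty(P)\le\mu_\infty(\{u^{\mathrm{av}}=-\infty\})$: one has
\begin{equation*}
\mu_\infty(P)=\int_{\SS_\infty}\mu_\infty(s^{-1}P)\,ds=\int_{V}\mathrm{Haar}\bigl(\{s\in\SS_\infty\mid u(s\cdot x)=-\infty\}\bigr)\,d\mu_\infty(x),
\end{equation*}
and the integrand is nonzero only at points $x$ where $u=-\infty$ on a positive Haar-measure subset of the orbit, which (since $u$ is locally bounded above) forces $u^{\mathrm{av}}(x)=-\infty$. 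This is exactly what the paper does, packaged as the integral identity $\int_{V_0}u\,d\mu_\infty=\int_{U_0}\rho\,d\mu$ for the fiberwise average $\rho$ (which is a finite convex function on a bounded convex $U_0\subset N_\R$ by \cite[Corollary I.5.14]{demailly_agbook_2012}, hence bounded below there, making the right-hand side finite and contradicting $\mu_\infty(\{u=-\infty\}\cap V_0)>0$). Two further cosmetic differences from the paper: it invokes Josefson's theorem to replace your local model sets by the $-\infty$-locus of a single global psh function on $\T_\infty^{\rm an}$, and it disposes of the boundary $V\setminus\T_\infty^{\rm an}$ at the outset rather than through the strata analysis of $L(\varphi)$; both routes are fine. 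So: right idea, but the write-up of the one step you yourself flag as delicate would not survive scrutiny and must be replaced by the Fubini inequality above.
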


\begin{proof}
By construction, $\mu _{\infty}$ does not charge $V\setminus \T_{\infty}^{\an}$.
Hence it is enough to check that $\mu _{\infty }$ does not charge any
pluripolar subset $D$ of $V \cap \T_{\infty}^{\an}$. {Let $D$ be
  such a pluripolar subset.}
Then Josefson's Theorem (see \cite{josefson1978} or
\cite[Theorem 4.7.4]{klimek1991}) yields a psh function 
$u\colon \T_{\infty}^{\an}\to \Rinf$ not identically $-\infty$ 
such that $D\subset\{u=-\infty\}$. Let $U_{0}$ be a bounded convex open
subset of $N_{\R}$ and $V_{0}=\trop_{\infty}^{-1}(U_{0})$. Since a countable
union of such subsets $U_{0}$ covers $N_{\R}$, 
in order to show $\mu_\infty(D)=0$, is enough to show that
$\mu_{\infty}(\{u=-\infty\}\cap V_{0})=0$.
Assume that $\mu_{\infty}(\{u=-\infty\}\cap V_{0})>0$. 
It follows from \cite[Corollary I.5.14]{demailly_agbook_2012} that the function
\[
\rho:U_{0}\longrightarrow \R,\quad
r\longmapsto \int_{\trop_\infty^{-1}(\{r\})}u(t)dP_r(t)
\]
is convex on the convex open subset $U_{0}$ of $N_\R$ where $P_r$ is
the unique $\SS_\infty$-invariant probability measure on
$\trop_\infty^{-1}(\{r\})$. 
As $U_{0}$ is bounded, 
we have $\int_{V_0}ud\mu_\infty= \int_{U_0} \rho d\mu \in \R$.
However, this integral is $-\infty$ by our assumption  
$\mu_\infty(\{u=-\infty\}\cap V_{0})>0$, which gives the desired contradiction. 
\end{proof}

We switch to the global case. 
In the following, we assume that $\Sigma$ is a complete fan in $N_\R$. We will see in this toric setting that the global approaches to psh functions descibed in \ref{global complex MA operator and Ecal} and \ref{global non-arch MA operator and Ecal} have natural combinatorial analogues. 

\begin{art} \label{global toric case}
We  consider a concave 
piecewise linear function $\Psi$ on $\Sigma$ with integral slopes and
with dual polytope  $\Delta=\{x\in M_\R\,|\,\Psi(u)\leq\langle
x,u\rangle\,\forall u\in N_\R\}$ of dimension $n$. 
This means that the induced toric line bundle $L$ is big and semiample on the proper toric variety $X_\Sigma$. 
Moreover, we have $\deg_L(X)=n! \cdot \vol(\Delta)$ using the normalized volume such that $\vol(N_\R/N)=1$. 
Then the {canonical} Green function $g_0\coloneqq -\Psi$ is convex and we set $\theta \coloneqq c_1(\Psi,g_0)$. The \emph{non-pluripolar Monge--Amp\`ere operator} of  a $\theta$-psh function $\varphi:N_\Sigma \to \Rinf$ is defined by
\begin{equation} \label{tropical non-pluripolar MA operator}
\mu _{\varphi} \coloneqq \lim_{k\to \infty}1_{\{\varphi > -k\}}(\theta + d'd''\max(\varphi,-k))^{\wedge n} 
\end{equation}
as a strong limit of measures. We omit here and in the following  the case $\varphi \equiv - \infty$.
\end{art}
 
\begin{prop} \label{properties of tropical MA}
Under the hypotheses above, the following properties hold.	
\begin{enumerate}
\item \label{tropical MA is Radon}
$\mu_\varphi$ is a positive Radon measure of total mass $\mu_\varphi(N_\Sigma)\leq n! \cdot \vol(\Delta)$.
\item \label{tropical non-pluripolar MA for bounded}
If $\varphi$ is bounded, then $\mu_\varphi$ is equal to the Monge--Amp\`ere measure $(d'd''\varphi+\theta)^{\wedge n}$ from \ref{tropical MA for continuous} and we have $\mu_\varphi(N_\Sigma)= n! \cdot \vol(\Delta)$.
\item \label{tropical MA and dense orbit}
$\mu_\varphi$ is the image measure of the Monge--Amp\`ere measure $(\theta|_{N_\R} + dd^c(\varphi|_{N_\R}))^{\wedge n}$ on $N_\R$ with respect to the inclusion $j:N_\R \to N_\Sigma$.
\end{enumerate}
\end{prop}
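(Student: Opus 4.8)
The plan is to deduce all three statements from the corresponding facts on the complex toric manifold $X_{\Sigma,\infty}^{\rm an}$ via the tropicalization map, using the correspondence theorems already established. First I would reduce to the smooth case: by toric resolution of singularities (Remark \ref{toric resolution}) there is a proper $\id_N$-equivariant morphism $E\colon N_{\Sigma'}\to N_\Sigma$ with $\Sigma'$ smooth and refining $\Sigma$; the function $\Psi\circ \id_N = \Psi$ pulled back is still piecewise linear with integral slopes, $\varphi\circ E$ is $E^*(\theta)$-psh by Remark \ref{functoriality and theta-psh}, and $E_*$ commutes with the truncation $\max(\cdot,-k)$ and with the Bedford--Taylor products by the projection formula (Proposition \ref{projection formula for locally bounded psh}, extended to $\theta$-psh functions). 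So it suffices to treat a smooth complete fan $\Sigma$.

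Assuming $\Sigma$ smooth, set $V\coloneqq \trop_\infty^{-1}(N_\Sigma)= X_{\Sigma,\infty}^{\rm an}$, let $L$ be the toric line bundle attached to $\Psi$ with canonical reference metric $\metr_0$ determined by $g_0=-\Psi$, and let $\theta_\infty\coloneqq c_1(L,\metr_0)$ be the canonical (semipositive, non-smooth in general) Kähler-type form; note $\int_V \theta_\infty^{\wedge n}= \deg_L(X_\Sigma)= n!\cdot\vol(\Delta)$. By Theorem \ref{main comparison thm of psh}, $\varphi_\infty\coloneqq \varphi\circ\trop_\infty$ is an $\SS_\infty$-invariant $\theta_\infty$-psh function on $V$. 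The key step is to show that the tropical truncated Monge--Ampère currents pull back to the complex ones: for each $k$,
\begin{equation*}
\trop_\infty^*\bigl(1_{\{\varphi>-k\}}(\theta+d'd''\max(\varphi,-k))^{\wedge n}\bigr) = 1_{\{\varphi_\infty>-k\}}(\theta_\infty+dd^c\max(\varphi_\infty,-k))^{\wedge n}
\end{equation*}
in the sense of image measures, i.e. applying $\trop_{\infty,*}$ to the right side gives the left side. This follows from the compatibility of Monge--Ampère measures in Remark \ref{compatibility of MA-measures} applied to the bounded $\theta$-psh function $\max(\varphi,-k)$, together with the fact that $\{\varphi_\infty>-k\}=\trop_\infty^{-1}(\{\varphi>-k\})$ and $\trop_{\infty,*}$ of an indicator-times-measure is the indicator of the image times the image measure (since $\trop_\infty$ is the topological quotient by $\SS_\infty$). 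Since $\trop_{\infty,*}$ is weakly continuous and the complex non-pluripolar Monge--Ampère operator $\mu_{\varphi_\infty}$ from \eqref{non-pluripolar MA operator} is the strong limit of the right-hand currents, we obtain $\mu_\varphi = \trop_{\infty,*}(\mu_{\varphi_\infty})$. Now \ref{tropical MA is Radon} follows from the corresponding complex fact ($\mu_{\varphi_\infty}$ is a positive Radon measure of mass $\le \int_V\theta_\infty^{\wedge n}= n!\cdot\vol(\Delta)$, by Stokes on the compact $V$) together with properness of $\trop_\infty$, which preserves Radon measures and total mass.

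For \ref{tropical non-pluripolar MA for bounded}, if $\varphi$ is bounded then $\varphi_\infty$ is bounded, the truncations stabilize, $\mu_{\varphi_\infty}= (\theta_\infty+dd^c\varphi_\infty)^{\wedge n}$ with full mass $\int_V \theta_\infty^{\wedge n}= n!\cdot\vol(\Delta)$ (this uses that $\metr_{\varphi_\infty}$ is a continuous semipositive metric on the ample-pullback side, or directly that bounded $\omega$-psh functions lie in $\Ecal$ as recalled in \ref{global complex MA operator and Ecal}), and then by the pushforward identity $\mu_\varphi$ equals $(d'd''\varphi+\theta)^{\wedge n}$ from \ref{tropical MA for continuous} by Proposition \ref{corresponding Monge-Ampere measures}, with the same total mass preserved under $\trop_{\infty,*}$. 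For \ref{tropical MA and dense orbit}, I would argue that the truncated currents $1_{\{\varphi>-k\}}(\theta+d'd''\max(\varphi,-k))^{\wedge n}$ are, on the level of the dense stratum $N_\R$, exactly $1_{\{\varphi>-k\}}(\theta|_{N_\R}+dd^c\max(\varphi|_{N_\R},-k))^{\wedge n}$ and that passing to the limit commutes with restriction to $N_\R$; moreover by Remark \ref{rem:4} and the fact that the real non-pluripolar operator charges no mass on the lower-dimensional boundary strata, $\mu_\varphi = j_*\bigl((\theta|_{N_\R}+dd^c(\varphi|_{N_\R}))^{\wedge n}\bigr)$. The main obstacle is the pushforward-of-indicator identity under $\trop_{\infty,*}$, i.e. checking that $\trop_{\infty,*}(1_E\cdot \nu)= 1_{\trop_\infty(E)}\cdot \trop_{\infty,*}(\nu)$ for the relevant $\SS_\infty$-invariant Borel sets $E$ and measures $\nu$, which needs the quotient description $X_{\Sigma,\infty}^{\rm an}/\SS_\infty = N_\Sigma$ from \cite[Remark 3.1.3]{burgos-gubler-jell-kuennemann1} and a measure-theoretic descent argument; everything else is a formal consequence of the correspondence theorems and weak continuity of $\trop_{\infty,*}$.
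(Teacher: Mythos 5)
Your route is essentially the paper's: both reduce to a smooth fan by toric resolution and derive everything from the complex correspondence, i.e.\ from Proposition \ref{corresponding Monge-Ampere measures} and the identity $\mu_\varphi=\trop_{\infty,*}(\mu_{\varphi_\infty})$ that the paper records in Remark \ref{compare global approaches for complex toric}. The only structural difference is in \ref{tropical MA is Radon}: the paper gets it for free from \ref{tropical non-pluripolar MA for bounded}, since the truncations $\max(\varphi,-k)$ are bounded and hence have Monge--Amp\`ere mass $n!\cdot\vol(\Delta)$, so the increasing limit $\mu_\varphi$ is a locally finite Borel measure of mass at most $n!\cdot\vol(\Delta)$; your detour through $\trop_{\infty,*}$ of the complex non-pluripolar operator proves the same thing with more machinery but is not wrong.

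The one place your argument as written does not go through is the full-mass claim in \ref{tropical non-pluripolar MA for bounded}. Under the hypotheses of \ref{global toric case} the line bundle $L$ is only big and semiample, and after a toric resolution its pull-back is big and nef but in general not ample; hence $\theta_\infty$ does not lie in a K\"ahler class, and the statement ``bounded $\omega$-psh functions lie in $\Ecal(X,\omega)$'' that you cite from \ref{global complex MA operator and Ecal} is formulated there only for a K\"ahler form $\omega$ and does not apply directly. You need either the big-cohomology-class version of the theory (\cite{boucksometal10:_monge_amper_big}) or, as the paper does, to write the pull-back of $L$ as a limit of ample $\Q$-line bundles and pass to the limit in the degree identity. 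This is a fixable but genuine omission; the rest of the plan, including the pushforward-of-indicator identity via the quotient description $X_{\Sigma,\infty}^{\rm an}/\SS_\infty=N_\Sigma$ and the covering of $N_\R$ by the sets $\{\varphi>-k\}$ for part \ref{tropical MA and dense orbit}, matches what the paper does and is sound.
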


\begin{proof}
The first claim in \ref{tropical non-pluripolar MA for bounded} is obvious from the definition in \eqref{tropical non-pluripolar MA operator}. Then the second claim in  \ref{tropical non-pluripolar MA for bounded} follows directly from Proposition \ref{corresponding Monge-Ampere measures} if $\Sigma$ is smooth and $L$ is ample using the corresponding claim in the complex case. In general, the second claim follows from this special case by passing to a projective toric desingularization and writing the pull-back of $L$ as a limit of ample $\Q$-line bundles. This proves \ref{tropical non-pluripolar MA for bounded}. 

For unbounded $\varphi$, we deduce from  \ref{tropical non-pluripolar MA for bounded} that $\mu_\varphi(N_\Sigma)\leq \vol(\Delta)$ and hence the Borel measure $\mu_\varphi$ on $N_\Sigma$ is locally finite. This proves \ref{tropical MA is Radon}.

It is clear that $\mu_\varphi$ and $(\theta|_{N_\R} + dd^c(\varphi|_{N_\R}))^{\wedge n}$ agree on the bounded subsets $\{\varphi>-k\}$  of $N_\R$ for every $k\in \N$. Since these subsets form a covering of $N_\R$, the measures agree on $N_\R$ and we get \ref{tropical MA and dense orbit}.
\end{proof}

\begin{definition} \label{tropical Ecal and MA}
  We define $\Ecal(N_\Sigma,\theta)$ as the set of $\theta$-psh
  functions $\varphi:N_\Sigma \to \Rinf$ with $\mu_\varphi(N_\Sigma)=
  n! \cdot \vol(\Delta)$ and $\Ecal^1(N_\Sigma,\theta) \coloneqq
  \{\varphi \in \Ecal(N_\Sigma,\theta) \mid \varphi \in
  L^1(\mu_\varphi)\}$. For $\varphi \in \Ecal(N_\Sigma,\theta)$, we
  define the \emph{Monge--Amp\`ere measure} $(d'd''\varphi +
  \theta)^{\wedge n} \coloneqq \mu_\varphi$.
\end{definition}

\begin{rem} \label{compare global approaches for complex toric}
To compare with the global complex approach in \ref{global complex MA
  operator and Ecal}, we assume that $\Sigma$ is a smooth fan and that
$\Psi$ is a strictly concave function on the complete fan $\Sigma$
with integral slopes. This means that the corresponding toric line
bundle $L$ is ample. Let
$\theta_\infty$ be the first Chern current of a fixed canonical metric of the toric line bundle $L_\infty^{\rm an}$.  There is also a smooth Fubini--Study metric on $L_\infty^{\rm an}$ such that the associated first Chern form is a K\"ahler form $\omega$ on the complex manifold $X_{\Sigma,\infty}^{\rm an}$. These two reference metrics lead to an isomorphism from the space of $\theta_\infty$-psh  functions to the space of $\omega$-psh functions. Let $u$ be the $\omega$-psh function corresponding to a $\theta$-psh function $f$, then we define the \emph{non-pluripolar Monge--Amp\`ere operator of $f$} by $\mu_f \coloneqq \mu_u$ 
and then we can subsequently adjust all the definitions in \ref{global complex MA operator and Ecal} replacing $\omega$ by $\theta$. 

Let $\varphi$ be a $\theta$-psh function on $N_\Sigma$. By Theorem \ref{main comparison thm of psh}, the corresponding $\SS_\infty$-invariant function $\varphi_\infty \coloneqq \varphi \circ \trop_\infty$ on $X_{\Sigma,\infty}^{\rm an}$ is $\theta_\infty$-psh. It follows immediately from the definitions of the non-pluripolar Monge--Amp\`ere operators that $\mu_\varphi = \trop_{\infty,*}(\mu_{\varphi_\infty})$. By Theorem \ref{main comparison thm of psh}, the map $\varphi \to \varphi_\infty$ induces canonical isomorphisms between $\Ecal(N_\Sigma,\theta)$ (resp. $\Ecal^1(N_\Sigma,\theta)$) and the space of $\SS_\infty$-invariant functions in $\Ecal(X_{\Sigma,\infty}^{\rm an},\theta_\infty)$ (resp. $\Ecal^1(X_{\Sigma,\infty}^{\rm an},\theta_\infty)$).
\end{rem}

We now give two proofs of the existence and uniqueness of solutions of the tropical toric  Monge--Amp\`ere equation. 
The first one uses  convex analysis through Theorem \ref{thm:6} and the second one uses the comparison with the complex situation.

\begin{lemma}\label{lemm:9}
Let $\Delta _{1}\subset \Delta _{2}$ be bounded convex sets with the same Lebesgue measure. 
Then the interior $\Delta _{2}^{\circ}$ of $\Delta_2$ in $N_\R$ is contained in $\Delta_{1}$. 
\end{lemma}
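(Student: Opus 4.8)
The plan is to argue by contradiction, combining the separating hyperplane theorem with a volume comparison. Suppose the conclusion fails; then I may fix a point $x_0\in\Delta_2^\circ$ with $x_0\notin\Delta_1$ (in particular $\Delta_2^\circ\neq\emptyset$, and I may assume $\Delta_1\neq\emptyset$, since otherwise $\vol(\Delta_1)=\vol(\Delta_2)=0$ forces $\Delta_2^\circ=\emptyset$ and there is nothing to prove). Since $x_0$ does not lie in the convex set $\Delta_1$, the basic separation theorem in \cite[\S 11]{rockafellar1970} provides a nonzero linear form $\ell$ on $N_\R$ with $\ell(y)\le\ell(x_0)$ for all $y\in\Delta_1$; setting $c\coloneqq\ell(x_0)$ I get $\Delta_1\subset\{\ell\le c\}$ while $x_0\in\{\ell=c\}$.

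Next I would use that $x_0$ is an \emph{interior} point of $\Delta_2$ to manufacture a set of positive measure inside $\Delta_2\setminus\Delta_1$. Choosing $r>0$ with the open ball $B(x_0,r)\subset\Delta_2$ and a vector $u\in N_\R$ with $\ell(u)>0$, the points $x_0+tu$ for $0<t<r$ lie in $\Delta_2$ and satisfy $\ell(x_0+tu)=c+t\ell(u)>c$, hence lie outside $\Delta_1$. Thus the open set $B^+\coloneqq\{y\in B(x_0,r)\mid \ell(y)>c\}$ is non-empty, is contained in $\Delta_2\setminus\Delta_1$, and has strictly positive Lebesgue measure. On the other hand, $\Delta_1$ and $\Delta_2$ are bounded, hence of finite Lebesgue measure, and $\Delta_1\subset\Delta_2$, so the hypothesis $\vol(\Delta_1)=\vol(\Delta_2)$ forces $\vol(\Delta_2\setminus\Delta_1)=0$ --- contradicting $\vol(B^+)>0$.

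The whole argument is short, and the one point I would be careful about is the separation step: it is worth noting that no closedness or full-dimensionality assumption on $\Delta_1$ is needed --- the mere fact that $x_0\notin\Delta_1$ suffices, because after renormalising the separating constant to $c=\ell(x_0)$ the convex set $\Delta_1$ still lies in the closed half-space $\{\ell\le c\}$, and the nonvanishing of $\ell$ is all that is used to produce the wedge $B^+$ lying in $\Delta_2$. Everything else is routine measure theory.
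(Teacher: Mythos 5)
Your proof is correct and follows essentially the same route as the paper: separate the interior point from $\Delta_1$ by a hyperplane and observe that the resulting half-ball inside $\Delta_2$ has positive Lebesgue measure, contradicting $\vol(\Delta_2\setminus\Delta_1)=0$. The only cosmetic difference is that the paper phrases the separation with an affine function $H$ satisfying $H(p)\ge 0$ and $H\le 0$ on $\Delta_1$, while you normalise a linear form at $x_0$; the wedge argument is identical.
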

\begin{proof}
Assume that $\Delta _{2}^{\circ}\not \subset \Delta_{1}$. 
Then there is a point $p\in \Delta _{2}^{\circ}\setminus \Delta_{1}$ and a ball $B(p,\varepsilon )$ centered at $p$ which is contained in $\Delta _{2}^{\circ}$. 
By the separation theorem for convex sets, there is a non-zero affine function $H$ such that $H(p)\ge 0$ and $H(x)\le 0$ for all $x\in \Delta _{1}$. 
The set $B(p,\varepsilon )\cap \{H >0 \}$
has positive Lebesgue measure and is contained in $\Delta_{2}^{\circ}\setminus \Delta _{1}$. 
Therefore the Lebesgue measure of   $\Delta _{2}$ is strictly bigger than the Lebesgue measure of $\Delta _{1}$.\
\end{proof}

\begin{thm}\label{missing-label}
Let $\Sigma $, $\Psi $, $\Delta $ and $\theta $ be as in \ref{global toric case}. 
Let $\mu $ be a positive Radon measure on $N_{\Sigma }$ such that
\begin{displaymath}
    \mu (N_{\R})=\mu (N_{\Sigma })=n!\vol(\Delta ).
\end{displaymath}
Then there exists a $\theta $-psh function $\varphi\in \Ecal(N_\Sigma,\theta)$ such that $(d'd''\varphi + \theta)^{\wedge n}=\mu $. 
Moreover, if $\varphi_{1}$ and $\varphi_{2}$ are two such functions, then $\varphi_{1}-\varphi_{2}$ is constant. 
\end{thm}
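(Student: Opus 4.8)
The plan is to reduce the statement on the tropical toric variety $N_\Sigma$ to the second boundary problem of Theorem \ref{thm:6} on $N_\R$, using the dictionary between $\theta$-psh functions on $N_\Sigma$ and convex functions on $N_\R$. First I would translate the hypothesis on $\mu$: by Proposition \ref{properties of tropical MA} \ref{tropical MA and dense orbit}, the non-pluripolar Monge--Amp\`ere measure $\mu_\varphi$ of any $\theta$-psh $\varphi$ is the pushforward under $j\colon N_\R\to N_\Sigma$ of the real Monge--Amp\`ere measure $(\theta|_{N_\R}+dd^c(\varphi|_{N_\R}))^{\wedge n}$ on $N_\R$, which by Remark \ref{rem:4} equals $\MA(\varphi+g_0)$ where $g_0=-\Psi$ is the canonical Green function. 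Since $\mu(N_\R)=\mu(N_\Sigma)$, the measure $\mu$ is the pushforward of its restriction $\mu^\circ\coloneqq\mu|_{N_\R}$, and by Proposition \ref{equivalence theta-psh and convex green functions} solving $(d'd''\varphi+\theta)^{\wedge n}=\mu$ for a $\theta$-psh $\varphi$ is equivalent to solving $\MA(\psi)=\mu^\circ$ for a convex continuous Green function $\psi=\varphi+g_0$ for $\Psi$; and by Lemma \ref{lemm:8} the condition that $\psi$ be a Green function for $\Psi$ amounts (up to bounded error) to $\psi-\Phi_\Delta$ being bounded, i.e.\ to $\Delta(\psi)\subset\Delta$.

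Next I would apply Theorem \ref{thm:6} with the convex body $\Delta$ (which has nonempty interior since $\dim\Delta=n$ by \ref{global toric case}) and the measure $\mu^\circ$ on $N_\R$, noting $\mu^\circ(N_\R)=\mu(N_\R)=n!\vol(\Delta)$ matches the required normalization. This produces a convex $\psi\colon N_\R\to\R$ with $\MA(\psi)=\mu^\circ$ and $\Delta^\circ\subset\Delta(\psi)\subset\Delta$. By Lemma \ref{lemm:8}, $\Delta(\psi)\subset\Delta$ gives that $\psi-\Phi_\Delta$ is bounded above; I would then argue $\psi-\Phi_\Delta$ is also bounded below using $\Delta^\circ\subset\Delta(\psi)$ together with the description $\Phi_\Delta=\sup_{x\in\Delta}\langle x,\cdot\rangle$ and $\Delta=\overline{\Delta^\circ}$, so that $\psi=\Phi_\Delta+O(1)=-\Psi+O(1)=g_0+O(1)$. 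Hence $\psi$ is a convex continuous Green function for $\Psi$ on $N_\R$, and since $\psi+\Psi$ is bounded it extends to a continuous singular (in fact bounded) Green function on all of $N_\Sigma$. Setting $\varphi\coloneqq\psi-g_0=\psi+\Psi$, Proposition \ref{equivalence theta-psh and convex green functions} shows $\varphi$ is a bounded $\theta$-psh function on $N_\Sigma$, and by Proposition \ref{properties of tropical MA} \ref{tropical non-pluripolar MA for bounded} together with \ref{tropical MA and dense orbit} we get $(d'd''\varphi+\theta)^{\wedge n}=\mu_\varphi=j_*\MA(\psi)=j_*\mu^\circ=\mu$; in particular $\mu_\varphi(N_\Sigma)=n!\vol(\Delta)$, so $\varphi\in\Ecal(N_\Sigma,\theta)$.

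For uniqueness, suppose $\varphi_1,\varphi_2\in\Ecal(N_\Sigma,\theta)$ both solve the equation. Since $\mu_{\varphi_i}(N_\Sigma)=n!\vol(\Delta)$ and $\mu_{\varphi_i}$ is supported where $\varphi_i>-\infty$, and since by Proposition \ref{properties of tropical MA} \ref{tropical non-pluripolar MA for bounded} a bounded solution already attains the full mass, I would first show that membership in $\Ecal(N_\Sigma,\theta)$ forces $\varphi_i$ to be bounded: the restriction $\psi_i\coloneqq\varphi_i+g_0|_{N_\R}$ is convex with $\MA(\psi_i)=\mu^\circ$ of total mass $n!\vol(\Delta)$ and with $\Delta(\psi_i)\subset\Delta$ (the latter because $\varphi_i$ being $\theta$-psh means $\varphi_i$ is bounded above near the boundary of $N_\Sigma$, forcing $\psi_i-\Phi_\Delta$ bounded above via Proposition \ref{prop:4'} and Lemma \ref{lemm:8}); mass considerations as in Lemma \ref{lemm:9} then give $\Delta^\circ\subset\Delta(\psi_i)$, so $\psi_i$ is a genuine Green function and $\varphi_i$ is bounded. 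Then $\psi_1,\psi_2$ are both convex solutions of the second boundary problem for $\Delta$ (with $\Delta^\circ\subset\Delta(\psi_i)\subset\Delta$), so by the uniqueness part of Theorem \ref{thm:6}, $\psi_1-\psi_2$ is constant, whence $\varphi_1-\varphi_2$ is constant. The main obstacle I anticipate is the bookkeeping in this last step --- namely extracting, purely from $\varphi_i\in\Ecal(N_\Sigma,\theta)$, the two-sided bound on $\psi_i-\Phi_\Delta$ needed to invoke the uniqueness in Theorem \ref{thm:6}; the upper bound is the analytic input (boundary behavior of $\theta$-psh functions) and the lower bound is the measure-theoretic input (full mass plus Lemma \ref{lemm:9}), and one must check these interact correctly. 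An alternative, as the excerpt indicates, is to bypass convex analysis: pull $\mu$ back to the $\SS_\infty$-invariant measure $\mu_\infty$ on $X_{\Sigma,\infty}^{\rm an}$ (assuming $\Sigma$ smooth, then reducing the general case by a toric desingularization and a limit of ample $\mathbb Q$-line bundles), invoke the complex solution of Boucksom--Eyssidieux--Guedj--Zeriahi for the big semiample class, average over $\SS_\infty$ to get an invariant solution, and descend via Theorem \ref{main comparison thm of psh} and Remark \ref{compare global approaches for complex toric}; uniqueness then follows from Theorem \ref{thm:4} after the same reduction. I would present the convex-analytic proof as the primary one and remark on the complex route.
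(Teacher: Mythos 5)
Your overall strategy coincides with the paper's first (convex--analytic) proof: push $\mu$ down to $\mu^\circ$ on $N_\R$, solve the second boundary problem via Theorem \ref{thm:6}, and transport the solution back using the Green-function dictionary; your uniqueness argument (upper inclusion $\Delta(\psi_i)\subset\Delta$ from Lemma \ref{lemm:8}, lower inclusion $\Delta^\circ\subset\Delta(\psi_i)$ from full mass plus Lemma \ref{lemm:9}, then the uniqueness in Theorem \ref{thm:6}) is exactly the paper's. However, there is a genuine error in your existence step: you claim that $\Delta^\circ\subset\Delta(\psi)$ forces $\psi-\Phi_\Delta$ to be bounded \emph{below}, hence that $\varphi=\psi+\Psi$ is bounded. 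This is false. The condition $\psi\ge\Phi_\Delta-C$ is equivalent to $\psi^*\le C$ on all of $\Delta$, and the Legendre dual $\psi^*$ may well blow up towards $\partial\Delta$ even when $\Delta^\circ\subset\Delta(\psi)\subset\Delta$ (for each $x\in\Delta^\circ$ you get a lower bound $\psi(u)\ge\langle x,u\rangle-C_x$, but $C_x$ need not be uniform in $x$). The paper's own Example \ref{projective line} exhibits a measure $\mu_\alpha$ satisfying the hypotheses of the theorem whose solution $\varphi_\alpha$ takes the value $-\infty$ at the boundary point, so the solution is genuinely unbounded in general and your appeal to Proposition \ref{properties of tropical MA} \ref{tropical non-pluripolar MA for bounded} (the bounded case) does not apply.

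The repair is exactly what the paper does: define $\varphi$ on $N_\Sigma\setminus N_\R$ by $\varphi(x)=\limsup_{y\to x,\,y\in N_\R}(f(y)+\Psi(y))$, which lies in $\Rinf$ because $f+\Psi$ is bounded \emph{above} by Lemma \ref{lemm:8}; conclude that $\varphi$ is (possibly singular) $\theta$-psh by Proposition \ref{prop:4'}; and then invoke Proposition \ref{properties of tropical MA} \ref{tropical MA and dense orbit}, which holds without any boundedness assumption, to get $\mu_\varphi=j_*\MA(f)=\mu$ and hence $\varphi\in\Ecal(N_\Sigma,\theta)$. The same false boundedness claim reappears in your uniqueness paragraph (``so $\psi_i$ is a genuine Green function and $\varphi_i$ is bounded''), but there it is harmless, since the subsequent application of Theorem \ref{thm:6} only needs $\Delta^\circ\subset\Delta(\psi_i)\subset\Delta$, not boundedness of $\varphi_i$. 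Your sketch of the alternative route through the complex Monge--Amp\`ere equation matches the paper's second proof.
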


\begin{proof}[Proof using convex analysis]
Since $\mu (N_{\R})=\mu (N_{\Sigma })=n!\vol(\Delta )$, the measure $\mu $ is the image measure of a Radon measure $\mu _{0}$ in $N_{\R}$ such that $\mu _{0}(N_{\R})=n!\vol(\Delta )$. 
By Theorem \ref{thm:6} there exist a convex function $f\colon
  N_{\R}\to \R$ such that $\MA(f)=\mu _{0}$ and $\Delta
  ^{\circ}\subset \Delta (f)\subset \Delta $. 
  We define the function
  $\varphi\colon N_{\Sigma }\to \Rinf$ by
  \begin{displaymath}
    \varphi(x)=\limsup_{\substack{y\to x\\y\in N_{ \R}}} (f(y)+\Psi(y)). 
  \end{displaymath}
  Note that the values of this function belong to $\Rinf$ because, by
  Lemma \ref{lemm:8} the function $f+\Psi $ is bounded from above. By
  Proposition \ref{prop:4'}, the function $\varphi$ is $\theta
  $-psh. By Proposition \ref{properties of tropical MA}, the
  measure $\mu _{\varphi}$ is the image measure of $(\theta|_{N_\R} +
  dd^c(\varphi|_{N_\R}))^{\wedge n}=\MA(f)$ and hence $\mu _{\varphi}=\mu
  $. We conclude that $\mu _{\varphi}(N_{\Sigma })=\mu (N_{\Sigma
  })=n!\vol(\Delta )$ proving $\varphi\in \Ecal(N_\Sigma,\theta)$. 

For  uniqueness, assume that $\varphi_{1}$ and $\varphi_{2}$ are $\theta $-psh functions in $\Ecal(N_\Sigma,\theta)$ which solve the Monge--Amp\`ere equation. 
For $i=1,2$, write $f_{i}={\varphi_{i}|_{N_\R}}-\Psi $. 
Since
$\varphi_{i}$ is $\theta $-psh, we get a convex function $f_{i}$ on $N_\R$. 
Since $\varphi_{i}$ is continuous on the compact set $N_{\Sigma }$ and does not attain the value $+\infty$, it is bounded above. 
By Lemma \ref{lemm:8}, we have $\Delta (f_{i})\subset \Delta $. 
Since $\varphi_{i}\in \Ecal(N_\Sigma,\theta)$, the Lebesgue measure of $\Delta (f_{i})$, that agrees with the total mass of the Monge--Amp\`ere measure of $f_{i}$, agrees with the Lebesgue measure of $\Delta $. 
By Lemma \ref{lemm:9}, we have $\Delta^\circ \subset \Delta(\varphi_i)$. 
It follows from Theorem \ref{thm:6} that 
$f_{1}-f_{2}$ is constant.
\end{proof}
\begin{proof}[Proof using the complex Monge--Amp\`ere equation]
  We assume first that $\Sigma$ is a smooth fan and that $\Psi $ is
  strictly concave in $\Sigma $.  
Then $X_{\Sigma,\infty}^{\rm an}$ is a complex manifold and $\Psi$ induces a toric meromorphic section $s$ of the holomorphic ample toric line bundle $L_\infty^{\rm an}$  on $X_{\Sigma,\infty}^{\rm an}$ again by \ref{toric semipositive metrics for complex}. 
The canonical Green function $g_0=-\Psi $ induces the canonical metric
$\metr_{\infty,{\rm can}}$ on the toric line bundle $L_\infty^{\rm
  an}$ and we set $\theta_\infty \coloneqq c_1(L_\infty^{\rm an},
\metr_{\infty,{\rm can}})$.
Let $\mu _{\infty}$ be as in \ref{invariant toric measures}. 
Introducing a K\"ahler form $\omega$ as in \ref{compare global
  approaches for complex toric} and applying Theorem \ref{thm:4}, we
deduce that there is $\varphi_\infty \in \Ecal(X_{\Sigma,\infty}^{\rm
  an},\theta_\infty)$, unique up to adding a constant, solving the
complex Monge--Amp\`ere equation
\[
(dd^c \varphi_\infty + \theta_\infty)^{\wedge n} = \mu_\infty.
\]
Uniqueness shows that $\varphi_\infty$ is $\SS_\infty$-invariant. 
By Theorem \ref{main comparison thm of psh}, there is a  $\theta$-psh function $\varphi$ with $\varphi_\infty = \varphi \circ \trop_\infty$. 
By Remark \ref{compare global approaches for complex toric}, we have
$\varphi \in \Ecal(N_\Sigma,\theta)$ and  \eqref{corresponding
  measures} yields $(dd^c \varphi + \theta)^{\wedge n} = \mu$ and
$\varphi$ is uniquely determined by these conditions up to adding a
constant.

If $\Psi $ is not strictly concave, but only concave, and the polytope 
$\Delta $ is full dimensional, then the line $L$ is no longer ample
but it is still big and nef. Then we use the
generalization of  Theorem \ref{thm:4}  in \cite[Theorem 
A]{boucksometal10:_monge_amper_big} and proceed in the same way as
before. 

If $\Sigma$ is not smooth, we choose a projective toric
desingularisation $X_{\Sigma'}\to X_\Sigma$ corresponding to a smooth
fan $\Sigma'$ subdividing $\Sigma$.  Then the pull back $L'$ of $L$ to
$X_{\Sigma'}$ is still big and nef. Let
$\theta_\infty'$ be the first Chern current of the canonical metric of
$(L')_\infty^{\rm an}$ and let $\theta' \coloneqq c_1(\Psi,-\Psi)$ the
canonical first Chern current on $N_{\Sigma'}$.  Applying again
\cite[Theorem A]{boucksometal10:_monge_amper_big}, we deduce that
there is a $\theta_\infty'$-psh function $\varphi_\infty'$ on
$X_{\Sigma',\infty}^{\rm an}$, unique up to adding a constant, solving
the complex Monge--Amp\`ere equation
$(dd^c \varphi_\infty' + \theta_\infty')^n = \mu_\infty'$ for the
invariant measure $\mu_\infty'$ on $X_{\Sigma',\infty}^{\rm an}$
induced by $\mu$. Here, we note that $\mu$ does not charge the
boundary and hence may be seen as a Radon measure on
$N_{\Sigma'}$. As above, we get a $\theta'$-psh function $\varphi'$ on $N_{\Sigma'}$
solving the real Monge--Amp\`ere equation
$(dd^c \varphi' + \theta')^{\wedge n} = \mu$ and which is unique up to
adding a constant. It follows from Remark \ref{toric resolution} that
$\varphi'$ descends to a $\theta$-psh function $\varphi$ and since the
Monge--Amp\`ere measures and $\mu$ do not charge the boundary, we see
that $\varphi$ solves $(dd^c \varphi + \theta)^{\wedge n} = \mu$, and
is unique up to adding a constant.
\end{proof}

\begin{art}  \label{compare global approaches for non-arch toric}
To compare with the global approach in \ref{global non-arch MA operator and Ecal} for a non-archimedean field $K$ with valuation $v$, we do not have to assume that the fan $\Sigma$ is smooth. 
We fix as before a  {continuous} Green function $g_0$ for the polarization function $\Psi$ of the toric line bundle $(L,s)$ and put $\theta=c_1(\Psi,g_0)$. 
For a $\theta$-psh function $\varphi\colon N_\Sigma \to \Rinf$, {Theorem \ref{main correspondence theorem for semipositiv in non-arch}} gives a corresponding toric continuous singular psh metric $\metr_v$ on $L_v^{\rm an}$. 
We claim that $\metr_v$ is also {semipositive} in the
sense of the global approach of Boucksom--Jonsson explained in
\ref{global non-arch MA operator and Ecal}.

Indeed, we apply Proposition \ref{global regularization by rational
  piecewise affine functions} to the convex Green function $g\coloneqq
\varphi+g_0$.
We obtain a decreasing sequence $(g_n)_{n\in \N}$ of rational piecewise affine convex Green functions with pointwise limit $g$.
Let $\metr_{v,n}$ be the metric on $L_v^{\rm an}$ corresponding to the Green function $g_n$. 
Then $\metr_{v,n}$ is by construction a Fubini--Study metric, 
$\metr_v$ is the increasing limit of the metrics $\metr_{v,n}$, and hence the metric $\metr_v$ is {semipositive} in the sense of  \ref{global non-arch MA operator and Ecal}.

For non-pluripolar Monge--Amp\`ere operators, we have $\mu_\varphi = \trop_{v,*}(\mu_{\metr_v})$. 
The map $\varphi \mapsto \metr_v$ induces canonical isomorphisms between $\Ecal(N_\Sigma,\theta)$ (resp.~$\Ecal^1(N_\Sigma,\theta)$) and the space of  toric metrics in $\Ecal(L_v^{\rm an})$ (resp. $\Ecal^1(L_v^{\rm an})$). 
Since $\mu_{\metr_v}$ does not charge the boundary $X_{\Sigma,v}^{\rm an} \setminus \T_v^{\rm an}$, we have the useful formula
\begin{equation} \label{non-archimedean torus formula}
\mu_{\metr_v}=j_{v}\bigl(c_1(L^{\an}_{v}|_{\T_v^{\rm an}},\metr_v)^{\wedge n}\bigr)
\end{equation}
where we use the image measure with respect to the inclusion $j_v:\T_v^{\rm an} \to X_{\Sigma,v}^{\rm an}$. Applying Proposition \ref{local MA equation} to the dense torus $\T$ and using that the real and the non-archimedean non-pluripolar Monge--Amp\`ere operators do not charge the boundaries, we see that $\mu_{\metr_v}=\mu_{\varphi,v}$, i.e.~$\mu_{\metr_v}$ is the push-forward measure of $\mu_\varphi$ with respect to the inclusion $N_\Sigma \to X_{\Sigma,v}^{\rm an}$ of the canonical skeleton.
\end{art}

We illustrate the theory in the case of a projective line.

\begin{ex} \label{projective line}
  Let $X_\Sigma = \P^1$, then the associated tropical toric
  variety $N_\Sigma$ is $[-\infty,\infty]$. 
  We pick the piecewise linear function $\Psi(u)={\min(u,0)}$ on $\R$.  
The associated toric divisor on $\P^1$ is $[\infty]$.
It induces the very ample toric line bundle $L=\Ocal_{\P^1}(1)$. 
The dual polytope is $\Delta = [0,1]$ and we have $\deg_L(X)=\vol(\Delta)=1$. 
Then the canonical Green form is the convex function $g_0 \coloneqq
{\max(-u,0)}$ on $\R$ and
$\theta \coloneqq c_1(\Psi,g_0)$ is the Dirac current $\delta_{\{0\}}$
on $[-\infty,\infty]$.
	In the following, we fix a real parameter $0<\alpha<1$ and consider the function
\begin{displaymath}
\varphi(u) \coloneqq \varphi_{\alpha }(u)\coloneqq     
\begin{cases}
	1 & \text{if $u \leq 0$}, \\
	1-u & \text{if $0 \leq u \le 1$},\\
	\frac{1-u^{\alpha }}{\alpha } & \text{if $1 < u$},\\
	-\infty & \text{if $u=\infty$.}
	\end{cases}
	\end{displaymath}
    We also allow $\alpha =0$ by using
    $\varphi_0(u)\coloneqq -\log(u)$ for $1 < u$. The unbounded function $\varphi$ is $\theta$-psh and solves the real Monge--Amp\`ere equation  $d'd''\varphi + \theta = \mu$ for the positive Radon measure 
    \begin{equation} \label{example definition of mu}
    \mu \coloneqq \mu_\alpha \coloneqq (1-\alpha)1_{\{u\ge 1\}}\frac{d'u\wedge d'' u}{u^{2-\alpha }}
\end{equation}
on $[-\infty,\infty]$. 
Note here that $\mu([-\infty,\infty])=1$ and hence $\varphi \in \Ecal(N_\Sigma,\theta)$. 
Basic analysis shows that we have $\varphi \in \Ecal^1(N_\Sigma,\theta)$ (i.e.~$\varphi$  integrable with respect to $\mu$) if and only if $\alpha <1/2$.
    
Now we switch to the complex setting. 
We fix a toric coordinate $z$ on $\P_\C^1$ with $\trop_\infty(z)=-\log|z|$. 
Then $\SS_\infty$ is the unit circle. The canonical metric on $L_\infty^{\rm an}$ induced by $\Psi$ has first Chern current $\theta_\infty$ induced by the Haar probability measure on $\SS_\infty$. 
Note that $\theta_\infty$ is the positive $(1,1)$-current with $\trop_{\infty,*}(\theta_\infty)=\theta$ illustrating \cite[Theorem 7.1.5]{burgos-gubler-jell-kuennemann1}. 
The $\SS_\infty$-invariant positive Radon measure $\mu_\infty$ from \ref{invariant toric measures} is here given by 
\begin{equation} \label{complex example formula}
\mu_\infty =    \frac{\alpha-1}{4\pi i} \cdot 1_{\{|z|\le 1/e\}} \cdot
\frac{dz\wedge d\bar z}{(-\log |z|)^{2-\alpha }z\bar z}.
\end{equation}
Note that $\mu_\infty$ is the unique 
  $\SS_{\infty}$-invariant Radon measure with
$\trop_{\infty,*}(\mu_\infty)=\mu$. Hence \eqref{complex example
  formula} follows from \eqref{example definition of mu},
\eqref{correspondences of differential operators} and the projection
formula in \cite[Corollary 5.1.8]{burgos-gubler-jell-kuennemann1}.
For $\alpha =0$, we recover the Poincar\'e metric of the punctured disk. 
Applying Remark \ref{compare global approaches for complex toric} or a direct computation show that $\varphi_\infty \coloneqq \varphi \circ \trop_\infty$ is an unbounded $\theta$-psh function in $\Ecal(\P_\C^{1,\rm an},\theta_\infty)$ solving the Monge--Amp\`ere equation $dd^c \varphi_\infty + \theta_\infty = \mu_\infty$. 
Moreover, we have $\varphi_\infty \in \Ecal^1(\P_\C^{1,\rm an},\theta_\infty)$ if and only if $\alpha < 1/2$.

Finally, we look at a non-archimedean field $K$ with valuation $v$. 
Similary as in the complex setting, $\SS_v$ is the unit circle in $\P_K^{1,\rm an}$. 
Let $\metr_{v,\rm can}$ be the canonical metric on $L_v^{\rm an}$ induced by $\Psi$, then $\varphi$ induces the continuous singular psh metric $\metr_v \coloneqq e^{-\varphi}\metr_{v,\rm can}$ on $L_v^{\rm an}$ which is in $\Ecal(L_v^{\rm an})$ and solves the Monge--Amp\`ere equation $c_1(L_v^{\rm an},\metr_v)^n = \mu_v$ by \ref{compare global approaches for non-arch toric}. Moreover, we have $\metr_v \in \Ecal^1(L_v^{\rm an})$ if and only if $\alpha < 1/2$.

We claim that $\mu_v=\mu_{\alpha,v}$ has finite energy if and only if $\alpha < 1/2$. 
Indeed, if $\alpha < 1/2$, then $\varphi_v=\varphi_{\alpha,v} \in \Ecal^1(L_v^{\rm an})$ and hence $\mu_{\alpha,v}$ is of finite energy by \cite[Proposition 7.2]{bj:singular_metrics}. 
It is clear that $\mu_{\alpha,v}$ has not finite energy for $\alpha > 1/2$ by integrating against the  $\theta_v$-psh function $\varphi_{\beta,v}$ for any $\beta$ with $1/2>\beta \geq 1-\alpha$. 
For  $\alpha=1/2$, we see that $\mu_{\alpha,v}$ has infinite energy by integrating against another test function $g$  from $\Ecal^1(L_v^{\rm an})$ which is defined by  
\[
g(u) \coloneqq \frac{1-u^{1/2}}{(1+\log(u))/2} 
\]
for $u \geq 1$, by  $1-u$ for $u \leq 1$ and by $u=-\infty$ at the boundary points. 
\end{ex}

The example {above shows}  that there is a finite positive Radon measures $\mu_v$ on the canonical skeleton $N_\Sigma$ of $X_{\Sigma,v}^{\an}$  which does not charge the boundary $N_\Sigma \setminus N_\R$ and which  is not of finite energy. 
This  explains that the following theorem goes beyond \cite{bj:singular_metrics}.

\begin{thm} \label{solution of non-arch global toric MAeq}
Let $L$ be an ample toric line bundle on a proper 
toric variety $X_\Sigma$ over a non-archimedean field $K$ with valuation $v$.  
Let $\mu_v$ be a positive Radon measure on $X_{\Sigma,v}^{\rm an}$ supported in the canonical skeleton $N_\Sigma$   with $\mu_v(N_\Sigma \setminus N_\R)=0$ and with $\mu_v(X_{\Sigma,v}^{\rm an})= \deg_{L}(X_\Sigma)$. 
Then there exists a unique up to scaling toric continuous singular psh metric $\metr_v$ on $L_v^{\rm an}$ solving the non-archimedean  Monge--Amp\`ere equation 
\begin{equation}\label{eq:5}
c_{1}(L^{\an}_{v},\metr_{v})^{\wedge n}=\mu_v.
\end{equation}
\end{thm}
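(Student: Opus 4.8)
The plan is to reduce the statement to the tropical toric Monge--Amp\`ere equation solved in Theorem \ref{missing-label} and then transport the solution to $L_v^{\rm an}$ via the correspondence of Theorem \ref{main correspondence theorem for semipositiv in non-arch} and the discussion in \ref{compare global approaches for non-arch toric}. First I would set up the combinatorial data: since $L$ is an ample toric line bundle on the proper toric variety $X_\Sigma$, it is induced (as in \ref{toric semipositive in non-arch}) by a strictly concave piecewise linear function $\Psi$ on $\Sigma$ with integral slopes whose dual polytope $\Delta$ is $n$-dimensional, so we are in the situation of \ref{global toric case}; in particular $\deg_L(X_\Sigma)=n!\,\vol(\Delta)$, the canonical Green function $g_0\coloneqq -\Psi$ is convex, and we set $\theta\coloneqq c_1(\Psi,g_0)\in D^{1,1}(N_\Sigma)$.

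Next I would pull $\mu_v$ back to the tropical side. The canonical section $\iota\colon N_\Sigma\to X_{\Sigma,v}^{\rm an}$ is a homeomorphism onto the closed canonical skeleton, and $\mu_v$ is supported there, so $\mu_v=\iota_*\mu$ for a unique positive Radon measure $\mu$ on $N_\Sigma$. Using the hypothesis $\mu_v(N_\Sigma\setminus N_\R)=0$ one gets $\mu(N_\Sigma\setminus N_\R)=0$, hence $\mu(N_\R)=\mu(N_\Sigma)=\mu_v(X_{\Sigma,v}^{\rm an})=\deg_L(X_\Sigma)=n!\,\vol(\Delta)$. Thus $\mu$ satisfies the hypotheses of Theorem \ref{missing-label}, which produces $\varphi\in\Ecal(N_\Sigma,\theta)$, unique up to an additive constant, with $(d'd''\varphi+\theta)^{\wedge n}=\mu$. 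By Proposition \ref{lemm:4} this $\varphi$ is continuous, so Theorem \ref{main correspondence theorem for semipositiv in non-arch} associates to it a toric continuous singular psh metric $\metr_v$ on $L_v^{\rm an}$. By \ref{compare global approaches for non-arch toric} this $\metr_v$ is semipositive in the Boucksom--Jonsson sense, lies in $\Ecal(L_v^{\rm an})$, and $c_1(L_v^{\rm an},\metr_v)^{\wedge n}=\mu_{\metr_v}=\iota_*\mu_\varphi=\iota_*\mu=\mu_v$, proving existence.

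For uniqueness up to scaling I would run the correspondence backwards. If $\metr_v'$ is another toric continuous singular psh metric solving \eqref{eq:5}, then $\mu_{\metr_v'}(X_{\Sigma,v}^{\rm an})=\mu_v(X_{\Sigma,v}^{\rm an})=\deg_L(X_\Sigma)$ forces $\metr_v'\in\Ecal(L_v^{\rm an})$; by \ref{compare global approaches for non-arch toric} it corresponds to some $\varphi'\in\Ecal(N_\Sigma,\theta)$ with $\iota_*\mu_{\varphi'}=\mu_{\metr_v'}=\mu_v=\iota_*\mu$, and injectivity of $\iota$ gives $\mu_{\varphi'}=\mu=(d'd''\varphi+\theta)^{\wedge n}$. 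The uniqueness clause of Theorem \ref{missing-label} then yields $\varphi'=\varphi+c$ for a constant $c\in\R$, hence $\metr_v'=e^{-c}\metr_v$. (Alternatively, uniqueness is a special case of the general non-archimedean uniqueness result of Yuan and Zhang recalled in the introduction.)

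Since the two main inputs --- Theorem \ref{missing-label} and the correspondence/Bedford--Taylor comparison in \ref{compare global approaches for non-arch toric} --- are already available, the proof is essentially formal. The one point that needs care is the bookkeeping of the two mass conditions in Theorem \ref{missing-label}: one must check that $\mu$ neither charges the boundary strata nor ``loses mass at infinity'', which is precisely what the hypotheses $\mu_v(N_\Sigma\setminus N_\R)=0$ and $\mu_v(X_{\Sigma,v}^{\rm an})=\deg_L(X_\Sigma)$ provide, together with the fact (from \ref{compare global approaches for non-arch toric}, via Lemma \ref{lem: support on skeleton}) that the non-archimedean non-pluripolar Monge--Amp\`ere measure of a toric semipositive metric is supported on, and identified with the skeleton push-forward of, the tropical one.
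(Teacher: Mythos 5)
Your proposal is correct and follows essentially the same route as the paper: fix the polarization function $\Psi$ and the canonical Green function $g_0=-\Psi$, descend $\mu_v$ to a measure $\mu$ on $N_\Sigma$ satisfying the hypotheses of Theorem \ref{missing-label}, solve the tropical equation there, and transport the solution back via the correspondence of Theorem \ref{main correspondence theorem for semipositiv in non-arch} together with Proposition \ref{local MA equation} applied to the dense torus (which is exactly what \ref{compare global approaches for non-arch toric} packages). Your uniqueness argument, running the correspondence backwards and invoking the uniqueness clause of Theorem \ref{missing-label}, is the same reasoning the paper leaves implicit.
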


\begin{proof}
Choosing a toric meromorphic section $s$ of $L$, we have seen in
\ref{toric semipositive metrics for complex} that $(L,s)$ induces a piecewise linear function $\Psi$ on $\Sigma$ with integral slopes.  
Since $L$ is ample, the function $\Psi$ is concave on $N_\R$. 
Choosing the canonical Green function $g_0 \coloneqq -\Psi$, we set
$\theta \coloneqq c_1(\Psi,g_0)$. Let $\mu $ be the measure on
$N_{\Sigma }$ determined by $\mu _{v}$. It satisfies the hypothesis of Theorem \ref{missing-label}.
Hence, there exists a $\theta $-psh function $\varphi\in
\Ecal(N_\Sigma,\theta)$ such that $(d'd''\varphi + \theta)^{\wedge n}=\mu$.

Applying Proposition \ref{local MA equation} to the dense torus $\T$ and using that the real and the non-archimedean Monge--Amp\`ere measures do not charge the boundaries, we see that the psh metric $\metr_v$ on $L_v^{\rm an}$ corresponding to $\varphi$ solves \eqref{eq:5} and is the unique up to scaling toric continuous singular metric solving this non-archimedean Monge--Amp\`ere equation.
\end{proof}

\begin{rem} \label{compare with BJ}
If the measure $\mu_v$ is of finite energy, then Theorem \ref{solution of non-arch global toric MAeq} follows from Theorem \ref{thm:9} of Boucksom--Jonsson which goes beyond the toric case and also gives then uniqueness of solutions in the space $\Ecal^1(X_{\Sigma,v}^{\rm an})$. 
As we have seen in Example \ref{projective line}, there are positive Radon measures $\mu_v$ on the canonical skeleton $N_\Sigma$ in $X_{\Sigma,v}^{\rm an}$ with $\mu_v(N_\Sigma \setminus N_\R)=0$ which do not charge the boundary and which are not of finite energy.
Hence Theorem \ref{solution of non-arch global toric MAeq} is also a strengthening of Theorem \ref{thm:9} and holds without any restrictions on the non-archimedean field $K$.    
\end{rem}	

\begin{rem} \label{generalization of main global thm}
The above proof shows that Theorem \ref{solution of non-arch global toric MAeq} generalizes to $L$ big and nef. In this toric case, we can use \eqref{non-archimedean torus formula} as a definition of the non-pluripolar Monge--Amp\`ere operator.
\end{rem}

\section{Monge--Amp\`ere equations on totally degenerate abelian varieties} \label{section: totally degenerate abelian varieties}

We apply our toric correspondence results to totally degenerate abelian 
varieties over a non-trivially valued non-archimedean field 
${(K,|\phantom{a}|)}$. 
The goal is to generalize Liu's solution of the non-archimedean 
Calabi--Yau problem \cite{liu2011}. 
We denote by ${v\coloneqq -\log |\phantom{a}|}$ the additive 
valuation of $K$ and by $\Gamma \coloneqq v(K^\times) \subset \R$ 
the additive valuation group.

\subsection{Polarized tropical abelian varieties}
\label{subsection:Polarized tropical abelian varieties}

Recall from \cite[Definition 2.4]{foster-rabinoff-shokrieh-soto} 
the definition of a polarized tropical abelian variety.

\begin{definition}\label{def-polarized-abelian-variety}
A \emph{polarized tropical
abelian variety of dimension $n$ with polarization
of degree $d^2$} is given by a quadruple
$(\Lambda,M, [\cdot , \cdot],\lambda)$ where 
\begin{enumerate}
\item
$M$ and $\Lambda$ are finitely generated free abelian groups  of rank $n$, 
\item
the pairing $[\cdot , \cdot]\colon\Lambda\times M\to \R$ 
is bilinear,
\item \label{non-degenerate}
{the bilinear form $\Lambda_\R \times M_\R \to \R$ induced by 
$[\cdot , \cdot]$ is 
non-degenerate,\footnote{It is not enough to assume that the canonical 
maps $M \to \Hom(\Lambda,\R)$ and $\Lambda \to \Hom(M,\R)$ are injective 
as then the images of $M$ and $N$ are not necessarily sublattices. 
A counterexample is $\Lambda=\Z+\Z\pi \subset \R$ and $M=\Z^2$ with pairing 
$[a,m]=a(m_1+ \nu m_2)$ where we fix $\nu \in \R \setminus \Q$. 
Using $\lambda(m_1+m_2\pi)=(m_1,m_2)\in M$, all other conditions are also satisfied.}}
\item
$\lambda\colon \Lambda\to M$ is a homomorphism with 
$\#\,\coker (\lambda)= d$,
\item \label{positive definite}
the induced pairing
$[\cdot , \lambda(\cdot)]\colon
\Lambda\times \Lambda\to \R$
is symmetric and positive-definite. 
\end{enumerate}
A polarized tropical variety $(\Lambda,M, [\cdot , \cdot],\lambda)$
is called \emph{$\Gamma$-rational} if $[\Lambda,M]\subset \Gamma$.
A \emph{morphism of polarized tropical abelian varieties} 
$(f,g)\colon (\Lambda,M, [\cdot , \cdot],\lambda)
\rightarrow(\Lambda',M', [\cdot , \cdot]',\lambda')$
is given by homomorphisms $f\colon \Lambda\to \Lambda'$
and $g\colon M'\to M$ such that $[f(\cdot),\cdot]'=[\cdot,g(\cdot)]$
and $\lambda=g\circ\lambda'\circ f$.
\end{definition}

Let $(\Lambda,M, [\cdot , \cdot],\lambda)$ be a polarized tropical abelian variety. We put $N \coloneqq \Hom_\Z(M,\Z)$. The natural pairing $\langle\cdot ,\cdot\rangle\colon N_\R\times M_\R\to \R$ leads to a unique monomorphism $\iota\colon \Lambda\to N_\R$
such that $[\cdot,\cdot]=\langle\iota(\cdot),\cdot\rangle$. Note that
\ref{non-degenerate} is equivalent to require that $\iota(\Lambda)$ is
a lattice in $N_\R$. 
From now on, we will identify $\Lambda$ with this sublattice of $N_\R$ using $\iota$. Then we have  $[\cdot,\cdot]=\langle \cdot,\cdot\rangle$ on $\Lambda \times M$.
We write 
\[
\deg(\Lambda,M, [\cdot , \cdot],\lambda) \coloneqq d^2
\coloneqq (\#\coker (\lambda))^2
\] 
for the degree of $(\Lambda,M, [\cdot , \cdot],\lambda)$.

The \emph{associated tropical abelian variety} is the real torus
  $N_\R/\Lambda$ with integral structure given by $N$ and we will
  always denote the quotient homomorphism by $\pi\colon N_\R \to N_\R/\Lambda$. Tropical abelian varieties were first considered in \cite{mikhalkin-zharkov2008}.
Furthermore the tropical abelian varieties $N_\R / \Lambda$ are tropical spaces in the sense of \cite{jell-shaw-smacka2015} and hence Lagerberg forms are defined on $N_\R/\Lambda$ (see Remark \ref{tropical spaces}).

\begin{rem}\label{rem-polarized-abelian-variety}
Let $b(\cdot,\cdot)\colon N_\R\times N_\R\to \R$ be the unique symmetric bilinear form which satisfies $b(\cdot,\cdot)=[\cdot , \lambda(\cdot)]$ on $\Lambda\times \Lambda$. 
The non-degeneracy in \ref{non-degenerate} yields that $\lambda$ extends to an isomorphism $\lambda_\R\colon \Lambda_\R=N_\R \to M_\R$ and that 
\begin{equation} \label{rem-polarized-abelian-variety-eq1}
b(x,y) = \langle x, \lambda_\R(y) \rangle
\end{equation}
for all $x,y \in N_\R$. 
Indeed \eqref{rem-polarized-abelian-variety-eq1} is clear on $N \times \Lambda$, shows that $b(N,\Lambda)\subset \Z$ and extends by linearity to $N_\R \times N_\R$. 
Using that $\lambda_\R$ is an isomorphism, we also deduce from \eqref{rem-polarized-abelian-variety-eq1} that the bilinear form $b$ is non-degenerate and hence \ref{positive definite} shows that $b$ is positive definite.
\end{rem}

\begin{art}  \label{canonical form for tropical polarized abelian variety}
Any basis $u_1,\ldots,u_n$ of $M_\R$ determines coordinates on $N_\R$.
Let $u_1',\ldots,u_n'$ denote the dual basis of $N_\R$.
The positive translation invariant Lagerberg $(1,1)$-form
\begin{equation} \label{canonical form for ptav}
\theta\coloneqq \theta_{(\Lambda,M, [\cdot , \cdot],\lambda)}\coloneqq
\sum_{k,l=1}^nb(u'_k,u'_l)d'u_k\wedge d''u_l\in A^{1,1}(N_\R)
\end{equation}
does not depend on the choice of the basis.
There is a unique $\omega \in A^{1,1}(N_\R/\Lambda)$ with $\pi^*(\omega)=\theta$.
We call $\theta$ (resp.~$\omega$) the \emph{canonical $(1,1)$-form of the 
	polarized tropical abelian variety $(\Lambda,M, [\cdot , \cdot],\lambda)$ on $N_\R$ (resp.~$N_\R/\Lambda$)}. 
This fits to the setting of \S \ref{subsection: Lagerberg setting} by using $N_\Sigma = N_\R$, $\Psi \coloneqq 0$, the Green function $g_0(u) \coloneqq \frac{1}{2} b(u,u)$ for $\Psi$ noting that $\theta = c_1(\Psi,g_0)$. 
Choosing the basis $u_1,\ldots,u_n$ such that the dual basis $u_1',\ldots,u_n'$ is a $\Z$-basis of $\Lambda$, one computes
\begin{equation} \label{volume of Lambda}
\int_{N_\R/\Lambda} \omega^n = \int_{F_\Lambda} \theta^n = n! \cdot \#\coker (\lambda) = n! \cdot \sqrt{\deg(\Lambda,M, [\cdot , \cdot],\lambda)},
\end{equation}
{where $F_\Lambda$ is a fundamental domain for the action of $\Lambda$ on $N_\R$.}
\end{art}

\subsection{Tropicalization of polarized totally degenerate 
abelian varieties} 
\label{subsection: tropicalization of polarized totally degenerate abelian varieties}
{We work over a non-archimedean field $K$ with valuation $v$ and non-trivial value group $\Gamma \coloneqq v(K^\times)$.}

\begin{art} \label{totally degenerate split abelian variety}
{A \emph{totally degenerate split abelian variety over $K$} is an abelian variety $A$ over $K$ such that there exists a morphism of analytic groups $\pi\colon \T^{\rm an}\to A^{\rm an}$ for some split torus $\T$ over $K$, which induces an isomorphism $A^{\rm an}= \T^{\rm an}/ \Lambda$ for a lattice $\Lambda$ in $\T^{\rm an}$. 
Here a \emph{lattice in $\T^{\rm an}$} means a closed discrete subgroup $\Lambda$ of $\T(K)$ such that $\trop_v$ maps $\Lambda$ isomorphically onto a complete lattice of $N_\R$, where $N$ is the cocharacter lattice of $\T$.
If $\SS$ denotes the maximal affinoid torus in $\T^{\rm an}$, then $\T$ is the split torus with character lattice $M\coloneqq \Hom\,({A_1},\G_{{\rm m},1})$ given by homomorphisms of analytic groups, where ${A_1}$ is the maximal affinoid torus $\pi(\SS)$ in $A^{\rm an}$ and $\G_{{\rm m},1}$ is the maximal affinoid torus in $\G_{\rm m}^{\rm an}$.}

{We will always identify $\Lambda$ with the lattice $\trop_v(\Lambda)$ in $N_\R$ along $\trop_v$. 
Note that $\trop_v$ induces a map $\tropbar_v\colon A^{\rm an} \to N_\R/\Lambda$  called the \emph{canonical tropicalization map of $A$}. Moreover, there is a canonical section
\begin{equation}\label{tot-deg-skeleton}
\iota_v\colon N_\R/ \Lambda\longrightarrow A^{\rm an}
\end{equation}
of $\trop_v$ which identifies $N_\R/ \Lambda$ with a closed subset 
of $A^{\rm an}$, called the \emph{canonical skeleton}, such that $\trop_v$ 
is a strong deformation retraction (see \cite[6.5]{berkovich-book}).
} 
\end{art}

\begin{art}\label{tropicalization-non-archimedean-abelian-variety} 

Let $A$ be a  totally degenerate split abelian variety over $K$ with $A^{\rm an}= \T^{\rm an}/ \Lambda$ as in \ref{totally degenerate split abelian variety}, let $M$ be the character lattice of the torus $\T$ and let $N \coloneqq \Hom_\Z(M,\Z)$. 
Then 
\[
[\cdot,\cdot]\colon \Lambda\times M\longrightarrow \R,\quad
(\gamma,m)\longmapsto {v(m(\gamma))} 
\]
is a bilinear pairing. 
We denote by $\T^\vee$ the torus with character lattice $\Lambda$.
Restricting the characters of $M$ to $\Lambda$ identifies $M$ with a lattice in $(\T^\vee)^{\rm an}$. Then the dual abelian variety $A^\vee$ of $A$ has the uniformization
$(A^\vee)^\an=(\T^\vee)^\an/M$. 
A polarization ${\phi}\colon A\to A^\vee$ induces homomorphisms
$\T\to \T^\vee$ and $\lambda\colon \Lambda\to M$.
It is verified in 
\cite[Proposition 4.7]{foster-rabinoff-shokrieh-soto}
that 
\begin{equation}\label{tropi-tot-deg-av}
\trop_v(A,{\phi})\coloneqq(\Lambda,M, [\cdot , \cdot],\lambda)
\end{equation}
is a polarized tropical abelian variety.
Recall that a \emph{morphism $f\colon (A,{\phi}) \to (A',{\phi}')$ 
of polarized abelian varieties over $K$} is given by a 
homomorphism $f\colon A \to A'$ of abelian varieties over $K$ such 
that ${\phi}=  f^\vee \circ {\phi}' \circ f$. 
Then $(A,{\phi})\mapsto \trop_v(A,{\phi})$ 
induces a functor
\begin{equation}
\label{functor-tropicalization-non-archimedean-abelian-variety-eq1}
{\trop_v}\colon
\begin{pmatrix}
\mbox{polarized totally degenerate}\\
\mbox{{split} abelian varieties over $K$}
\end{pmatrix}
{\longrightarrow}
\begin{pmatrix}
\mbox{{$\Gamma$-rational} polarized}\\
\mbox{tropical abelian varieties}
\end{pmatrix}.
\end{equation}
The rank of $M$ equals the dimension of $A$ and
the polarized tropical variety $\trop_v(A,{\phi})$ has the same 
degree as the polarized abelian variety $(A,{\phi})$  
\cite[Theorem 6.15]{bosch-luetkebohmert1991}.
Observe that it follows from results of Bosch and Lütkebohmert 
that the functor
\eqref{functor-tropicalization-non-archimedean-abelian-variety-eq1}
is essentially surjective 
\cite[\S 2,  Theorem 6.13]{bosch-luetkebohmert1991}.
\end{art}

\begin{definition} \label{definition canonical form on totally degenerate abelian variety}
{Let $(A,{\phi})$ be a polarized totally degenerate
split abelian variety over $K$ with tropicalization \eqref{tropi-tot-deg-av} .
Let $\theta\in A^{1,1}(N_\R)$ and $\omega \in A^{1,1}(N_\R/\Lambda)$
denote the canonical $(1,1)$-forms of the polarized tropical 
abelian variety $\trop_v(A,{\phi})$ from \eqref{canonical form for ptav}.
We call 
\begin{equation}\label{canonical-non-arch-theta}
\text{$\theta_v\coloneqq\trop_v^*(\theta)\in A^{1,1}(\torus^\an)$  and  $\omega_v\coloneqq\tropbar_v^*(\omega)\in A^{1,1}(A^\an)$}
\end{equation}
the \emph{canonical $(1,1)$-forms of  $(A_,{\phi})$}.}
\end{definition}

\begin{rem}\label{remark-canonical-non-archimedean-metric}
In the situation of \ref{tropicalization-non-archimedean-abelian-variety}, there is always a non-zero  $m \in \N$ such that the polarization  $m\varphi$ is  induced by an ample  line bundle $L$ on $A$.
{In fact, we can always choose $m=2$ \cite[Proposition 6.10]{mumford-etal1994}.
If $K$ is algebraically closed, then we can even choose $m=1$.} 
Let $\pi\colon \T^\an\to A^\an$ denote the 
uniformization morphism. 
The line bundle $L^\an$ on $A^\an$ carries a canonical
metric $\|\phantom{a}\|_{\mathrm{can}}$ 
\cite[Example 8.15]{gubler-kuenne2017} and we have  
\begin{equation}\label{remark-canonical-non-archimedean-metric-eq1}
\text{$c_1(\pi^*L^\an,\pi^*\|\phantom{a}\|_{\mathrm{can}})={m}\,\theta_v$ and $c_1(L^\an,\|\phantom{a}\|_{\mathrm{can}})= {m}\,\omega_v$.}
\end{equation}
This follows from the description of the canonical metric in
\cite[Example 8.15]{gubler-kuenne2017}.
It is shown  in \cite[Lemma 2.2]{bosch-luetkebohmert1991}
that the line bundle $\pi^*L^\an$ on $\torus^\an$ is trivial.
\end{rem}

\subsection{Tropicalization of decomposed polarized complex abelian varieties} 
\label{subsection: tropicalization of polarized complex abelian varieties}
We work complex analytically and consider a polarized complex abelian variety $(A,H)$ 
of dimension $n$.
The complex manifold $A^\an$ admits a uniformization
$V_A/U_A$ where $U_A$ is the lattice given by the 
image of $H_1(A^\an,\Z)$ in the tangent space
at the origin $V_A\coloneqq T_{0}A^\an$. 
The polarization is given by a Riemann form $H$, i.e.~a positive
definite sesquilinear form $H \colon V_A\times V_A\to \C$
(antilinear in the second argument) 
whose imaginary part $E\coloneqq \im(H)$ takes
integral values on the lattice $U_A$.
Recall that we can recover $H$ from $E$ by the formula
\begin{equation}\label{trop-complex-pol-ab-var-eq1}
H(v,w)=E(iv,w)+iE(v,w) \,\,\,(v,w\in V_A).
\end{equation}

\begin{rem}\label{trop-complex-pol-ab-var}
A \emph{decomposition for $(A,H)$} is a direct sum decomposition
$U_A=U_1\oplus U_2$ for subgroups $U_1,U_2$ of $U_A$ which are isotropic 
for $E$ \cite[Chapter 3 \S 1]{birkenhake-lange2004}. 
According to Frobenius 
\cite[5.1 Th.~1]{bourbaki-algebre-chapitre-9} 
we can choose a $\Z$-basis $a_1,\ldots,a_{n},b_1,\ldots,b_n$ of 
the lattice $U_A$ such that $E$ is given in this basis by a matrix
\[
\begin{pmatrix}
0&D\\-D&0
\end{pmatrix}
\]
for a diagonal matrix $D=\mathrm{diag}(d_1,\ldots,d_n)$
with  $d_j \in \N_{>0}$ satisfying $d_j|d_{j+1}$ for
$j=1,\ldots,n-1$.
The matrix $D$ is uniquely determined by $U_A$ and $E$.
We call $D$ the \emph{type} and $d^{{2}}\coloneqq
\det D^{{2}}$
the \emph{degree} of the polarized complex abelian variety $(A,H)$.
Any symplectic basis as above defines 
a decomposition for $(A,H)$ if we put 
$U_1=\langle a_1,\ldots,a_n\rangle$ and 
$U_2=\langle b_1,\ldots,b_n\rangle$ and all decompositions 
come from such a symplectic basis \cite[\S 3.1, p.~46]{birkenhake-lange2004}.
\end{rem}

\begin{definition} \label{decomposed abelian varieties}
A \emph{decomposed polarized complex abelian variety $(A,H,U_1,U_2)$}
is a polarized complex abelian variety $(A,H)$ together with
a fixed decomposition $U_A=U_1\oplus U_2$.
A \emph{morphism of decomposed polarized complex abelian varieties}
is a morphism of polarized abelian varieties which preserves
the decomposition.
\end{definition}
Decomposed polarized complex abelian varieties of 
dimension $n$ and type $D$ admit a moduli space which can
be realized as a quotient of the Siegel upper half plane $\H_{{n}}$
\cite[Proposition 8.3.3]{birkenhake-lange2004}.

Let $(A,H,U_1,U_2)$ be a decomposed polarized complex abelian variety
of dimension {$n$} and degree $d^{{2}}$.
Put $N\coloneqq U_1$ and 
let $\torus \coloneqq \Spec\,\C[M]$ be the 
multiplicative torus  with character lattice $M \coloneqq  \Hom(N,\Z)$. 
As $H$ is positive definite {and hence $E$ is non-degenerate}, 
we see that the $\C$-span of $N$ is $V_A$, i.e.~$V_A=N_\C\coloneqq N\otimes_\Z\C$.
We use the homomorphisms 
\[
e\colon \C\longrightarrow \C^\times,\quad e(z)\coloneqq \exp(2\pi iz)
\qquad\mbox{ and }\qquad t\colon \C^\times \longrightarrow \R,\quad z\mapsto -\log |z|
\]
with $t\circ e(z)=2\pi \im(z)$ to get the maps
\begin{equation} \label{complex exp and trop}
2\pi \im_N\colon
N_\C=N\otimes_\Z\C\stackrel{e_N}{\longrightarrow} 
N\otimes _\Z\C^\times {=\torus^{\rm an}}   
\stackrel{{\trop_\infty} }{\longrightarrow} N\otimes_\Z\R=N_\R
\end{equation}
where $e_N\coloneqq \id_N\otimes e$, $\trop_\infty=\id_N\otimes t$
and $\im_N\coloneqq \id_N\otimes \im$. Note that $e_N\colon V_A \to \torus^{\rm an}$ maps $U_2$ isomorphically onto a discrete closed subgroup $\Lambda \coloneqq e_N(U_2)$ of $\torus^{\rm an}$. Moreover, $\Lambda$ 
maps under $\trop_\infty$ isomorphically onto the complete lattice
\[
\trop_\infty(\Lambda)
=2\pi   \im_N (U_2)\subset N_\R.
\]
By abuse of notation similarly as in \ref{totally degenerate split abelian variety}, we will identify $\Lambda$ with the lattice $\trop_\infty(\Lambda)$. 
Note that $A^{\rm an}=V_A/U_A \simeq (V_A/U_1)/(U_A/U_1) 
\simeq \torus^\an/\Lambda$ leading to the following definition.

\begin{definition} \label{definition complex Tate uniformization}
The  \emph{complex Tate uniformization} of the decomposed polarized
complex abelian variety $(A,H,U_1,U_2)$ is defined by 
$A^{\rm an}\simeq\torus^\an/\Lambda$. 
The map $\overline{\trop}_\infty\colon A^{\rm an} \to N_\R/\Lambda$ induced by $\trop_\infty\colon \torus^{\rm an} \to N_\R$ is called \emph{the canonical tropicalization map of $(A,H,U_1,U_2)$}.
\end{definition}

The complex Tate uniformization $A^{\rm an} \simeq \mathbb
T^{\rm an}/\Lambda$	is not intrinsically associated to the
polarized abelian variety $(A,H)$, it depends on the 
decomposition $U_A=U_1 \oplus U_2$.

We also have a non-degenerate pairing
\begin{equation*}\label{def-non-deg-pairing}
[\cdot,\cdot]\colon \Lambda\times M\longrightarrow \R,\quad
[w,m]\coloneqq {m(\trop_\infty(w))}
\quad (w\in \Lambda, m\in M),
\end{equation*}
and a homomorphism
\begin{equation*}\label{def-pol-map}
\lambda\colon \Lambda\longrightarrow M,\quad
{\lambda(w)(u)\coloneqq E(v,u) \quad (w=e_N(v)\in \Lambda = e_N(U_2),\, v \in U_2, \, u\in N).}
\end{equation*}
Given $w=e_N(v)\in \Lambda$ as above, we use that $N=U_1$ 
is isotropic for $E$, \eqref{complex exp and trop}
and \eqref{trop-complex-pol-ab-var-eq1} for
\[
[w,\lambda(w)]
= E(v,\trop_\infty(w))
=2\pi E(i\im_N(v),\im_N(v))
=2\pi H(\im_N(v),\im_N(v)).
\]
We conclude that 
\begin{equation}\label{functor-trop-dec-complex-pol-ab-var}\trop_\infty(A,H,U_1,U_2)\coloneqq
(\Lambda,M,[\cdot , \cdot],\lambda)
\end{equation}
is a polarized tropical abelian variety of dimension {$n$} and degree $d^2$. 
The latter is seen by choosing a symplectic basis as in Remark \ref{trop-complex-pol-ab-var}.
The assigment \eqref{functor-trop-dec-complex-pol-ab-var}
induces a functor
\begin{equation}\label{functor-complex-pol-trop}
{\trop_\infty}\colon \begin{pmatrix}
\mbox{\rm decomposed polarized}\\
\mbox{\rm complex abelian varieties}
\end{pmatrix}
{\longrightarrow}
\begin{pmatrix}
\mbox{\rm polarized tropical}\\
\mbox{\rm abelian varieties}
\end{pmatrix}.
\end{equation}
which preserves the dimension and the degree.

\begin{prop}\label{complex-trop-surjective}
{The functor \eqref{functor-complex-pol-trop} is essentially surjective.}
\end{prop}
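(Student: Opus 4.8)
The plan is to construct, starting from an arbitrary polarized tropical abelian variety $(\Lambda,M,[\cdot,\cdot],\lambda)$, a decomposed polarized complex abelian variety $(A,H,U_1,U_2)$ whose tropicalization under the functor \eqref{functor-complex-pol-trop} is isomorphic to it. First I would unwind the datum using the conventions established after Definition \ref{def-polarized-abelian-variety}: set $N \coloneqq \Hom_\Z(M,\Z)$, identify $\Lambda$ with a complete lattice in $N_\R$ via the monomorphism $\iota$, so that $[\cdot,\cdot] = \langle\cdot,\cdot\rangle$ on $\Lambda \times M$, and recall from Remark \ref{rem-polarized-abelian-variety} that the symmetric bilinear form $b$ on $N_\R$ with $b = [\cdot,\lambda(\cdot)]$ on $\Lambda\times\Lambda$ is positive definite and satisfies $b(x,y) = \langle x, \lambda_\R(y)\rangle$ for the isomorphism $\lambda_\R\colon N_\R \to M_\R$ extending $\lambda$. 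This positive definite form is exactly the ingredient needed to build a Riemann form.

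Next I would produce the complex structure. The idea is to take $U_1 \coloneqq N$, to take for $U_2$ an abstract free abelian group with a fixed isomorphism $U_2 \xrightarrow{\sim} \Lambda$ (writing $v \mapsto \gamma_v$ for this iso), and to equip $V_A \coloneqq N_\C = N\otimes_\Z\C$ with the complex structure coming from the second factor. Then $A^{\rm an} \coloneqq N_\C/(U_1 \oplus 2\pi i\,\Lambda)$ is a complex torus — here one uses $U_1 \oplus U_2 \hookrightarrow N_\C$ via $u_1 + u_2 \mapsto u_1 + 2\pi i\,\gamma_{u_2}$, which is a lattice of full rank $2n$ because $\Lambda$ is a complete lattice in $N_\R$ so $N \oplus i\Lambda$ spans $N_\C$ over $\R$. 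To polarize it, I would define $E\colon U_A\times U_A \to \R$ as the alternating form that vanishes on $U_1\times U_1$ and on $U_2\times U_2$ and satisfies $E(u, v) = -E(v,u) = \langle u, \lambda(\gamma_v)\rangle = [\gamma_v, ?]$-type pairing for $u \in U_1 = N$, $v \in U_2$ — precisely mirroring the formula for $\lambda$ in \S\ref{subsection: tropicalization of polarized complex abelian varieties}. This takes integral values on $U_A$ because $\lambda$ lands in $M = \Hom(N,\Z)$ and $b(N,\Lambda)\subset\Z$ (Remark \ref{rem-polarized-abelian-variety}). Then one defines $H$ from $E$ by \eqref{trop-complex-pol-ab-var-eq1}, $H(v,w) = E(iv,w) + iE(v,w)$, and the key check is that $H$ is Hermitian and positive definite: Hermitian-ness amounts to the compatibility $E(iv, iw) = E(v,w)$, which one verifies on the real basis $N \cup i\Lambda$ by direct computation using the symmetry of $b$, and positive-definiteness $H(v,v) = E(iv,v) > 0$ reduces, after writing $v = u_1 + iu_2$ with $u_1 \in N_\R$, $u_2 \in N_\R$, to positivity of a quadratic form built from $b$ — which holds since $b$ is positive definite.

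With $(A, H, U_1, U_2)$ in hand, the final step is to compute $\trop_\infty(A,H,U_1,U_2)$ and check it agrees with the original $(\Lambda,M,[\cdot,\cdot],\lambda)$ up to isomorphism of polarized tropical abelian varieties. By construction $N = U_1$, the character lattice is $M = \Hom(N,\Z)$, and the image lattice $\trop_\infty(\Lambda') = 2\pi\im_N(U_2)$ — with our normalization $U_2 \hookrightarrow N_\C$ via $v \mapsto 2\pi i\gamma_v$, so $\im_N$ of that is $2\pi\gamma_v$ up to the $2\pi$ factor — recovers $\Lambda \subset N_\R$ (one can absorb the $2\pi$ by choosing the embedding $U_2 \mapsto i\,\Lambda$ rather than $2\pi i\,\Lambda$, matching the precise constants in \eqref{complex exp and trop}). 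The pairing $[w,m] = m(\trop_\infty(w))$ becomes $\langle\gamma, m\rangle = [\gamma,m]$ on $\Lambda\times M$, and the polarization homomorphism $\lambda^{\rm trop}(w)(u) = E(v,u)$ unwinds, by our definition of $E$, to $\langle u, \lambda(\gamma_v)\rangle$, i.e. to the original $\lambda$. Thus $(\mathrm{id}_\Lambda, \mathrm{id}_M)$ is the desired isomorphism.

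The main obstacle I anticipate is the positive-definiteness of $H$ — more precisely, making sure that the complex structure on $V_A$ and the form $E$ are wired together so that $E(iv,v)$ is genuinely a positive form and not merely non-degenerate; this is where one must use that $b$ is positive definite (not just non-degenerate), and it is the reason axiom \ref{positive definite} in Definition \ref{def-polarized-abelian-variety} is strictly stronger than non-degeneracy. A secondary bookkeeping nuisance is tracking the factors of $2\pi$ and the sign conventions consistently across \eqref{trop-complex-pol-ab-var-eq1}, \eqref{complex exp and trop}, and the definition of $\lambda$ in \S\ref{subsection: tropicalization of polarized complex abelian varieties}, so that the computed tropicalization matches on the nose rather than merely up to a scaling; a clean way to handle this is to fix the embedding $U_2 \hookrightarrow V_A$ as $v \mapsto i\gamma_v$ at the outset and then read off all constants from \eqref{complex exp and trop}. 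Everything else — that $A$ is algebraic (Riemann relations hold because $H$ is a Riemann form, so $A$ is an abelian variety), that $U_1, U_2$ are isotropic for $E$ (true by construction), and functoriality checks — is routine given the earlier parts of the paper.
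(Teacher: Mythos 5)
Your construction is essentially the paper's proof: take $U_1=N$, embed $U_2\cong\Lambda$ in the imaginary direction of $N_\C$, define $E$ to vanish on each factor and to pair $N$ with $\Lambda$ via $b$, and deduce positive definiteness of $H$ from that of $b$ via $H(x+iy,x+iy)=2\pi(b(x,x)+b(y,y))$. The only point left to nail down is exactly the bookkeeping you flagged: the paper takes $U_2=\frac{1}{2\pi i}\Lambda$ (so that $2\pi\im_N(U_2)=\Lambda$ on the nose, matching \eqref{complex exp and trop}, rather than $2\pi\Lambda$ as your $v\mapsto i\gamma_v$ would give) and the sign convention $E\bigl(x+\frac{i}{2\pi}a,x'+\frac{i}{2\pi}a'\bigr)=b(a,x')-b(a',x)$, which is what makes $\lambda^{\mathrm{trop}}(w)(u)=E(v,u)$ recover $\lambda$ rather than $-\lambda$.
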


\begin{proof}
Let $(\Lambda,M, [\cdot , \cdot],\lambda)$ be a polarized
tropical abelian variety. 
As in Subsection \ref{subsection:Polarized tropical abelian varieties}, we consider $\Lambda$  as a lattice in $N_\R$.
We get a decomposed lattice
\[
U\coloneqq U_1\oplus U_2\coloneqq 
N\oplus \frac{1}{2\pi i}\Lambda
\subset N_\R\oplus iN_\R=N_\C
\]
and consider the alternating map $E\colon U\times U\longrightarrow \Z$ given by
\begin{equation*}
E\biggl(x+\frac{i}{2\pi }a,x'+\frac{i}{2\pi}a'\biggr)\coloneqq
b(a,x')-b(a',x) \quad(x,x'\in N, \,a,a'\in \Lambda).
\end{equation*}
We have seen after \eqref{rem-polarized-abelian-variety-eq1}
that $E$ has indeed values in $\Z$. 
We denote the unique extension of $E$ to a 
{real} bilinear form on $N_\C$ also by $E$. It is given by
\begin{equation}\label{complex-trop-surjective-eq1}
E(x+iy,x'+iy')\coloneqq
2\pi (b(x',y)-b(x,y')) \quad  (x,y,x',y'\in N_\R)
\end{equation}
and hence we have 
$E(iz,iw)=E(z,w)$ for  $z,w \in N_\C$.
By \cite[Lemma 2.1.7]{birkenhake-lange2004}, 
there is a unique hermitian form $H$ on $N_\C$
such that $E=\im(H)$.
By \eqref{trop-complex-pol-ab-var-eq1} and \eqref{complex-trop-surjective-eq1}, we get
\begin{equation} \label{complex-trop-surjective-eq1'}
H(x+iy,x+iy)= E(i(x+iy),x+iy))=2\pi\bigl(b(x,x)+b(y,y)\bigr)\,\,\,(x,y\in N_\R),
\end{equation}
which shows that $H$ is positive definite. 
Hence $H$ is a Riemann form and $N_\C/U$  is
{the analytification of a complex abelian variety $A$.} 
Now $(A,H,U_1,U_2)$ is a decomposed polarized 
complex abelian variety which maps under $\trop_\infty$
to $(\Lambda,M, [\cdot , \cdot],\lambda)$.
\end{proof}

Let $(A,H)$ be a polarized complex abelian variety with canonical 
translation invariant $(1,1)$-form 
\[
\widetilde{\omega}_{(A,H)}\coloneqq {\frac{i}{2}}\sum_{k,l=1}^n
H\biggl(\frac{\partial}{\partial z_k},\frac{\partial}{\partial  z_l}
\biggr)dz_k\wedge {d\bar z_l}\in A^{1,1}(V_A).
\]
and induced K\"ahler form $\omega_{(A,H)} \in A^{1,1}(A^{\rm an})$. 
Both forms do not depend on the choice of {holomorphic} 
coordinates $z_1, \ldots, z_n$ on $V_A$. 
We call $\theta_{(A,H)}\coloneqq p^*(\omega_{(A,H)})$ \emph{the canonical $(1,1)$-form on 
the Tate uniformization $p\colon \torus^{\rm an} \to A^{\rm an}$}.
We compare these forms to their tropical analogues from \eqref{canonical form for ptav}.
{We have introduced in \ref{comparison with complex forms} 
a natural map 
$\trop_\infty^*\colon A^{p,q}(\torus^{\rm an}) \to A^{p,q}(N_\R)$
where   $N=\Hom(U_2,\Z)$ is the cocharacter lattice of the torus $\torus^\an$.
There is also an induced map 
$\overline\trop_\infty^*\colon A^{p,q}(A^{\rm an}) \to A^{p,q}(N_\R/\Lambda)$.}

\begin{prop} \label{example-canonical-archimedean-metric}
Let $(A,H,U_1,U_2)$ be a decomposed polarized complex abelian variety with $A^{\rm an}=V_A/U_A \simeq \mathbb T^{\rm an}/\Lambda$
as above. 
We have 
\begin{equation}\label{example-canonical-archimedean-metric-eq1}
\theta_{(A,H)}=
\trop_\infty^*(\theta_{\trop_\infty(A,H,U_1,U_2)}) \quad \text{and} \quad \omega_{(A,H)}=\overline\trop_\infty^*(\omega_{\trop_\infty(A,H,U_1,U_2)}).
\end{equation}
\end{prop}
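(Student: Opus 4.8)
The plan is to reduce the statement to a direct computation in coordinates, using the compatibility of $\trop_\infty^*$ with the relevant bilinear data. First I would unwind the definitions: on the torus side, pick the standard holomorphic coordinates $z_1,\ldots,z_n$ on $V_A = N_\C$ coming from a $\Z$-basis $u_1',\ldots,u_n'$ of $N=U_1$, so that these coordinates are identified with $\id_N\otimes \id_\C$ under $V_A = N\otimes_\Z\C$. The multiplicative coordinates on $\torus^{\rm an}$ are $w_k = e(z_k)$, and under the map in \eqref{complex exp and trop} we have $\trop_\infty(w_k) = 2\pi\,\im(z_k)$; thus the real coordinate $u_k'$ on $N_\R$ pulls back under $\trop_\infty^*$ via the normalization in \eqref{correspondences of differential operators}. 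The key point is to track how the differential operators transform: since $\trop_\infty^*\circ d' = \pi^{-1/2}\partial\circ\trop_\infty^*$ and $\trop_\infty^*\circ d'' = \pi^{-1/2}i\bar\partial\circ\trop_\infty^*$, pulling back $d'u_k\wedge d''u_l$ produces $\pi^{-1}i\,\partial(u_k\circ\trop_\infty)\wedge\bar\partial(u_l\circ\trop_\infty)$ up to the factor coming from $u_k\circ\trop_\infty = 2\pi\,\im(z_k) = \pi i(\bar z_k - z_k)$. Carrying these constants through, $\partial(u_k\circ\trop_\infty) = -\pi i\,dz_k$ and $\bar\partial(u_l\circ\trop_\infty) = \pi i\,d\bar z_l$, so each term $d'u_k\wedge d''u_l$ pulls back to a fixed scalar multiple of $i\,dz_k\wedge d\bar z_l$.

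Next I would compare the coefficients. The tropical canonical form is $\theta_{\trop_\infty(A,H,U_1,U_2)} = \sum_{k,l} b(u_k',u_l')\,d'u_k\wedge d''u_l$, where $b$ is the real symmetric bilinear form on $N_\R$ with $b(\cdot,\cdot) = \langle\cdot,\lambda_\R(\cdot)\rangle$ from Remark \ref{rem-polarized-abelian-variety}. The complex canonical form on $V_A$ is $\widetilde\omega_{(A,H)} = \tfrac{i}{2}\sum_{k,l} H(\partial/\partial z_k,\partial/\partial z_l)\,dz_k\wedge d\bar z_l$. The equality \eqref{complex-trop-surjective-eq1'} from the proof of Proposition \ref{complex-trop-surjective} — or rather the defining relation $E = \im(H)$ together with $H(x+iy,x+iy) = 2\pi(b(x,x)+b(y,y))$ — shows that $H(\partial/\partial z_k,\partial/\partial z_l) = 2\pi\, b(u_k',u_l')$ (using that $N = U_1$ is isotropic for $E$ and the polarization law in \eqref{functor-trop-dec-complex-pol-ab-var}, whence $H$ restricted to $N_\R$ is $2\pi b$, and $H$ is $\C$-sesquilinear so its values on the $\partial/\partial z_k$ are determined by its real part on $N_\R$). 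Substituting this into the two expressions and comparing the scalar prefactors from the previous paragraph gives $\trop_\infty^*(\theta_{\trop_\infty(A,H,U_1,U_2)}) = \widetilde\omega_{(A,H)} = \theta_{(A,H)}$ as forms on $N_\R$, which is the first identity in \eqref{example-canonical-archimedean-metric-eq1} (note $\theta_{(A,H)} = p^*\omega_{(A,H)}$ is the translation-invariant form pulled back from $V_A$).

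Finally, the second identity follows by descent: both $\omega_{(A,H)}$ and $\overline\trop_\infty^*(\omega_{\trop_\infty(A,H,U_1,U_2)})$ are forms on $A^{\rm an} = \torus^{\rm an}/\Lambda$, and their pullbacks to $\torus^{\rm an}$ along $p$ agree by the first identity (using that $p^*\circ\overline\trop_\infty^* = \trop_\infty^*\circ p^*$ on the quotient $N_\R/\Lambda$, together with $p^*\omega_{\trop_\infty(A,H,U_1,U_2)} = \theta_{\trop_\infty(A,H,U_1,U_2)}$ from \ref{canonical form for tropical polarized abelian variety}). Since $p$ is a surjective submersion (a covering map of complex manifolds, or equivalently $\trop_\infty$ identifies $\torusan/\SS_\infty$ with $N_\R$ and then quotient by $\Lambda$), pullback along $p$ is injective on forms, so the two forms on $A^{\rm an}$ coincide. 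I expect the main obstacle to be bookkeeping the various factors of $2\pi$, $i$, and $\pi^{-1/2}$ that appear in the normalization \eqref{correspondences of differential operators} of $\trop_\infty^*$, in the definition of $e(z) = \exp(2\pi iz)$, and in the $\tfrac{i}{2}$ and $2\pi$ appearing in $\widetilde\omega_{(A,H)}$ and in \eqref{complex-trop-surjective-eq1'}; getting all of these to cancel cleanly is the only real content, since conceptually everything reduces to the identity $H|_{N_\R\times N_\R} = 2\pi b$.
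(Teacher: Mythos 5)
Your proposal is correct and follows essentially the same route as the paper's proof: choose coordinates from a lattice basis, compute $\trop_\infty^*(d'u_k\wedge d''u_l)=i\pi\,dz_k\wedge d\bar z_l$ (you derive this from \eqref{correspondences of differential operators}, the paper quotes the equivalent formulas $\trop^{\ast}(d'u_{j})=-dw_{j}/(2\sqrt{\pi}w_{j})$, $\trop^{\ast}(d''u_{j})=-i\,d\bar w_{j}/(2\sqrt{\pi}\bar w_{j})$ from \cite[(4.5)]{burgos-gubler-jell-kuennemann1}), and conclude by the identity $H=2\pi b$ on the isotropic real span of $U_1$. The constants check out ($i\pi\,b(u_k',u_l')$ on both sides), and the descent to $A^{\rm an}$ via injectivity of $p^*$ is the intended, if implicit, final step.
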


\begin{proof}
Write $\trop_\infty(A,H,U_1,U_2)=(\Lambda,M, [\cdot , \cdot],\lambda)$. Let us pick a $\Z$-basis $u_1, \ldots, u_n$ of $M$. 
The corresponding holomorphic  coordinates $z_1, \ldots, z_n$ on $V_A$ and $w_1, \ldots, w_n$ on $T^{\rm an}$ are given by $\trop^*(u_j)=w_j= \exp(2  \pi i z_j)$. 
We have $dw_j=2\pi i  w_j dz_j$ and we take 
\begin{equation*}
\trop^{\ast}(d'u_{j})=-\frac{dw_{j}}{2 {\sqrt{\pi}}  w_{j}},\quad
\trop^{\ast}(d''u_{j})=-\frac{id\bar w_{j}}{2{\sqrt{\pi}} \bar w_{j}}.
\end{equation*}
from \cite[(4.5)]{burgos-gubler-jell-kuennemann1}. 
Using also that $H=2\pi {b}$ on the isotropic real span of $U_1$ (see \eqref{complex-trop-surjective-eq1'}), we deduce easily  the claim  from  the definition of the canonical forms.	
\end{proof}

\begin{rem} \label{line bundles}
Observe that $\mathbb T^{\rm an}$ is not simply connected.
In fact, it is a domain of holomorphy and hence 
a complex Stein manifold.
Then we can  use {Cartan's theorem B} to compute the holomorphic Picard group of 
$\mathbb T^{\rm an}$ 
via the exponential sequence as $H^2(\mathbb T^{\rm an},\Z)$,
which, by the Künneth formula, is easily seen to be non-zero.
\end{rem}

Using that $H^2(A^{\rm an}, \Z)\simeq \Lambda^2 \Hom(U_A,\Z)$ (see \cite[Lemma 2.1.3]{birkenhake-lange2004}), the first Chern class associates to any line bundle $L$ on $A$ an integer valued alternating form on the lattice $U_A$. 
We say that \emph{$L$ induces the polarization $H$} if the first Chern class of $L$ corresponds to $E = \im H$. 
Clearly, such line bundles are ample and they are unique up to $\Pic^0(A)$. 
Moreover, any polarization $H$ on $A$ is induced by an ample line bundle
{\cite[Theorem 2.2.3 and Proposition 4.5.2]{birkenhake-lange2004}.}

\begin{lemma}\label{lemm:5}
  Let $(A,H,U_{1},U_{2})$ be a decomposed polarized abelian variety.
  Let $L$ be an ample line bundle on $A$ inducing the polarization and
  $p\colon \torus ^{\an}\to A^{\an}$ the Tate uniformization
  corresponding to the decomposition. Then $p^{\ast}L^{\an}$ is a
  trivial line bundle on $\torus^{\an}$. 
\end{lemma}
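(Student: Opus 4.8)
The plan is to trivialize $p^{\ast}L^{\an}$ by using the explicit description of the holomorphic Picard group of $\torus^{\an}$ and the compatibility of the Tate uniformization with the classical one. Recall that $\torus^{\an}$ is a Stein manifold (Remark \ref{line bundles}), so by Cartan's Theorem B the exponential sheaf sequence gives $\Pic(\torus^{\an}) \cong H^{2}(\torus^{\an},\Z)$, the isomorphism being the first Chern class. Hence it suffices to show that $c_{1}(p^{\ast}L^{\an})=0$ in $H^{2}(\torus^{\an},\Z)$.

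First I would fit the two uniformizations of $A^{\an}$ into a commutative diagram. Write the classical uniformization $q\colon V_{A}\to A^{\an}$ with $V_{A}/U_{A}=A^{\an}$, the decomposition $U_{A}=U_{1}\oplus U_{2}$ with $N=U_{1}$, and the exponential map $e_{N}\colon V_{A}=N_{\C}\to \torus^{\an}$ from \eqref{complex exp and trop}. By construction of the complex Tate uniformization in Definition \ref{definition complex Tate uniformization}, the composite $p\circ e_{N}\colon V_{A}\to A^{\an}$ equals $q$; equivalently, $e_{N}$ descends from the isomorphism $V_{A}/U_{1}\xrightarrow{\sim}\torus^{\an}$ and $A^{\an}=(V_{A}/U_{1})/\Lambda$ with $\Lambda=e_{N}(U_{2})$. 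Therefore $e_{N}^{\ast}p^{\ast}L^{\an}=q^{\ast}L^{\an}$, and since $V_{A}$ is contractible (indeed Stein and contractible), $q^{\ast}L^{\an}$ is holomorphically trivial. So $e_{N}^{\ast}c_{1}(p^{\ast}L^{\an})=0$.

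Now I would pull back along $e_{N}$ on cohomology. The key point is that $e_{N}\colon N_{\C}\to \torus^{\an}=N\otimes_{\Z}\C^{\times}$ is, up to the real-analytic identification $N_{\C}\cong N_{\R}\times N_{\R}$ and $\torus^{\an}\cong N_{\R}\times (N_{\R}/N)$, the product of $\id_{N_{\R}}$ with the universal covering $N_{\R}\to N_{\R}/N\cong (S^{1})^{n}$; more usefully, $e_{N}$ induces a surjection $e_{N\ast}\colon \pi_{1}(V_{A})=0$, which forces all of $H^{2}$ of $\torus^{\an}$ to be detected after a further argument — but this is the wrong direction. Instead I would argue directly: the inclusion $(S^{1})^{n}=N\otimes_{\Z}U(1)\hookrightarrow \torus^{\an}$ is a homotopy equivalence (polar decomposition deformation retract, or $\torus^{\an}\simeq N_{\R}\times(S^{1})^{n}$), so $H^{2}(\torus^{\an},\Z)\cong H^{2}((S^{1})^{n},\Z)\cong \Lambda^{2}\Hom(N,\Z)$, and under $e_{N}$ restricted to $iN_{\R}\to \torus^{\an}$ (which is a homotopy equivalence onto the compact torus part) the induced map on $H^{2}$ is an isomorphism. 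Since $e_{N}^{\ast}c_{1}(p^{\ast}L^{\an})=0$, we conclude $c_{1}(p^{\ast}L^{\an})=0$, hence $p^{\ast}L^{\an}$ is trivial.

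The main obstacle is making the cohomological comparison fully rigorous: one must identify $H^{2}(\torus^{\an},\Z)$ correctly and check that the pullback $e_{N}^{\ast}$ on $H^{2}$ is injective on the relevant class (or an isomorphism onto $\Lambda^{2}\Hom(N,\Z)$), rather than just noting $e_{N}^{\ast}$ vanishes. An alternative, possibly cleaner route that avoids this: use $c_{1}(L^{\an})\in H^{2}(A^{\an},\Z)\cong \Lambda^{2}\Hom(U_{A},\Z)$ corresponding to $E$, and note that the map $p^{\ast}\colon H^{2}(A^{\an},\Z)\to H^{2}(\torus^{\an},\Z)$ factors, via the homotopy equivalences $A^{\an}\simeq (S^{1})^{2n}$ and $\torus^{\an}\simeq (S^{1})^{n}$, through restriction of alternating forms from $U_{A}=U_{1}\oplus U_{2}$ to $U_{2}$ (the covering group of the compact part of $\torus^{\an}$ being $U_{2}$); since $U_{2}$ is isotropic for $E$ by the definition of a decomposition (Remark \ref{trop-complex-pol-ab-var}), this restriction is zero, so $p^{\ast}c_{1}(L^{\an})=0$ and again $p^{\ast}L^{\an}$ is trivial. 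I would present this second argument as the proof, as it uses exactly the isotropy hypothesis built into Definition \ref{decomposed abelian varieties} and matches the bookkeeping already set up before Lemma \ref{lemm:5}.
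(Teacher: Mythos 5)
Your second argument (the one you say you would present) is essentially correct, and it is a genuinely different route from the paper's. The paper argues with factors of automorphy: it takes the classical theta cocycle $e_u$ of a line bundle $L_0$ inducing $H$, which Birkenhake--Lange show can be chosen trivial on $U_1$, twists by the character describing $L\otimes L_0^{-1}\in\Pic^0(A)$, and then modifies the trivialization by $\exp(2\pi i\ell)$ to kill the character on $U_1$, so that the trivialization descends to $V_A/U_1\simeq\torus^{\an}$. Your route instead exploits Remark \ref{line bundles}: $\Pic(\torus^{\an})\cong H^2(\torus^{\an},\Z)$ by Cartan B and the exponential sequence, so it suffices to kill $p^*c_1(L^{\an})$, which by functoriality is the restriction of the alternating form $E$ to the fundamental group of $\torus^{\an}$, and that restriction vanishes by isotropy. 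This is cleaner and uses exactly the hypothesis built into Definition \ref{decomposed abelian varieties}; what the paper's cocycle computation buys in exchange is an explicit descended trivialization (and it stays entirely within the Appell--Humbert formalism it needs anyway for the surrounding material).

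Two corrections are needed. First, you identify the relevant lattice incorrectly: $\pi_1(\torus^{\an})$ is $U_1=N$, not $U_2$. Indeed $e_N=\id_N\otimes e$ has kernel $N\otimes\Z=U_1$, so $\torus^{\an}=V_A/U_1$ and $p_*\colon\pi_1(\torus^{\an})\to\pi_1(A^{\an})$ is the inclusion $U_1\hookrightarrow U_A$; the compact torus $N\otimes U(1)$ is the image of the \emph{real} subspace $N_\R$, while $e_N(iN_\R)$ is the contractible radial part $N\otimes\R_{>0}$. So $p^*$ on $H^2$ is restriction of alternating forms to $U_1$, not $U_2$. The conclusion survives only because $U_1$ is also isotropic for $E$ — but you must say that, since invoking isotropy of $U_2$ proves nothing about this restriction. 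Second, your first argument should be dropped entirely: $e_N^*$ on $H^2$ is the zero map (its source $V_A$ is contractible), so $e_N^*c_1(p^*L^{\an})=0$ carries no information, and the claim that $e_N|_{iN_\R}$ is a homotopy equivalence onto the compact part inducing an isomorphism on $H^2$ is false on both counts.
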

\begin{proof}
{Any line bundle $L^{\an}$ on $A^{\an}$ has trivial pull-back to $V_A$
  and hence we can write $L^{\an}= (V \times \C)/U_A$ for an
  action of the group $U_A$ given by
  \begin{displaymath}
  U_A \times (V_A \times \C) \longrightarrow V_A \times \C, \quad 
(u;v,z) \longmapsto (v+u, \alpha_u(v)z)  
  \end{displaymath}
for a cocycle $u \mapsto \alpha_u$  of the group $U_A$ in $H^0(V_A,
\Ocal_{V_A}^\times)$. 
If $L \in \Pic^0(A)$, then $c_1(L)$ is zero as well as the corresponding  alternating form $E$. 
Then there is a unique character $\chi$ of $U_A$ such that $L^{\an}$ is given by the cocycle $\alpha_u\equiv \chi(u)$ \cite[Proposition 2.2.2]{birkenhake-lange2004}.}

Now let $L$ be an (ample) line bundle inducing the polarization
  $H$. To show that $p^*(L^{\an})$ is trivial on $\torus^{\an}$, we
  have to prove that $L^{\an}$ can be given 
  by a cocycle $u \to \alpha_u$ with $\alpha_u \equiv 1$ for all $u
  \in U_1$ as then the trivialization descends to $V_A/U_1 \simeq
  \torus^{\an}$. 
It is shown in \cite[Lemma 3.2.2 and thereafter]{birkenhake-lange2004} that there is a line bunde $L_0$ on $A$ {which induces the polarization $H$ and} a cocycle $u \mapsto e_u$ of $L_0$ which is trivial on $U_1$ (note that $U_1 = \Lambda_2$ and $U_2= \Lambda_1$ in \cite{birkenhake-lange2004}). 
This $e_u$ is the factor of automorphy of classical theta functions for $L_0$. 

{Since $L\otimes L_0^{-1} \in \Pic^0(A)$, there is a character $\chi$
  such that $u \to \chi(u) e_u$ is a cocycle for $L$. There is a
  unique $\ell_0 \in \Hom(U_A, \Z)$ such that $\chi(u)=\exp(2\pi i
  \ell_0(u))$ for all $u \in U_A$. 
If we change the trivialization to a frame $h \in
H^0(V_A,\Ocal_{V_A}^\times)$, then the cocycle changes to
\begin{displaymath}
  \alpha_u(v) = h(v+u)^{-1} e_u(v) h(v) \quad (u \in U_A, v \in V_A).
\end{displaymath}
Now let $\ell$ be the unique complex linear form on $V_A$ which agrees
with $\ell_0$ on $U_1$. Then $h \coloneqq \exp \circ (2\pi i \ell)$
does the job as $\alpha_u = \exp(-2 \pi i \ell(u)) \chi(u) e_u$ is
trivial on $U_1$.}
\end{proof}

\subsection{Corresponding polarized abelian varieties and psh functions} 
\label{subsection: totally degenerate and complex abelian varieties}
We continue to denote by $K$ a non-archimedean field with valuation 
{$v=-\log |\phantom{a}|$ and value group $\Gamma$}. 
We define a correspondence between  polarized totally degenerate split abelian varieties over  $K$  and  decomposed polarized complex abelian varieties. 
The goal {of this subsection} is to relate psh functions on corresponding abelian varieties.

\begin{definition}
{A polarized totally degenerate split abelian variety
$(A_v,{\phi})$ over the complete non-archimedean field
$K$ \emph{corresponds} to the decomposed complex
polarized abelian variety $(A_\infty,H,U_1,U_2)$ if their tropicalizations
$\trop_v(A_v,{\phi})$ and $\trop_\infty(A_\infty,H,U_1,U_2)$
are isomorphic as polarized tropical abelian varieties.}
\end{definition}

\begin{rem} \label{remark about existence of corresponding abelian varieties}
For any polarized totally degenerate split abelian variety over $K$ there exists a corresponding decomposed complex polarized abelian variety by Proposition \ref{complex-trop-surjective}.
Furthermore a decomposed complex polarized abelian variety, whose associated
polarized tropical abelian variety is $\Gamma$-rational, corresponds to a
polarized totally degenerate split abelian variety over $K$ by \ref{tropicalization-non-archimedean-abelian-variety}.
However, corresponding abelian varieties do not determine each other up to isomorphism.
\end{rem}

{In the following consider a polarized totally degenerate split 
abelian variety $(A_v,{\phi})$ over $K$ which corresponds
to the decomposed complex polarized abelian variety $(A_\infty,H,U_1,U_2)$.
We will read the isomorphism of associated polarized tropical
abelian varieties as an identification.
Hence we have, 
\begin{equation}\label{corr-abelian-var}
\trop_v(A_v,{\phi})
=(\Lambda,M, [\cdot , \cdot],\lambda)
=\trop_\infty(A_\infty,H,U_1,U_2).
\end{equation}
{From \ref{tropicalization-non-archimedean-abelian-variety} and Definition \ref{definition complex Tate uniformization},} we obtain the  Tate uniformizations
\[
\pi_v\colon \T_v^\an\longrightarrow A_v^\an,\,\,\,\,\,
\pi_\infty \colon\T_\infty^\an\longrightarrow A_\infty^\an,
\]
{with kernels identified with $\Lambda$ along the tropicalizations.} 
Here $\T_v^\an$ and $\T_\infty^\an$ is the non-archimedean
resp.~the archimedean analytification of the split
algebraic torus with cocharacter lattice $N\coloneqq \Hom_\Z(M,\Z)$.}
From our data, we get 
complex $(1,1)$-forms $\theta_\infty\coloneqq \theta_{(A,H)}$ on $\torus_\infty^\an$
and $\omega_\infty\coloneqq \omega_{(A,H)}$ on $A_\infty^\an$,
smooth $(1,1)$-forms 
$\theta_v\coloneqq \theta_{(A,{\phi}_v)}$ on $\torus_v^\an$
and $\omega_v\coloneqq \omega_{(A,{\phi}_v)}$ on $A_v^\an$,
and Lagerberg $(1,1)$-forms 
$\theta\coloneqq \theta_{(\Lambda,M, [\cdot , \cdot],\lambda)}$ on $N_\R$
and $\omega\coloneqq \omega_{(\Lambda,M, [\cdot , \cdot],\lambda)}$ 
on $N_\R/\Lambda$.
We have seen in Proposition \ref{example-canonical-archimedean-metric} and Definition \ref{definition canonical form on totally degenerate abelian variety} that $\trop_\infty^*(\theta)=\theta_\infty$ and 
	$\trop_v^*(\theta)=\theta_\infty$.

\begin{art} \label{finiteness-remark}
Let $U$ be an open subset of $N_\R$. The cone of $\theta$-psh functions on $U$ is denoted by $\PSH(U,\theta)$. 
For $w \in \{\infty, v\}$, we have 
$U_w\coloneqq \trop_w^{-1}(U)\subset \torus_w^\an$ and
$W\coloneqq \trop_w^{-1}(U)\subset \torus_w^\an$
are invariant under the action of the compact torus $\SS_w$. 
We denote by  $\PSH(U_w,\theta_w)^{\SS_w}$ 
the cone of $\theta_w$-psh 
functions which are invariant
under the action of $\SS_w$. 
Recall from Theorems \ref{main comparison thm of psh} and
\ref{main correspondence theorem for semipositiv in non-arch} that
we have a canonical isomorphism
\begin{equation}\label{psh-corr}
\PSH(U,\theta)\longrightarrow \PSH(U_w,\theta_w)^{\SS_w},\quad
\varphi\longmapsto \varphi_\infty\coloneqq \varphi\circ\trop_w
\end{equation}
of cones. By Proposition \ref{lemm:4}, all these functions $\varphi$ and $\varphi_w$ are continuous. If they are not identically $-\infty$, then the functions are finite.
\end{art}

\begin{art} \label{theta-psh functions on abelian varieties}
We assume that $U$ is a $\Lambda$-invariant subset of $N_\R$. 
Equivalently, this means that there is an open subset $W$ of $N_\R/\Lambda$ with $U = \pi^{-1}(W)$ for the quotient map $\pi\colon N_\R \to N_\R/\Lambda$. 
Then $U_w$ is $\Lambda$-invariant  and we have $U_w = \pi_w^{-1}(W_w)$  for the open subset $W_w \coloneqq \trop_w^{-1}(W)$ of $A_w^\an$ for $w\in \{v,\infty\}$. 

{If we apply Definition \ref{theta psh for Lagerberg} literally, then we get a natural generalization of the class of $\theta$-psh functions for any tropical space and any closed symmetric $(1,1)$-current $\theta$.}
We apply this for the open subset $W$ of the tropical space $N_R/\Lambda$ and the canonical form $\omega$, then the $\omega$-psh functions on $W$ are given by the $\Lambda$-invariant $\theta$-psh functions on $U$ and for the corresponding cones we get  
\[
\PSH(W,\omega) \simeq \PSH(U,\theta)^\Lambda.
\]
Note that $\SS_w$-acts on $A_w^{\rm an}$ in a unique way such that the Tate uniformization  $\pi_w\colon \T_w^{\rm an} \to A_w^{\rm an}$
becomes $\SS_w$-equivariant. We denote by $\PSH(W_w, \omega_w)^{\SS_w}$ the cone of $\SS_w$-invariant $\omega_w$-psh functions on $W_w$. 
\end{art}

{\begin{prop}\label{lambda-invariance-cor}
For any  subset $W$ of $N_\R/\Lambda$ and $w \in \{\infty,v\}$,  
 we have an isomorphism 
	\begin{equation}
	\label{psh-corr-non-arch-arch-descent}
	\PSH(W, \omega)
	\stackrel{\sim}{\longrightarrow} \PSH(W_w, \omega_w)^{\SS_w}, \quad \varphi \longmapsto \varphi \circ \tropbar_w.
	\end{equation}
\end{prop}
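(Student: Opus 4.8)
The plan is to deduce Proposition \ref{lambda-invariance-cor} from the already-established local correspondence in Theorem \ref{main correspondence theorem for theta-psh functions in intro} (equivalently, from the isomorphism \eqref{psh-corr} recalled in \ref{finiteness-remark}), using that $W$ can be locally lifted to an isomorphic open subset of $N_\R$. First I would set $U \coloneqq \pi^{-1}(W) \subset N_\R$, which is a $\Lambda$-invariant open set, and observe as in \ref{theta-psh functions on abelian varieties} that $U_w \coloneqq \trop_w^{-1}(U) = \pi_w^{-1}(W_w)$ is $\Lambda$-invariant in $\torus_w^\an$, where $W_w \coloneqq \tropbar_w^{-1}(W) \subset A_w^\an$. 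By \ref{theta-psh functions on abelian varieties} we already have $\PSH(W,\omega) \simeq \PSH(U,\theta)^\Lambda$ (this is essentially a definition: an $\omega$-psh function on $W$ is the same as a $\Lambda$-invariant $\theta$-psh function on $U$, since $\pi^*\omega = \theta$ and plurisubharmonicity is local by Proposition \ref{prop:1-3}), and similarly $\PSH(W_w,\omega_w)^{\SS_w} \simeq \PSH(U_w,\theta_w)^{\SS_w,\Lambda}$ because $\pi_w^*\omega_w = \theta_w$ and $\Lambda$-invariance of a function on $W_w$ pulled back to $U_w$ is automatic while $\SS_w$-invariance is preserved under this identification.

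The key step is then to check that the isomorphism \eqref{psh-corr}, $\varphi \mapsto \varphi \circ \trop_w$, from $\PSH(U,\theta)$ to $\PSH(U_w,\theta_w)^{\SS_w}$, restricts to an isomorphism on the $\Lambda$-invariant subcones. This is immediate in both directions: the map $\trop_w \colon \torus_w^\an \to N_\R$ is $\Lambda$-equivariant (with $\Lambda$ acting by translation on $N_\R$ via the identification $\Lambda \subset N_\R$, and acting on $\torus_w^\an$ through the uniformization $\pi_w$), so $\varphi \circ \trop_w$ is $\Lambda$-invariant whenever $\varphi$ is, and conversely since \eqref{psh-corr} is a bijection, a $\Lambda$-invariant $\theta_w$-psh function $\psi$ on $U_w$ descends to a unique $\theta$-psh function $\varphi$ on $U$ with $\psi = \varphi \circ \trop_w$, and $\Lambda$-invariance of $\psi$ forces $\varphi$ to be $\Lambda$-invariant as well (using surjectivity of $\trop_w$). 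Chaining these identifications gives
\[
\PSH(W,\omega) \simeq \PSH(U,\theta)^\Lambda \simeq \PSH(U_w,\theta_w)^{\SS_w,\Lambda} \simeq \PSH(W_w,\omega_w)^{\SS_w},
\]
and one checks that the composite is exactly $\varphi \mapsto \varphi \circ \tropbar_w$ by commutativity of the square relating $\trop_w$, $\tropbar_w$, and the two quotient maps $\pi$, $\pi_w$.

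I do not expect a serious obstacle here; the only point requiring a little care is the bookkeeping of the three $\Lambda$-actions (on $N_\R$, on $\torus_w^\an$, and the induced trivial-on-the-quotient structure on $A_w^\an$) and verifying that the local correspondence of Theorem \ref{main correspondence theorem for theta-psh functions in intro} is genuinely equivariant for these actions rather than merely compatible pointwise. Since $\SS_w$ and the translation $\Lambda$-action on $\torus_w^\an$ commute, and the correspondence $\varphi \mapsto \varphi \circ \trop_w$ is defined by pullback along an equivariant map, this equivariance is formal. One should also note that by Proposition \ref{lemm:4} all functions involved are automatically continuous, so there are no measurability or integrability subtleties beyond those already handled in the local statements, and the case $\varphi \equiv -\infty$ (on a connected component) is trivially compatible. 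This is why the statement of the proposition, unlike Theorem \ref{main correspondence theorem for theta-psh functions in intro}, does not need to separately impose continuity.
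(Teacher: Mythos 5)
Your proof is correct and is exactly the argument the paper gives (the paper's proof is the single sentence that the isomorphism \eqref{psh-corr} for $U_w = \pi_w^{-1}(U_w)$ is $\Lambda$-equivariant); you have simply unpacked the identifications $\PSH(W,\omega)\simeq\PSH(U,\theta)^\Lambda$ and $\PSH(W_w,\omega_w)^{\SS_w}\simeq\PSH(U_w,\theta_w)^{\SS_w,\Lambda}$ from \ref{theta-psh functions on abelian varieties} and verified the equivariance explicitly. No gaps.
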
}

\begin{proof}
	This holds as the isomorphism \eqref{psh-corr} for $U_w \coloneqq \pi_w^{-1}(U)$ is $\Lambda$-equivariant.
\end{proof}

\subsection{Monge--Amp\`ere equations on corresponding abelian varieties} \label{subsection: MA equations on corresponding abelian varieties}

Let $K$ be a non-trivially valued non-archimedean field with valuation $v$. 
The goal of this subsection is to show existence of an invariant solution of the  Monge--Amp\`ere equation on a polarized totally degenerate split abelian variety over $K$ with respect to a positive Radon-measure supported on the canonical skeleton. 
We will generalize our result to arbitrary abelian varieties in Section \ref{section: short variant} where we also address uniqueness.

\begin{art} \label{setup for corresponding measures}
Let $(A_v,{\phi})$ be a polarized totally degenerate split abelian variety  over $K$ which corresponds to the decomposed complex polarized abelian variety $(A_\infty,H,U_1,U_2)$, i.e.	
\begin{equation*}
\trop_v(A_v,{\phi})
=(\Lambda,M, [\cdot , \cdot],\lambda)
=\trop_\infty(A_\infty,H,U_1,U_2)
\end{equation*}
as in \eqref{corr-abelian-var}. 
For $w \in \{\infty,v\}$, we have  the  Tate uniformization $\pi_w\colon \T_w^\an\to A_w^\an$ with kernel identified with $\Lambda$ along $\trop_w$, where $\T$  is the split algebraic torus with character lattice $M$ and cocharacter lattice $N$.
 {Observe that} the maximal compact torus $\SS_w$ in $\torus_w^\an$ acts on $A_w^\an$.
Let $\omega \in A^{1,1}(N_\R/\Lambda)$ be the canonical form of the polarized tropical abelian variety $(\Lambda,M, [\cdot , \cdot],\lambda)$ introduced in  \eqref{canonical form for ptav}. 
If $\omega_w$ denotes the canonical form of the polarization of $A_w$, then it follows from \eqref{canonical-non-arch-theta} and  \eqref{example-canonical-archimedean-metric-eq1} that $\omega_w = \tropbar_w^*(\omega)$ and hence $\omega_w$ is $\SS_w$-invariant.
\end{art}

\begin{art} \label{related toric measures-new}
In the setup of \ref{setup for corresponding measures}, we  want to fix corresponding invariant measures. 
We start with a positive Radon measure $\mu$ on the tropical abelian variety $N_\R/\Lambda$.

By push-forward along the proper map $\iota_v$ in \eqref{tot-deg-skeleton} to the canonical skeleton of the totally degenerate abelian variety $A_v$, we get a
positive Radon measure $\mu_v{\coloneqq \iota_v(\mu)}$ on  $A_v^\an$. 
	We have
	$\tropbar_v(\mu_v)=\mu$ for the proper map $\tropbar_v$.

	By \cite[Corollary 5.1.17]{burgos-gubler-jell-kuennemann1}, the positive $\SS_\infty$-invariant Radon measures on $\torus_\infty^{\rm an}$ correspond bijectively to the positive Radon measures on $N_\R$, i.e.~there is a unique $\SS_\infty$-in\-va\-riant positive Radon measure  $\mu_\infty$ on $A_\infty^{\rm an}$ with  $\tropbar_\infty(\mu_\infty)=\mu$.

	Let $n$ denote the common dimension of $A_\infty$, $A_v$ and $N_\R$.
	In the following we assume that 
	\begin{equation}\label{normalize-measure1}
	\mu(N_\R/\Lambda)= n!\,
	\sqrt{\deg(\Lambda,M, [\cdot , \cdot],\lambda)} 
	= \int_{N_\R/\Lambda} \omega^{\wedge n}
	\end{equation}
	where we used \eqref{volume of Lambda} on the right. Using that the tropicalization functor \eqref{functor-complex-pol-trop} keeps the degree and that $\tropbar_\infty^*$ leaves integrals invariant, we  
	note that \eqref{normalize-measure1} is equivalent to  
	\begin{equation}\label{normalize-measure2}
	\mu_\infty(A_\infty^\an)=n!\,\sqrt{\deg_{H}(A_\infty^\an)} 
	=   \int_{A_\infty^\an}\omega_\infty^{\wedge n}.
	\end{equation}
	Using that tropicalization \eqref{functor-tropicalization-non-archimedean-abelian-variety-eq1} preserves the degree, we deduce that \eqref{normalize-measure1} is 
	equivalent to 
	\begin{equation}\label{normalize-measure3}
	\mu_v(A_v^\an)=n!\,\sqrt{\deg_{{\phi}}(A_v)}
	=\int_{A_v^\an}\omega_{v}^{\wedge n}.
	\end{equation}
These equivalent conditions are necesssary to solve the following Monge--Amp\`ere equations.
\end{art}

\begin{prop}\label{equivalent-ma-equations}
In the setup of \ref{setup for corresponding measures} and with the measures $\mu,\mu_\infty,\mu_v$ from \ref{related toric measures-new}, the maps $\varphi \mapsto \varphi_w \coloneqq \varphi \circ \tropbar_w$ for $w \in \{\infty,v\}$ induce bijections between
\begin{enumerate}
\item\label{tot-deg-ma-corr-i}
the set of functions $\varphi\in\PSH(N_\R/\Lambda,\omega)\setminus \{-\infty\}$ such that $(\omega+d'd''\varphi)^{\wedge n}=\mu$,
\item\label{tot-deg-ma-corr-ii}
the set of functions $\varphi_\infty\in\PSH(A_\infty^\an,\omega_\infty)^{\SS_\infty}\setminus \{-\infty\}$ such that $(\omega_\infty+dd^c\varphi_\infty)^{\wedge n}=\mu_\infty$,
\item\label{tot-deg-ma-corr-iii}
the set of functions 
		$\varphi_v\in \PSH(A_v^\an,\omega_v)^{\SS_v}\setminus \{-\infty\}$
		such that $(\omega_v+d'd''\varphi_v)^{\wedge n}=\mu_v$.
	\end{enumerate}
\end{prop}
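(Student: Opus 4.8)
The statement is essentially a combination of the $\theta$-psh correspondence in Theorem \ref{main correspondence theorem for theta-psh functions on av in intro} (established via Proposition \ref{lambda-invariance-cor}) with the compatibility of Monge--Amp\`ere measures from the local toric comparison in Proposition \ref{local MA equation} and Remark \ref{compatibility of MA-measures}, pushed down to the quotient by $\Lambda$. The plan is to reduce the global statement on $N_\R/\Lambda$ (resp.~$A_w^\an$) to a local statement on $N_\R$ (resp.~$\torus_w^\an$), where the already-proven toric results apply directly, and then to patch the local equivalences together using $\Lambda$-invariance.

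\textbf{Step 1: Reduce to open subsets of $N_\R$.} First I would cover $N_\R/\Lambda$ by finitely many open sets $W$ each of which lifts homeomorphically along $\pi\colon N_\R \to N_\R/\Lambda$ to an open subset $U \subset N_\R$ small enough that $\pi|_U$ is an isomorphism onto $W$. Over such $W$, the sheaf $A^{\cdot,\cdot}_{N_\R/\Lambda}$, the forms $\omega$ and $\omega_w$, the $\theta$-psh condition, and the Bedford--Taylor / Monge--Amp\`ere operators all pull back along $\pi$ (resp.~$\pi_w$) to their toric counterparts on $U$ (resp.~$U_w \coloneqq \trop_w^{-1}(U) \subset \torus_w^\an$). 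Concretely, over $U$ the canonical form $\omega$ coincides with the Lagerberg form $\theta$ of \eqref{canonical form for ptav}, which fits the framework of \S\ref{subsection: Lagerberg setting} with $N_\Sigma = N_\R$, $\Psi \equiv 0$ and the smooth Green function $g_0(u) = \tfrac12 b(u,u)$; similarly $\omega_w|_{U_w} = \theta_w|_{U_w} = \trop_w^*(\theta)$ is the first Chern form of the smooth reference metric $\metr_0$ on the trivial bundle over $U_w$ (using Lemma \ref{lemm:5} and Remark \ref{remark-canonical-non-archimedean-metric} that the pull-back bundle is trivial, so everything is genuinely of the form treated in Theorems \ref{main comparison thm of psh} and \ref{main correspondence theorem for semipositiv in non-arch}).

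\textbf{Step 2: Apply the local toric equivalences.} On each such $U$, Proposition \ref{local MA equation} (its toric core, applied with $\Sigma$ the trivial fan of $N_\R$, which is smooth) gives exactly the equivalence of the three Monge--Amp\`ere equations $(\theta + d'd''\varphi)^{\wedge n} = \mu|_U$, $(\theta_\infty + dd^c \varphi_\infty)^{\wedge n} = \mu_\infty|_{U_\infty}$, and $(\theta_v + d'd''\varphi_v)^{\wedge n} = \mu_v|_{U_v}$, under the correspondences $\varphi_w = \varphi \circ \trop_w$ of Theorems \ref{main comparison thm of psh} and \ref{main correspondence theorem for semipositiv in non-arch}. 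Here I use that $\mu_\infty$ is the $\SS_\infty$-invariant measure on $\torus_\infty^\an$ with $\trop_{\infty,*}\mu_\infty = \mu$ and $\mu_v$ is the measure on the canonical skeleton with $\trop_{v,*}\mu_v = \mu$ — which is precisely how $\mu_w$ was constructed in \ref{related toric measures-new} (after restricting to $U$), since $\tropbar_w \circ \pi_w = \pi \circ \trop_w$. One must also check that on $U_w$ the local ($\theta_w$-psh, locally bounded) Monge--Amp\`ere operator agrees with the global non-pluripolar one used on $A_w^\an$ when the global function is $\SS_w$-invariant and continuous; but the functions in \ref{tot-deg-ma-corr-i}--\ref{tot-deg-ma-corr-iii} that are not identically $-\infty$ are continuous (Proposition \ref{lemm:4}, Theorem \ref{thm:1}) hence locally bounded, so this is immediate from locality of both operators.

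\textbf{Step 3: Patch and conclude.} By Proposition \ref{lambda-invariance-cor}, the assignment $\varphi \mapsto \varphi \circ \tropbar_w$ is already a bijection between $\PSH(N_\R/\Lambda,\omega)\setminus\{-\infty\}$ and $\PSH(A_w^\an,\omega_w)^{\SS_w}\setminus\{-\infty\}$; it remains only to see that it matches the solution sets. An equality of positive Radon measures is a local condition, the open sets $W$ (equivalently the $U$) cover, and on each we have the equivalence from Step 2; hence $(\omega + d'd''\varphi)^{\wedge n} = \mu$ on $N_\R/\Lambda$ iff the corresponding equation holds locally everywhere iff (pulling back) the toric equation holds on each $U_w$ iff $(\omega_w + dd^c\varphi_w)^{\wedge n} = \mu_w$ globally on $A_w^\an$. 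Combining with the bijection of Proposition \ref{lambda-invariance-cor} gives the three asserted bijections, and transitivity of these bijections (composing the $w=\infty$ one with the inverse of the $w=v$ one through the common middle term) yields the remaining bijection between \ref{tot-deg-ma-corr-ii} and \ref{tot-deg-ma-corr-iii}.

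\textbf{Main obstacle.} The only genuinely delicate point is the bookkeeping in Step 2 — making sure that the \emph{global} non-pluripolar Monge--Amp\`ere operators on $A_\infty^\an$ and $A_v^\an$ (as in \ref{global complex MA operator and Ecal} and \ref{global non-arch MA operator and Ecal}), which are defined by limiting procedures, restrict on the invariant locus to the honest local operators that the toric Proposition \ref{local MA equation} speaks about, and that the three normalizations of $\mu_w$ in \ref{related toric measures-new} are the ones compatible with $\mu|_U$ under restriction. Once one observes that for continuous (hence locally bounded) invariant $\theta_w$-psh functions the global and local Monge--Amp\`ere measures coincide by locality, and that $\tropbar_w\circ\pi_w = \pi\circ\trop_w$ intertwines all the push-forwards, the argument is routine.
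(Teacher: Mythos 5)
Your proposal is correct and follows essentially the same route as the paper: the paper likewise invokes Proposition \ref{lambda-invariance-cor} for the bijection of psh cones, reduces the equality of Monge--Amp\`ere measures to local lifts $U\subset N_\R$ of small opens $W\subset N_\R/\Lambda$, and applies Proposition \ref{local MA equation} with $\theta=\pi^*(\omega)$ in the toric setup of \S\ref{subsection: Lagerberg setting}. Your extra care about matching the global non-pluripolar operators with the local ones is harmless but unnecessary here, since the paper notes that all functions in play are finite and continuous, so only the local operators of the first part of \S\ref{sec:monge-ampere-equat} are involved.
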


It follows from \ref{finiteness-remark} and \ref{theta-psh functions on abelian varieties} that the functions $\varphi,\varphi_\infty,\varphi_v$ are finite and continuous, hence the Monge--Amp\`ere measures in \ref{tot-deg-ma-corr-i}--\ref{tot-deg-ma-corr-iii} are defined as in the first part of \S \ref{sec:monge-ampere-equat}.

\begin{proof}
We have seen in Proposition \ref{lambda-invariance-cor} that  we have an isomorphism 
$$	\PSH(N_\R/\Lambda, \omega)
\stackrel{\sim}{\longrightarrow} \PSH(A_w^{\rm an}, \omega_w)^{\SS_w}, \quad \varphi \longmapsto \varphi_w \coloneqq \varphi \circ \tropbar_w$$
of cones for  $w \in \{\infty,v\}$. It remains to see that $\varphi$ is a solution of the tropical Monge--Amp\`ere equation in   \ref{tot-deg-ma-corr-i} if and only if $\varphi_w \coloneqq \varphi \circ \tropbar_w$ is a solution of the Monge--Amp\`ere equation in \ref{tot-deg-ma-corr-ii} or \ref{tot-deg-ma-corr-iii}, respectively. This can be checked
locally  on an open subset $W$ of $N_\R/\Lambda$ and on the corresponding open subsets $W_w \coloneqq \tropbar_w^{-1}(W)$ of $A_w^{\rm an}$. Then we may assume that  the quotient homomorphism $\pi\colon N_\R \to N_\R/\Lambda$ maps an open subset $U$ isomorphically onto $W$ and hence the open subset $U_w \coloneqq \trop_w^{-1}(U)$ of the Tate uniformization $\torus_w^{\rm an}$ is mapped isomorphically onto $W_w$ by $\pi_w$. We have noticed in \ref{canonical form for tropical polarized abelian variety} that the canonical form $\theta \coloneqq \pi^*(\omega) \in A^{1,1}(N_\R)$ fits into the setup of \S \ref{subsection: Lagerberg setting}. In this toric situation, Proposition \ref{local MA equation} shows that a continuous finite $\theta$-psh function $f$ on $U$ is a solution of $(\theta+d'd''f)^{\wedge n} = \tilde\mu$ if and only if $f_w \coloneqq f \circ \trop_w$ is a solution of $(\theta_w+d'd''f_w)^{\wedge n} = \tilde\mu_w$ for the canonical form $\theta_w \coloneqq \trop_w^*(\theta) \in A^{1,1}(\torus_w^{\rm an})$. Applying this to the lift $f$ of $\varphi|_W$ to $U$ and using that $\pi_w^*(\omega_w) = \theta_w$, we deduce that 
$(\omega+d'd''\varphi)^{\wedge n}=\mu$ if and only if $(\omega_w+d'd''\varphi_w)^{\wedge n}=\mu_w$.
\end{proof}

\begin{prop} \label{complex solution is toric}
	Let $(A_\infty,H,U_1,U_2)$ be a decomposed polarized complex abelian variety of dimension $n$ with canonical form  $\omega_\infty \in A^{1,1}(A_\infty^\an)$. 
	Let $\mu_\infty$ be an $\SS_\infty$-invariant positive Radon measure on  $A_\infty^{\rm an}$  with respect to the $\SS_\infty$-action on $ A_\infty^\an$ from \ref{theta-psh functions on abelian varieties} and with  
	\begin{equation}\label{normalize-measure2'}
	\mu_\infty(A_\infty^\an)=n!\,\sqrt{\deg_{H}(A_\infty^\an)}. 
	\end{equation}
	Then 
	the complex Monge--Amp\`ere equation 
	\[
	(\omega_\infty+dd^c\varphi_\infty)^{\wedge n}=\mu_\infty
	\]
	is solved by a continuous finite  $\omega_\infty$-psh function 
	$\varphi_\infty$ on $A_\infty^\an$, 
	unique up to adding a constant. 
	Any such solution  is $\SS_\infty$-invariant.
\end{prop}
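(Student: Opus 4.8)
The plan is to obtain existence from the complex Monge--Amp\`ere theory of Guedj--Zeriahi and Dinew (Theorem~\ref{thm:4}) and then to extract continuity of the solution from the toric structure via Proposition~\ref{thm:2}, after which $\SS_\infty$-invariance and uniqueness follow formally.

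First I would assemble the hypotheses of Theorem~\ref{thm:4}. Since $(A_\infty,H)$ is a polarized complex abelian variety, $A_\infty^{\rm an}$ is a compact connected K\"ahler manifold and $\omega_\infty=\tropbar_\infty^*(\omega)$ is a K\"ahler form by Proposition~\ref{example-canonical-archimedean-metric}. Pulling back along $\tropbar_\infty$ and combining \eqref{volume of Lambda} with the fact that the functor \eqref{functor-complex-pol-trop} preserves the degree gives
\[
\int_{A_\infty^{\rm an}}\omega_\infty^{\wedge n}=\int_{N_\R/\Lambda}\omega^{\wedge n}=n!\,\sqrt{\deg_H(A_\infty^{\rm an})}=\mu_\infty(A_\infty^{\rm an}),
\]
so the mass condition of Theorem~\ref{thm:4} holds. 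It remains to check that $\mu_\infty$ does not charge pluripolar sets, the invariant analogue of Lemma~\ref{no-charge-pluriplolar-sets-toric}: the Tate uniformization $\pi_\infty\colon\torus_\infty^{\rm an}\to A_\infty^{\rm an}$ is a covering map, so $A_\infty^{\rm an}$ is covered by countably many open sets, each $\SS_\infty$-equivariantly isomorphic to some $\trop_\infty^{-1}(U)$ with $U\subset N_\R$ bounded, open and convex; a pluripolar subset of $A_\infty^{\rm an}$ restricts to a pluripolar subset of each such piece, and running the argument in the proof of Lemma~\ref{no-charge-pluriplolar-sets-toric} for the trivial fan (where the boundary condition is vacuous) shows that the $\SS_\infty$-invariant measure $\mu_\infty$ does not charge it there; countable subadditivity gives the global claim. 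Theorem~\ref{thm:4} then produces $\varphi_\infty\in\Ecal(A_\infty^{\rm an},\omega_\infty)$, unique up to an additive constant, with $(\omega_\infty+dd^c\varphi_\infty)^{\wedge n}=\mu_\infty$.

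Next I would upgrade $\varphi_\infty$ to a continuous finite function; this is where the toric picture is essential. Pull $\varphi_\infty$ back along the covering $\pi_\infty$: the function $\varphi_\infty\circ\pi_\infty$ is then $\theta_\infty$-psh with $\theta_\infty\coloneqq\pi_\infty^*\omega_\infty$, it is $\SS_\infty$-invariant, and it is not identically $-\infty$. By Proposition~\ref{example-canonical-archimedean-metric} we have $\theta_\infty=\trop_\infty^*(\theta)$, and since $\theta=c_1(0,g_0)=d'd''[g_0]$ for the smooth Green function $g_0(u)=\frac{1}{2}b(u,u)$ from \ref{canonical form for tropical polarized abelian variety}, equation \eqref{eq:3} gives $\theta_\infty=dd^c[g_0\circ\trop_\infty]$. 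Hence $\varphi_\infty\circ\pi_\infty+g_0\circ\trop_\infty$ is a psh function on $\torus_\infty^{\rm an}$; being $\SS_\infty$-invariant it equals $(\psi+g_0)\circ\trop_\infty$ for some $\psi\colon N_\R\to\Rinf$ (recall that $\trop_\infty$ realizes $N_\R$ as the quotient of $\torus_\infty^{\rm an}$ by $\SS_\infty$), and Proposition~\ref{thm:2} applied with $U=N_\R$ shows that $\psi+g_0$ is psh on $N_\R$, i.e.\ convex. Since $\varphi_\infty$, hence $\psi+g_0$, is not identically $-\infty$, Lemma~\ref{lemm:2} forces $\psi+g_0$ to be finite, therefore continuous on $N_\R$ by Theorem~\ref{thm:1}. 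Consequently $\varphi_\infty\circ\pi_\infty$ is finite and continuous, and as $\pi_\infty$ is a surjective local homeomorphism, so is $\varphi_\infty$ on $A_\infty^{\rm an}$.

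Finally I would settle invariance and uniqueness. For $t\in\SS_\infty$ the translation by $t$ is a biholomorphism of $A_\infty^{\rm an}$ preserving both $\omega_\infty$ and $\mu_\infty$, so its pull-back sends any solution in $\Ecal(A_\infty^{\rm an},\omega_\infty)$ to another one; by the uniqueness part of Theorem~\ref{thm:4} this changes the solution by a constant, which integration against $\omega_\infty^{\wedge n}$ (legitimate since $\Ecal$-functions lie in $L^1(\omega_\infty^{\wedge n})$ and translations preserve $\omega_\infty^{\wedge n}$) forces to vanish. Thus $\varphi_\infty$ — and, since every continuous finite $\omega_\infty$-psh solution is bounded and hence lies in $\Ecal(A_\infty^{\rm an},\omega_\infty)$ (see \ref{global complex MA operator and Ecal}), every such solution — is $\SS_\infty$-invariant, and uniqueness up to a constant among continuous finite solutions again follows from Theorem~\ref{thm:4}. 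The main obstacle is the continuity step: on a general compact K\"ahler manifold a Monge--Amp\`ere equation with a merely Radon right-hand side need not have a continuous solution, and the argument succeeds here only because pushing the solution down to $N_\R$ turns it into a finite convex function, which is automatically continuous by Theorem~\ref{thm:1}.
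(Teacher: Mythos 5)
Your proof is correct and follows essentially the same route as the paper's: existence and uniqueness come from Theorem \ref{thm:4} after checking the mass condition and that $\mu_\infty$ does not charge pluripolar sets via the local lift to the Tate uniformization and Lemma \ref{no-charge-pluriplolar-sets-toric}, $\SS_\infty$-invariance follows from uniqueness up to constants, and finiteness/continuity are obtained by descending the invariant solution to a convex function on $N_\R$ (which is exactly the content of \ref{finiteness-remark}). The only blemish is one of ordering: your continuity paragraph already invokes the $\SS_\infty$-invariance of $\varphi_\infty\circ\pi_\infty$, which you justify only in the final paragraph; since that invariance argument is independent of the continuity step, swapping the two paragraphs repairs the exposition.
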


\begin{proof}
	Existence and uniqueness of a solution is known from complex K\"ahler geometry (see Theorem \ref{thm:4}).   Here, we use that $\omega_\infty$ is a K\"ahler form and that $\mu_\infty$ does not charge pluripolar sets. 
	To see the latter, we apply Lemma \ref{no-charge-pluriplolar-sets-toric} by lifting the positive Radon measures $\mu_\infty$ and $\mu \coloneqq \tropbar_\infty(\mu_\infty)$ locally to the Tate uniformization $\torus_\infty^\an$ with cocharacter lattice $N$ and to $N_\R \to N_\R/\Lambda$, respectively. The canonical form  $\omega_\infty$ and the measure $\mu_\infty$  is invariant under $\SS_\infty$. It follows now from uniqueness of the solution up to adding constant that the solutions are $\SS_\infty$-invariant. By \ref{finiteness-remark},
	 the function  
	$\varphi_\infty$ is finite and continuous.
\end{proof}

\begin{cor}\label{tropical-ma}
	Let $(\Lambda,M, [\cdot , \cdot],\lambda)$ be a polarized 
	tropical abelian variety with canonical form $\omega$ on the associated tropical abelian variety $N_\R/\Lambda$. We assume  the condition  
\begin{equation}\label{normalize-measure'}
\mu(N_\R/\Lambda)= n!\,
\sqrt{\deg(\Lambda,M, [\cdot , \cdot],\lambda)}. 
\end{equation}
for a positive Radon
measure $\mu$ on  $N_\R/\Lambda$.   Then
	the tropical Monge--Amp\`ere equation 
	$$(\omega+d'd''\varphi)^{\wedge n}=\mu$$
	 is solved by a 
	continuous $\omega$-psh function 
	$\varphi\colon N_\R/\Lambda \to \R$, unique up to constants. 
\end{cor}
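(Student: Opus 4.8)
The strategy is to reduce the tropical Monge--Amp\`ere equation on $N_\R/\Lambda$ to the corresponding complex Monge--Amp\`ere equation on a decomposed polarized complex abelian variety, which is already solved by classical K\"ahler geometry. First I would invoke Proposition~\ref{complex-trop-surjective}, which says the tropicalization functor \eqref{functor-complex-pol-trop} is essentially surjective, to produce a decomposed polarized complex abelian variety $(A_\infty,H,U_1,U_2)$ with $\trop_\infty(A_\infty,H,U_1,U_2) \simeq (\Lambda,M,[\cdot,\cdot],\lambda)$. Under this identification, the canonical $(1,1)$-form $\omega$ on $N_\R/\Lambda$ pulls back to the canonical K\"ahler form $\omega_\infty$ on $A_\infty^\an$ via $\tropbar_\infty$, by Proposition~\ref{example-canonical-archimedean-metric}.

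Next, starting from the given positive Radon measure $\mu$ on $N_\R/\Lambda$ satisfying \eqref{normalize-measure'}, I would form the associated $\SS_\infty$-invariant positive Radon measure $\mu_\infty$ on $A_\infty^\an$ as in \ref{related toric measures-new} (using that $\SS_\infty$-invariant Radon measures on $\torus_\infty^\an$ correspond bijectively to Radon measures on $N_\R$, compatibly with the $\Lambda$-action, so one descends to $A_\infty^\an$). The compatibility of the tropicalization functor with degrees, together with the fact that $\tropbar_\infty^*$ preserves integrals, shows that \eqref{normalize-measure'} translates into the normalization $\mu_\infty(A_\infty^\an) = n!\sqrt{\deg_H(A_\infty^\an)} = \int_{A_\infty^\an}\omega_\infty^{\wedge n}$ as in \eqref{normalize-measure2'}.

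Now I would apply Proposition~\ref{complex solution is toric}: the complex Monge--Amp\`ere equation $(\omega_\infty + dd^c\varphi_\infty)^{\wedge n} = \mu_\infty$ has a continuous finite $\omega_\infty$-psh solution $\varphi_\infty$, unique up to an additive constant, and any such solution is $\SS_\infty$-invariant. By the isomorphism \eqref{psh-corr-non-arch-arch-descent} of Proposition~\ref{lambda-invariance-cor} (applied with $W = N_\R/\Lambda$ and $w = \infty$), the $\SS_\infty$-invariant function $\varphi_\infty$ is of the form $\varphi_\infty = \varphi \circ \tropbar_\infty$ for a unique continuous $\omega$-psh function $\varphi\colon N_\R/\Lambda \to \R$. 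Finally, Proposition~\ref{equivalent-ma-equations} (the equivalence of \ref{tot-deg-ma-corr-i} and \ref{tot-deg-ma-corr-ii}) shows that $\varphi$ solves the tropical equation $(\omega + d'd''\varphi)^{\wedge n} = \mu$, and uniqueness of $\varphi$ up to constants follows from uniqueness of $\varphi_\infty$ up to constants. This completes the argument.

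\textbf{Main obstacle.} The essential geometric and analytic content is already packaged in the quoted results, so the only delicate point is bookkeeping: one must make sure that the three measures $\mu$, $\mu_\infty$, $\mu_v$ and the three forms $\omega$, $\omega_\infty$, $\omega_v$ are matched up consistently across the various identifications — in particular that the degree normalizations \eqref{normalize-measure'}, \eqref{normalize-measure2'} genuinely coincide and that the $\SS_\infty$-invariant lift of $\mu$ is the right one. Since this abelian variety $(A_\infty,H,U_1,U_2)$ exists only because the tropicalization functor is essentially surjective and is \emph{not} canonically attached to the tropical datum, one should also note that the final statement is independent of that choice (which is automatic, since the tropical solution $\varphi$ is characterized intrinsically by the tropical equation and its uniqueness up to constants).
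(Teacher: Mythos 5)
Your proposal is correct and follows essentially the same route as the paper's proof: essential surjectivity of $\trop_\infty$ (Proposition \ref{complex-trop-surjective}), the invariant lift $\mu_\infty$ with the degree normalization from \ref{related toric measures-new}, the complex solution from Proposition \ref{complex solution is toric}, and descent via Proposition \ref{equivalent-ma-equations}. The additional remarks on matching normalizations and on independence of the choice of complex model are sensible but, as you note, automatic.
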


\begin{proof}
	By Proposition \ref{complex-trop-surjective}, there is a polarized decomposed complex abelian variety $(A_\infty,H,U_1,U_2)$ with $\trop_\infty(A_\infty,H,U_1,U_2)=(\Lambda,M, [\cdot , \cdot],\lambda)$. Let $\mu_\infty$ be the unique positive $\SS_\infty$-invariant Radon measure on $A_\infty^\an$ with $\trop_\infty(\mu_\infty)=\mu$. We have seen in \ref{related toric measures-new} that \eqref{normalize-measure'} is equivalent to \eqref{normalize-measure2'}. By Proposition \ref{complex solution is toric}, the complex Monge--Amp\`ere equation 
	$$	(\omega_\infty+dd^c\varphi_\infty)^{\wedge n}=\mu_\infty$$
	has a solution $\varphi_\infty \in \PSH(A_\infty^\an,\omega_\infty)$, unique up to adding a constant. Now the claim  follows from Proposition \ref{equivalent-ma-equations}.
\end{proof}

\begin{cor}\label{non-arch-ma-tot-deg-av'}
	Let $(A_v,{\phi})$ be a polarized totally degenerate split 
	abelian variety  over $K$ with canonical form $\omega_v \in A^{1,1}(A_v^\an)$. Let $\mu_v$ be a positive Radon measure supported on the canonical skeleton of $A_v^\an$ with 
	\begin{equation}\label{normalize-measure33}
	\mu_v(A_v^\an)=n!\,\sqrt{\deg_{{\phi}}(A_v)}.
	\end{equation}
	Then there is a  $\varphi_v \in \PSH(A_v^\an,\omega_v)^{\SS_v}$, unique up to adding constants,  with  
	\begin{equation}\label{non-arch-ma-tot-deg-av-eq}
	(\omega_v+d'd''\varphi_v)^{\wedge n}=\mu_v.
	\end{equation} 
	All such solutions $\varphi_v$ are continuous and finite.
\end{cor}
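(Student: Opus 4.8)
The plan is to deduce Corollary \ref{non-arch-ma-tot-deg-av'} from the tropical case already established in Corollary \ref{tropical-ma} together with the correspondence in Proposition \ref{equivalent-ma-equations}. First I would fix a decomposed polarized complex abelian variety $(A_\infty,H,U_1,U_2)$ corresponding to $(A_v,\phi)$, which exists by Proposition \ref{complex-trop-surjective} and Remark \ref{remark about existence of corresponding abelian varieties}, so that
\begin{equation*}
\trop_v(A_v,\phi)=(\Lambda,M,[\cdot,\cdot],\lambda)=\trop_\infty(A_\infty,H,U_1,U_2).
\end{equation*}
Since $\mu_v$ is supported on the canonical skeleton $\iota_v(N_\R/\Lambda)$ and $\tropbar_v$ restricts to the identity there, the measure $\mu\coloneqq\tropbar_v(\mu_v)$ on $N_\R/\Lambda$ satisfies $\iota_v(\mu)=\mu_v$, and this identification puts us exactly in the setup of \ref{setup for corresponding measures} and \ref{related toric measures-new}. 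The normalization \eqref{normalize-measure33} is, by \eqref{normalize-measure3} and its equivalence with \eqref{normalize-measure1} shown in \ref{related toric measures-new}, the same as the hypothesis $\mu(N_\R/\Lambda)=n!\sqrt{\deg(\Lambda,M,[\cdot,\cdot],\lambda)}$ needed to apply Corollary \ref{tropical-ma}.

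Next I would apply Corollary \ref{tropical-ma} to obtain a continuous $\omega$-psh function $\varphi\colon N_\R/\Lambda\to\R$, unique up to adding a constant, with $(\omega+d'd''\varphi)^{\wedge n}=\mu$. Then Proposition \ref{equivalent-ma-equations}, applied with $w=v$, translates this into the assertion that $\varphi_v\coloneqq\varphi\circ\tropbar_v$ lies in $\PSH(A_v^\an,\omega_v)^{\SS_v}$ and solves $(\omega_v+d'd''\varphi_v)^{\wedge n}=\mu_v$. For uniqueness, if $\varphi_v$ and $\varphi_v'$ are two $\SS_v$-invariant solutions, then the bijection in Proposition \ref{equivalent-ma-equations} shows they are of the form $\varphi\circ\tropbar_v$ and $\varphi'\circ\tropbar_v$ for tropical solutions $\varphi,\varphi'$, which differ by a constant by the uniqueness part of Corollary \ref{tropical-ma}; hence $\varphi_v-\varphi_v'$ is constant. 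Finiteness and continuity of all such solutions follow from \ref{finiteness-remark} and \ref{theta-psh functions on abelian varieties}, since a $\theta_v$-psh function that is not identically $-\infty$ is continuous and finite, and an $\SS_v$-invariant solution cannot be $\equiv-\infty$ because its Monge--Amp\`ere measure $\mu_v$ is a genuine (nonzero) Radon measure by \eqref{normalize-measure33}.

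The only real subtlety, and the step I would be most careful about, is checking that the present hypotheses genuinely match those of Proposition \ref{equivalent-ma-equations}: there the measure $\mu_v$ on $A_v^\an$ is \emph{defined} as the push-forward $\iota_v(\mu)$ of an arbitrary tropical Radon measure, whereas here $\mu_v$ is given first and assumed supported on the skeleton. One needs the elementary observation that a positive Radon measure on $A_v^\an$ supported on the closed subset $\iota_v(N_\R/\Lambda)$ is exactly the push-forward along the proper embedding $\iota_v$ of its own restriction/pullback $\mu=\tropbar_v(\mu_v)$; this uses that $\iota_v$ is a section of the proper map $\tropbar_v$, as recorded in \ref{totally degenerate split abelian variety}. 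Once this bookkeeping is in place, everything else is a direct citation of the already-proved statements, so this corollary is essentially a packaging result; no new analysis is required.
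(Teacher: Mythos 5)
Your proposal is correct and follows essentially the same route as the paper, whose proof is precisely the one-line citation of Proposition \ref{equivalent-ma-equations} and Corollary \ref{tropical-ma}; your write-up simply fills in the bookkeeping (existence of a corresponding complex abelian variety, the identification $\mu_v=\iota_v(\tropbar_v(\mu_v))$ for a skeleton-supported measure, and the matching of the normalization conditions) that the paper leaves implicit. No gaps.
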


\begin{proof}
	This follows from Proposition \ref{equivalent-ma-equations} 
	and {Corollary \ref{tropical-ma}}
\end{proof}

\section{Monge--Amp\`ere equations on arbitrary non-archimedean  abelian varieties} \label{section: short variant}

In this section, $K$ denotes a non-archimedean field ${(K,|\phantom{a}|)}$ with valuation ${v\coloneqq -\log |\phantom{a}|}$ and value group $\Gamma \coloneqq v(K^\times) \subset \R$. 
We will extend the solvability of the Monge--Amp\`ere equation for totally degenerate split abelian varieties from Section \ref{section: totally degenerate abelian varieties} to arbitrary abelian varietes over $K$. 
Instead of using $\theta$-psh functions, we will use in this section the equivalent language of psh metrics. 
Furthermore we will show that the solution is semipositive in the sense of Zhang which is then unique up to scaling by a result of Yuan and Zhang.

In \S \ref{short subsection: Raynaud uniformization and canonical tropicalization}--\S \ref{short subsection: MA equations on abelian varieties},
we assume that $K$ is algebraically closed and non-trivially valued and use the same strategy as in the totally degenerate case to solve the non-archimedean Monge--Amp\`ere equation.
Semipositivity will follow from a piecewise  linear approximation of the tropical solution and then by relying  on the results in \cite{gubler-compositio} about Mumford models. 
In \S \ref{subsection: descent}, we give a descent argument to deduce our main result over any non-archimedean $K$. 

\subsection{Raynaud uniformization and canonical tropicalization} 
\label{short subsection: Raynaud uniformization and canonical tropicalization}
We assume that the non-archimedean field $K$ is non-trivially valued and algebraically closed. 
We can associate to an abelian variety $A$ over $K$ in a functorial way an exact sequence of algebraic groups 
\[
0 \longrightarrow \torus \longrightarrow E 
\stackrel{q}{\longrightarrow} B \longrightarrow 0
\]
called the \emph{Raynaud extension of $A$} and a discrete subgroup $\Lambda$ of $E(K)$ such that $A^{\an}= E^{\an}/\Lambda$, i.e.~$E^{\an}$ is the uniformization of $A^{\an}$. 
Here $B$ is an abelian variety of good reduction over $K$ and $\torus$ is a  torus over $K$ (see \cite{bosch-luetkebohmert1991} and \cite[6.5]{berkovich-book}).
The quotient homomorphism $p\colon E^{\an} \to A^{\an}$ is only an analytic morphism, but the Raynaud extension is algebraic.

\begin{rem} \label{short canonical tropicalization}
We denote the character lattice of the  torus $\torus$ by $M$ and 
set $N \coloneqq \Hom_\Z(M,\Z)$. 
For $u \in M$, the pushout of the Raynaud extension with respect to 
the character  $\chi_u\colon \torus \to \mathbb G_m$ gives rise to a 
translation invariant
rigidified $\mathbb G_m$-torsor over $B$ and hence to 
a rigidified translation invariant line bundle $E_u$ on $B$ (see 
\cite{bosch-luetkebohmert1991} and \cite[3.2]{foster-rabinoff-shokrieh-soto}). 
Note that the line bundle $q^*(E_u)$ is trivial over $E$ and the above 
pushout construction gives a canonical frame $e_u\colon E \to q^*(E_u)$. 
Following \cite[3.2]{foster-rabinoff-shokrieh-soto}), we define 
the \emph{canonical tropicalization map of $E$} as the unique map
\[
\trop_v\colon E^{\an} \longrightarrow N_\R
\] 
satisfying
\[
\langle \trop_v(x), u \rangle = - \log q^*\|e_u(x)\|_{E_u}
\quad\quad(x\in E^\an, u\in M),
\]
where $\|\phantom{a}\|_{E_u}$ is the canonical metric 
of the rigidified line bundle $E_u$. 
The canonical tropicalization map agrees with the classical 
tropicalization map on $\torus^{\an}$ and maps $\Lambda$ onto 
a complete lattice in $N_\R$ 
(see \cite[Theorem 1.2]{bosch-luetkebohmert1991}). 
By abuse of notation, we will identify $\Lambda$ with the lattice 
$\trop_v(\Lambda)$ in $N_\R$. 
Then $\trop_v$ induces  a continuous map
\[
\overline{\trop}_v\colon A^{\an} \longrightarrow N_\R/ \Lambda
\]
which is called \emph{the canonical tropicalization of the abelian variety $A$}.
\end{rem}

\begin{rem} \label{short canonical skeleton}
The canonical tropicalization map $\trop_v\colon E^{\an} \to N_\R$ 
admits a canonical section $\iota_v\colon N_\R\to {E^\an}$ which identifies 
$N_\R$ with a closed subset of $E^{\an}$ called the 
\emph{canonical skeleton of $E$}, such that $\trop_v$ 
is a strong deformation retraction. Hence $E^{\an}$ is contractible (see \cite[6.5]{berkovich-book}).  
Passing to the quotient, we see that there is a canonical section 
$\overline{\iota}_v$ of $\overline{\trop}_v$ which identifies 
$N_\R/\Lambda$ with a closed subset of $A^{\an}$  
called the \emph{canonical skeleton of $A$} such that $\overline\trop_v$ 
is a strong deformation retraction onto $N_\R/\Lambda$.
\end{rem}

In the following, we consider a polarized abelian variety $(A,\phi)$ over $K$ of dimension $g$. 
Recall that its degree $\deg(A,\phi)$ is defined as the degree of the isogeny $\phi{\colon A\to A^\vee}$.

\begin{rem} \label{short polarization and tropical pairing}
	Using the above notations, we have a natural bilinear pairing
	\[
	\Lambda \times M \longrightarrow \Z, \quad (a,u) \longmapsto [a,u] 
	\coloneqq \langle  \trop_v(a), u \rangle.
	\]
Using the lift of $\phi$ to Raynaud uniformizations, 
it is shown in \cite{bosch-luetkebohmert1991},
\cite{foster-rabinoff-shokrieh-soto} that $\phi$ induces a 
homomorphism $\lambda\colon \Lambda \to M$ and that 
$(\Lambda,M, [\cdot , \cdot],\lambda)$ is a $\Gamma$-rational polarized tropical 
abelian variety as defined in \S \ref{subsection:Polarized tropical
  abelian varieties}. We call $ \trop_v(A,\phi)\coloneqq (\Lambda,M,
[\cdot , \cdot],\lambda)$ 
the \emph{tropicalization of the polarized abelian variety $(A,\phi)$}. 
	This induces a functor
	\begin{equation}
	\label{functor-tropicalization-non-archimedean-abelian-variety-eq1'}
	{\trop_v}\colon
	\begin{pmatrix}
	\mbox{polarized abelian}\\
	\mbox{varieties over $K$}
	\end{pmatrix}
	{\longrightarrow}
	\begin{pmatrix}
	\mbox{{$\Gamma$-rational} polarized}\\
	\mbox{tropical abelian varieties}
	\end{pmatrix}
	\end{equation}
	extending the tropicalization functor  \eqref{functor-tropicalization-non-archimedean-abelian-variety-eq1}.
	Observe that the rank $n$  of $M$ might be smaller than $g = \dim(A)$
	and  equality occurs precisely in the totally degenerate case.    
\end{rem}

As $K$ is algebraically closed, there is an ample line bundle $L$ on 
$A$ such that $\phi=\phi_L$ is the isogeny $\phi_L\colon A \to A^\vee$ induced by $L$. 
By \cite[Proposition 6.5 and Theorem 6.13]{bosch-luetkebohmert1991}, there is an ample line bundle $H$ on $B$ with $q^*(H^\an) \cong p^*(L^\an)$ on $E^{\an}$. Let $\chi(A,L)$ and $\chi(B,H)$ be
the Euler characteristics.

\begin{prop} \label{compare degree of polarizations wrt tropicalizations}
Under the above assumptions, we have $\deg(A,\phi)= \chi(A,L)^2$ and 
\[
\chi(A,L) = \frac{\deg_{L}(A)}{g!}=\sqrt{\deg(\trop_v(A,\phi))}\cdot \chi(B,H)
= \sqrt{\deg(\trop_v(A,\phi))}
\cdot \frac{\deg_H(B)}{(g-n)!}.
\]
\end{prop}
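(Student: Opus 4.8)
\textbf{Proof proposal for Proposition \ref{compare degree of polarizations wrt tropicalizations}.}

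The plan is to assemble three standard facts and then match numerical invariants through the Raynaud extension. First I would recall the Riemann--Roch theorem on the abelian variety $A$ of dimension $g$: for the ample line bundle $L$ one has $\chi(A,L)=\deg_L(A)/g!$, and the classical formula $\deg(\phi_L)=\chi(A,L)^2$ (see Mumford's book on abelian varieties). This already gives the outer equalities $\deg(A,\phi)=\chi(A,L)^2$ and $\chi(A,L)=\deg_L(A)/g!$, so the real content is the middle equality $\chi(A,L)=\sqrt{\deg(\trop_v(A,\phi))}\cdot\chi(B,H)$, together with the corresponding Riemann--Roch statement $\chi(B,H)=\deg_H(B)/(g-n)!$ on the abelian variety $B$ of dimension $g-n$ (again classical Riemann--Roch on $B$).

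To prove the middle equality, I would use the structure of the Raynaud extension $0\to\torus\to E\xrightarrow{q} B\to 0$ with $A^\an=E^\an/\Lambda$ and the comparison of line bundles $q^*(H^\an)\cong p^*(L^\an)$ recalled just before the statement. The key point is that passing from $(A,L)$ to $(B,H)$ one loses exactly the ``toric part'' of the polarization, whose contribution is governed by the degree of the tropicalization $\trop_v(A,\phi)=(\Lambda,M,[\cdot,\cdot],\lambda)$. Concretely, the polarization $\lambda\colon\Lambda\to M$ has $\#\coker(\lambda)=\sqrt{\deg(\trop_v(A,\phi))}$ by Definition \ref{def-polarized-abelian-variety}. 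I would invoke Bosch--Lütkebohmert \cite[\S6, especially Theorem 6.13 and its proof]{bosch-luetkebohmert1991}, where the relation between $\chi(A,L)$, $\chi(B,H)$ and the lattice index $[M:\lambda(\Lambda)]$ is established (this is precisely the ingredient used in \ref{tropicalization-non-archimedean-abelian-variety} to conclude that $\trop_v$ preserves the degree). The degree-preservation of the functor \eqref{functor-tropicalization-non-archimedean-abelian-variety-eq1'}, already asserted in Remark \ref{short polarization and tropical pairing}, is essentially equivalent to the identity we want; so one option is to simply read off $\deg(A,\phi)=\deg(\trop_v(A,\phi))\cdot\deg(B,H)$ from there, combine with $\deg(A,\phi)=\chi(A,L)^2$, $\deg(B,H)=\chi(B,H)^2$, and take square roots, noting all quantities are positive.

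The main obstacle is bookkeeping the square roots and making sure the ``missing'' dimension $g-n$ enters correctly: $B$ has dimension $g-n$, not $g$, so Riemann--Roch on $B$ produces the denominator $(g-n)!$ rather than $g!$, and one must check that the factor relating $\deg_L(A)/g!$ to $\deg_H(B)/(g-n)!$ is exactly $\sqrt{\deg(\trop_v(A,\phi))}$ and not, say, its square or a further combinatorial factor. I would verify this by a direct computation in the totally degenerate case $n=g$ (where $B$ is a point, $\chi(B,H)=1$, and the statement reduces to $\chi(A,L)=\sqrt{\deg(\trop_v(A,\phi))}$, matching \eqref{volume of Lambda} and the fact that the tropicalization functor \eqref{functor-tropicalization-non-archimedean-abelian-variety-eq1} keeps the degree), and then by the general functoriality of the Raynaud extension under the pushout $q$, reducing the mixed case to the good-reduction part $B$ and the toric part $\torus$ separately. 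Finally I would remark that since $K$ is algebraically closed all the cited results of Bosch--Lütkebohmert apply verbatim, so no extra descent is needed here.
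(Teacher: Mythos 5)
Your proposal is correct and follows essentially the same route as the paper, whose proof consists precisely of citing Riemann--Roch and the vanishing theorem on abelian varieties \cite[III.16]{mumford1970} (giving $\chi(A,L)=\deg_L(A)/g!$, $\chi(B,H)=\deg_H(B)/(g-n)!$ and $\deg(\phi_L)=\chi(A,L)^2$) together with the dimension formula of \cite[Theorem 6.13]{bosch-luetkebohmert1991} relating $\chi(A,L)$ to $\#\coker(\lambda)\cdot\chi(B,H)$. The only caution is that your alternative shortcut of ``reading off'' degree-preservation of the tropicalization functor from Remark \ref{short polarization and tropical pairing} would be circular, since that preservation is exactly what this proposition establishes in the non--totally-degenerate case; your primary route via Bosch--L\"utkebohmert is the right one.
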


\begin{proof}
This follows from the Riemann--Roch theorem and the vanishing theorem on abelian varieties \cite[III.16]{mumford1970} and the dimension formula in	\cite[Theorem 6.13]{bosch-luetkebohmert1991}.
\end{proof}

\subsection{Mumford models} \label{short subsection: Mumford models}
We  recall the construction of Mumford models from \cite[\S 4]{gubler-compositio}. 
We assume that the reader is familiar with the notation and results in \cite{gubler-compositio}. 
In particular, we will use admissible formal models and formal analytic structures as recalled in \cite[\S 2]{gubler-compositio}. 
If the reader is unfamiliar with such models, then he can replace them often by algebraic models using the algebraic approach to toric schemes given in \cite{gubler-guide}. 

We use the same assumptions and notation as in Subsection \ref{short subsection: Raynaud uniformization and canonical tropicalization}.
Since $K$ is algebraically closed and the valuation $v$ is non-trivial, the value group $\Gamma \coloneqq v(K^\times)$ is dense in $\R$.
Again, we consider an abelian variety $A $ over $K$ with Raynaud extension
\[
0 \longrightarrow \torus  \longrightarrow E  
\stackrel{q}{\longrightarrow} B  \longrightarrow 0.
\]
Let $g$ be the dimension of $A $ and $n$ the rank of the torus $\torus$. 
The abelian variety $B $ over $K$ is the generic fiber of an abelian scheme $\Bcal$ over $\kcirc$. 
For any  scheme $\mathscr X$ over $K^\circ$,  we denote $\widehat{\Xcal}$ the formal completion along the special fiber and 
by $\mathscr X_s$ its special fibre over the residue field $\widetilde K=K^\circ/K^{\circ\circ}$. 

We first fix the terminology from convex geometry expanding the conventions from \S \ref{section: notation}.

\begin{art} \label{conventions from convex geometry}   
\label{definition: locally piecewise affine functions}
A \emph{polytopal decomposition} $\Pi$ of $N_\R$ is a locally finite set $\Pi$ of polytopes in $N_\R$ 
such that the set $\Pi$ contains with a polytope also all its faces,
polytopes in $\Pi$ intersect only in common faces, and the support condition $\bigcup_{\Delta \in \Pi} \Delta = N_\R$ holds.
The polytopal decomposition $\Pi$ is called \emph{integral $\Gamma$-affine} if every polytope $\Delta \in \Pi$ is  integral 
	$\Gamma$-affine (often also called \emph{$\Gamma$-rational}). 
The \emph{star of $\Pi$ in  $\omega \in N_\R$} is the
fan $\Sigma\coloneqq \{\sigma_\Delta\,|\,\Delta\in \Pi\mbox{ with }
\omega \in \Delta\}$
where $\sigma_\Delta$ is the cone in $N_\R$ generated by 
$\Delta-\omega$.

A function $f\colon {N_\R} \to \R$ is called \emph{piecewise affine with respect to a polytopal decomposition $\Pi$} if for any polytope $\Delta \in \Pi$ there are $u_\Delta \in M_\R = \Hom(N,\R)$ and $c_\Delta \in \R$ such that $f=u_\Delta + c_\Delta$ on $\Delta$. 
Such a function $f$ is called \emph{piecewise $\Gamma$-affine} if $\Pi$ is integral $\Gamma$-affine and $c_\Delta \in \Gamma$ for all $\Delta \in \Pi$. We say that $f$ has \emph{integral (resp.~rational) slopes} if $u_\Delta \in M$ (resp.~$u_\Delta \in M_\Q$) for all $\Delta \in \Pi$.
The \emph{recession function} of $f$ in $\omega$ is the unique function $g\colon N_\R \to \R$ which is piecewise linear with respect to the star of $\Pi$ in $\omega$ 
and satisfies $g(x-\omega)= u_\Delta(x)$ for each $\Delta \in \Pi$ and each $x\in \Delta$.

We call $f\colon N_\R \to \R$ a \emph{locally piecewise affine function} if $f$ is piecewise affine with respect to a polytopal decomposition $\Pi$ of $N_\R$. 
Here, the term ``locally'' emphasizes the fact that a polytopal decomposition is locally a polyhedral complex and hence a locally piecewise affine function is locally a piecewise affine function in the sense of our conventions in  \S \ref{section: notation}.
\end{art}

\begin{rem} \label{short Mumford models}	
Mumford's construction associates to an integral $\Gamma$-affine polytopal decomposition $\Pi$  of $N_\R$ an admissible formal $\kcirc$-model $\hat\Ecal_\Pi$ of $E$ given by the formal analytic atlas $\{\trop_v^{-1}(\Delta) \mid \Delta \in \Pi\}$ on $\trop^{-1}(N_\R)=(\hat\Ecal_\Pi)^\an$ (see \cite[\S 4]{gubler-compositio} for details). 
	
The irreducible components of the special fiber of $\hat\Ecal_\Pi$ are in bijective correspondence to the vertices of $\Pi$. 
In fact, the irreducible component $Y_\omega$ corresponding to the vertex $\omega$ of $\Pi$ is a fiber bundle over $\Bcal_s$. 
The torus $\torus_{\ktilde} \coloneqq \Spec(\ktilde[M])$ acts naturally on the fibers of $Y_\omega$ over $\Bcal_s$ and each  fiber is $\torus_{\ktilde}$-equivariantly isomorphic to the $\torus_{\ktilde}$-toric variety $Y_\Sigma$ associated to the star $\Sigma$ of $\Pi$ in $\omega$ \cite[Proposition 4.8]{gubler-compositio}.
The fiber over zero is even canonically isomorphic to the $\torus_{\ktilde}$-toric variety $Y_\Sigma$. 
It follows in particular that $Y_\omega$ is proper over $\Bcal_s$.
	
We call $\hat\Ecal_\Pi$ the \emph{formal Mumford model} of $\trop_v^{-1}(|\Pi|)$ associated to $\Pi$. 
In fact, using toric schemes over $\kcirc$ from \cite[\S 7]{gubler-guide}, one can construct (again similar as in \cite[\S 4]{gubler-compositio}) a flat algebraic model $\Ecal_\Pi$ of $E $ with formal completion $\hat\Ecal_\Pi$. 
Note that $q$ extends uniquely to morphisms $\hat\Ecal_\Pi \to \hat\Bcal$ and $\Ecal_\Pi \to \Bcal$ which we denote by $q$ as well.

Let  $f\colon N_\R \to \R$ be a piecewise $\Gamma$-affine function with respect to $\Pi$. 
If $f$  has integral slopes, then $f$ defines a formal model $\Ocal_{\hat\Ecal_\Pi}(f)$ of the trivial line bundle $\Ocal_{\trop_v^{-1}(N_\R)}$ living on $\hat\Ecal_\Pi$ which is determined by 
\begin{equation} \label{short piecewise linear functions and line bundles}
	f \circ \trop_v = -\log \bigl(\| 1 \|_{\Ocal_{\hat\Ecal_\Pi}(f)}\bigr),
\end{equation}
where $\metr_{\Ocal_{\hat\Ecal_\Pi}(f)}$ denotes the model metric on $\Ocal^\an_{\trop_v^{-1}(N_\R)}$ induced by $\Ocal_{\hat\Ecal_\Pi}(f)$.
\end{rem}

In the following, we consider a piecewise $\Gamma$-affine function $f\colon N_\R \to \R$ with respect to a polytopal decomposition $\Pi$ of $N_\R$ and we assume that $f$ has integral slopes. 
We fix a vertex $\omega$ of $\Pi$ with associated irreducible component $Y_\omega$ of  the special fiber of $\hat\Ecal_\Pi$ (see Remark \ref{short Mumford models}). 
The function $f$  is called 
\emph{convex in $\omega$} if $f$ is convex in 
a neighbourhood of $\omega$
or equivalently if the recession function $g$ of $f$ in $\omega$ is convex on $N_\R$.

\begin{lemma} \label{short globally generated}
	Let $\omega$ be a vertex of $\Pi$. 
	Then the function $f$ is convex in $\omega$ if and only if 
	the line bundle
	$\Ocal_{\hat\Ecal_\Pi}(f)|_{Y_\omega}$ 
	on the proper scheme $Y_\omega$ is nef. 	
\end{lemma}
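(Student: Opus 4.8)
The plan is to reduce the statement to the toric fiber over $0$, where it becomes the classical fact that a toric line bundle on a complete toric variety is nef if and only if its associated piecewise linear function is convex. First I would recall from Remark \ref{short Mumford models} that the fiber over zero of the fiber bundle $Y_\omega \to \Bcal_s$ is canonically isomorphic, as a $\torus_{\ktilde}$-toric variety, to $Y_\Sigma$ where $\Sigma$ is the star of $\Pi$ in $\omega$, and that each fiber of $Y_\omega$ over $\Bcal_s$ is $\torus_{\ktilde}$-equivariantly isomorphic to $Y_\Sigma$. Under the defining equation \eqref{short piecewise linear functions and line bundles}, the restriction of $\Ocal_{\hat\Ecal_\Pi}(f)$ to the fiber $Y_\Sigma$ over zero is the toric line bundle associated to the recession function $g$ of $f$ in $\omega$, which is piecewise linear with respect to $\Sigma$ with integral slopes. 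Here I would use that $q^*(E_u)$ is trivial with canonical frame $e_u$, so that on the fiber over zero the twisting by the fiber bundle structure disappears and one is left exactly with the toric line bundle on $Y_\Sigma$ corresponding to $g$.

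Next I would invoke the standard toric dictionary: the toric line bundle on the complete toric variety $Y_\Sigma$ given by the piecewise linear function $g$ with respect to $\Sigma$ is nef if and only if $g$ is convex on $N_\R$ (see \cite{cox-little-schenck}), and by definition $f$ is convex in $\omega$ exactly when this recession function $g$ is convex. This already gives the equivalence ``$f$ convex in $\omega$'' $\Leftrightarrow$ ``$\Ocal_{\hat\Ecal_\Pi}(f)$ restricted to the fiber over zero is nef''. To pass from the fiber over zero to all of $Y_\omega$, I would use that $Y_\omega$ is a fiber bundle over the proper scheme $\Bcal_s$ with fibers all isomorphic to $Y_\Sigma$, and that nefness of a line bundle on a scheme proper over a base can be checked fiberwise when the restriction to each fiber is the ``same'' line bundle under the equivariant identifications; more concretely, nefness is tested against curves, every curve in $Y_\omega$ either maps finitely to $\Bcal_s$ or lies in a fiber, curves in a fiber are controlled by the toric computation above, and curves mapping to $\Bcal_s$ pair trivially with $\Ocal_{\hat\Ecal_\Pi}(f)$ since this line bundle is trivial along the section $q$ direction (it comes from a piecewise $\Gamma$-affine function that is affine, hence trivializable, on a neighborhood of $\omega$, and $Y_\omega$ lies over the vertex $\omega$). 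Thus nefness of $\Ocal_{\hat\Ecal_\Pi}(f)|_{Y_\omega}$ is equivalent to nefness on a single fiber $Y_\Sigma$.

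I would then assemble these equivalences: $f$ convex in $\omega$ $\Leftrightarrow$ $g$ convex $\Leftrightarrow$ the toric line bundle on $Y_\Sigma$ given by $g$ is nef $\Leftrightarrow$ $\Ocal_{\hat\Ecal_\Pi}(f)|_{Y_\omega}$ is nef. The main obstacle I anticipate is the fiber-bundle step: carefully checking that nefness on $Y_\omega$ is detected on a single toric fiber, i.e. that the line bundle $\Ocal_{\hat\Ecal_\Pi}(f)|_{Y_\omega}$ is, up to a pullback from $\Bcal_s$ that does not affect fiberwise curve intersections, constant along the base in the appropriate sense. This should follow from the explicit description of the Mumford model in \cite[\S 4]{gubler-compositio}, where the line bundle $\Ocal_{\hat\Ecal_\Pi}(f)$ on the chart $\trop_v^{-1}(\Delta)$ for $\Delta \ni \omega$ is built directly from the local affine data of $f$ and the canonical frames $e_u$ of the trivial bundles $q^*(E_u)$; I would cite \cite[Proposition 4.8]{gubler-compositio} for the toric structure of the fibers and reduce the nefness criterion to \cite[Theorem 6.3.12, Theorem 6.4.9]{cox-little-schenck} on the toric fiber.
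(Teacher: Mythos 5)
Your ``nef $\Rightarrow$ convex'' direction is exactly the paper's: restrict to the fiber of $Y_\omega \to \Bcal_s$ over zero, identify the restriction with $\Ocal_{Y_\Sigma}(g)$ for the recession function $g$, and invoke the toric dictionary. The gap is in the converse. You reduce nefness of $\Ocal_{\hat\Ecal_\Pi}(f)|_{Y_\omega}$ to nefness on a single toric fiber, on the grounds that nefness ``can be checked fiberwise when the restriction to each fiber is the same line bundle'' and that curves mapping finitely to $\Bcal_s$ pair trivially because the bundle is ``trivial along the $q$ direction, since $f$ is affine on a neighbourhood of $\omega$.'' Both claims fail. Fiberwise nefness does not imply nefness: for $\P(\Ocal\oplus\Ocal(-1))\to\P^1$ the relative $\Ocal(1)$ restricts to the ample $\Ocal_{\P^1}(1)$ on every fiber yet has a section of negative self-intersection. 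And $f$ is \emph{not} affine near the vertex $\omega$ unless the recession function $g$ is linear (in which case the lemma is vacuous); near $\omega$ the function $f$ has genuinely different affine pieces on the various $\Delta\in\Pi$ containing $\omega$, and the gluing of $\Ocal_{\hat\Ecal_\Pi}(f)$ over $\Bcal_s$ involves the nontrivial (only algebraically trivial) line bundles $E_u$. So the intersection of $\Ocal_{\hat\Ecal_\Pi}(f)$ with a horizontal curve is not obviously zero, and your dichotomy does not close the argument.

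The paper avoids any fiberwise reduction and instead tests every closed curve $C\subset Y_\omega$ directly: pick $y\in C$, lift it to $x\in\Ucal^{\an}$ with reduction $y$ (\cite[Proposition 2.17]{gubler-rabinoff-werner2}), choose $\Delta\in\Pi$ with $\trop_v(x)\in\Delta$ and write $f=u+c$ on $\Delta$. Convexity of $f$ near $\omega$ gives $\|s_{f-u-c}\|\le 1$, so $s_{f-u-c}$ is a global section of $\Ocal_{\hat\Ecal_\Pi}(f-u-c)$ over the formal neighbourhood $\Ucal$ of $Y_\omega$ whose divisor is effective and does not vanish at $y$, hence does not contain $C$; since $\Ocal_{\hat\Ecal_\Pi}(u)$ and $\Ocal_{\hat\Ecal_\Pi}(c)$ are numerically trivial, one gets $c_1(\Ocal_{\hat\Ecal_\Pi}(f)).C=\div(s_{f-u-c}).C\ge 0$. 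Some such curve-by-curve use of these sections (or of global generation of the twists) is needed; as written, your passage from one fiber to all of $Y_\omega$ is a genuine gap.
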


\begin{proof} 
It follows from Remark \ref{short canonical tropicalization} that any $u\in M$ induces  a rigidified translation invariant line bundle $E_u$ on $B $ which has a model $\Ecal_u$ on $\Bcal$ algebraically equivalent to zero.
	
For any piecewise $\Gamma$-affine function $h$ with respect to $\Pi$ such that $h$ has integral slopes, the line bundle $\Ocal_{\hat\Ecal_\Pi}(h)$ has a unique meromorphic section $s_h$ extending the constant section $1$ from the generic fiber. 
This construction is linear in $h$. 
If $h$ is equal to some $u\in M$ (resp.~equal to some $c\in \Gamma$), then $\Ocal_{\hat\Ecal_\Pi}(h)$ is numerically trivial as it is isomophic to the pull-back of $\Ecal_u$ (resp.~a trivial line bundle).

Suppose first that $\Ocal_{\hat\Ecal_\Pi}(f)|_{Y_\omega}$ is nef. 
We have seen in Remark \ref{short Mumford models} that $Y_\omega \to \Bcal_s$ is a fiber bundle with fiber isomorphic to the proper toric variety $Y_\Sigma$ where the fan $\Sigma$ is given as the star of $\Pi$ in $\omega$. 
The restriction of $\Ocal_{\hat\Ecal_\Pi}(f)$ to 
the fiber of $Y_\omega \to \Bcal_s$ over zero leads to a line bundle  on $Y_\Sigma$ corresponding to the recession function $g$ of $f$ in $\omega$. 
This restriction  is also nef and hence $g$ is convex (see \cite[Theorem 6.3.12]{cox-little-schenck} or  \cite[\S 3.4]{fulton-toric-varieties}, but note that in these books concave functions are called convex and hence $-g$ is used there).

Conversely, let $f$ be convex in $\omega$.  
We have to show that $\Ocal_{\hat\Ecal_\Pi}(f)|_{Y_\omega}$ is nef. 
Let $P$ be the union of all  polytopes  of $\Pi$  with vertex $\omega$. 
Then there is a unique formal open subset $\Ucal$ of $\hat\Ecal_\Pi$ with $\trop^{-1}(P)=\Ucal^\an$. 
Note that $Y_\omega$ is also an irreducible component of $\Ucal_s$.
		 
For any $\Delta \in \Pi$ with vertex $\omega$, there is $c \in \Gamma$ and $u \in M$ such that $f=u+c$ on $\Delta$. 
For  the formal metric $\metr$ induced by $\Ocal_{\hat\Ecal_\Pi}(f)$, the convexity of $f|_P$ yields that $\|s_{f-u-c}\| \leq 1$.
Hence  $s_{f-u-c}|_\Ucal$ is a global section of $\Ocal_{\hat\Ecal_\Pi}(f-u-c)|_\Ucal$  which is nowhere vanishing over the formal open subset $\Ucal_\Delta$ of $\Ucal$ with  $\Ucal_\Delta^\an=\trop_v^{-1}(\Delta)$.  
Now let $C$ be any closed curve in $Y_\omega$. 
We pick any $y \in C$. 
By \cite[Proposition 2.17]{gubler-rabinoff-werner2} there exists some
$x \in \Ucal^\an$ with reduction $y$. 
We choose $\Delta \in \Pi$ with $\trop_v(x) \in \Delta$. 
The above shows that the restriction of the global section $s_{f-u-c}|_\Ucal$ to $C$ does not vanish in $y$.
Hence the intersection number 
\[
c_1(\Ocal_{\hat\Ecal_\Pi}(f)).C
=c_1(\Ocal_{\hat\Ecal_\Pi}(f-u-c)).C
+c_1(\Ocal_{\hat\Ecal_\Pi}(u)).C+c_1(\Ocal_{\hat\Ecal_\Pi}(c)).C
= \div(s_{f-u-c}).C 
\]
is non-negative. This proves the claim. 
\end{proof}

Let $F$ be a line bundle on $E^{\an}$ with $F=q^*(H)$ 
for a rigidified line bundle $H$ on $B^\an$. 
Then $H$ has a unique rigidified model $\Hcal$ on the formal 
abelian scheme $\hat\Bcal$ and 
\[
q^*\Hcal(f) \coloneqq q^*\Hcal \otimes \Ocal_{\hat\Ecal_\Pi}(f)
\]
is a model of $F$ living on the formal Mumford model $\hat\Ecal_\Pi$.

\begin{prop} \label{short degree of irreducible component}
Consider  $F=q^*(H)$ as above. 
Let $f\colon N_\R \to \R$ be a piecewise $\Gamma$-affine function with respect to the polytopal decomposition $\Pi$ of $N_\R$. 
We assume that $f$ has integral slopes and is convex in the given vertex $\omega$ of $\Pi$. 
We consider the dual polytope 
\[
\{\omega\}^f\coloneqq \{u \in M_\R \mid 
u(x-\omega) \leq f(x)-f(\omega) \,\text{for all $x$ in a neighborhood of $\omega$} \}
\]
of the star $\Sigma$  in $\omega$ where $M=\Hom_\Z(N,\Z)$.  
Then we have 
\[
\deg_{q^*\Hcal(f)}(Y_\omega)= \frac{g!}{(g-n)!} \cdot \deg_H(B ) \cdot \vol(\{\omega\}^f)
\]
	where $\vol$ is the Haar measure on $M_\R$ such that the lattice $M$ has covolume one. 
\end{prop}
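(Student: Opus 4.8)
The plan is to compute the degree $\deg_{q^*\Hcal(f)}(Y_\omega)$ by intersection theory on the irreducible component $Y_\omega$, exploiting that $Y_\omega$ is a fiber bundle over the abelian scheme $\Bcal_s$ with toric fibers (Remark \ref{short Mumford models}). First I would use the projective bundle / fibration structure: the class $c_1(q^*\Hcal(f))^g$ on $Y_\omega$ splits into a sum of mixed terms $c_1(q^*\Hcal)^{g-k} \cdot c_1(\Ocal_{\hat\Ecal_\Pi}(f))^{k}$, and since $\dim(Y_\omega) = g$ with the torus part contributing dimension $n$ and the base $\Bcal_s$ contributing dimension $g-n$, only the term with $k = n$ survives (higher powers of the toric line bundle vanish for dimension reasons on $n$-dimensional toric fibers, and $c_1(q^*\Hcal)$ pulled back from the $(g-n)$-dimensional base has vanishing $(g-n+1)$-st power). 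Up to the binomial coefficient $\binom{g}{n} = \frac{g!}{(g-n)! \, n!}$ this reduces the computation to
\[
\deg_{q^*\Hcal(f)}(Y_\omega) = \binom{g}{n}\, \deg_{q^*\Hcal}\!\left(c_1(\Ocal_{\hat\Ecal_\Pi}(f))^{n} \cdot Y_\omega\right).
\]

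Next I would evaluate the right-hand side using the fibration $Y_\omega \to \Bcal_s$ with fiber the proper toric variety $Y_\Sigma$ attached to the star $\Sigma$ of $\Pi$ in $\omega$. The line bundle $\Ocal_{\hat\Ecal_\Pi}(f)|_{Y_\omega}$ restricts on each fiber to the toric line bundle corresponding to the recession function $g$ of $f$ in $\omega$ (as already used in the proof of Lemma \ref{short globally generated}), whose dual polytope is precisely $\{\omega\}^f$. By the standard toric formula, the top self-intersection of this toric line bundle on the $n$-dimensional fiber $Y_\Sigma$ equals $n! \cdot \vol(\{\omega\}^f)$ with respect to the lattice $M$. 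Pushing forward along $Y_\omega \to \Bcal_s$ therefore converts $c_1(\Ocal_{\hat\Ecal_\Pi}(f))^{n} \cdot Y_\omega$ into $n! \cdot \vol(\{\omega\}^f)$ times the fundamental class of $\Bcal_s$, and then pairing with $c_1(q^*\Hcal)^{g-n} = q^*(c_1(\Hcal))^{g-n}$ gives $n!\cdot \vol(\{\omega\}^f) \cdot \deg_H(B)$, using that $\deg_H(B) = \deg_\Hcal(\Bcal_s)$ by flatness and that $H$ is the generic fiber of $\Hcal$. Combining with the binomial coefficient from the first step yields
\[
\deg_{q^*\Hcal(f)}(Y_\omega) = \binom{g}{n} \cdot n! \cdot \vol(\{\omega\}^f) \cdot \deg_H(B) = \frac{g!}{(g-n)!}\cdot \deg_H(B)\cdot \vol(\{\omega\}^f),
\]
as claimed.

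I expect the main obstacle to be making the fibration argument rigorous at the level of the special fiber $Y_\omega$, which is generally only a formal/scheme-theoretic model fiber and need not be smooth or even reduced in a naive sense; one must be careful that $Y_\omega \to \Bcal_s$ is a genuine (Zariski-locally trivial) fiber bundle with toric fibers so that the projection formula and the splitting of the top intersection number are valid. The cleanest route is to pass to the algebraic Mumford model $\Ecal_\Pi$ over $\kcirc$ via toric schemes as in \cite{gubler-guide} and \cite[\S 4]{gubler-compositio}, where $Y_\omega$ is a proper $\Bcal_s$-scheme with the stated toric fiber structure, and then invoke the projection formula for the proper morphism $Y_\omega \to \Bcal_s$ together with the toric intersection formula on the fibers. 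A secondary point requiring care is the compatibility of the normalization of the Haar measure on $M_\R$ (covolume one for $M$) with the normalization implicit in the toric degree formula $n! \cdot \vol(\{\omega\}^f)$; this is exactly the convention fixed in \S \ref{section: notation} and in \ref{global toric case}, so it should match without further adjustment. The remaining steps — the vanishing of the off-diagonal terms in the multinomial expansion, and the identification of the restriction of $\Ocal_{\hat\Ecal_\Pi}(f)$ to the zero fiber with the recession-function toric line bundle — are already essentially contained in Remark \ref{short Mumford models} and the proof of Lemma \ref{short globally generated}, so I would only cite them.
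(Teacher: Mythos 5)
Your proposal follows essentially the same route as the paper: expand $c_1(q^*\Hcal(f))^g$, keep only the term $\binom{g}{n}c_1(q^*\Hcal)^{g-n}c_1(\Ocal_{\hat\Ecal_\Pi}(f))^n$, compute the toric self-intersection on the fibers of $Y_\omega\to\Bcal_s$ as $n!\cdot\vol(\{\omega\}^f)$ via the recession function, and conclude by the projection formula. The one point where the paper is more careful than your "dimension reasons on $n$-dimensional toric fibers" is the vanishing of $c_1(\Ocal_{\hat\Ecal_\Pi}(f))^r\cdot Y_\omega$ for $r>n$: this is not automatic for an arbitrary line bundle on a fibration, and the paper justifies it by representing these intersection products, via the global sections $s_{f-u-c}$, as positive combinations of strata closures, all of which have codimension at most $n$ in $Y_\omega$ — exactly the device you would need to make your "only $k=n$ survives" step rigorous.
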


\begin{proof}  
	The proof is similar as in \cite[Proposition 5.18]{gubler-compositio}. 
	We use the canonical strata of a scheme as defined in \cite[\S 2]{berkovich-1999}.
	There is a bijective correspondence
	between the strata $S$ of 
	the special fiber of the Mumford model $\hat\Ecal_\Pi$ and the open faces $\tau$ of $\Pi$ given by
	$S={\rm red}(\trop_v^{-1}(\tau))$ where ${\rm red}$ is the reduction
	map of $\hat\Ecal_\Pi$ \cite[Proposition 4.8]{gubler-compositio}.
	
	Similarly as  in the proof of Lemma \ref{short globally generated},
	the 
	intersection product $c_1(\Ocal_{\hat\Ecal_\Pi}(f))^r\cdot Y_\omega$
	can be 
	performed for any $r \leq g$ by using proper intersections with
	suitable Cartier divisors $\div(s_{f-u-c})$ and hence can be
	represented by a positive linear combination of strata closures in
	$Y_\omega$. 
The above correspondence shows that there are no strata $S$ of $Y_\omega$ with $\codim(S,Y_\omega) >n$.
Then it follows from the projection formula with respect to $q\colon Y_\omega \to \Bcal_s$ that 
\begin{equation} \label{projection formula applied}
\deg_{q^*\Hcal(f)}(Y_\omega)= \binom{g}{n}  c_1(\Hcal)^{g-n} . 
q_*\Bigl( c_1\bigl(\Ocal_{\hat\Ecal_\Pi}(f)\bigl)^{n} \cdot Y_\omega \Bigr).
\end{equation}
For any closed point $b$ of $\Bcal_s$, the fibre $q^{-1}(b)$ is 
$\T_{\tilde{K}}$-equivariantly isomorphic to the toric variety $Y_\Sigma$ over $\ktilde$ associated to the star $\Sigma$ of $\Pi$ in $\omega$. Along this isomorphism, the restriction of $\Ocal_{\hat\Ecal_\Pi}(f)$ to $q^{-1}(b)$  corresponds to the line bundle $\Ocal_{Y_\Sigma}(g)$ for the recession function $g$ of $f$ in $\omega$. 
Hence the degree of $q^{-1}(b)$ with respect to $\Ocal_{\hat\Ecal_\Pi}(f)$ is equal to $n!\cdot\vol(\{\omega\}^f)$ \cite[\S 5.3]{fulton-toric-varieties}. 
Using that the intersection product $c_1(\Ocal_{\hat\Ecal_\Pi}(f))^{n} \cdot Y_\omega$  is represented by a positive linear combination of strata closures and that every stratum of $\hat\Ecal_s$ maps onto $\Bcal_s$, we may compute the intersection product locally over $\Bcal_s$, say over an open subset $\Wcal$ of $\hat\Bcal$  which trivializes $\Ecal$ and hence $Y_\omega$ is over $\Wcal_s$ isomorphic to $Y_\Sigma \times \Wcal_s$. 
Along this isomorphism, the line bundle $\Ocal_{\hat\Ecal_\Pi}(f)|_{q^{-1}(\Wcal_s)}$ is given by pull back of $\Ocal_{Y_\Sigma}(g)$. This shows immediately that
	\[
	q_* \bigl( c_1(\Ocal_{\hat\Ecal_\Pi}(f))^{n} \cdot Y_\omega \bigr) 
	= n!\cdot\vol(\{\omega\}^f) \Bcal_s
	\]
	and hence the claim follows from \eqref{projection formula applied} and $\deg_{\Hcal}(\Bcal_s)=\deg_H(B)$. 
\end{proof}

\begin{rem}\label{periodicity and M-linearization}
	A line bundle $F$ on $E ^{\an}$ descends to $A ^{\an}$ 
	if and only if $F$ admits a 
	$\Lambda$-linearization over the action of $\Lambda$ on $E^\an$. 
	In this case, we have $F=p^*(L^\an)$ for the line bundle 
	$L^\an=F/\Lambda$ on $A^{\an}$. 
By \cite[Proposition 6.5]{bosch-luetkebohmert1991}, for any rigidified line  bundle $L$ on $A $ there is a rigidified line bundle $H$ on $B ^{\an}$ such that $p^*(L^\an) \simeq q^*(H)$ as $\Lambda$-linearized cubical sheaves. 
	The line bundle $H$ is unique up to a tensor product with 
	a line bundle $E_u$ for some $u \in M$. 
	Using the above isomorphism for identification, 
	it is shown in \cite[4.3]{gubler-compositio} that the 
	$\Lambda$-linearization
	yields a canonical cocycle 
	$(z_\lambda\colon N_\R \rightarrow \R)_{\lambda \in \Lambda}$ and a 
	canonical symmetric bilinear form 
	$b\colon \Lambda \times \Lambda \rightarrow \R$ associated to $L$ such that 
	\begin{equation} \label{cocycle and b}
	z_\lambda(\omega) = z_\lambda(0) + b(\omega,\lambda) \quad 
	\forall\,\omega \in N_\R, \lambda \in \Lambda.
	\end{equation}
	and
	\begin{equation}
	\label{eq:9}
	z_\lambda(0) \in \Gamma  \quad \forall \lambda \in \Lambda .
	\end{equation}
	Moreover, the cocycle condition shows that 
	$\lambda\mapsto z_\lambda(0)$ is a quadratic function 
	on $\Lambda$ with associated bilinear form $b$. 
	The line bundle $L$ is ample if and only if $H$ is ample and $b$ is 
	positive definite \cite[Theorem 6.13]{bosch-luetkebohmert1991}. 
	In this case, $b$ is the bilinear form of 
	the polarized tropical abelian variety $\trop_v(A ,\phi_L)$ 
	considered 
	in \eqref{functor-tropicalization-non-archimedean-abelian-variety-eq1'} and 
	\S \ref{subsection:Polarized tropical abelian varieties}. 
	Moreover, $H$ is the generic fiber of a unique rigidified ample line 
	bundle $\Hcal$ on $\hat\Bcal$. 
	
A polytopal decomposition $\Pi$  of $N_\R$ is called \emph{$\Lambda$-periodic} if for all $\Delta \in \Pi$ and for all $\lambda \in \Lambda\setminus\{0\}$ the polytope $\Delta + \lambda$ is a face of $\Pi$ disjoint from $\Delta$. 
For a $\Lambda$-periodic integral $\Gamma$-affine polytopal decomposition $\Pi$ of $N_\R$, the following facts are shown in \cite[4.3]{gubler-compositio}:
	
\begin{enumerate}
\item\label{item-1}
The  Mumford model $\hat\Ecal_\Pi$ is a formal model of $E^{\an}$ over $\kcirc$ and $\Acal_\Pi \coloneqq \hat\Ecal_\Pi/M$ is a formal model of $A^{\an}=E^{\an}/M$ over $\kcirc$.  
Furthermore the formal models $\hat\Ecal_\Pi$ and  $\Acal_\Pi$ are locally isomorphic.
\item\label{item-2}
Let $f\colon N_\R \to \R$ be a piecewise $\Gamma$-affine function with respect to $\Pi$. 
Assume that $f$ has integral slopes and let $\Fcal\coloneqq q^*\Hcal(f)$. 
If $f$ satisfies the automorphy condition 
\begin{equation} \label{new cocycle condition for f}
f(\omega + \lambda)= f(\omega) + z_\lambda(\omega)\quad
\forall \,\omega \in N_\R, \lambda \in \Lambda,
\end{equation}
then there is a unique line bundle $\Lcal$ on $\Acal_\Pi$ with $p^*(\Lcal)=\Fcal$.
\end{enumerate}
\end{rem}

\begin{prop} \label{new periodic approximation}
Every convex function $f\colon N_\R \to \R$  satisfying \eqref{new cocycle condition for f} is a uniform limit of locally finite piecewise $\Gamma$-affine convex functions with rational slopes satisfying also \eqref{new cocycle condition for f}. 
\end{prop}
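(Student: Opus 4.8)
The plan is to approximate $f$ in two stages: first replace $f$ by a convex piecewise $\Gamma$-affine function (with possibly irrational slopes) satisfying the automorphy condition \eqref{new cocycle condition for f}, and then perturb the slopes to make them rational while preserving convexity and automorphy.

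\textbf{Step 1: reduction to a fundamental domain.} The automorphy condition \eqref{new cocycle condition for f} says that $f(\omega+\lambda)-f(\omega)=z_\lambda(\omega)$ is determined by the cocycle data, which by \eqref{cocycle and b} is affine in $\omega$ with $\Gamma$-rational constant term $z_\lambda(0)\in\Gamma$ (using \eqref{eq:9}) and linear part $b(\cdot,\lambda)$. Since $b$ is the bilinear form of the polarized tropical abelian variety $\trop_v(A,\phi_L)$, it is $\Gamma$-valued on $\Lambda\times\Lambda$. Hence if I can approximate $f$ on a bounded neighbourhood $F_\Lambda$ of a fundamental domain for $\Lambda$ uniformly by a convex piecewise $\Gamma$-affine function with rational slopes, I can then extend it by the automorphy rule \eqref{new cocycle condition for f} to all of $N_\R$; the extension will again be convex (because the local convex pieces glue across the translates, the transition functions being affine and the original $f$ being convex globally), piecewise $\Gamma$-affine with rational slopes (the translates of the constant terms stay in $\Gamma$ by the $\Gamma$-rationality of $b$ and \eqref{eq:9}), and the uniform estimate on $F_\Lambda$ propagates to a uniform estimate on $N_\R$ by periodicity of the difference $\tilde f - f$ (the difference is $\Lambda$-periodic, hence its sup over $N_\R$ equals its sup over $F_\Lambda$).

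\textbf{Step 2: local convex approximation with rational slopes.} On the bounded region $F_\Lambda$ I want to approximate the convex function $f$ uniformly by convex piecewise affine functions, and then further approximate those by convex piecewise affine functions with $\Gamma$-rational (in fact rational) slopes and integral-$\Gamma$-affine domain of linearity. For the first half I would use the standard fact that a convex function on a bounded convex set is a uniform limit of maxima of finitely many affine functions (e.g. take supporting hyperplanes at a fine net of points, as in \cite[Propositions 2.5.23, 2.5.24]{bps-asterisque} which was already invoked in the proof of Proposition \ref{global regularization by rational piecewise affine functions}). For the second half, given $g=\max(a_1,\dots,a_p)$ with $a_i$ affine, I perturb each $a_i$ by adding a small affine function with rational slope and rational (hence, after scaling, $\Gamma$-rational since $\Gamma$ is dense in $\R$) constant term, keeping $|a_i-\tilde a_i|<\varepsilon$; the maximum $\tilde g=\max(\tilde a_1,\dots,\tilde a_p)$ is still convex, is within $\varepsilon$ of $g$, and is piecewise affine with rational slopes with respect to an integral $\Gamma$-affine polytopal decomposition (here density of $\Gamma$ in $\R$, noted at the start of \S\ref{short subsection: Mumford models}, is what lets me realize the breakpoints on a $\Gamma$-rational polytopal decomposition). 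One must check that the perturbed pieces on the overlap regions still define a locally finite polytopal decomposition, which is routine.

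\textbf{Step 3: matching the automorphy condition.} The one subtlety is that the approximant produced in Step 2 on $F_\Lambda$ need not a priori be compatible with the gluing rule \eqref{new cocycle condition for f} along the boundary of the fundamental domain. I would handle this by first subtracting from $f$ a fixed smooth (or piecewise affine) function $h_0$ that itself satisfies \eqref{new cocycle condition for f} exactly — such an $h_0$ exists because the cocycle $(z_\lambda)$ is the coboundary of a quadratic-plus-linear expression, so one can take $h_0(\omega)=\tfrac12 b(\omega,\omega)+(\text{linear correction})$, which satisfies the automorphy relation up to the linear ambiguity; concretely $h_0$ can be taken convex as $b$ is positive definite. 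Then $f-h_0$ is a genuine $\Lambda$-periodic convex-in-direction... more precisely $f - h_0$ descends to a function on $N_\R/\Lambda$, which I approximate uniformly by a piecewise $\Gamma$-affine function with rational slopes on the compact torus $N_\R/\Lambda$ using Step 2 applied on a fundamental domain and checking the pieces glue periodically (again automatic since the function is genuinely $\Lambda$-periodic); adding back $h_0$ — or rather a rational-slope convex piecewise-$\Gamma$-affine approximation of $h_0$ satisfying the exact automorphy, which exists since $b$ is $\Gamma$-valued on $\Lambda$ — gives $\tilde f$. The main obstacle is precisely this bookkeeping in Step 3: ensuring that after perturbing slopes to rational values one can still satisfy \eqref{new cocycle condition for f} on the nose rather than approximately, which forces the choice of perturbation to be compatible with the $\Lambda$-action; this is where the $\Gamma$-rationality of $b$ and of the $z_\lambda(0)$ is essential, and where I would spend the most care.
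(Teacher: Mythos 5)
There is a genuine gap, and it sits exactly where you say you would ``spend the most care'': global convexity is lost in your reduction to a fundamental domain. If you build a convex piecewise affine approximant on (a neighbourhood of) $F_\Lambda$ and extend it to $N_\R$ by the automorphy rule \eqref{new cocycle condition for f}, the result need not be convex: convexity of a function glued from convex pieces requires the subgradients to be monotone across the gluing locus, and a uniform approximation on $F_\Lambda$ gives no control on the one-sided derivatives at $\partial F_\Lambda$ versus those of the translated copy on the adjacent domain. Already for $n=1$, $\Lambda=\Z$, one easily produces a convex piecewise affine $g$ on $[0,1]$ with $|g-f|<\varepsilon$ whose equivariant extension has a concave kink at the integers. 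Your Step 3 does not repair this. Writing $f=h_0+(f-h_0)$ with $h_0$ a convex function satisfying \eqref{new cocycle condition for f} exactly, the remainder $f-h_0$ is $\Lambda$-periodic but \emph{not} convex (a periodic convex function on $N_\R$ is constant), so ``Step 2'' does not apply to it; and even if you approximate $f-h_0$ uniformly by some periodic piecewise affine $u$ and $h_0$ by some convex equivariant piecewise affine $\tilde h_0$, the sum $u+\tilde h_0$ has no reason to be convex --- uniform closeness of the summands to a convex total does not yield convexity of the sum. (Moreover, producing the convex, exactly equivariant, rational-slope piecewise $\Gamma$-affine approximation of $h_0$ that you invoke is itself an instance of the proposition, so this step is close to circular.)

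The way to avoid all of this --- and what the paper does --- is to never leave the world of \emph{global} affine minorants of $f$. For each $p\in N_\R$ one uses the quadratic growth of $f$ (forced by \eqref{new cocycle condition for f}, \eqref{cocycle and b} and positive definiteness of $b$) to show that the set of slopes $x$ with $f(\omega)\ge f(p)+x(\omega-p)-\varepsilon/3$ for \emph{all} $\omega\in N_\R$ has nonempty interior, hence contains a rational slope $m_p$; together with a $\Gamma$-rational constant this gives a $\Gamma$-affine global minorant $h_p\le f$ with $h_p\ge f-\varepsilon$ near $p$. Making the family $(h_p)$ $\Lambda$-equivariant and taking $h=\sup_{p\in I,\lambda\in\Lambda}h_{p+\lambda}$ over a finite set $I$ covering $F_\Lambda$ yields a function that is automatically convex (a supremum of affine functions), automatically satisfies \eqref{new cocycle condition for f}, and satisfies $f-\varepsilon\le h\le f$; the only remaining point is local finiteness of the supremum, which again follows from the quadratic decay of $z_\lambda(0)-b(\lambda,\lambda)$ against the linear growth of $m_p(\lambda)-b(\lambda,\omega)$. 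Your proposal contains several of these ingredients (rational supporting slopes, density of $\Gamma$, equivariance of the cocycle), but as organized it does not produce a convex approximant.
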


\begin{proof}
	    Convexity,  \eqref{new cocycle condition for f} and \eqref{cocycle and b} yield that $f(\omega )$ grows like
		the positive definite quadratic form $b(\omega ,\omega )$. Fix $\varepsilon >0$ and let $p\in N_{\R}$. By
		the quadratic growth of $f$, the convex set
		\begin{displaymath}
		C_{p}=\{x\in M_{\R}\mid f(\omega )\ge f(p)+x(\omega -p)-  \varepsilon/{3}
	     \: \forall \omega \in N_{\R}\}
		\end{displaymath}
has nonempty interior and hence there is a point $m_{p}\in C\cap M_{\Q}$. 
Choose a value $c_{p}\in \Gamma $ with
\begin{displaymath}
f(p)-m_{p}(p)\ge c_{p} +\varepsilon /3 \ge f(p)-m_{p}(p)-\varepsilon /3.
\end{displaymath}
This is possible because the value group $\Gamma$ is dense in $\R$. 
Then the function $h_{p} \coloneqq c_{p}+m_{p}$ is $\Gamma$-affine with rational slope and satisfies
\begin{displaymath}
h_{p}(p)=c_{p}+m_{p}(p)\le f(p)-\varepsilon /3.
\end{displaymath}
Since $m_{p}\in C_{p}$, this implies that $h_{p}\le f$ on $N_{\R}$. Moreover, 
		\begin{displaymath}
		h_{p}(p)=c_{p}+m_{p}(p)\ge f(p)-2\varepsilon /3.
		\end{displaymath}
		By  continuity of $h_{p}$ and $f$, there is an
		open neighborhood $U_{p}$ of $p$ such that
		\begin{displaymath}
		h_{p}(\omega )\ge f(\omega )-\varepsilon \quad \forall \omega
		\in U_{p}.
		\end{displaymath}
We choose such a function $h_{p}$ for every point $p\in N_{\R}$. 
We may assume that 
\begin{displaymath}
h_{p+\lambda}(\omega+\lambda)=h_p(\omega) + z_\lambda(\omega)
\end{displaymath}
for all $\lambda \in \Lambda$. 
Note that $h_{p+\lambda }$ is still $\Gamma$-affine with rational slope because the cocycle $z_\lambda$ is	$\Gamma$-affine with integral slope. 
This follows from \eqref{cocycle and b}, \eqref{eq:9} and Remark \ref{rem-polarized-abelian-variety} using the fact that $b$ is the bilinear form of a polarized tropical abelian variety.

Let $F_\Lambda$ be the closure of a fundamental domain of the lattice $\Lambda$. 
Since $F_\Lambda$ is compact, there is a finite subset $I \subset N_\R$ such that the open subsets $(U_p)_{p \in I}$ cover $F_\Lambda$. 
Then we define 
\begin{equation} \label{new definition of h as a sup}
h\colon N_\R \longrightarrow \R, \quad \omega \longmapsto h(\omega) \coloneqq \sup_{p \in I, \lambda \in \Lambda} h_{p+\lambda}(\omega).
\end{equation}
Obviously, we have $f-\varepsilon \leq h \leq f$. 
Since we have $h_{p+\lambda}(\omega+\lambda)=h_\lambda(\omega) + z_\lambda(\omega)$, it is clear that $h$ satisfies the automorphy condition \eqref{new cocycle condition for f}. 
It remains to show that $h$ is a convex locally finite piecewise $\Gamma$-affine function. 
To see this, it is enough to show that the supremum in \eqref{new definition of h as a sup} is locally the maximum of finitely many of the functions $h_p$. 
Recall that $m_p$ is the linear part of the affine function $h_p$.  
For $\omega \in N_\R$ and $\lambda \in \Lambda$, we have
\begin{displaymath}
h_{p+\lambda}(\omega)\leq h_p(\omega) \Leftrightarrow 
h_p(\omega-\lambda) + z_\lambda(\omega-\lambda) \leq h_p(\omega)
\Leftrightarrow z_\lambda(\omega-\lambda) \leq m_p(\lambda).
\end{displaymath}
Using \eqref{cocycle and b}, we have $z_\lambda(\omega-\lambda)=z_\lambda(0)+b(\lambda,\omega-\lambda)$ and hence the above inequalities are equivalent to
$z_\lambda(0)-b(\lambda,\lambda) \leq {m}_p(\lambda)-b(\lambda,\omega)$.
Since the left hand side decreases quadratically like $\sim -b(\lambda,\lambda)$
and the right hand side grows at most linearly for $\|\lambda\|
		\to \infty$, we conclude that for any bounded set $\Omega$, there
		is $R \geq 0$ such that the original inequality
		$h_{p+\lambda}(\omega)\leq h_p(\omega)$ is satisfied for all
		$\omega \in \Omega$ and all $\lambda \in \Lambda$ with
		$\|\lambda\| \geq R$. This proves that locally the supremum in
		\eqref{new definition of h as a sup} is the maximum of only finitely
		many $h_p$.
\end{proof}

\begin{rem} \label{algebraicity of the model}
	If $f$ is a function as in Remark \ref{periodicity and M-linearization} \ref{item-2}   satisfying \eqref{new cocycle condition for f} and
		if $L$ is ample, then $\Lcal$ in \ref{item-2}
		is ample and  $\Acal_\Pi$ is the formal completion of an algebraic model. 
	
	Here is a sketch of proof. 
	To show ampleness, by the formal GAGA-principle in \cite[Th\'eor\`eme 5.4.5]{ega3} and \cite[Theorem I.10.1.2]{fujiwara-kato-1}, it is enough to show that the restriction of $\Lcal$  to any irreducible component $Y$ of the special fiber of $\Acal_\Pi$ is ample. 
	There is a vertex $\omega$ of $\Pi$ such that $p$ maps $Y_\omega$ isomorphically onto $Y$. By  \cite[Proposition 4.12]{gubler-compositio}, strict convexity of $f$ yields that $ p^*(\Lcal)|_{Y_\omega}$ is relatively ample with respect to the toric fiber  bundle $Y_\omega \to \Bcal_s$. 
	Recall that $p^*(\Lcal)= q^*\Hcal \otimes \Ocal_{\hat\Ecal_\Pi}(f)$ and  the line bundle $\Hcal$ on $\hat\Bcal$ is ample as $L$ is ample. 
	Using the global sections $s_{f-\ell-c}|_{Y_\omega}$ as in the proofs of Lemma \ref{short globally generated} and Proposition \ref{short degree of irreducible component}, the Nakai--Moishezon criterion shows that $ p^*(\Lcal)|_{Y_\omega}$ is ample and hence the same is true for $\Lcal|_Y$.
	
\end{rem}

\subsection{Monge--Amp\`ere equations for abelian varieties} \label{short subsection: MA equations on abelian varieties}
 
{Our} goal is to solve the invariant non-archimedean Monge--Amp\`ere equation for arbitrary abelian varieties. 
In \S \ref{short subsection: MA equations on abelian varieties}, we deal with an algebraically closed non-archimedean ground field $K$ with non-trivial valuation $v$. 
In \S \ref{subsection: descent}, we will handle arbitrary non-trivially valued non-archimedean ground fields. 

We fix a $g$-dimensional polarized abelian variety 
$(A ,\phi)$ over $K$. 
Let $L $ be an ample line bundle on $A $ which 
induces the polarization $\phi$. 
Again, we denote by $n$ the rank of the torus $\T$ in the Raynaud extension 
\[
0 \longrightarrow \torus  \longrightarrow E  
\stackrel{q}{\longrightarrow} B  \longrightarrow 0.
\]
of $A$. 
Let $\theta \in A^{1,1}(N_\R)$ and $\omega \in A^{1,1}(N_\R/\Lambda)$
be the canonical $(1,1)$-Lagerberg forms  from \eqref{canonical form for ptav} of $(\Lambda,M,[\cdot,\cdot],\lambda)\coloneqq \trop_v(A,\phi)$. 
We also rely on the canonical metric $\metr_{L}$ of $L$.

\begin{prop} \label{short corresponding psh metris and theta-psh functions for polarized abelian varieties}
	For  $\varphi \in \PSH(N_\R/\Lambda,\omega)$, let $\varphi_v \coloneqq \varphi \circ \tropbar_v$. Then 
	$\metr_{L (\varphi)} \coloneqq e^{-{\varphi_v}}\metr_{L }$ is a continuous semipositive metric as introduced in  Remark \ref{Zhang semipositive for non-archimedean}.
\end{prop}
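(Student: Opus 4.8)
The plan is to transport the problem to the uniformization $E^{\an}$, where $\metr_{L(\varphi)}$ becomes a model metric twisted by a convex function on $N_\R$, and then to approximate that function by piecewise $\Gamma$-affine convex functions via Proposition \ref{new periodic approximation}, each approximant producing a semipositive Mumford model. First I would note that $\varphi$, being used to define a metric, is not identically $-\infty$, hence by \ref{finiteness-remark} and Proposition \ref{prop:4'} it is finite and continuous; thus $\varphi_v = \varphi\circ\tropbar_v$ is a continuous real function and $\metr_{L(\varphi)} = e^{-\varphi_v}\metr_L$ is a continuous metric on $L^{\an}$. Let $\tilde\varphi\colon N_\R\to\R$ be the $\Lambda$-periodic lift of $\varphi$; being $\theta$-psh for $\theta = c_1(0,\frac{1}{2}b(\cdot,\cdot))$ as in \ref{canonical form for tropical polarized abelian variety}, the function $\tilde\varphi + \frac{1}{2}b(\cdot,\cdot)$ is convex on $N_\R$ by Proposition \ref{prop:4'}. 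Unwinding the construction of the canonical metric via the Raynaud uniformization (Remark \ref{periodicity and M-linearization}, \cite[\S 4]{gubler-compositio}, \cite{gubler-kuenne2017}), one has, under the identification $p^{*}(L^{\an})\simeq q^{*}(H)$ with $H$ an ample line bundle on $B$, an identity $p^{*}\metr_L = q^{*}\metr_{\Hcal}\cdot e^{-g_{\mathrm{can}}\circ\trop_v}$, where $\metr_{\Hcal}$ is the model metric induced by the rigidified ample model $\Hcal$ of $H$ on $\hat\Bcal$ and $g_{\mathrm{can}}\colon N_\R\to\R$ is a convex quadratic function with Hessian form $b$ satisfying the automorphy relation \eqref{new cocycle condition for f}. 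Setting $f\coloneqq g_{\mathrm{can}}+\tilde\varphi$, I would obtain $p^{*}\metr_{L(\varphi)} = q^{*}\metr_{\Hcal}\cdot e^{-f\circ\trop_v}$ with $f$ convex (since $g_{\mathrm{can}}$ and $\frac{1}{2}b(\cdot,\cdot)$ differ by a linear form) and satisfying \eqref{new cocycle condition for f} (since $\tilde\varphi$ is $\Lambda$-invariant).

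Next I would apply Proposition \ref{new periodic approximation} to $f$: it produces locally finite piecewise $\Gamma$-affine convex functions $f_k$ with rational slopes, each satisfying \eqref{new cocycle condition for f}, with $f_k\to f$ uniformly on $N_\R$; the $f_k$ may be taken piecewise $\Gamma$-affine with respect to $\Lambda$-periodic integral $\Gamma$-affine polytopal decompositions $\Pi_k$, since the construction in the proof of Proposition \ref{new periodic approximation} is $\Lambda$-equivariant. After choosing $m_k\in\N_{>0}$ with $m_k f_k$ of integral slopes, Remark \ref{periodicity and M-linearization} together with Remark \ref{short Mumford models} and Remark \ref{algebraicity of the model} provides the formal completion of an algebraic model $\Acal_{\Pi_k}$ of $A^{\an}$ over $\kcirc$ and a line bundle $\Lcal_k$ on it, a model of $L^{\otimes m_k,\an}$, with $p^{*}\Lcal_k = q^{*}\Hcal^{\otimes m_k}\otimes\Ocal_{\hat\Ecal_{\Pi_k}}(m_k f_k)$; by \eqref{short piecewise linear functions and line bundles}, the induced model metric $\metr_k$ satisfies $p^{*}\metr_k = q^{*}\metr_{\Hcal}^{\otimes m_k}\cdot e^{-m_k f_k\circ\trop_v}$.

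I would then show $\Lcal_k$ is nef by checking its restriction to each irreducible component of the special fibre of $\Acal_{\Pi_k}$ (the generic fibre being $L^{\otimes m_k}$, which is ample). Each such component is identified via $p$ with a component $Y_\omega$ of the special fibre of $\hat\Ecal_{\Pi_k}$ attached to a vertex $\omega$ of $\Pi_k$, so it suffices to see that $p^{*}\Lcal_k|_{Y_\omega}$ is nef. Now $q^{*}\Hcal^{\otimes m_k}|_{Y_\omega}$ is nef, being the pull-back of the ample line bundle $\Hcal^{\otimes m_k}$ along $Y_\omega\to\Bcal_s$, and $\Ocal_{\hat\Ecal_{\Pi_k}}(m_k f_k)|_{Y_\omega}$ is nef by Lemma \ref{short globally generated} since $m_k f_k$ is convex; hence $p^{*}\Lcal_k|_{Y_\omega}$ is nef. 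Therefore $\metr_k$ is a semipositive model metric and $\metr_k^{1/m_k}$ is a semipositive model metric on $L^{\an}$ in the sense of Remark \ref{Zhang semipositive for non-archimedean}.

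Finally, I would conclude by observing that $-\log\bigl(\metr_k^{1/m_k}/\metr_{L(\varphi)}\bigr)$ pulls back along $p$ to $(f-f_k)\circ\trop_v$; since $f-f_k$ is $\Lambda$-invariant (both $f$ and $f_k$ obey \eqref{new cocycle condition for f} with the same cocycle) and $f_k\to f$ uniformly on $N_\R$ while $\trop_v$ is surjective, this tends to $0$ uniformly on $A^{\an}$. Hence $\metr_{L(\varphi)}$ is a uniform limit of semipositive model metrics on $L^{\an}$, i.e.\ continuous semipositive in Zhang's sense (Remark \ref{Zhang semipositive for non-archimedean}), as claimed. The hard part will be the first step: pinning down the precise description of $p^{*}\metr_L$ as a model metric on $q^{*}H$ twisted by a convex quadratic function obeying \eqref{new cocycle condition for f} exactly, which is where the fine structure of the Raynaud uniformization and of the canonical metric from \cite[\S 4]{gubler-compositio} (and \cite{gubler-kuenne2017}) is needed; once this is available, the approximation (Proposition \ref{new periodic approximation}), the construction of the Mumford models (Remark \ref{periodicity and M-linearization}), and the nefness criterion (Lemma \ref{short globally generated}) combine routinely, the only extra bookkeeping being the harmless passage from rational to integral slopes via the auxiliary powers $L^{\otimes m_k}$.
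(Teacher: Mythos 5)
Your proposal is correct and follows essentially the same route as the paper's proof: express $p^*\metr_L$ as $e^{-Q}q^*\metr_{\Hcal}$ for a quadratic $Q$ with $d'd''Q=\theta$, observe that $f=\tilde\varphi+Q$ is convex and satisfies the automorphy condition \eqref{new cocycle condition for f}, approximate via Proposition \ref{new periodic approximation}, clear denominators in the slopes, and verify nefness of the resulting line bundles on the Mumford models componentwise via Lemma \ref{short globally generated} before passing to the uniform limit. The only differences are cosmetic (your $g_{\mathrm{can}}$ is the paper's $Q$, and you make explicit the $\Lambda$-periodicity of the approximating decompositions, which the paper uses implicitly).
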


\begin{proof}  
The canonical metric $\metr_L$ of $L$ is determined by choosing a rigidification of $L$. 
We first recall some facts from Remark \ref{periodicity and M-linearization}. There is a rigidified ample line bundle $H$ on $B^\an$ such that $p^*(L^\an) \simeq q^*(H)$ as 
$\Lambda$-linearized cubical sheaves.  
The ample line bundle $L$ induces a cocycle $(z_\lambda)_{\lambda \in \Lambda}$ and a scalar product $b$ on $N_\R$. We have seen that $Q(\lambda) \coloneqq z_\lambda(0)$ is a quadratic function in $\lambda$ and that $b$ is the associated bilinear form. Since $b$ is also the bilinear form of  $\trop_v(A,\phi)=(\Lambda,M,[\cdot,\cdot],\lambda)$, we have $\theta = d'd''Q$ by \ref{canonical form for tropical polarized abelian variety}. Using \cite[Example 8.15]{gubler-kuenne2017}, we have
 that 
\begin{equation} \label{theta and cocycle}
p^*\metr_{L }= e^{-Q} q^*\metr_{H}.
\end{equation}
Let $\tilde\varphi\colon N_\R \to \R$ be the lift of $\varphi$ to $N_\R$. Then $\tilde\varphi$ is a    $\theta$-psh function $\varphi$ on $N_\R$ which means that 
$f \coloneqq \tilde\varphi+Q$ is convex. For $x \in N_\R$ and     $\lambda \in \Lambda$, we deduce from \eqref{cocycle and b} that
$$f(x+\lambda)= \tilde\varphi(x+\lambda)+Q(x+\lambda)= \tilde\varphi(x)+Q(x)+Q(\lambda)+b(x,\lambda)=f(x)+z_\lambda(0).$$
This means that $f$ satisfies the automorphy condition \eqref{new cocycle condition for f}. By Proposition \ref{new periodic approximation}, $f$ is a uniform limit of locally finite piecewise $\Gamma$-affine functions $f_k$ on $N_\R$ which have rational slopes and which satisfy also \eqref{new cocycle condition for f}. We have seen in the proof of Proposition \ref{new periodic approximation} that $z_\lambda(0)$ is an affine function with integral slopes and hence there is a non-zero $m_k \in \N$ such that $m_k f_k$ is a piecewise $\Gamma$-affine function with integral slopes. Going the above steps backwards, we see that $f_k-Q$ is the lift of a unique $\omega$-psh function $\varphi_k$. By construction, $\metr_{L(\varphi)}$ is the uniform limit of the metrics $\metr_{L(\varphi_k)}$. As in \S \ref{short subsection: Mumford models}, let $\Hcal$ be the model of $H$ on the formal completion $\hat\Bcal$ of the abelian scheme $\Bcal$ over $\kcirc$ with generic fiber $B$. Then we have 
\begin{equation*} \label{model metric identities}
p^*\metr_{L(\varphi_k)}^{\otimes m_k} = e^{-Q-\varphi_{k,v}\circ p}\metr_{q^*\Hcal}^{\otimes m_k}= e^{-f_k} \metr_{q^*\Hcal^{\otimes m_k}} = \metr_{q^*\Hcal^{\otimes m_k}(m_kf_k)}.
\end{equation*}
It follows from Remark \ref{periodicity and M-linearization} that $\metr_{L(\varphi_k)}^{\otimes m_k}$ is the metric induced by a line bundle $\Mcal_k$ on a formal model $\Acal_k=\Ecal_k/M$ of $A$ for a formal Mumford model $\Ecal_k$ of $E$ induced by a integral $\Gamma$-affine $\Lambda$-periodic decomposition $\Pi_k$ of $N_\R$. We claim that these metrics are semipositive. To see this, we have to show that the restriction of $\Mcal_k$ to any irreducible component $Y$ of $\Acal_k$ is nef. Using $\Acal_k= \Ecal_k/M$, we see that $Y\simeq Y_\omega$ for an irreducible component $Y_\omega$ of $\Ecal_k$ associated to a vertex $\omega$ of $\Pi_k$. By Lemma \ref{short globally generated}, we have that $\Ocal_{{\Ecal}_k}(m_kf_k)|_{Y_\omega}$ is nef. Using the $\Hcal$ is ample, we deduce that  $p^*\Mcal_k=q^*\Hcal(m_kf_k)$ restricts to a nef line bundle on $Y_\omega$. This proves our intermediate claim and hence $\metr_{L(\varphi_k)}$ is a semipositive model metric. By definition, we get that $\metr_{L (\varphi)} \coloneqq e^{-{\varphi_v}}\metr_{L }$ is semipositive.
\end{proof}

\begin{rem} \label{short related toric measures}
Let $\mu$ be a positive Radon measure on $N_\R/\Lambda$. 
Let $\overline\iota_v\colon N_\R/\Lambda \to A ^{\an}$ be the 
canonical section of $\overline\trop_v$ used in Remark \ref{short canonical skeleton} to identify $N_\R/\Lambda$ 
with the canonical skeleton of $A$. 
Then we obtain a Radon measure on $A ^{\an}$ as
the image measure   
\begin{equation} \label{short normalization non-archimedean measure}
\mu_v\coloneqq \binom{g}{g-n}\deg_{H }(B )\cdot \overline\iota_v(\mu).
\end{equation}
where $H $ is again the line bundle on $B^\an $ with 
$q^*(H ) = p^*(L^\an)$. 
\end{rem}

\begin{thm} \label{non-arch Calabi-Yau for non-archimedean abelian variety}
We consider a positive Radon measure $\mu_v$ on $A^\an$ 
which is supported in the canonical skeleton of $A$ and satisfies 
$\mu_v(A )=\deg_L(A)$. 
Then there is a continuous semipositive metric $\metr$ of $L $ 
which satisfies the non-archimedean Monge--Amp\`ere equation 
\begin{equation} \label{main non-arch MA-equation}
c_1(L ,\metr )^{\wedge g}=\mu_v.
\end{equation}
The continuous semipositive metric $\metr$ is unique up to scaling.
\end{thm}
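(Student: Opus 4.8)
The plan is to reduce the statement, via the canonical tropicalization map $\tropbar_v\colon A^\an \to N_\R/\Lambda$, to the tropical Monge--Amp\`ere equation on the tropical abelian variety $N_\R/\Lambda$ attached to the polarized tropical abelian variety $(\Lambda,M,[\cdot,\cdot],\lambda) = \trop_v(A,\phi)$, exactly as in the totally degenerate case (Section~\ref{section: totally degenerate abelian varieties}), but now using Raynaud extensions instead of Tate uniformizations. First I would normalize: by Proposition~\ref{compare degree of polarizations wrt tropicalizations} we have $\deg_L(A) = \sqrt{\deg(\trop_v(A,\phi))}\cdot \binom{g}{g-n}^{-1}\cdot\frac{g!}{(g-n)!}\cdot\deg_H(B)$, so since $\mu_v$ is supported on the canonical skeleton $N_\R/\Lambda$ and has total mass $\deg_L(A)$, the pullback measure $\mu \coloneqq \overline\iota_v^{\,*}(\mu_v)$, rescaled by $\binom{g}{g-n}^{-1}\deg_H(B)^{-1}$ as in \eqref{short normalization non-archimedean measure}, is a positive Radon measure on $N_\R/\Lambda$ with $\mu(N_\R/\Lambda) = n!\sqrt{\deg(\Lambda,M,[\cdot,\cdot],\lambda)} = \int_{N_\R/\Lambda}\omega^{\wedge n}$, i.e.\ it satisfies \eqref{normalize-measure'}.

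Next I would invoke Corollary~\ref{tropical-ma}: there is a continuous $\omega$-psh function $\varphi\colon N_\R/\Lambda \to \R$, unique up to additive constants, solving the tropical equation $(\omega + d'd''\varphi)^{\wedge n} = \mu$. Setting $\varphi_v \coloneqq \varphi \circ \tropbar_v$, Proposition~\ref{short corresponding psh metris and theta-psh functions for polarized abelian varieties} shows that $\metr \coloneqq \metr_{L(\varphi)} = e^{-\varphi_v}\metr_L$ is a continuous semipositive metric on $L^\an$ in the sense of Zhang. It remains to verify the Monge--Amp\`ere identity $c_1(L,\metr)^{\wedge g} = \mu_v$. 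For this I would use the approximation of $f \coloneqq \tilde\varphi + Q$ (the lift to $N_\R$ of $\varphi$ plus the quadratic cocycle function) by locally finite piecewise $\Gamma$-affine convex functions $f_k$ with rational slopes satisfying the automorphy condition \eqref{new cocycle condition for f}, produced in the proof of Proposition~\ref{short corresponding psh metris and theta-psh functions for polarized abelian varieties}, together with the Mumford models $\Acal_k = \Ecal_k/M$ and line bundles $\Mcal_k$ with $\metr_{L(\varphi_k)}^{\otimes m_k}$ the induced model metric. The Chambert--Loir measure $c_1(L,\metr_{L(\varphi_k)})^{\wedge g}$ of a semipositive model metric is a sum of Dirac masses at the divisorial points attached to the irreducible components $Y_\omega$ of the special fiber, with weight $\deg_{p^*\Mcal_k}(Y_\omega)$; by Proposition~\ref{short degree of irreducible component} this weight equals $\frac{g!}{(g-n)!}\deg_H(B)\cdot m_k^n\vol(\{\omega\}^{f_k})$, and the divisorial point attached to $Y_\omega$ reduces under $\tropbar_v$ to the vertex $\omega\bmod\Lambda$. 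Thus $c_1(L,\metr_{L(\varphi_k)})^{\wedge g}$ is the push-forward along $\overline\iota_v$ of $\binom{g}{g-n}\deg_H(B)$ times the real Monge--Amp\`ere measure $(\omega + d'd''\varphi_k)^{\wedge n}$ on $N_\R/\Lambda$, the latter being $\sum_\omega n!\vol(\{\omega\}^{f_k})\,\delta_{\omega}$ for a piecewise affine convex function; this is precisely the toric local computation underlying Proposition~\ref{local MA equation} and Remark~\ref{rem:4}. Passing to the limit $k\to\infty$: on the left, $\metr_{L(\varphi_k)} \to \metr$ uniformly and the Chambert--Loir measure is continuous along uniform convergence of semipositive metrics (\cite[Corollaire 5.6.5]{chambert-loir-ducros}); on the right, $(\omega + d'd''\varphi_k)^{\wedge n} \to (\omega + d'd''\varphi)^{\wedge n} = \mu$ weakly by continuity of the tropical Bedford--Taylor product (Theorem~\ref{thm:7}, Remark~\ref{MA-currents for theta-psh in non-smooth case}). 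Hence $c_1(L,\metr)^{\wedge g} = \binom{g}{g-n}\deg_H(B)\cdot\overline\iota_v(\mu) = \mu_v$, as required.

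Finally, uniqueness up to scaling follows from the general uniqueness theorem of Yuan and Zhang for continuous semipositive metrics solving the Monge--Amp\`ere equation on a projective variety: two continuous semipositive metrics $\metr_1,\metr_2$ on $L^\an$ with $c_1(L,\metr_1)^{\wedge g} = c_1(L,\metr_2)^{\wedge g}$ differ by a constant.

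\textbf{Main obstacle.} I expect the principal technical point to be the clean identification, for a semipositive model metric coming from a Mumford model $\Acal_k = \Ecal_k/M$, of the top Chambert--Loir measure $c_1(L,\metr_{L(\varphi_k)})^{\wedge g}$ as the weighted sum of Dirac masses at the divisorial points over the components $Y_\omega$ with weights given by Proposition~\ref{short degree of irreducible component}, and checking that these points lie on (and reduce correctly onto) the canonical skeleton $N_\R/\Lambda$ so that the whole measure push-forwards through $\overline\iota_v$ from the tropical side. This is where one must combine the structure of the special fiber of $\hat\Ecal_\Pi$ as a toric fiber bundle over $\Bcal_s$ (Remark~\ref{short Mumford models}) with the locality of the Monge--Amp\`ere operator; the passage to the limit and the semipositivity input from Proposition~\ref{short corresponding psh metris and theta-psh functions for polarized abelian varieties} are then comparatively formal. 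The approximation argument is modeled on the one in \cite{gubler-compositio}, as announced in the introduction.
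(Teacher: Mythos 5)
Your proposal is correct and follows essentially the same route as the paper: reduce to the tropical Monge--Amp\`ere equation via Corollary \ref{tropical-ma}, obtain semipositivity from Proposition \ref{short corresponding psh metris and theta-psh functions for polarized abelian varieties}, verify the equation through the piecewise $\Gamma$-affine approximation, the degree formula of Proposition \ref{short degree of irreducible component} on Mumford models, and continuity of both Monge--Amp\`ere operators under uniform convergence, and conclude uniqueness from Yuan--Zhang. The only blemish is the displayed intermediate formula for $\deg_L(A)$, where the spurious factor $\binom{g}{g-n}^{-1}$ makes it read $n!\sqrt{\deg}\,\deg_H(B)$ instead of $\tfrac{g!}{(g-n)!}\sqrt{\deg}\,\deg_H(B)$; your subsequent normalization of $\mu$ is nonetheless the correct one.
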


For totally degenerate abelian varieties, this was shown by Liu \cite{liu2011} in case of  a measure $\mu_v$ with smooth density proving also that then the solution is a smooth metric.

\begin{proof} 
	It was shown by Yuan and Zhang \cite[Corollary 1.2]{yuan-zhang} that uniqueness of a continuous semipositive solution up to scaling holds on any projective variety over $K$. 
	
	For existence, we note that 
	the measure $\mu_v$ is obtained from a unique Radon
	measure $\mu$ on $N_\R/\Lambda$ as in Remark
	\ref{short related toric measures}. 
	By Proposition \ref{compare degree of polarizations wrt tropicalizations}, 
	we have 
	\begin{equation}\label{norm-condition-eq1}
	\mu(N_\R/\Lambda)=n!\sqrt{\deg(\trop_v(A,\phi))}.
	\end{equation}
	By Proposition \ref{tropical-ma}, there is $\varphi \in \PSH(N_\R/\Lambda)$ solving the tropical  Monge--Amp\`ere equation
	\begin{equation} \label{solution of tropical MA-eq}
	(\omega +d'd''\varphi)^{\wedge n} = \mu.
	\end{equation}
    We claim that the semipositive metric $\metr_{L(\varphi)}$ of $L$ from Proposition \ref{short corresponding psh metris and theta-psh functions for polarized abelian varieties} is a solution of \eqref{main non-arch MA-equation}. 
    We use now the notation and the results from the proof of Proposition  \ref{short corresponding psh metris and theta-psh functions for polarized abelian varieties}. Recall that  $f= \tilde\varphi+Q$ is the uniform limit of the sequence $f_k = \tilde\varphi_k+Q$. Since $\metr_{L(\varphi_k)}$ is a model metric for $L$, its Monge--Amp\`ere measure is the discrete measure on $A^\an$ supported in the Shilov points associated to the irreducible components $Y$ of the Mumford model $A_{\Pi_k}$ and with multiplicity give by the degree of $Y$ with respect to the model of the metric \cite[Theorem 10.5]{gubler-kuenne2017}. By \cite[Proposition 4.8]{gubler-compositio},   $Y$ corresponds to a vertex $u$ of $\Pi_k$, determined up to translation by $\Lambda$, and  the Shilov point of $Y$ is just $p(u)$ in the canonical skeleton of $N_\R$. 
    Proposition \ref{short degree of irreducible component} shows that the non-archimedean Monge--Amp\`ere measure $c_1(L,\metr_{L(\varphi_k)})^{\wedge g}$ is equal to
    \begin{equation} \label{crucial formula} \frac{g!}{(g-n)!} \deg_H(B)\sum_u  \vol(\{u\}^{f_k})
    =\binom{g}{g-n} \deg_H(B)  \bar\iota_v (\omega +d'd''\varphi_k)^{\wedge n}
    \end{equation}  
    where we sum over a system of representatives of vertices $u$ of $\Pi_k$ modulo $\Lambda$. For \eqref{crucial formula}, we use that $n! \vol(\{u\}^{f_k})$  is the multiplicity of the real Monge--Amp\`ere measure of the piecewise affine $f_k$ in $u$ \cite[Proposition 2.7.4]{bps-asterisque} and then Remark \ref{rem:4}. Using continuity of the tropical and the non-archimedean Monge--Amp\`ere measure along uniformly convergent sequences, we deduce from \eqref{short normalization non-archimedean measure}, \eqref{solution of tropical MA-eq} and \eqref{crucial formula} that $\metr_{L(\varphi_k)}$ is a solution of \eqref{main non-arch MA-equation}.
\end{proof}

\subsection{Descent} \label{subsection: descent}

In this subsection, $K$ is an arbitrary non-archimedean field with non-trivial valuation $v$. 
We  solve the invariant non-archimedean Monge--Amp\`ere equation for any abelian variety $A$ over $K$. 
We  use the solution from Theorem \ref{non-arch Calabi-Yau for non-archimedean abelian variety} for the base change of $A$ to an algebraic closure $\overline K$ and  apply a descent argument.  
We  use continuous semipositive metrics on an ample line bundle as introduced in Remark \ref{Zhang semipositive for non-archimedean}. 

\begin{art} \label{notation for base change}
Let $L$ be an ample line bundle on a  geometrically integral projective variety $Y$ over $K$ of dimension $n$. We consider an extension $F/K$ of non-archimedean fields. The base change of $Y$ (resp.~$L$) is denoted by $Y_F$ (resp.~$L_F$). Note that the base change morphism induces a proper map $\pi\colon Y_F^\an \to Y^\an$. Let $\metr$ be a continuous metric on $\Lan$ and let $\metr_F \coloneqq \pi^*\metr$ be the base change metric on $L_F^\an$. 	
\end{art}

We  recall results of Boucksom and Eriksson about the base change of semipositive metrics.
\begin{prop} \label{base extension and MA}
In the setting of \ref{notation for base change}, the following properties hold.
 \begin{enumerate}
 	\item \label{psh and base change} 
 	The continuous metric $\metr$ is semipositive if and only if  $\metr_F$ is semipositive.
 	\item \label{item: MA and base change}
 	If $\metr$ is a continuous semipositive metric, then $c_1(L,\metr)^{\wedge n}=\pi_*(c_1(L_F,\metr_F)^{\wedge n})$.
 \end{enumerate}
\end{prop}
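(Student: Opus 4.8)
The plan is to cite the results of Boucksom and Eriksson on base change of semipositive metrics and Monge--Amp\`ere measures, and to verify that they apply in our setting. The first observation is that the analytification $Y^\an$ of a projective variety $Y$ over $K$ is a good boundaryless Berkovich space whose topology is Hausdorff, so both the Chambert-Loir--Ducros formalism of forms and currents and the Monge--Amp\`ere measures of continuous semipositive metrics are available on $Y^\an$ and on $Y_F^\an$; the base change morphism $\pi\colon Y_F^\an\to Y^\an$ is continuous, proper and surjective. For part \ref{psh and base change}, semipositivity of a continuous metric in the sense of Zhang is stable under ground field extension: this is \cite[Theorem 7.8 and the base change statements in Section 8]{boucksom-eriksson2018}, where the key point is that Fubini--Study metrics pull back to Fubini--Study metrics (as they are model metrics associated to globally generated models, and globally generated models pull back to globally generated models under flat base change) and that the regularization of a semipositive metric by Fubini--Study metrics is preserved. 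The converse implication is the nontrivial direction and relies on the descent result of Boucksom and Eriksson showing that if $\metr_F$ is semipositive then so is $\metr$; this uses the fact that for an extension of non-archimedean fields $F/K$ the energy and the semipositivity notion are compatible with base change together with an approximation argument.

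For part \ref{item: MA and base change}, the plan is first to treat the case of a model metric (or more generally a Fubini--Study metric) $\metr$. In that case $c_1(L,\metr)^{\wedge n}$ is a finite sum of Dirac masses at the Shilov points of the special fibre of a model $\mathscr{Y}$, weighted by intersection numbers, and the base change model $\mathscr{Y}_{F^\circ}$ has special fibre obtained by base change of the residue field; the Shilov points of $Y_F^\an$ over a given Shilov point $x$ of $Y^\an$ are governed by the components of the special fibre over the corresponding component, and the projection formula for intersection numbers under the flat morphism $\mathscr{Y}_{F^\circ}\to\mathscr{Y}$ together with the fact that $\pi$ maps each such Shilov point to $x$ yields $\pi_*\bigl(c_1(L_F,\metr_F)^{\wedge n}\bigr)=c_1(L,\metr)^{\wedge n}$. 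This is exactly the content of the base change compatibility proved in \cite[Section 8]{boucksom-eriksson2018}, building on Chambert-Loir's description of Monge--Amp\`ere measures of model metrics \cite{chambert-loir-2006}. The general case then follows by writing the continuous semipositive metric $\metr$ as a uniform limit of Fubini--Study (or semipositive model) metrics $\metr_k$, observing that $\metr_{k,F}$ converges uniformly to $\metr_F$, and using continuity of the Monge--Amp\`ere operator along uniformly convergent sequences of semipositive metrics \cite[Corollaire 5.6.5]{chambert-loir-ducros} on both sides together with weak continuity of the proper push-forward $\pi_*$ of currents.

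The main obstacle is the converse implication in \ref{psh and base change}: descending semipositivity from $Y_F^\an$ to $Y^\an$. Unlike the forward direction, this cannot be done by simply pulling back a regularizing sequence, and one genuinely needs the theorem of Boucksom and Eriksson that the class of semipositive metrics (equivalently, the finite-energy formalism) is preserved in both directions under arbitrary non-archimedean ground field extensions. I would therefore state the proposition as a citation of \cite{boucksom-eriksson2018}, spelling out only the verification that our hypotheses (projective $Y$, ample $L$, $F/K$ an extension of complete non-archimedean fields) are those under which their results apply, and that the Monge--Amp\`ere measures used here agree with theirs by the compatibility already recorded in \ref{global non-arch MA operator and Ecal}. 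The remaining steps --- continuity under uniform limits, weak continuity of $\pi_*$, and the projection formula for model metrics --- are routine given the tools assembled in Sections \ref{sec:posit-real-compl} and \ref{section: semipositive metrics and theta-psh functions}.
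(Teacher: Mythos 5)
Your proposal is correct and follows essentially the same route as the paper, which simply cites Boucksom--Eriksson: part \ref{psh and base change} is \cite[Theorem 7.32]{boucksom-eriksson2018} and part \ref{item: MA and base change} is \cite[\S 8.1]{boucksom-eriksson2018}. Your additional sketch of why their results apply (Fubini--Study regularization, the Shilov-point description for model metrics, and continuity of the Monge--Amp\`ere operator along uniform limits) is consistent with their arguments, though note that the descent direction of \ref{psh and base change} is covered by Theorem 7.32 rather than Theorem 7.8 of \cite{boucksom-eriksson2018}.
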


\begin{proof}
Property \ref{psh and base change} is  \cite[Theorem 7.32]{boucksom-eriksson2018} and \ref{item: MA and base change} is shown in \cite[\S 8.1]{boucksom-eriksson2018}.
\end{proof}

\begin{art} \label{norm and metric}
We keep the setting of \ref{notation for base change}. We assume that $F/K$ is a finite Galois extension of degree $d$ with Galois group $G$. Then base change induces a finite flat morphism $Y_F \to Y$ and  hence we have the norm  of a line bundle $L'$ on $Y_F$ as a line bundle $N(L')$ on $Y$ (see \cite[\S A.8]{boucksom-eriksson2018}). For a metric $\metr'$ of $L'$, we have the norm of the metric as a metric $N(\metr')$ of $N(L')$. If $\metr'$ is a continuous semipositive metric, then $N(\metr')$ is a continuous semipositive metric \cite[Proposition 8.22]{boucksom-eriksson2018}. 

We apply this now to $L' \coloneqq L_F$ for the ample line bundle $L$ on $Y$. Then $N(L_F)=L^{\otimes d}$ and hence $\pi^*(N(L_F))= L_F^{\otimes d}$. The finite Galois group $G$ acts continuously on $\Yan$ and on $L_F^\an$. For a continuous semipositive metric $\metr'$ of $L_F$, the metric $\metr_\sigma' \coloneqq \sigma^*\metr'$ is also a continuous semipositive metric of $L_F$ and we deduce from the definitions that
\begin{equation} \label{norm and tensor metric}
\pi^*\bigl(N(L_F),N(\metr')\bigr)
= \bigotimes_{\sigma \in G} (L_F,\metr_\sigma).
\end{equation}
Moreover, there is a metric $\metr$ on $L$ with $\pi^*\metr=\metr'$ if and only if $\metr'$ is $G$-invariant.
\end{art}

Uniqueness of the complex Monge--Amp\`ere equation was originally proven by Calabi. 
In non-archimedean geometry, we have the following result of Yuan and Zhang.

\begin{thm} \label{yuan-zhang}
Let $L$ be an ample line bundle on a geometrically integral variety $Y$ over the non-trivially valued non-archimedean field $K$. 
If $\metr$ and $\metr'$ are continuous semipositve metrics of $L$ with $c_1(L,\metr)^{\wedge n}= c_1(L,\metr')^{\wedge n}$, then there is  $r \in \R_{>0}$ with $\metr'=r \metr$.
\end{thm}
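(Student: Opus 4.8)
The statement is a uniqueness result for solutions of the non-archimedean Monge--Amp\`ere equation on an arbitrary geometrically integral projective variety. The natural strategy is to reduce to a case where such uniqueness is already available in the literature, namely the result of Yuan--Zhang for projective varieties over a complete discretely valued field, or more directly to invoke \cite[Corollary 1.2]{yuan-zhang} which is stated for projective varieties over any complete non-archimedean field. Since the paper already cites Yuan--Zhang at the very end of \S\ref{short subsection: MA equations on abelian varieties} for exactly this purpose in the abelian variety case, the cleanest path is simply to record here that their theorem applies verbatim, and perhaps to explain why the hypotheses match: $Y$ is geometrically integral and projective, $L$ is ample, and the two metrics are continuous and semipositive (in the sense of Zhang, cf.~Remark \ref{Zhang semipositive for non-archimedean}), which is precisely the framework in which Yuan--Zhang prove that equality of the Monge--Amp\`ere measures forces the metrics to agree up to a multiplicative constant.

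\textbf{Key steps.} First I would fix reference metrics and translate the statement into the language of $\theta$-psh functions: writing $\metr = e^{-\varphi}\metr_0$ and $\metr' = e^{-\varphi'}\metr_0$ for a fixed continuous semipositive reference metric $\metr_0$ (which exists, e.g.\ a Fubini--Study metric as in \ref{global non-arch MA operator and Ecal}), the hypothesis becomes $(dd^c\varphi+\theta)^{\wedge n} = (dd^c\varphi'+\theta)^{\wedge n}$ for bounded $\theta$-psh functions $\varphi,\varphi'$, and the conclusion is that $\varphi - \varphi'$ is constant. Second, I would note that this is exactly the uniqueness half of the non-archimedean Calabi conjecture as proven by Yuan--Zhang; their argument proceeds via a comparison/integration-by-parts argument for the energy functional, using the mixed Monge--Amp\`ere inequalities that are valid for bounded semipositive metrics (these hold in the Chambert-Loir--Ducros formalism on any boundaryless space, in particular on $\Yan$). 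Third, I would invoke base change: by Proposition \ref{base extension and MA}\ref{psh and base change} semipositivity is insensitive to extension of the ground field, and by \ref{item: MA and base change} the Monge--Amp\`ere measures are compatible with pushforward along $\pi\colon Y_{\overline K}^\an \to Y^\an$; combined with surjectivity of $\pi$ this lets one pass to an algebraically closed or to a maximally complete field if the Yuan--Zhang argument is more transparent there, and then descend the conclusion "$\varphi_{\overline K} - \varphi_{\overline K}'$ is constant" back down to $Y^\an$ since $\pi$ is surjective.

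\textbf{Main obstacle.} The real content is entirely contained in the cited theorem of Yuan--Zhang, so there is no genuine mathematical obstacle here beyond verifying that the hypotheses line up; the only point requiring a little care is whether one wishes to reprove uniqueness or merely cite it. If one prefers a self-contained argument, the hard part would be establishing the monotonicity of the energy functional $E$ along the segment $t\mapsto t\varphi + (1-t)\varphi'$ and the strict concavity that forces $\varphi=\varphi'$ up to a constant when the Monge--Amp\`ere measures coincide; this in turn rests on the orthogonality relation and on the fact that $\int_{\Yan}(\varphi-\varphi')\big((dd^c\varphi+\theta)^{\wedge n} - (dd^c\varphi'+\theta)^{\wedge n}\big) \geq c\,\|\varphi-\varphi'\|$ for a suitable quasi-norm, which for continuous semipositive metrics follows from the Bedford--Taylor calculus of Chambert-Loir and Ducros together with regularization. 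Given the scope of the present paper, I would opt for the short route: state the theorem and attribute it to Yuan and Zhang, adding only the one-line remark that the continuous semipositive metrics considered here are semipositive in Zhang's sense (Remark \ref{Zhang semipositive for non-archimedean}), so that \cite{yuan-zhang} applies directly.
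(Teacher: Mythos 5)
Your overall strategy --- reduce to the uniqueness theorem of Yuan--Zhang --- is the route the paper takes, but two points in your setup do not hold up. First, \cite[Corollary 1.2]{yuan-zhang} is not, as you assert, available over an arbitrary complete non-archimedean field: it is proven for non-trivially valued \emph{algebraically closed} fields. Hence the base change step is not an optional convenience (``if the Yuan--Zhang argument is more transparent there'') but is the entire content of the proof of this theorem.

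Second, and this is the genuine gap, your base change argument does not transfer the \emph{hypothesis} to the algebraically closed field. Let $F$ be the completion of an algebraic closure $\overline K$ of $K$ and $\pi\colon Y_F^{\an}\to \Yan$. Proposition \ref{base extension and MA} gives $\pi_*\bigl(c_1(L_F,\metr_F)^{\wedge n}\bigr)=c_1(L,\metr)^{\wedge n}$ and likewise for $\metr'$, but two positive Radon measures on $Y_F^{\an}$ with equal pushforward need not be equal, and surjectivity of $\pi$ does not help here --- it only enters in the easy descent of the \emph{conclusion} $\metr_F'=r\,\metr_F$ back to $\metr'=r\,\metr$. The missing ingredient is the Galois action: for $G=\Gal(\overline K/K)$ one has $Y_F^{\an}/G\simeq \Yan$ by \cite[Corollary 1.3.6]{berkovich-book}, the measures $c_1(L_F,\metr_F)^{\wedge n}$ and $c_1(L_F',\metr_F')^{\wedge n}$ are $G$-invariant because $\metr_F$ and $\metr_F'$ are pullbacks from $\Yan$, and a positive Radon measure on the quotient of a locally compact Hausdorff space by a compact group admits a \emph{unique} invariant lift (see \S \ref{section: notation}). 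Only this uniqueness lets you conclude $c_1(L_F,\metr_F)^{\wedge n}=c_1(L_F',\metr_F')^{\wedge n}$ and then apply Yuan--Zhang over $F$. The reformulation in terms of bounded $\theta$-psh functions and the sketch of the energy-functional argument are not needed for the intended citation-based proof.
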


\begin{proof}
In \cite[Corollary 1.2]{yuan-zhang}, this is proven for a non-trivially valued algebraically closed non-archimedean field. We will deduce from it the claim for any non-trivially valued non-archimedean field $K$. 
Let $F$ be the completion of an algebraic closure $\overline K$ of $K$ and let $G$ be the Galois group of $\overline K/K$.  By \cite [Corollary 1.3.6]{berkovich-book}, we have $X_F^{\rm an}/G\simeq\Xan$ as a topological space. 
Using that the profinite group $G$ is compact, we note that $c_1(L_F,\metr_F)^{\wedge n}$ is the unique $G$-invariant positive Radon measure on $Y_F^\an$ 
with image measure $c_1(L,\metr)^{\wedge n}$ on $\Yan$ (see \S \ref{section: notation}). 
This shows that $c_1(L_F,\metr_F)^{\wedge n}=c_1(L_F',\metr_F')^{\wedge n}$. 
By \cite[Corollary 1.2]{yuan-zhang}, we get  $\metr_F'=r \metr_F$ for some $r \in \R_{>}$ and hence  $\metr'=r \metr$. 
\end{proof}

\begin{art} \label{split abelian varieties}
We say that an abelian variety $A$ is \emph{split over $K$} if $A$ has a Raynaud extension as  in \S \ref{short subsection: Raynaud uniformization and canonical tropicalization} with split torus $\T$. 
Then we get a canonical tropicalization map $\tropbar_v\colon A^\an \to N_\R/\Lambda$ as in Remark \ref{short canonical tropicalization} where $N$ is the cocharacter lattice of $\T$.  
	For any abelian variety $A$ over $K$, there is a finite
	separable extension $F$ of $K$ such that $A_F$ is split over $F$.
	
	Now let $L$ be a rigidified ample line bundle on the split abelian variety $A$ and let 
\[
0 \longrightarrow \torus \longrightarrow E \stackrel{q}{\longrightarrow} B \longrightarrow 0
\]
be the {Raynaud extension of $A$} such that $A^{\an}= E^{\an}/\Lambda$ for a lattice  $\Lambda$ of $E(K)$. 
By \cite[Proposition 6.5]{bosch-luetkebohmert1991}, there is a rigidified ample line bundle $H$ on $B^\an$ such that $p^*(\Lan) \simeq q^*(H)$.
\end{art}

\begin{art}
We prove now Theorem \ref{solution of non-archimedean MAeqn for abelian varieties in intro} from the introduction which generalizes Theorem \ref{non-arch Calabi-Yau for non-archimedean abelian variety} to any abelian variety over any non-trivially valued non-archimedean field $K$. 
Let $S(A)$ be the canonical skeleton of $A$ from \cite[\S 6.5]{berkovich-book}. 
We consider an ample line bundle $L$ on an abelian variety $A$ over $K$ and  a positive Radon measure $\mu_v$ on $A^\an$ which is supported in $S(A)$ and satisfies $\mu_v(A )=\deg_L(A)$. 
We have to show that 
\begin{equation} \label{main non-arch MA-equation++}
	c_1(L ,\metr )^{\wedge g}=\mu_v.
\end{equation}
has a solution given by a continuous semipositive metric $\metr$ of $L$ unique up to scaling.

\begin{proof}
Uniqueness up to scaling follows from Theorem \ref{yuan-zhang} and hence it remains to show existence. 
By \ref{split abelian varieties}, there is a finite Galois extension $F/K$ such that $A_F$ is split over $F$. Let $\varphi$ be solution of the associated tropical Monge--Amp\`ere equation \eqref{solution of tropical MA-eq} as in the proof of Theorem \ref{non-arch Calabi-Yau for non-archimedean abelian variety}. Using $\varphi_v \coloneqq \varphi \circ \tropbar_v$, we define the metric $\metr_{L(\varphi)}$ of $L_F$ as in 
Proposition \ref{short corresponding psh metris and theta-psh functions for polarized abelian varieties}. We have seen in the proof of Theorem \ref{non-arch Calabi-Yau for non-archimedean abelian variety} that $\metr' \coloneqq \metr_{L(\varphi)}$ satisfies the non-archimedean Monge--Amp\`ere equation \ref{main non-arch MA-equation++} over the completion of the algebraic closure of $F$ and hence also over $F$ by using 
Proposition \ref{base extension and MA}. 

Let $G$ be the Galois group of $F/K$, let $d \coloneqq [F:K]$ and let $N$ be the norm introduced  in \ref{norm and metric}. 
By \cite[\S 6.5]{berkovich-book}, we have $S(A)=S(A_F)/G$ and hence there is a unique $G$-invariant positive Radon measure $\mu_v'$ on $A_F^\an$ which is supported in $S(A_F)$ and with image measure $\mu_v$ (see \S \ref{section: notation}).
Since $\mu_v$ is invariant under the $G$-operation, it follows from the uniqueness theorem  \ref{yuan-zhang} that $\metr'= \metr_\sigma'$ for all $\sigma \in G$. 
Using \eqref{norm and tensor metric}, we conclude that the metric $\pi^*N(\metr')$ of $L_F^{\otimes d}$ solves the non-archimedean Monge--Amp\`ere equation on $A_F^\an$ for the measure $d^g \cdot\mu_v'$. 
Again by Proposition \ref{base extension and MA}, it follows that the metric $N(\metr')$ of $N(L_F)=L^{\otimes d}$ satisfies the non-archimedean Monge--Amp\`ere equation on $A^\an$ for the measure $d^g \mu_v$. We conclude that $\metr \coloneqq   N(\metr')^{1/d}$ is the desired solution of \eqref{main non-arch MA-equation++}.
\end{proof}
\end{art}

\appendix

\section{Proof of Theorem \ref{thm:6}}
\label{sec:proof-theor-refthm:6}

We give a proof of uniqueness in Theorem \ref{thm:6} as the argument
is omitted in Berman-Berndtsson \cite{berman-berndtsson2013} and
Bakelman's uniqueness proof for \cite[Theorem
17.1]{bakelman94:_convex_analy} relies on \cite[Lemma
10.2]{bakelman94:_convex_analy} which is false
as the following example shows. We will adjust his arguments.

  \begin{ex}
   Let $u, v$ the convex functions on $G \coloneqq \R$ defined as
    follows
    \begin{displaymath}
      u(x)=
      \begin{cases}
        -4x-3,&\text{ if }x\le -1,\\
        x^{2},&\text{ if }-1\le x\le 1\\
        4x-3,&\text{ if }x\ge 1,
      \end{cases}
      \qquad
      v(x)=
      \begin{cases}
        -2x-1,&\text{ if }x\le 0, \\
        2x-1,&\text{ if }x\ge 0.
      \end{cases}
    \end{displaymath}
    On the open interval $Q=(-1,1)$
    we have $v<u$. On $\partial Q=\{-1,1\}$, we have $v=u$. 
    For the multivalued subdifferentials, we have 
    \begin{displaymath}
      \partial u (1) = [2,4],\qquad \partial v(1) = \{2\}.
    \end{displaymath}
Hence  $\partial u (1)\setminus \partial v(1)\not = \emptyset$. 
Under these hypotheses, it was claimed in \cite[Lemma 10.2]{bakelman94:_convex_analy} that $\MA(v)\big(Q\big) > \MA(u)\big(Q\big)$, but we have
    $\MA(v)\big(Q\big)= 4 = \MA(u)\big(Q\big)$.
  \end{ex}

We keep the same setting as in \ref{real setting for MA}, i.e.~$N$ is
a free abelian group of finite rank $n$ with an identification $N
\simeq \Z^n$ and $M \coloneqq \Hom(N,\Z)$. For a set $W$ of $M_\R$, we
denote by $W^\circ$ the \emph{interior of $W$}. In the following, we
fix a convex body $\Delta$ in $M_\R$ and we consider a convex function
$f: N_\R \to \R$ with $\Delta ^{\circ}\subset \Delta (f) \subset
\Delta$ for the stability set $\Delta(f)$. For the multivalued
subdifferential $\partial f$, it follows from \cite[Theorems 23.4 and
23.5]{rockafellar1970} that
    \begin{equation} \label{sandwich for subdifferential}
    \Delta ^{\circ}\subset \partial f(N_\R) \subset \Delta (f) \subset \Delta.   
    \end{equation}
    We first prove a \emph{comparison principle}.
    
    \begin{prop} \label{comparison principle}
    Let $f$ and $g$ be convex
    functions on $N_\R \simeq \R^n$ and let $\Delta$ be a convex body
    in $M_\R$ with
    $\Delta ^{\circ}\subset \Delta (f) \subset \Delta $. 
    For
    $\Omega \coloneqq \{x \in N_\R\mid f(x) < g(x)\}$, we have
    \begin{displaymath}
    \partial g(\Omega )\cap \Delta ^{\circ}\subset \partial
    f(\Omega ).
    \end{displaymath}
    \end{prop}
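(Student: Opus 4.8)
\textbf{Proof plan for Proposition \ref{comparison principle}.}

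The plan is to adapt the classical comparison argument for the real Monge--Amp\`ere operator (as in Bakelman's book, but fixing the gap from Lemma 10.2 by working with the stability-set condition rather than a naive boundary subdifferential comparison). Let $y \in \partial g(\Omega) \cap \Delta^\circ$, say $y \in \partial g(x_0)$ for some $x_0 \in \Omega$. First I would note that since $y \in \Delta^\circ \subset \Delta(f)$, the affine function $\ell(x) \coloneqq f(x_0) + \langle y, x - x_0 \rangle$ satisfies $f - \ell$ bounded below on $N_\R$ (indeed $y \in \Delta(f)$ means $x \mapsto f(x) - \langle y, x\rangle$ is bounded below, so after adding a constant it is $\geq f(x_0) - \langle y, x_0\rangle$ at the minimizing behaviour — more precisely one uses that $\Delta^\circ$ is contained in the \emph{relative interior} so that the infimum is actually attained or approached appropriately). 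The key point is then a \emph{sliding} argument: consider the affine function $h_t(x) \coloneqq \langle y, x - x_0\rangle + g(x_0) - t$ for $t \geq 0$. At $t = 0$ we have $h_0(x_0) = g(x_0) > f(x_0)$ since $x_0 \in \Omega$, and because $f - \langle y, \cdot\rangle$ is bounded below while $h_t - \langle y, \cdot\rangle$ is constant, for $t$ large enough $h_t < f$ somewhere; decreasing $t$ continuously, there is a smallest $t_0 \geq 0$ such that $h_{t_0} \leq f$ everywhere on $N_\R$, with equality at some point $x_1$.

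Next I would show $x_1 \in \Omega$ and $y \in \partial f(x_1)$. The inclusion $y \in \partial f(x_1)$ is immediate because $h_{t_0}$ is an affine function with slope $y$ touching $f$ from below at $x_1$. To see $x_1 \in \Omega$, observe that $g(x_1) \geq h_{t_0}(x_1) + (g(x_0) - t_0 - (\text{value of }h_{t_0}\text{ shifted}))$; more carefully, $g$ is convex and $y \in \partial g(x_0)$ gives $g(x) \geq g(x_0) + \langle y, x - x_0\rangle = h_0(x) \geq h_{t_0}(x)$ for all $x$ (since $t_0 \geq 0$), hence $g(x_1) \geq h_{t_0}(x_1) = f(x_1)$. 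If $g(x_1) = f(x_1)$ then $x_1 \notin \Omega$, but then $h_{t_0}(x_1) = f(x_1) = g(x_1) = h_0(x_1)$ forces $t_0 = 0$ at $x_1$, i.e. $h_0$ touches $f$ from below at $x_1$; combined with $h_0(x_0) = g(x_0) > f(x_0)$ and convexity of $f - h_0$ (which is $\geq 0$ with a zero at $x_1$ and strictly positive at $x_0$), one derives that $f - h_0$ cannot be bounded below unless... — here I would instead argue directly: since $t_0$ is minimal with $h_{t_0} \le f$, and $h_0 \le g$ with $h_0(x_0) > f(x_0)$, we must have $t_0 > 0$, so $h_{t_0}(x_0) = h_0(x_0) - t_0 < g(x_0)$; then $f(x_1) = h_{t_0}(x_1) \le h_0(x_1) - t_0 \le g(x_1) - t_0 < g(x_1)$, giving $x_1 \in \Omega$.

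Assembling these, every $y \in \partial g(\Omega) \cap \Delta^\circ$ lies in $\partial f(x_1)$ for some $x_1 \in \Omega$, which is exactly the claimed inclusion $\partial g(\Omega) \cap \Delta^\circ \subset \partial f(\Omega)$. I expect the main obstacle to be the careful justification that the sliding parameter $t_0$ is well-defined (finite and that the touching point $x_1$ exists) — this is where the hypothesis $\Delta^\circ \subset \Delta(f)$ is essential, since without a coercivity-type bound on $f$ relative to the slope $y$ the affine function could slide off to infinity without ever touching $f$; the condition $y \in \Delta^\circ$ (an \emph{open} neighbourhood of slopes is admissible) gives the needed quantitative lower bound $f(x) - \langle y, x\rangle \to +\infty$, or at least boundedness below with the infimum controlled, forcing a finite $t_0$ and a genuine contact point. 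Once the comparison principle is established, uniqueness in Theorem \ref{thm:6} will follow by the standard argument: if $f_1, f_2$ are two solutions, apply the comparison principle on $\Omega = \{f_1 < f_2\}$ and on $\{f_2 < f_1\}$ together with $\MA(f_i) = n!\cdot(\text{Lebesgue measure of }\partial f_i(\cdot))$ and $\mu$-equality to conclude both sets have measure zero, hence (using $\Delta^\circ \subset \Delta(f_i)$ and Lemma \ref{lemm:9}-type volume rigidity) that $f_1 - f_2$ is constant.
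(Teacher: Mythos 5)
Your argument is correct and is essentially the paper's proof in geometric clothing: the paper takes any $y_2$ with $H\in\partial f(y_2)$ (which exists because $\Delta^\circ\subset\partial f(N_\R)$, the same fact your coercivity/attainment discussion establishes) and combines the two subgradient inequalities at $y_1\in\Omega$ and $y_2$ with $f(y_1)<g(y_1)$ to get $f(y_2)<g(y_2)$, which is exactly what your sliding parameter $t_0>0$ and contact point $x_1$ encode. The only point to state cleanly is that the contact point exists because $y$ lies in the \emph{interior} of $\mathrm{dom}(f^*)$, so $\partial f^*(y)\neq\emptyset$; mere boundedness below would not suffice.
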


    \begin{proof} Let $H\in \partial g(\Omega )\cap \Delta
    ^{\circ}$. Since $H\in \partial g(\Omega )$, there is an $y_{1}\in
    \Omega $ such that $H\in \partial g(y_{1})$. Since $H\in \Delta
    ^{\circ}\subset \partial(f)(N_\R)$  using \eqref{sandwich
      for subdifferential}, there is an $y_{2}\in N_{\R}$ such
    that $H\in \partial f(y_{2})$. The conditions $H\in \partial
    g(y_{1})$ and $y_{1}\in \Omega $ yield
    \begin{equation}
      \label{eq:12}
      g(y_{2})\ge g(y_{1}) + H(y_{2}-y_{1}) > f(y_{1}) +
      H(y_{2}-y_{1}). 
    \end{equation}
    The condition $H\in \partial f(y_{2})$ yields
    \begin{equation}\label{eq:13}
      f(y_{1})\ge f(y_{2})+ H(y_{1}-y_{2}).
    \end{equation}
    Equations \eqref{eq:12} and \eqref{eq:13} imply
    $f(y_{2})<g(y_{2})$. Therefore $y_{2}\in \Omega $ proving claim.
    \end{proof}

We will need also the following \emph{openness criterion.}
\begin{prop} \label{openness criterion}
Let $f$ be a convex function on an
open convex set $U$ of $N_\R$. Let $x_{0}\in U$ and $H_{0}\in \partial f(x_{0})$. If
there is a compact neighbourhood $B$ of $x_{0}$ in $U$ such that
\begin{equation}\label{eq:16}
f(y) > f(x_{0}) + H_{0}(y-x_{0})
\end{equation}
for all $y \in \partial B$.  Then $\partial f(B)$ is a neighborhood of $H_0$.
\end{prop}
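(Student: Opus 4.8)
The statement to establish is the openness criterion: under the stated hypotheses on a convex function $f$ on an open convex set $U$, with $x_0 \in U$, $H_0 \in \partial f(x_0)$, and a compact neighbourhood $B$ of $x_0$ such that $f(y) > f(x_0) + H_0(y-x_0)$ on $\partial B$, we want $\partial f(B)$ to be a neighbourhood of $H_0$. The natural approach is a perturbation argument: for a linear form $H$ close enough to $H_0$, I want to produce a point $x \in B$ with $H \in \partial f(x)$. The idea is to consider the \emph{perturbed function} $g_H(y) \coloneqq f(y) - H(y-x_0)$ on the compact set $B$ and let $x$ be a point where $g_H$ attains its minimum over $B$. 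If $x$ lies in the interior $B^\circ$, then by elementary convex analysis $0 \in \partial g_H(x)$, which means $H \in \partial f(x)$, and we are done. So the entire content is to show that for $H$ sufficiently near $H_0$, the minimum of $g_H$ over $B$ is \emph{not} attained on the boundary $\partial B$.

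\textbf{Key steps.} First I would record that $g_{H_0}(y) = f(y) - H_0(y-x_0)$ satisfies $g_{H_0}(x_0) = f(x_0)$ and, by hypothesis \eqref{eq:16}, $g_{H_0}(y) > f(x_0) = g_{H_0}(x_0)$ for all $y \in \partial B$. Since $\partial B$ is compact and $g_{H_0}$ is continuous (convex functions on open sets are continuous), there is a gap: $\delta \coloneqq \min_{y \in \partial B} g_{H_0}(y) - g_{H_0}(x_0) > 0$. Next, for a general linear form $H$, we have $g_H(y) - g_{H_0}(y) = (H_0 - H)(y - x_0)$, which is bounded in absolute value by $\|H_0 - H\| \cdot \operatorname{diam}(B)$ uniformly for $y \in B$. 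Choosing $H$ with $\|H_0 - H\| < \delta / (2\operatorname{diam}(B))$ gives $|g_H(y) - g_{H_0}(y)| < \delta/2$ for all $y \in B$; in particular $g_H(x_0) < g_{H_0}(x_0) + \delta/2$ while $g_H(y) > g_{H_0}(y) - \delta/2 \geq g_{H_0}(x_0) + \delta/2$ for $y \in \partial B$. Hence $\min_{\partial B} g_H > g_H(x_0) \geq \min_B g_H$, so the minimum of $g_H$ over the compact set $B$ is attained at some interior point $x \in B^\circ$. Finally, at such an interior minimizer, $g_H(z) \geq g_H(x)$ for $z$ in a neighbourhood of $x$ forces $0 \in \partial g_H(x)$ (the subdifferential of a convex function at an interior local minimum contains $0$), equivalently $H \in \partial f(x) \subset \partial f(B)$. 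This shows every $H$ in the ball of radius $\delta/(2\operatorname{diam}(B))$ around $H_0$ lies in $\partial f(B)$, proving that $\partial f(B)$ is a neighbourhood of $H_0$.

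\textbf{Main obstacle.} There is no serious obstacle here; the argument is a clean compactness-plus-perturbation estimate. The only points requiring a little care are the bookkeeping with the uniform bound $\|H_0 - H\| \cdot \operatorname{diam}(B)$ (making sure $B$ is bounded, which it is, being compact) and the appeal to the standard fact that a convex function is continuous on the interior of its domain (\cite[Theorem 10.1]{rockafellar1970}) so that the minimum over $\partial B$ is genuinely attained and the gap $\delta$ is positive; and the characterization of interior minima via $0 \in \partial g_H(x)$, which is immediate from the definition of the subdifferential. If one wanted to avoid even these minor invocations, one could instead argue directly with supporting hyperplanes, but the minimization formulation above is the cleanest route.
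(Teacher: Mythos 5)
Your proof is correct and follows essentially the same perturbation strategy as the paper: both use compactness of $\partial B$ to preserve the strict inequality \eqref{eq:16} for $H$ near $H_0$, and both produce the subgradient by locating the minimizer of the tilted function $f - H(\cdot - x_0)$ inside $B$. The only cosmetic difference is that you minimize over the compact set $B$ and rule out boundary minimizers via the quantitative gap $\delta$, whereas the paper uses a segment/convexity argument to show the full sublevel set $\{f \le f(x_0) + H(\cdot - x_0)\}$ is trapped in $B$; your variant is slightly more self-contained and equally valid.
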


\begin{proof}
Since $\partial B$ is compact, the condition \eqref{eq:16} implies
that there is a neighbourhood $V$ of $H_{0}$ such that $f(y) > f(x_{0}) + H(y-x_{0})$ for all $H\in V$
and all $y\in
\partial B$. We consider $H \in V$. We claim that 
for every point $y\in U\setminus B$, it is still true that $f(y) >
f(x_{0}) + H(y-x_{0})$. To see that, consider the segment $\overline
{x_{0}y}$. There is a point $y_{0}$ in the
interior of $\overline
{x_{0}y}$ such that $y_{0} \in \partial B$.
Writing $y_{0}=\alpha  x_{0}+(1-\alpha )y$ with $0<\alpha <1$ and
using that $x\mapsto g(x) \coloneqq f(x)-f(x_{0})-H(x-x_{0})$ is convex, we have 
\begin{displaymath}
0 < f(y_{0})-f(x_{0})-H(y_{0}-x_{0})\le \alpha g(x_0) + (1-\alpha)g(y) = (1-\alpha)g(y)
\end{displaymath}
proving $f(y) >
f(x_{0}) + H(y-x_{0})$. 
We get
\begin{displaymath}
\emptyset \not = \{x\in U\mid f(x)\le
f(x_{0})+H(x-x_{0})\}\subset B 
\end{displaymath}
which implies 
$H\in 
\partial f(B)$ as claimed.
\end{proof}

\begin{proof}[Proof of uniqueness in Theorem \ref{thm:6}.]
  Let $\Delta \subset M_\R$ be a convex body. For convex functions
  $f,g$ on $N_\R$ such that $\MA(f)=\MA(g)$ and 
  such that
  \begin{gather*}
    \Delta ^{\circ} \subset \Delta (f) \subset \Delta \quad {\rm and}  \quad  \Delta ^{\circ} \subset \Delta (g) \subset \Delta,
  \end{gather*}
  we have to show that $f-g$ is constant.

  Let $f^*(L) \coloneqq \sup\{x \in N_\R \mid L(x)-f(x)\}$  be the
  \emph{conjugate function of $f$}, also called the \emph{Legendre
    dual of $f$}. It is a closed convex function on $M_\R$ which
  takes the value $\infty$ precisely outside the stability set
  $\Delta(f)$, and we have $f^{**}=f$ (see \cite[\S
  12]{rockafellar1970}).
    
  Assume that there
    exists a point $H\in \Delta ^{\circ}$ such that $\partial
    f^{*}(H)\cap \partial g^{*}(H)=\emptyset$. We will see that
    this contradicts the hypothesis that $\MA(f)=\MA(g)$. For
    simplicity, 
    we can assume that $H=0$. This
    amounts to translate $\Delta $ and to add to $f$ and $g$ the same
    linear function which does not change the Monge--Amp\`ere
    measures. After adding a constant to $f$ and to $g$, we can assume
    that $\min(f) = \min(g)=0$. As before, let $\Omega \coloneqq \{x \in N_\R \mid f(x)<g(x)\}$. We are going to see that
    \begin{equation} \label{contradiction interior}
    0\in \partial f(\Omega )^{\circ},\quad 0\not \in \overline
    {\partial g(\Omega )}. 
    \end{equation}
    This and Proposition \ref{comparison principle} imply
    that $\MA(g)(\Omega )=\vol(\partial g(\Omega ))< \vol(\partial
    f(\Omega )) =\MA(f)(\Omega )$, which contradicts the hypothesis
    $\MA(f)=\MA(g)$.
    
    We show first that $0\in \partial f(\Omega )^{\circ}$.  Since
    $H=0$ and $\min(f)=0$, we have
    $A\coloneqq \{x \in N_\R \mid f(x)=0\}=\partial f^{*}(0)$ which, by
    hypothesis, is disjoint to $\{x \in N_\R \mid g(x)=0\}=\partial
    g^{*}(0)$. Thus
    $A\subset \Omega $.  By
    \cite[Theorem 24.7]{rockafellar1970}, the set $A$ is compact. Since
    $\Omega $
    is open and $A$ is compact, there is a compact subset
    $B$ of $\Omega$ 
    with $A\subset B^{\circ}$. Then $f|_{\partial B}>0$. 
    By Proposition \ref{openness criterion},   
    $\partial f(B)$ is a neighborhood of $0$.  Since 
    $\partial f(B) \subset   \partial f (\Omega )$, we get $0\in
    \partial f (\Omega )^{\circ}$.
    
    Assume now that $0\in \overline{\partial g (\Omega )}$. This means that 
    there are sequences $x_{k}\in \Omega $ and $H_{k}\in \partial
    g(x_{k})$ such that  $H_{k}$ 
    converges to zero and hence $S \coloneqq \{H_k \mid k \in \N\}
    \cup \{0\}$ is a compact subset of $M_\R$.  
    Since $\Delta^\circ \subset \partial g(N_\R)$ by \eqref{sandwich
      for subdifferential}, we may assume that $S$ is contained in the
    interior $\Delta^\circ$ of the domain $\Delta(g)$ of $g^*$.
    Applying \cite[Theorem 24.7]{rockafellar1970} to $g^*$, we deduce
    that $\partial g^*(S)$ is compact. 
    By  \cite[Corollary 23.5.1]{rockafellar1970}, 
    we have $x_k \in \partial g^*(H_k)$ and hence 
    we can find a subsequence $x_{i_{k}}$ that converges to a
    point $y \in N_\R$. By \cite[Theorem 24.4]{rockafellar1970}, we have $0\in
    \partial g(y)$ and so $g(y)=0< f(y)$. But $x_{k}\in
    \Omega $ implies that $f(x_{k})< g(x_{k})$ and hence $f(y)\le
    g(y)$ by continuity leading to a contradiction. This proves
    \eqref{contradiction interior}.
    
    We conclude from the whole argument above that for every
    $H\in \Delta ^{\circ}$, we have
    \begin{equation}
    \label{eq:14}
    \partial f^{\ast}(H)\cap \partial g^{\ast}(H)\not = \emptyset.
    \end{equation}
    The convex functions $ f^{\ast}$ and
    $g^{\ast}$ are almost everywhere differentiable on $\Delta$ and the
    condition \eqref{eq:14} implies that $df^{\ast}=dg^{\ast}$
    almost everywhere. Since $f^*$ and $g^*$ are convex  continuous finite functions on $\Delta^\circ$, this implies  $f^{*} -
    g^{*}$  constant. By biduality,  $f-
    g$ is also constant.
    \end{proof}

\bibliographystyle{alpha}

\begin{thebibliography}{BGJK20}

\bibitem[Ale58]{aleksandrov_1958}
Alexander~D. Aleksandrov.
\newblock Dirichlet's problem for the equation {${\rm Det}\,||z_{ij}||=\varphi
  (z_{1},..,z_{n},z, x_{1},.., x_{n})$}. {I}.
\newblock {\em Vestnik Leningrad. Univ. Ser. Mat. Meh. Astr.}, 13(1):5--24,
  1958.

\bibitem[AR10]{allermann-rau-2010}
Lars Allermann and Johannes Rau.
\newblock First steps in tropical intersection theory.
\newblock {\em Math. Z.}, 264(3):633--670, 2010.

\bibitem[Bak57]{bakelman57}
Ilya~J. Bakelman.
\newblock Generalized solutions of {M}onge-{A}mp\`ere equations.
\newblock {\em Dokl. Akad. Nauk SSSR (N.S.)}, 114:1143--1145, 1957.

\bibitem[Bak94]{bakelman94:_convex_analy}
Ilya~J. Bakelman.
\newblock {\em Convex analysis and nonlinear geometric elliptic equations}.
\newblock Springer-Verlag, Berlin, 1994.

\bibitem[BT82]{bedford-taylor1982}
Eric Bedford and B.~A. Taylor.
\newblock A new capacity for plurisubharmonic functions.
\newblock {\em Acta Math.}, 149(1-2):1--40, 1982.

\bibitem[Ber90]{berkovich-book}
Vladimir~G. Berkovich.
\newblock {\em Spectral theory and analytic geometry over non-{A}rchi\-medean
  fields}, volume~33 of {\em Mathematical Surveys and Monographs}.
\newblock American Mathematical Society, Providence, RI, 1990.

\bibitem[Ber93]{berkovich-ihes}
Vladimir~G. Berkovich.
\newblock \'{E}tale cohomology for non-{A}rchimedean analytic spaces.
\newblock {\em Inst. Hautes \'Etudes Sci. Publ. Math.}, 78:5--161, 1993.

\bibitem[Ber99]{berkovich-1999}
Vladimir~G. Berkovich.
\newblock Smooth {$p$}-adic analytic spaces are locally contractible.
\newblock {\em Invent. Math.}, 137(1):1--84, 1999.

\bibitem[BB13]{berman-berndtsson2013}
Robert~J. Berman and Bo~Berndtsson.
\newblock Real {M}onge-{A}mp\`ere equations and {K}\"{a}hler-{R}icci solitons
  on toric log {F}ano varieties.
\newblock {\em Ann. Fac. Sci. Toulouse Math. (6)}, 22(4):649--711, 2013.

\bibitem[BL04]{birkenhake-lange2004}
Christina Birkenhake and Herbert Lange.
\newblock {\em Complex abelian varieties}, volume 302 of {\em Grundlehren der
  Mathematischen Wissenschaften}.
\newblock Springer-Verlag, Berlin, second edition, 2004.

\bibitem[BL91]{bosch-luetkebohmert1991}
Siegfried Bosch and Werner L{\"u}tkebohmert.
\newblock Degenerating abelian varieties.
\newblock {\em Topology}, 30(4):653--698, 1991.

\bibitem[BE18]{boucksom-eriksson2018}
S{\'e}bastien Boucksom and Dennis Eriksson.
\newblock Spaces of norms, determinant of cohomology and {F}ekete points in
  non-archimedean geometry.
\newblock \href{https://arxiv.org/abs/1805.01016}{\tt arXiv:1805.01016}, 2018.

\bibitem[BEGZ10]{boucksometal10:_monge_amper_big}
S\'{e}bastien Boucksom, Philippe Eyssidieux, Vincent Guedj, and Ahmed Zeriahi.
\newblock Monge-{A}mp\`ere equations in big cohomology classes.
\newblock {\em Acta Math.}, 205(2):199--262, 2010.

\bibitem[BFJ15]{bfj-solution}
S{\'e}bastien Boucksom, Charles Favre, and Mattias Jonsson.
\newblock Solution to a non-{A}rchimedean {M}onge-{A}mp\`ere equation.
\newblock {\em J. Amer. Math. Soc.}, 28(3):617--667, 2015.

\bibitem[BFJ16]{bfj-singular}
S{\'e}bastien Boucksom, Charles Favre, and Mattias Jonsson.
\newblock Singular semipositive metrics in non-{A}rchimedean geometry.
\newblock {\em J. Algebraic Geom.}, 25(1):77--139, 2016.

\bibitem[BJ18]{bj:singular_metrics}
S{\'e}bastien Boucksom and Mattias Jonsson.
\newblock Singular semipositive metrics on line bundles on varieties over
  trivially valued fields.
\newblock \href{http://arxiv.org/abs/1801.08229}{\tt arXiv:1801.08229}, 2018.

\bibitem[BGJK20]{burgos-gubler-jell-kuennemann1}
José~Ignacio {Burgos Gil}, Walter Gubler, Philipp Jell, and Klaus Künnemann.
\newblock A comparison of positivity in complex and tropical toric geometry.
\newblock \href{https://arxiv.org/abs/2003.08644}{ \tt arXiv:2003.08644}, 2020.

\bibitem[BPS14]{bps-asterisque}
Jos{\'e}~Ignacio {Burgos Gil}, Patrice Philippon, and Mart{\'{\i}}n Sombra.
\newblock Arithmetic geometry of toric varieties. {M}etrics, measures and
  heights.
\newblock {\em Ast\'erisque}, 360:vi+222, 2014.

\bibitem[Bou63]{bourbaki-integration-7-8}
Nicolas Bourbaki.
\newblock {\em \'{E}l\'{e}ments de math\'{e}matique. {I}nt\'{e}gration.
  {C}hapitre 7: {M}esure de {H}aar. {C}hapitre 8: {C}onvolution et
  repr\'{e}sentations}.
\newblock Actualit\'{e}s Scientifiques et Industrielles, No. 1306. Hermann,
  Paris, 1963.

\bibitem[Bou07]{bourbaki-algebre-chapitre-9}
Nicolas Bourbaki.
\newblock {\em \'{E}l\'{e}ments de math\'{e}matique. {A}lg\`ebre. {C}hapitre
  9}.
\newblock Springer-Verlag, Berlin, 2007.
\newblock Reprint of the 1959 original.

\bibitem[CL06]{chambert-loir-2006}
Antoine Chambert-Loir.
\newblock Mesures et \'equidistribution sur les espaces de {B}erkovich.
\newblock {\em J. Reine Angew. Math.}, 595:215--235, 2006.

\bibitem[CD12]{chambert-loir-ducros}
Antoine {Chambert-Loir} and Antoine Ducros.
\newblock Formes diff\'erentielles r\'eelles et courants sur les espaces de
  {B}erkovich.
\newblock \href{http://arxiv.org/abs/1204.6277}{\tt arXiv:1204.6277}, 2012.

\bibitem[CLS11]{cox-little-schenck}
David~A. Cox, John~B. Little, and Henry~K. Schenck.
\newblock {\em Toric varieties}, volume 124 of {\em Graduate Studies in
  Mathematics}.
\newblock American Mathematical Society, Providence, RI, 2011.

\bibitem[Dem85]{demailly_1985}
Jean-Pierre Demailly.
\newblock Mesures de {M}onge-{A}mp\`ere et caract\'{e}risation
  g\'{e}om\'{e}trique des vari\'{e}t\'{e}s alg\'{e}briques affines.
\newblock {\em M\'{e}m. Soc. Math. France (N.S.)}, (19):124, 1985.

\bibitem[Dem93]{demailly_1993}
Jean-Pierre Demailly.
\newblock Monge-{A}mp\`ere operators, {L}elong numbers and intersection theory.
\newblock In {\em Complex analysis and geometry}, Univ. Ser. Math., pages
  115--193. Plenum, New York, 1993.

\bibitem[Dem12]{demailly_agbook_2012}
Jean-Pierre Demailly.
\newblock Complex analytic and differential geometry, eBook, Version June 21st
  2012.
\newblock
  \href{https://www-fourier.ujf-grenoble.fr/~demailly/manuscripts/agbook.pdf}{\tt
  https://www-fourier.ujf-grenoble.fr/$\sim$demailly/manuscripts/agbook.pdf}.

\bibitem[Din09]{dinew2009}
S\l{}awomir Dinew.
\newblock Uniqueness in {$\mathscr{E}(X,\omega)$}.
\newblock {\em J. Funct. Anal.}, 256(7):2113--2122, 2009.

\bibitem[FRSS18]{foster-rabinoff-shokrieh-soto}
Tyler Foster, Joseph Rabinoff, Farbod Shokrieh, and Alejandro Soto.
\newblock Non-{A}rchimedean and tropical theta functions.
\newblock {\em Math. Ann.}, 372(3-4):891--914, 2018.

\bibitem[FK18]{fujiwara-kato-1}
Kazuhiro Fujiwara and Fumiharu Kato.
\newblock {\em Foundations of rigid geometry. {I}}.
\newblock EMS Monographs in Mathematics. European Mathematical Society (EMS),
  Z\"{u}rich, 2018.

\bibitem[Ful93]{fulton-toric-varieties}
William Fulton.
\newblock {\em Introduction to toric varieties}, volume 131 of {\em Annals of
  Mathematics Studies}.
\newblock Princeton University Press, Princeton, NJ, 1993.

\bibitem[GS90]{gillet-soule1990}
Henri Gillet and Christophe Soul\'{e}.
\newblock Arithmetic intersection theory.
\newblock {\em Inst. Hautes \'{E}tudes Sci. Publ. Math.}, 72:93--174 (1991),
  1990.

\bibitem[GD63]{ega3}
Alexander Grothendieck and Jean Dieudonné.
\newblock \'{E}l\'ements de g\'eom\'etrie alg\'ebrique. {III}. \'{E}tude
  cohomologique des faisceaux coh\'erents.
\newblock {\em Inst. Hautes \'Etudes Sci. Publ. Math.}, 11, 17, 1961-63.

\bibitem[Gub10]{gubler-compositio}
Walter Gubler.
\newblock Non-{A}rchimedean canonical measures on abelian varieties.
\newblock {\em Compos. Math.}, 146(3):683--730, 2010.

\bibitem[Gub13]{gubler-guide}
Walter Gubler.
\newblock A guide to tropicalizations.
\newblock In {\em Algebraic and combinatorial aspects of tropical geometry},
  volume 589 of {\em Contemp. Math.}, pages 125--189. Amer. Math. Soc.,
  Providence, RI, 2013.

\bibitem[Gub16]{gubler-forms}
Walter Gubler.
\newblock Forms and currents on the analytification of an algebraic variety
  (after {C}hambert-{L}oir and {D}ucros).
\newblock In Matthew Baker and Sam Payne, editors, {\em Non-archimedean and
  Tropical Geometry}, Simons Symposia, pages 1--30, Switzerland, 2016.
  Springer.

\bibitem[GH17]{gubler-hertel}
Walter Gubler and Julius Hertel.
\newblock Local heights of toric varieties over non-archimedean fields.
\newblock In {\em Actes de la {C}onf\'{e}rence ``{N}on-{A}rchimedean {A}nalytic
  {G}eometry: {T}heory and {P}ractice''}, volume 2017/1 of {\em Publ. Math.
  Besan\c{c}on Alg\`ebre Th\'{e}orie Nr.}, pages 5--77. Presses Univ.
  Franche-Comt\'{e}, Besan\c{c}on, 2017.

\bibitem[GK17]{gubler-kuenne2017}
Walter Gubler and Klaus K\"unnemann.
\newblock A tropical approach to nonarchimedean {A}rakelov geometry.
\newblock {\em Algebra Number Theory}, 11(1):77--180, 2017.

\bibitem[GK19]{gubler-kuenne2018}
Walter Gubler and Klaus K\"{u}nnemann.
\newblock Positivity properties of metrics and delta-forms.
\newblock {\em J. Reine Angew. Math.}, 752:141--177, 2019.

\bibitem[GM19]{gubler-martin}
Walter Gubler and Florent Martin.
\newblock On {Z}hang's semipositive metrics.
\newblock {\em Doc. Math.}, 24:331--372, 2019.

\bibitem[GRW17]{gubler-rabinoff-werner2}
Walter Gubler, Joseph Rabinoff, and Annette Werner.
\newblock Tropical skeletons.
\newblock {\em Ann. Inst. Fourier (Grenoble)}, 67(5):1905--1961, 2017.

\bibitem[GZ07]{guedj-zeriahi2007}
Vincent Guedj and Ahmed Zeriahi.
\newblock The weighted {M}onge-{A}mp\`ere energy of quasiplurisubharmonic
  functions.
\newblock {\em J. Funct. Anal.}, 250(2):442--482, 2007.

\bibitem[Jel16]{jell-thesis}
Philipp Jell.
\newblock {\em Differential forms on {B}erkovich analytic spaces and their
  cohomology}.
\newblock PhD thesis, Universität Regensburg, 2016.
\newblock
  \href{http://nbn-resolving.de/urn/resolver.pl?urn=urn:nbn:de:bvb:355-epub-347884}{\tt
  urn:nbn:de:bvb:355-epub-347884}.

\bibitem[Jel19]{jell-2019}
Philipp Jell.
\newblock Tropical cohomology with integral coefficients for analytic spaces.
\newblock \href{https://arxiv.org/abs/1909.12633}{\tt arXiv:1909.12633}, 2019.

\bibitem[JSS19]{jell-shaw-smacka2015}
Philipp Jell, Kristin Shaw, and Jascha Smacka.
\newblock Superforms, tropical cohomology, and {P}oincar\'{e} duality.
\newblock {\em Adv. Geom.}, 19(1):101--130, 2019.

\bibitem[Jos78]{josefson1978}
Bengt Josefson.
\newblock On the equivalence between locally polar and globally polar sets for
  plurisubharmonic functions on {${\bf C}\sp{n}$}.
\newblock {\em Ark. Mat.}, 16(1):109--115, 1978.

\bibitem[Kaj08]{kajiwara2008}
Takeshi Kajiwara.
\newblock Tropical toric geometry.
\newblock In {\em Toric topology}, volume 460 of {\em Contemp. Math.}, pages
  197--207. Amer. Math. Soc., Providence, RI, 2008.

\bibitem[Kli91]{klimek1991}
Maciej Klimek.
\newblock {\em Pluripotential theory}, volume~6 of {\em London Mathematical
  Society Monographs. New Series}.
\newblock The Clarendon Press, Oxford University Press, New York, 1991.
\newblock Oxford Science Publications.

\bibitem[KS06]{kontsevich-soibelman2006}
Maxim Kontsevich and Yan Soibelman.
\newblock Affine structures and non-{A}rchimedean analytic spaces.
\newblock In {\em The unity of mathematics}, volume 244 of {\em Progr. Math.},
  pages 321--385. Birkh\"{a}user Boston, Boston, MA, 2006.

\bibitem[KT02]{kontsevitch-tschinkel}
Maxim Kontsevich and Yuri Tschinkel.
\newblock Non-archimedean {K}\"ahler geometry.
\newblock Unpublished note, 2002.

\bibitem[Lag12]{lagerberg-2012}
Aron Lagerberg.
\newblock Super currents and tropical geometry.
\newblock {\em Math. Z.}, 270(3-4):1011--1050, 2012.

\bibitem[Lel68]{lelong68:psh}
Pierre Lelong.
\newblock {\em {F}onctions {P}lurisousharmoniques et {F}ormes
  {D}iff\'erentielles Positives}.
\newblock Gordon \& Breach, Paris, 1968.

\bibitem[Liu11]{liu2011}
Yifeng Liu.
\newblock A non-{A}rchimedean analogue of the {C}alabi-{Y}au theorem for
  totally degenerate abelian varieties.
\newblock {\em J. Differential Geom.}, 89(1):87--110, 2011.

\bibitem[MS15]{maclagan-sturmfels}
Diane Maclagan and Bernd Sturmfels.
\newblock {\em Introduction to tropical geometry}, volume 161 of {\em Graduate
  Studies in Mathematics}.
\newblock American Mathematical Society, Providence, RI, 2015.

\bibitem[Mai00]{maillot-thesis}
Vincent Maillot.
\newblock G\'eom\'etrie d'{A}rakelov des vari\'et\'es toriques et fibr\'es en
  droites int\'egrables.
\newblock {\em M\'em. Soc. Math. Fr. (N.S.)}, 80:vi+129, 2000.

\bibitem[MZ08]{mikhalkin-zharkov2008}
Grigory Mikhalkin and Ilia Zharkov.
\newblock Tropical curves, their {J}acobians and theta functions.
\newblock In {\em Curves and abelian varieties}, volume 465 of {\em Contemp.
  Math.}, pages 203--230. Amer. Math. Soc., Providence, RI, 2008.

\bibitem[Mum70]{mumford1970}
David Mumford.
\newblock {\em Abelian varieties}.
\newblock Tata Institute of Fundamental Research Studies in Mathematics, No. 5.
  Published for the Tata Institute of Fundamental Research, Bombay; Oxford
  University Press, London, 1970.

\bibitem[MFK94]{mumford-etal1994}
David Mumford, John Fogarty, and Frances Kirwan.
\newblock {\em Geometric invariant theory}, volume~34 of {\em Ergebnisse der
  Mathematik und ihrer Grenzgebiete (2)}.
\newblock Springer-Verlag, Berlin, third edition, 1994.

\bibitem[Pay09]{payne-2009a}
Sam Payne.
\newblock Analytification is the limit of all tropicalizations.
\newblock {\em Math. Res. Lett.}, 16(3):543--556, 2009.

\bibitem[Pog64]{pogorelov1964}
Aleksei~V. Pogorelov.
\newblock {\em Monge-{A}mp\`ere equations of elliptic type}.
\newblock Translated from the first Russian edition by Leo F. Boron with the
  assistance of Albert L. Rabenstein and Richard C. Bollinger. P. Noordhoff,
  Ltd., Groningen, 1964.

\bibitem[Roc70]{rockafellar1970}
R.~Tyrrell Rockafellar.
\newblock {\em Convex analysis}.
\newblock Princeton Mathematical Series, No. 28. Princeton University Press,
  Princeton, N.J., 1970.

\bibitem[Sou92]{soule-etal1992}
Christophe Soul\'{e}.
\newblock {\em Lectures on {A}rakelov geometry}, volume~33 of {\em Cambridge
  Studies in Advanced Mathematics}.
\newblock Cambridge University Press, Cambridge, 1992.
\newblock With the collaboration of D. Abramovich, J.-F. Burnol and J. Kramer.

\bibitem[Tem15]{temkin2015}
Michael Temkin.
\newblock Introduction to {B}erkovich analytic spaces.
\newblock In {\em Berkovich spaces and applications}, volume 2119 of {\em
  Lecture Notes in Math.}, pages 3--66. Springer, Cham, 2015.

\bibitem[Yau78]{yau78}
Shing~Tung Yau.
\newblock On the {R}icci curvature of a compact {K}\"ahler manifold and the
  complex {M}onge-{A}mp\`ere equation.
\newblock {\em Comm. Pure Appl. Math.}, 31:399--411, 1978.

\bibitem[YZ17]{yuan-zhang}
Xinyi Yuan and Shou-Wu Zhang.
\newblock The arithmetic {H}odge index theorem for adelic line bundles.
\newblock {\em Math. Ann.}, 367(3-4):1123--1171, 2017.

\bibitem[Zha95]{zhang-95}
Shouwu Zhang.
\newblock Small points and adelic metrics.
\newblock {\em J. Algebraic Geom.}, 4(2):281--300, 1995.

\end{thebibliography}

\end{document}